\pgfplotsset{compat=1.10}
\tikzset{
 bordism/.style={thick},
 cut/.style={dashed},
 axis/.style={},
 dot/.pic={
 \fill[color=black] (0, 0) circle[radius=1.5pt]
 node[\tikzpictextoptions]{\tikzpictext};
 },
 oriented/.style={
 postaction=decorate,
 decoration={
 markings,
 mark=at position #1 with {\arrow{stealth}}
 }
 },
 oriented/.default=0.5,
 commutative diagrams/mark/.style={
 font={},
 shape=asymmetrical rectangle,
 anchor=center
 }
}
\newcommand\Coker{\operatorname{Coker}}
\newcommand\Fun{\operatorname{Fun}}
\newcommand\Hom{\operatorname{Hom}}
\newcommand\Ker{\operatorname{Ker}}
\newcommand\colim{\operatorname*{colim}}
\newcommand\pr{\operatorname{pr}}
\newcommand\tr{\operatorname{tr}}
\newcommand\vol{\operatorname{vol}}
\newcommand\sep{\operatorname{sep}}
\newcommand\ext{\operatorname{ext}}
\newcommand\res{\operatorname{res}}
\newcommand\TFT{\mathhyphen\mathrm{TFT}}
\newcommand\Bord{\mathhyphen\mathrm{Bord}}
\newcommand\Born{\mathrm{Born}}
\newcommand\CAlg{\mathrm{CAlg}}
\newcommand\CBorn{\mathrm{CBorn}}
\newcommand\CMod{\mathrm{CMod}}
\newcommand\Fam{\mathrm{Fam}}
\newcommand\Fin{\mathrm{Fin}}
\newcommand\GBord{\mathcal G\mathrm{Bord}}
\newcommand\Man{\mathrm{Man}}
\newcommand\PSh{\mathrm{PSh}}
\newcommand\Path{\mathrm{Path}}
\newcommand\Set{\mathrm{Set}}
\newcommand\Sh{\mathrm{Sh}}
\newcommand\SmSt{\mathrm{St}_{\mathrm{Man}}}
\newcommand\St{\mathrm{St}}
\newcommand\PSt{\mathrm{PSt}}
\newcommand\Vect{\mathrm{Vect}}
\newcommand\alg{\mathrm{alg}}
\newcommand\backsslash{\backslash\kern-0.7ex\backslash}
\newcommand\id{\mathrm{id}}
\newcommand\mathemdash{\textrm{---}}
\newcommand\mathhyphen{\textrm{-}}
\newcommand\otimeshat{\mathbin{\hat\otimes}}
\newcommand\pt{\mathrm{pt}}
\newcommand\sslash{/\kern-0.7ex/}
\newcommand\vN{\mathrm{vN}}
\newcommand{\point}{\mathrm{pt}_M}
\newcommand{\C}{\mathbb{C}}
\newcommand{\R}{\mathbb{R}}
\newcommand{\N}{\mathbb{N}}
\newcommand{\Z}{\mathbb{Z}}
\newcommand{\V}{V}
\newcommand{\W}{W}
\newcommand\G{\mathcal{G}}
\newcommand\opn{\operatorname}
\newcommand\cF{\mathcal F}
\newcommand\cG{\mathcal G}
\newcommand\M{\mathbb M}
\newcommand\cS{\mathcal S}
\numberwithin{equation}{section}
\newtheorem{Theorem}{Theorem}[section]
\newtheorem{Corollary}[Theorem]{Corollary}
\newtheorem{Lemma}[Theorem]{Lemma}
\newtheorem{Proposition}[Theorem]{Proposition}
 { \theoremstyle{definition}
\newtheorem{Definition}[Theorem]{Definition}

\newtheorem{Example}[Theorem]{Example}
\newtheorem{Examples}[Theorem]{Examples}
\newtheorem{Remark}[Theorem]{Remark}
\newtheorem{Construction}[Theorem]{Construction}
\newtheorem{Notation}[Theorem]{Notation}}
\begin{document}
\allowdisplaybreaks

\newcommand{\arXivNumber}{2001.05721}

\renewcommand{\PaperNumber}{072}

\FirstPageHeading

\ShortArticleName{A Framework for Geometric Field Theories and their Classification in Dimension One}

\ArticleName{A Framework for Geometric Field Theories\\ and their Classification in Dimension One}

\Author{Matthias LUDEWIG~$^{\rm a}$ and Augusto STOFFEL~$^{\rm b}$}

\AuthorNameForHeading{M.~Ludewig and A.~Stoffel}

\Address{$^{\rm a)}$~Universit\"at Regensburg, Germany}
\EmailD{\href{mailto:matthias.ludewig@mathematik.uni-regensburg.de}{matthias.ludewig@mathematik.uni-regensburg.de}}

\Address{$^{\rm b)}$~Universit\"at Greifswald, Germany}
\EmailD{\href{mailto:arstoffel@gmail.com}{arstoffel@gmail.com}}

\ArticleDates{Received June 15, 2020, in final form July 12, 2021; Published online July 25, 2021}

\Abstract{In this paper, we develop a general framework of geometric functorial field theories, meaning that all bordisms in question are endowed with geometric structures. We~take particular care to establish a notion of smooth variation of such geometric structures, so that it makes sense to require the output of our field theory to depend smoothly on the input. We then test our framework on the case of $1$-dimensional field theories (with or~with\-out orientation) over a manifold $M$. Here the expectation is that such a field theory is equivalent to the data of a vector bundle over $M$ with connection and, in the nonoriented case, the additional data of a nondegenerate bilinear pairing; we prove that this is indeed the case in our framework.}

\Keywords{field theory; vector bundles; bordism}

\Classification{57R56; 14D21; 57R22}

\section{Introduction}\label{sec:introduction}

Inspired by work of Witten~\cite{MR953828}, Segal and Atiyah pioneered the mathematical description of quantum
field theories as functors~\cite{MR1001453, MR2079383}.
More precisely, they described a $d$-dimensional quantum field theory
$Z$ as a functor that assigns to a closed $(d-1)$-manifold $Y$ a
vector space~$Z(Y)$ and to a $d$-dimensional bordism $X$ from $Y$
to another closed $d$-manifold $Y'$ a linear map $Z(X)\colon
Z(Y)\to Z(Y')$. Moreover, $Z$ is required to be a symmetric monoidal
functor, which means that $Z$ applied to a disjoint union of manifolds
of dimension $d-1$ or $d$ corresponds to the tensor product of the
associated vector spaces or linear maps. Segal's paper focused on
\emph{conformal} field theories, which means that the manifolds
involved come equipped with conformal structures, while Atiyah
discusses \emph{topological} field theories, where the manifolds are
smooth, but not equipped with any additional geometric structure.

Our first goal in this paper is to develop a general framework for
\emph{geometric} field theories. This involves a general definition
of a ``geometric structure'' $\G$ on $d$-dimensional manifolds,
which then leads to the definition of a symmetric monoidal bordism
category $\GBord$ whose morphisms are $d$-dimensional bordisms
equipped with a $\G$-structure. This is much more general than the
conformal structures considered by Segal or the rigid structures
based on the action of a Lie group $G$ on a $d$-dimensional model
space $\mathbb M$ of Stolz and Teichner~\cite{MR2742432}. Then we
essentially follow~\cite{MR2742432} to define $\G$-field theories.
As discussed at length in that paper, it is crucial to ensure the
\emph{smoothness} of the field theories; intuitively, this means in
particular that the operator $Z(X)\colon Z(Y)\to Z(Y')$ associated to
a bordism $X$ from $Y$ to $Y'$ depends smoothly on the bordism $X$.
At a technical level, this means that we need ``family versions'' of
the bor\-dism category $\GBord$ and the target category $\Vect$ of
suitable vector spaces whose objects and morphisms are now
\emph{families} of the originally considered objects/morphisms,
parametrized by smooth manifolds. In~\cite{MR2742432} this is
implemented by considering $\GBord$ and $\Vect$ as categories
\emph{internal to the $2$-category} of smooth stacks, but it has
become clear that, for technical reasons, it is easier to construct
and to work with the complete Segal object in smooth stacks that
should be thought of as the ``nerve'' of the internal category we
considered before, as is done in the preprint~\cite{arXiv:1501.00967}.
We carry out our constructions for non-extended field theories.
It is possible to define
extended field theories using an extension of our approach, via
$d$-fold (or $d$-uple) Segal objects.
For one-dimensional field theories, which are the main object of study
in this article, these distinctions are irrelevant.

The second goal of this paper is to check whether this abstract and
involved definition yields something sensible in the simplest cases,
namely $1$-dimensional (oriented) topological field theories over a
manifold $M$. In other words, the geometric structure on
$1$-manifolds $X$ is simply a~smooth map $\gamma\colon X\to M$, or
such a map $\gamma$ plus an orientation on $X$.

\begin{Theorem}\label{MainThm_or}
 The groupoid of $1$-dimensional oriented topological field
 theories over $M$ is equi\-valent to the groupoid of
 finite-dimensional vector bundles with connections over $M$.
\end{Theorem}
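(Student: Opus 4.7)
The plan is to construct mutually inverse functors between the two groupoids. Starting from an oriented $1$-dimensional topological field theory $Z$ over $M$, I would first restrict $Z$ to the level of objects: the value on a positively oriented point labeled by $m\in M$ is a vector space $E_m$, and the smoothness/family axioms of the preceding sections force the collection $\{E_m\}_{m\in M}$ to assemble into a smooth vector bundle $E\to M$. Finite-dimensionality follows because every object of the bordism category is dualizable (the dual being the same point with opposite orientation), which forces finite rank on the corresponding fibers in the family target category of vector spaces. To recover the connection, apply $Z$ to the bordism determined by a smooth path $\gamma\colon[0,t]\to M$, obtaining a linear map $P_\gamma\colon E_{\gamma(0)}\to E_{\gamma(t)}$. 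Functoriality under concatenation of paths and smoothness in the path parameter will identify $P$ as the parallel transport of a unique smooth connection $\nabla$ on $E$, via the standard reconstruction of a connection from a smooth parallel transport functor.

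In the reverse direction, given a pair $(E,\nabla)$, define a field theory $Z_{(E,\nabla)}$ by sending an oriented $0$-manifold $Y$ with map $f\colon Y\to M$ to
\[
 \bigotimes_{y\in Y_+}E_{f(y)}\,\otimes\,\bigotimes_{y\in Y_-}E_{f(y)}^\ast,
\]
and by sending a $1$-dimensional bordism to the corresponding tensor product of $\nabla$-parallel transports along its components, with orientations prescribing the direction of transport and the use of $E$ versus $E^\ast$. Checking that this yields a symmetric monoidal morphism of complete Segal stacks reduces to standard properties of parallel transport: the cocycle identity under concatenation, smooth dependence on paths and their endpoints, and compatibility with disjoint unions.

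Finally, one must verify that the two constructions are mutually inverse up to coherent isomorphism of groupoids, and that the correspondence is natural in $M$. The bulk of this is bookkeeping: a natural isomorphism $Z\cong Z_{(E,\nabla)}$ is specified by its value on objects, which is the tautological identification $Z(\mathrm{pt}_m)=E_m$, and one checks compatibility with intervals using the definition of $\nabla$ as the infinitesimal generator of $P_\gamma$. The main obstacle, in my view, is not the geometric content — which is essentially the classical equivalence between smooth parallel transport functors and connections — but bridging the complete Segal stack formalism underlying $\GBord$ with ordinary differential geometry on $M$. Specifically, one must isolate a set of generating morphisms (points and short intervals) whose values under $Z$ already determine $Z$ up to canonical isomorphism, so that the forward functor is well-defined; and one must verify that $Z_{(E,\nabla)}$ descends along all Segal equivalences, including reparametrization of intervals and gluing via collar neighborhoods. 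This is the place where the careful definition of smooth variation set up in the first part of the paper must be exploited in earnest.
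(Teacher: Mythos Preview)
Your outline matches the paper's overall strategy, but it glosses over the one point where this paper's framework genuinely departs from the classical argument, and that is where the real work lies.

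You write that $Z$ assigns ``to a positively oriented point labeled by $m\in M$'' a vector space $E_m$. In the bordism category constructed here, there is no such object. An object of $\GBord_0$ over a point is a \emph{germ of a one-manifold with a map to $M$}, i.e.\ a germ of a path through $m$, not the point $m$ itself (see Section~2.2). A priori, $Z$ assigns different vector spaces to different germs with the same underlying point, and nothing in the functoriality or smoothness axioms immediately forces these to agree. The heuristic that the space of germs deformation retracts onto $M$ does not help, because homotopy invariance of vector bundles fails for diffeological spaces of this kind. So the assertion that ``the collection $\{E_m\}$ assembles into a smooth vector bundle over $M$'' is precisely the hard step, not a consequence of the family axioms.

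The paper handles this in two stages. First it restricts to the full subcategory $\Path_c(M)$ of paths with \emph{sitting instants} at the marked points; for such objects the germ is constant, so the value of $Z$ genuinely depends only on a point of $M$, and the classical reconstruction of a connection from parallel transport data (Proposition~4.1) goes through after a normalization argument (Lemmas~4.10--4.13). The second and more delicate step is to show that the restriction functor from $\Path(M)$ to $\Path_c(M)$ induces an equivalence on functor categories (Proposition~4.14). This uses \emph{modification functions} to insert sitting instants into arbitrary paths, together with a careful argument (Lemmas~4.15--4.16) that the result is independent of the choice of modification function. This independence is not formal; it relies on the invertibility lemma and a continuity-in-the-cut-parameter argument. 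Your final paragraph gestures at ``isolating generating morphisms'' and ``descent along Segal equivalences'', but does not identify this germ-versus-point obstruction or propose a mechanism to overcome it.

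A smaller point: your dualizability argument for finite-dimensionality is not needed when the target is already the stack of finite-rank vector bundles, which is the setting of Section~4. It becomes relevant only in Section~5, where the target is an admissible stack of $C^\infty$-modules, and there the argument is again not the classical one: one must first show that $Z$ of the \emph{constant} germ is dualizable (using that constant elbows of different lengths are isomorphic), and then invoke the separation axiom (C3) to kill the ``excess'' summand $Z'(\gamma;a)$ coming from nonconstant germs.
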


\begin{Theorem}\label{MainThm_nonor}
 The groupoid of $1$-dimensional unoriented topological field
 theories over $M$ is equivalent to the groupoid of
 finite-dimensional vector bundles over $M$ equipped with a
 non-degenerate, possibly indefinite, field of symmetric bilinear forms on the
 fibers and a compatible connection.
\end{Theorem}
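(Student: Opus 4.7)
The plan is to reduce to Theorem \ref{MainThm_or} and then account for the extra generators in the unoriented bordism category. The forgetful functor from oriented to unoriented $1$-bordisms over $M$ induces, by pullback, a functor from unoriented field theories to oriented ones; composing with the equivalence of Theorem \ref{MainThm_or}, one associates to every $1$-dimensional unoriented topological field theory $Z$ a vector bundle with connection $(\V,\nabla)$ on $M$. The remaining work is to (i) produce the bilinear pairing $g$ on $\V$ from the additional data of $Z$, (ii) check compatibility of $g$ with $\nabla$, and (iii) verify essential surjectivity and full faithfulness on automorphisms.

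For (i), the key observation is that in the unoriented setting the half-circle that maps constantly to a point $p\in M$ provides a bordism from $\{p,p\}$ to $\emptyset$. Applying $Z$ yields a linear map $\V_p\otimes\V_p\to\R$, i.e., a bilinear form $g_p$. Symmetry follows from the fact that the swap of the two boundary points extends to a self-diffeomorphism of the half-circle; non-degeneracy follows from the snake identities with the complementary ``cup'' bordism $\emptyset\to\{p,p\}$, which plays the role of coevaluation. Allowing $p$ to vary in a smooth family shows that $g$ is a smooth section of $\Sym^2\V^*$, using the smooth-stack structure on $\GBord$ developed earlier in the paper.

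For (ii), compatibility $\nabla g=0$ is enforced by the following bordism identity: capping off two parallel transport intervals $p\rightsquigarrow q$ along $\gamma$ is diffeomorphic rel boundary, and in a smooth family over the space of paths $\gamma$, to the cap over $p$. Since $Z$ assigns equal morphisms to diffeomorphic bordisms, unwinding this gives exactly the statement that parallel transport preserves $g$.

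The inverse construction takes $(\V,\nabla,g)$ to the field theory defined on the oriented subcategory via Theorem \ref{MainThm_or} and on the cup/cap generators by $g^{-1}$ and $g$ respectively. The main obstacle will be verifying that this extends to a symmetric monoidal functor from the entire unoriented bordism stack, which amounts to checking all relations among generators and the required smooth dependence on the underlying bordism. By a generators-and-relations presentation of $\TpBord$ (obtained by Morse-theoretic decomposition of bordisms with boundary and map to $M$ into elementary pieces), the relations reduce to: the snake identities (satisfied by non-degeneracy of $g$), the symmetry of the cap (satisfied by symmetry of $g$), naturality of cap and cup under parallel transport (satisfied by $\nabla g=0$), and compatibility with gluing of oriented intervals (already handled by Theorem \ref{MainThm_or}). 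Smoothness is inherited from smoothness of parallel transport and of $g$, and fullness on isomorphisms of field theories follows because the bundle data together with $g$ determine $Z$ on a set of generators, so natural transformations of field theories correspond bijectively to bundle morphisms respecting $\nabla$ and $g$.
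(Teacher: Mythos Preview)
Your high-level strategy---restrict to the oriented subcategory to extract $(\V,\nabla)$, then read off the pairing from the cap---is reasonable in spirit, but it diverges from the paper's route and, more importantly, bypasses the central technical difficulty.

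First, on organization: the paper does not reduce the unoriented case to the oriented one. The unoriented classification (Theorem~\ref{Thm:Classification}) is proved directly, and the oriented version (Theorem~\ref{Thm:ClassificationOriented}) is stated afterwards with the remark that its proof is analogous. The paper's route to $(\V,\nabla)$ is instead to restrict $Z$ to the \emph{path subcategory} $\Path(M)\subset 1\Bord(M)$ (families whose cores have connected fibers) and prove that smooth functors $\Path(M)\to\Vect$ are equivalent to $\Vect^\sim_\nabla(M)$ (Theorem~\ref{ThmEquivalencePathCat}). The bilinear form is then extracted from the constant right elbow, much as you describe, and the proof concludes in Section~\ref{SectionFinishProof}.

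Second, and this is the substantive gap: your proposal does not engage with the issue emphasized in Section~\ref{SectionDiscussion}, namely that objects of $1\Bord(M)_0$ are not points of $M$ but \emph{germs of paths} in $M$. Your step~(i) speaks of ``the half-circle that maps constantly to a point $p\in M$'', and your step~(ii) treats interval bordisms as if their endpoints were points of $M$; but a priori $Z$ assigns to a general object $(\gamma;a)$ a vector space with no evident relation to any fiber $\V_{\gamma(a)}$. The bulk of the paper's work (Sections~\ref{SectionFunctorsOnPathM}--\ref{SectionGeneralCase}) is devoted precisely to showing that this germ information is irrelevant: one first proves invertibility of $Z(\gamma;a,b)$ (Lemma~\ref{LemmaIso}), then uses \emph{modification functions} to introduce sitting instants and reduce to the constant-germ case (Lemmas~\ref{LemmaSittingInstant} and~\ref{LemmaIndependenceOfChi}, Proposition~\ref{PropConstructionOfSection}). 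Without this, you cannot identify $Z(\gamma;a)$ with a fiber of a vector bundle on $M$, and even granting Theorem~\ref{MainThm_or} as a black box for the oriented restriction, the germ issue resurfaces when you try to pin down $Z$ on objects and bordisms that are not in the image of the oriented category.

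Third, your inverse construction appeals to a generators-and-relations presentation of the unoriented bordism category obtained ``by Morse-theoretic decomposition''. The paper neither establishes nor uses such a presentation in the smooth-category setting; it instead constructs $Z_{\V,\nabla,\beta}$ explicitly (Section~\ref{SectionConstructionOfFieldTheories}) and shows any $Z$ agrees with it by decomposing bordisms into intervals and constant elbows via the sitting-instant technology. A generators-and-relations presentation of $1\Bord(M)$ as a \emph{smooth} category (i.e., levelwise as stacks, compatibly with the Segal structure) would itself be a nontrivial result and cannot be assumed.
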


There are actually two versions of each of these results, depending on
whether all vector spaces involved are real or complex. For the field
theories, the two flavors come from the choice of the target category
as the category (of families of) real or complex vector spaces.
Similarly, the vector bundles over $M$ considered can be real or
complex.

Theorem~\ref{MainThm_or} is certainly the expected result. The basic
idea is that a vector bundle $E\to M$ with connection $\nabla$
determines a $1$-dimensional field theory $Z$ over $M$ which
associates to a~point~$x$ (interpreted as an object of $\GBord$) the
vector space $Z(x)=E_x$ given by the fiber over $x$, and to a path
$\gamma \colon [a,b]\to M$ (interpreted as a morphism in $\GBord$ from
$\gamma(a)$ to $\gamma(b)$) the linear map $Z(\gamma)\colon Z(x)\to
Z(y)$ given by parallel translation along the path $\gamma$.

In fact, there are closely related results in the literature, in work
of Freed~\cite[Appendix~A]{MR1337109}, and, in particular, in the
papers by Schreiber and Waldorf~\cite{MR2520993} and by Berwick-Evans and
Pavlov~\cite{arXiv:1501.00967}, whose title indeed seems a statement
of our first theorem. Indeed, our framework is closely related to
that of the latter paper; however, our goal to give a general definition of geometric
bordism categories leads to a different bordism category even in
dimension one, as explained below (see Section~\ref{SubsectionBordismsSegalApproach}).
In~\cite{MR2077672, MR2520993}, invariance under ``thin'' homotopies plays a~prominent role.
These concepts turned out to be not relevant to the present paper, as we were able to prove the main results (in particular Proposition~\ref{PropMultiplicativity}) without such assumptions.

This paper is organized as follows. In Section~\ref{SectionDiscussion}, we give a more detailed exposition of our
construction and discuss the differences to the papers cited above.
Afterwards, in Section~\ref{sec:smooth-field-theor}, we~define our
notion of geometry, and use it to define a \emph{smooth category of
 geometric bordisms}, for any geometry, in any dimension. Starting
in Section~\ref{sec:classification}, we restrict to the case of field
theories in dimension one. In particular, we prove a version of the
classification Theorems~\ref{MainThm_or} and~\ref{MainThm_nonor} under
the technical assumption that ``families of vector spaces'' are
finite-dimensional vector bundles. As discussed in Section~\ref{subsec:fam_vs} for a geometric field theory $Z$, unlike for
topological field theories, the vector space $Z(Y)$ associated to an
object $Y$ of the bordism category $\GBord$ is typically not finite-dimensional.
This in turn leads to the requirement that the vector
spaces~$Z(Y)$ need to be equipped with a topology or a ``bornological
structure'' (see Appendix~\ref{sec:bornological}) in order to
formulate the requirement that the operator $Z(X)\colon Z(Y)\to Z(Y')$
associated to a bordism $X$ from $Y$ to $Y'$ depends ``smoothly'' on
$X$. As also explained in Section~\ref{subsec:fam_vs}, the
appropriate notion of an ``$S$-family of (topological or bornological)
vector spaces'' needs to be more general than locally trivial bundles
over the parameter spaces $S$, namely sheaves of $\mathcal
O_S$-modules. These are the objects of the target category
appropriate for {\em general field theories}, and so we consider the
version of Theorems~\ref{MainThm_or} and~\ref{MainThm_nonor} for that
target category as the main result of this paper. This is proved in
Section~\ref{sec:values-in-sheaves} (see Theorem~\ref{thm:sheafy} and
Remark~\ref{rmk:final-version} for the precise statement).

\section{Discussion of the results}
\label{SectionDiscussion}

In this section we provide an informal overview of our framework of
geometric field theories, a~discussion of our motivations, and
comparisons to the existing literature.

\subsection{Bordisms in the Segal approach} \label{SubsectionBordismsSegalApproach}

In the presence of geometric structures, it is difficult to perform
the gluing of bordisms along their boundaries in a systematic way, as
needed to define composition in geometric bordism categories; see for
instance the discussion in the introduction of~\cite{arXiv:1501.00967}. Here the idea of Segal objects comes to the
rescue, as it allows one to instead consider only \emph{decomposition}
of bordisms along hypersurfaces, which is unproblematic. In this
approach, a category $C$ is encoded by its \emph{nerve}, that is, the
simplicial set $\mathcal C$, where $\mathcal C_0$ is the set of objects
and, for $n \geq 1$, $\mathcal C_n$ is the set of chains of $n$
composable morphisms; composition and identity morphisms in $C$
determine the simplicial structure maps between these sets.

To describe, at least roughly, the $d$-dimensional bordism category in
this way, we let $\mathcal{C}_n$ consist of $d$-dimensional manifolds
$X$ together with a collection of compact hypersurfaces $Y_k \subset
X$, for~$k=0, \dots, n$, as in Figure~\ref{fig:bordism}. This encodes
a chain of $n$ composable bordisms, the $k$th of~them being the
portion of $X$ lying between $Y_{k-1}$ and $Y_k$. Composition is
encoded by forgetting the marked hypersurfaces. To build in a
geometry $\mathcal G$ (for instance, orientations, Riemannian metrics,
or maps to a background manifold $M$), we just ask that $X$ is endowed
with that additional structure.

\begin{figure}
 \centering
 \begin{tikzpicture}[yscale=2]
 \foreach \x in {0, ..., 6} {
 \coordinate (a\x) at (\x, 0);
 \coordinate (b\x) at (\x, 1);
 }
 \draw (a0) to[bend right=2]
 (a1) to[bend left=6]
 (a2) to[bend right=9] coordinate[pos=1/2] (c0)
 (a3) to[bend left=12]
 (a4) to[bend right=4]
 (a5) to[bend left=5]
 (a6);
 \draw (b0) to[bend right=7]
 (b1) to[bend left=10]
 (b2) to[bend right=6] coordinate[pos=1/2] (c3)
 (b3) to[bend left=3]
 (b4) to[bend right=13]
 (b5) to[bend left=12]
 (b6);
 \draw (a2) ++(-0.3, 0.6) to[bend right=15]
 coordinate[pos=1/4] (d0)
 coordinate[pos=1/2] (c1)
 coordinate[pos=3/4] (d1)
 +(1.6,0);
 \draw (d0) to[bend left=20]
 coordinate[pos=1/2] (c2)
 (d1);
 \draw (a0) to [bend right=30] (b0);
 \draw (a0) to [bend left= 30] node[left] {$X$} (b0);
 \draw (a6) to [bend right=30] (b6);
 \draw[dashed] (a6) to [bend left= 30] (b6);
 \begin{scope}[ultra thick]
 \draw (a1) to [bend right=30] (b1);
 \draw[dashed] (a1) to [bend left= 30] (b1);
 \draw (c0) to [bend right=30] (c1);
 \draw[dashed] (c0) to [bend left= 30] (c1);
 \draw (c2) to [bend right=30] (c3);
 \draw[dashed] (c2) to [bend left= 30] (c3);
 \path (a4) to node{\Large$\cdots$} (b4);
 \draw (a5) to [bend right=30] (b5);
 \draw[dashed] (a5) to [bend left= 30] (b5);
 \end{scope}
 \path (a1) node[below] {$Y_0$}
 (c0) node[below] {$Y_1$}
 (a5) node[below] {$Y_n$};
 \end{tikzpicture}
 \caption{An object of $\mathcal C_n$ comprises the $d$-manifold $X$
 and the marked hypersurfaces $Y_0, \dots, Y_n$.}
 \label{fig:bordism}
\end{figure}
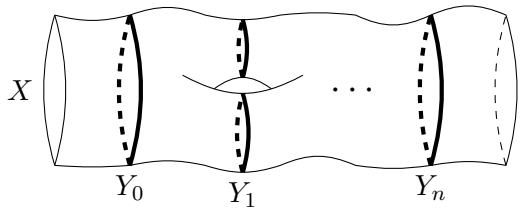

In particular, objects of $\mathcal C$ (i.e., elements of $\mathcal
C_0$) consist of a $d$-dimensional manifold $X$ with a marked compact
hypersurface $Y$, instead of just the $(d-1)$-dimensional manifold
$Y$. Now, this set of objects is much larger than what we would like
to have, since the portion of $X$ far away from $Y$ should be
irrelevant. This issue can be dealt with by promoting $\mathcal{C}_0$
from a set to a~\emph{groupoid}; we add isomorphisms that establish
suitable identifications between the pairs $Y \subset X$. (The same
approach applies later on, as we work fibered over $\Man$, by
promoting a~certain sheaf to a \emph{stack}.) This shifts the problem
to making a choice of such isomorphisms.

The choice made in~\cite{arXiv:1501.00967} is to say that morphisms in
$\mathcal C_0$ are maps $\varphi\colon Y \rightarrow Y^\prime$ that
have an~extension to a diffeomorphism between open neighborhoods of
$Y$ and $Y^\prime$ in $X$ and $X^\prime$. This makes the concrete
embedding of the hypersurface immaterial and ensures that the set of~isomorphism classes of objects is precisely the set of
$(d-1)$-dimensional manifolds, without any extra data. The issue with
this approach is that, while it works in the special case at~hand, it~does not generalize to arbitrary geometries $\mathcal{G}$, since we
are not allowed to restrict a~$\mathcal G$-structure on $X$ to one on the
hypersurface $Y$. Moreover, even for those $\mathcal G$ which make
sense in any dimension and allow restricting to hypersurfaces, it may
not be true that a $\mathcal G$-isomorphism is determined by its
data on a hypersurface.

Our choice for morphisms in $\mathcal C_0$ is designed to accommodate
for any geometry $\mathcal{G}$, in the sense of
Section~\ref{sec:geometries}, and is as follows. First we remark that
one of our axioms for a geometry is that one can always restrict it to
an open subset of a $\mathcal{G}$-manifold $X$. We then decree that a~mor\-phism between two pairs $X\subset Y$ and $X'\subset Y'$ in
$\mathcal C_0$ is determined by a $\mathcal{G}$-isomorphism defined on an
open neighborhood $U \subseteq X$ of $Y$, the underlying smooth map of
which sends $Y$ to $Y^\prime$; we~identify two such
$\mathcal{G}$-isomorphisms defined on, say, $U$ and $U^\prime$ if
they coincide on some smaller neighborhood $V \subset U \cap U^\prime$
of $Y$. Concisely, morphisms in $\mathcal{C}_0$ are \emph{germs of
 $\mathcal{G}$-isometries} at the marked hypersurfaces.

Further stages of the simplicial object $\mathcal C$ are constructed
in a similar fashion.

\subsection{Points versus germs of paths}

Our definition of $\mathcal C_0$ raises another difficulty, which is
generally unavoidable from our point of view: The set of isomorphism
classes of objects is \emph{huge}, as each different germ of the
geo\-met\-ric structure determines its own isomorphism class. This is
already true for the case of one-dimensional bordisms over a target
manifold $M$, which is considered in this paper. Here, an~object in
the bordism category can no longer be pictured as a point of $M$ (or,
more generally, a finite collection of such); instead, objects are {\em
 germs of paths} in $M$, a much larger space.

The main results of our paper (Theorems~\ref{MainThm_or} and~\ref{MainThm_nonor}) say that, at least in the one-dimensional case,
this does not make a difference: A field theory $Z \in 1\TFT(M)$ is
completely blind to the germ information, and its value on objects of
$\mathcal C$ contains no more data than that of a vector bundle over
$M$, as expected. This can be seen as a ``reality check'' for our
definition of geometric field theories.

A typical heuristic argument as to why the germs do not matter is that
the space of germs of~paths in $M$ deformation retracts to $M$. A
field theory $Z$ indeed defines a vector bundle on this space of
germs, viewed as a diffeological space. However, at this level of
generality, the familiar homotopy invariance of vector bundles breaks
down. So, instead, we will use the data assigned by $Z$ to higher
simplicial levels to show that $Z\vert_{\mathcal C_0}$ is determined
by a vector bundle on~$M$.

\subsection{Building in smoothness}
\label{sec:build-in-smoothness}

A second technical layer in our framework comes from the need to
formalize the idea that our field theories should be smooth. This is
already explained in detail in~\cite{MR2742432} and adapted to the
Segal approach in~\cite{arXiv:1501.00967}. The idea here is that a
\emph{smooth category} $\mathcal C$ is a complete Segal object in~the
$2$-category of (symmetric monoidal) stacks over the site of
manifolds; compare this with the preliminary description of above,
where $\mathcal C$ was a Segal object in the $2$-category of
(symmetric monoidal) groupoids. Thus, for each integer $n\geq 0$ and
each smooth manifold $S$ (a ``parameter space''), we have a groupoid
$\mathcal C_{n, S}$ of $S$-families of chains of $n$ composable
morphisms; this data is functorial in the variables $n$ and $S$.

To promote the bordism category to a smooth category, we need to fix
the meaning of ``$S$-family of bordisms'' $X/S$. In a nutshell, this
will be defined to be a submersion $\pi\colon X \to S$ such that each
fiber $\pi^{-1}(s)$ is a bordism in the previous sense. It remains to
explain what a~geometry~$\mathcal G$ is in this new context. Before,
$\mathcal G$ could be defined, technically, to be a sheaf or a~stack
on~the site $\Man$ of smooth manifolds; thus, to each $X$, corresponds
a set (or groupoid)~$\mathcal G(X)$ of~$\mathcal G$-structures on $X$.
To extend this to families, we introduce, in
Section~\ref{sec:smooth-field-theor}, a new site of \emph{families of
 $d$-dimensional manifolds}, denoted $\Fam^d$. Its objects are
submersions $\pi\colon X \to S$ with $d$-dimensional fibers. A
$d$-dimensional geometry is now simply a sheaf or stack on the site
$\Fam^d$. To illustrate this, consider the geometry $\mathcal G$ of
(fiberwise) Riemannian metrics; if $X/S \in \Fam^d$, then $\mathcal
G(X/S)$ is the set of inner products on the vertical tangent bundle
$\Ker(T\pi\colon TX \to TS)$.

\subsection{Appropriate families of vector spaces}
\label{subsec:fam_vs}
To promote the codomain of our field theories to smooth categories, we
must, likewise, specify what we mean by an $S$-family of vector
spaces. It is well known that for a topological field theory $Z$ the
vector space $Z(Y)$ associated to any object $Y$ of a topological
bordism category is finite-dimensional. So it is natural to declare
that an $S$-family of vector spaces is simply a finite-dimensional,
locally trival smooth vector bundle over $S$, and we will indeed
consider exclusively this case in Section~\ref{sec:classification}.
Notice that with this choice, a field theory $Z$, as a particular
example of a smooth functor, will assign to an $S$-family $X/S$ of
bordisms a linear map $Z(X/S)$ of vector bundles over $S$~-- an
$S$-family of linear maps.

For geometric field theories, the vector spaces $Z(Y)$ are typically
not finite-dimensional. For example, the quantum mechanical
description of a particle moving in a compact Riemannian manifold $N$
is given by a $1$-dimensional Riemannian field theory $Z$ which
associates to the object given by $Y=\{0\}\subset (-\epsilon,\epsilon)=X$
(with the standard Riemannian metric on the interval
$(-\epsilon,\epsilon)$) the ``vector space of functions on $N$''. Let
$X/S$ be an $S$-family of bordisms such that for every $s\in S$,
the fiber $X_s$ is a bordism from $Y$ to $Y'$ (where $Y$ and
$Y^\prime$ do not depend on $s$). Then the smoothness requirement for
$Z$ in particular says that the maps $S\to Z(Y')$ given by
$s\mapsto Z(X_s)v$ are smooth, for all $v \in Z(Y)$. If $Z(Y')$ is infinite
dimensional, a topology (or a bornological structure) on $Z(Y')$ is
needed to define a smooth map with target $Z(Y')$. In quantum
mechanics, the vector spaces are traditionally equipped with a Hilbert
space structure; for instance, in the case of a particle moving in a
Riemannian manifold, the vector space of functions on $N$ is
interpreted as $L^2(N)$, the Hilbert space of square-integrable
functions. However, as discussed in~\cite[Remark~3.15]{MR2742432}
there are difficulties formulating the smoothness of the functor $Z$
if the target category is built from families of Hilbert spaces;
instead, topological (or bornological) vector spaces are used. In the
quantum mechanics example, it is the space $C^\infty(N)$ of smooth
functions on $N$, equipped with its standard Fréchet topology.

It might seem appealing to define an $S$-family of topological or
bornological vector spaces to be a locally trivial bundle of such
spaces over $S$ with smooth transition functions. Such a definition,
though, has very undesirable consequences for the topology of the
space of field theories for a~fixed geometry $\mathcal G$; namely, for
any object $Y$ of $\GBord$, the isomorphism type of the topological
vector space $Z(Y)$ is invariant on the path component of $Z$ in the
space of field theories. The~heuristic reason is that if there is a
path of field theories $Z_t$, $t\in [0,1]$, then we have a family~$Z_t(Y)$ of topological vector spaces parametrized by $[0,1]$. If we
interpret that to mean a locally trivial vector bundle, then in
particular $Z_0(Y)$ and $Z_1(Y)$ are isomorphic. This is, in general,
an unexpected feature of field theories. For instance, it is
conjectured by Stolz and Teichner~\cite{MR2742432} that supersymmetric
Euclidean field theories provide cocycles for certain cohomology
theories; in particular, to $1\vert1$-dimensional correspond
$K$-theory classes. But~the dimension of a vector bundle representing
a $K$-theory class is not an invariant of the class (only ist \emph{virtual} dimension), so this should
also not be the case at the field theory level.

We choose to deal with this by dropping the local triviality condition
and defining an $S$-family of topological (or bornological) vector
spaces to be a sheaf of such spaces over $S$ which is a module over
the sheaf of smooth functions on $S$. This includes vector bundles
over $S$ by~asso\-ci\-ating to a vector bundle its sheaf of sections. It
then becomes a fact requiring proof that, under the additional
assumption that a field theory is topological, all the families of
vector spaces involved turn out to be locally trivial.

\subsection{Homotopy invariance considerations}

One focus in Berwick-Evans and Pavlov~\cite{arXiv:1501.00967} is to
endow the category of smooth categories (dubbed $C^\infty$-categories
there) with a model structure. This lets one conclude that the space
of field theories is insensitive to fine details in the definitions,
as long as everything remains weakly equivalent. It also lets one
compute with simplified models of the bordism category, since all that
matters is that the cofibrancy condition is met, and this is easy in
their model structure. We make no attempt to address questions of
homotopy meaningfulness in this paper; rather, our focus is on the
techniques dealing with the geometric situation.

Lastly, we remark that our bordism category $1\Bord^{\mathrm{or}}(M)$
possesses an obvious forgetful map to the bordism category of~\cite{arXiv:1501.00967}. Since in the model structure on the category
of $C^\infty$-categories considered there, weak equivalences are just
fiberwise equivalences of groupoids, the discussion above shows that
this forgetful map is \emph{not} a weak equivalence in this model
structure. Therefore, our result does \emph{not} follow from that in~\cite{arXiv:1501.00967}.

\section{Smooth functors and geometric field theories}\label{sec:smooth-field-theor}

Functorial field theories are functors from an appropriate bordism
category to a suitable target category. The bordisms in the domain
category might come equipped with geometric structures, in a sense to
be clarified in this section. After providing examples of geometric
structures in Section~\ref{subsec:ex_geometries} and recalling the
language of fibered categories and stacks in Section~\ref{subsec:stacks}, we provide a general definition of ``geometries''
in Section~\ref{sec:geometries}. In Section~\ref{sec:geom-bord-categ} we construct the geometric bordism category
$\GBord$, which is an example of a \emph{smooth category}, a concept
defined in Section~\ref{sec:smooth-categories}. In the final
Section~\ref{sec:geom-field-theor} we define geometric field
theories as ``smooth functors'' from $\GBord$ to a suitable smooth
target category.

\subsection{Examples of geometries}
\label{subsec:ex_geometries}

The goal of this subsection is to define what we mean by a {\em
 geometry} on smooth manifolds of a fixed dimension $d$, see
Definition~\ref{def:geom}. To motivate that abstract definition, we
begin by listing well-known structures on manifolds that will be
examples of ``geometries'', and distill their common features into our
Definition~\ref{def:geom}.

\begin{Examples}
 \label{ex:geom}
 The following are examples of ``geometries'' on a $d$-manifold $X$
 which we would like to capture in an abstract definition:
 \begin{enumerate}[1.]\itemsep=0pt
 \item A Riemannian metric or a conformal structure on $X$.
 \item A reduction of the structure group of the tangent bundle of
 $X$ to a Lie group $G$ equipped with a homomorphism $\alpha\colon
 G\to \opn{GL}(d)$. More explicitly, such a structure consists of
 a principal $G$-bundle $P\to X$ and a bundle map
 $\alpha_X\colon P\to \opn{Fr}(X)$ to the frame bundle of $X$
 (whose total space consists of pairs $(x,f)$ of points $x\in X$
 and linear isomorphisms $f\colon \R^d\to T_xX$). The~bundle map $\alpha_X$ is required to be $G$-equivariant, where
 the right action of $g\in G$ on $\opn{Fr}(X)$ is given by
 $(x,f)\mapsto (x,f\circ \alpha(g))$. Interesting special cases of
 reductions of the structure group include the following:
 \begin{enumerate}\itemsep=0pt
 \item[$(a)$] A $\opn{GL}^+(d)$-structure on $X$ is an orientation on
 $X$ (here and in the following three examples, the group $G$ is a
 subgroup of $\opn{GL}(d)$, and $\alpha\colon G\to \opn{GL}(d)$
 is the inclusion map).
 \item[$(b)$] An $\opn{SL}(d)$-structure on $X$ is a volume form on $X$.
 \item[$(c)$] An $\mathrm{O}(d)$-structure on $X$ is a Riemannian metric on $X$.
 \item[$(d)$] An $\mathrm{SO}(d)$-structure is Riemannian metric plus an
 orientation.
 \item[$(e)$] A $\opn{Spin}(d)$-structure on $X$ is a Riemannian metric
 plus a spin structure (here $\alpha$ is the composition of the
 double covering map $\opn{Spin}(d)\to \opn{SO}(d)$ and the
 inclusion $\opn{SO}(d)\to \opn{GL}(d)$).
 \end{enumerate}
 \item A rigid geometry is specified by a $d$-manifold $\M$
 (thought of as a ``model manifold'') and a~Lie group $G$ acting on
 $\M$ (thought of as ``symmetries'' of $\M$). Given this input, a
 $(G,\M)$-structure on $X$ is determined by the following data,
 which we refer to as a $(G,\M)$-atlas for~$X$:
 \begin{itemize}\itemsep=0pt
 \item an open cover $\{X_i\}_{i\in I}$ of $X$,
 \item embeddings $\phi_i\colon X_i\to \M$ for $i\in I$ (the charts
 of the atlas),
 \item group elements $g_{ij}\in G$ for $X_i\cap X_j\ne \varnothing$
 which make the diagram
 \[
 \begin{tikzcd}[column sep = small]
 &X_i\cap X_j\ar[dl,"\phi_j|_{X_i \cap X_j}"']\ar[dr,"\phi_i|_{X_i \cap X_j}"]&\\
 \M\ar[rr,"g_{ij}"]&&\M
 \end{tikzcd}
 \]
 commutative and satisfy a cocycle condition (these are the
 transition functions for the atlas). Two $(G,\M)$-atlases
 related by refinement of the covers involved define the same
 $(G,\M)$-structure on $X$ (as in the case of smooth atlases
 for $X$ defining the same smooth structure). Alternatively,
 analogous to smooth structures, $(G,\M)$-structures on $X$
 can be defined as \emph{maximal} $(G,\M)$-atlases for $X$.
 If $X$, $X^\prime$ are two manifolds with $(G, \M)$-structure,
 a morphism between them consists of a smooth map $f\colon X \to X^\prime$
 together with elements $h_{i^\prime i} \in G$ for each pair of charts $(X_i, \phi_i)$, $(X_{i^\prime}^\prime, \phi_{i^\prime}^\prime)$ with $f(X_i) \subset X_{i^\prime}^\prime$.
 such that $\phi_{i^\prime}^\prime \circ f = h_{{i^\prime}i} \cdot \phi_i$
 and subject to the coherence condition $h_{j^\prime j} \cdot g_{ji} = g^\prime_{j^\prime i^\prime} \cdot h_{i^\prime i}$.
 \end{itemize}
 Some concrete examples of rigid geometries are as follows:
 \begin{enumerate}\itemsep=0pt
 \item[$(a)$] For $G = \mathrm{SO}(d) \rtimes \R^d$, the Euclidean group of
 isometries of $\M=\R^d$, a $(G,\M)$-structure on $X$ can be
 identified with a flat Riemannian metric on $X$.
 \item[$(b)$] For $G = \mathrm{Spin}(d) \rtimes \R^d$ (where $\mathrm{Spin}(d)$
 acts on $\R^d$ through $\mathrm{SO}(d)$), a $(G, \M)$-structure on~$X$ consists of a flat Riemannian metric together with a spin structure.
 \item[$(c)$] For $\mathbb{M} = S^d$ and $G = \mathrm{Conf}(S^d)$, the
 group of conformal transformations of the sphere, a
 $(G,\M)$-structure on $X$ is a conformal structure on $X$.
 \item[$(d)$] For $\mathbb{M} = \R^d$ and $G = \mathrm{Aff}(d)$, the
 affine group, a $(G,\M)$-structure on $X$ is an affine
 structure on $X$.
 \item[$(e)$] If $\mathbb{M}$ is a simply connected manifold of constant
 sectional curvature $\kappa$ and isometry group~$G$, then a
 $(G,\M)$-structure on $X$ is a Riemannian metric on $X$ of
 constant curvature $\kappa$.
 \end{enumerate}
 Rigid geometries as described above are closely related to the notion
 of \emph{pseudogroups}, as developed by Cartan.
 The main difference is that the action of $G$ on $\M$
 is not required to be faithful (as, e.g., in the case of the spin group
 $\mathrm{Spin}(d)$ acting on $\R^d$ through $\mathrm{SO}(d)$).
 The~above notion was introduced by Stolz and Teichner, with an
 eye on supersymmetric field theories (see~\cite[Section~2.5]{MR2742432}).
 \item A smooth map $X\to M$ to some fixed manifold $M$.
 \item A principal $G$-bundle over $X$ (for a fixed Lie group $G$),
 or a principal $G$-bundle over $X$ with connection.
\end{enumerate}
\end{Examples}

\begin{Remark}
 In a physics context, the manifold $X$ is typically the relevant
 spacetime manifold and the geometry on $X$ is needed for the
 construction of some field theory. For example, a~Riemannian metric
 on $X$ allows the construction of the scalar field theory whose
 space of fields is the space $C^\infty(X)$ of smooth functions on
 $X$ and whose action functional is the ene\-rgy functional given by
 $S(f) := \int_X \lVert {\rm d}f\rVert^2 \, \vol$ (here $\vol$ is the volume
 form determined by the Riemannian metric. A fermionic analog of
 this field theory consists of fields which are spinors on~$X$; its~action functional is based on the Dirac operator. The construction
 of this field theory requires a Riemannian metric \emph{and} a spin
 structure on $X$, i.e., a reduction of the structure group to~$\opn{Spin}(d)$.
\end{Remark}

In many of the Examples~\ref{ex:geom}, the geometries on a fixed
$d$-manifold $X$ form just a \emph{set} (in~particular, in the
cases (1), $(2a)$, $(2b)$, $(2c)$, $(2d)$, $(3a)$, $(3c)$, $(3d)$, $(3e)$ and (4)). In~other cases, e.g., $(2e)$, $(3b)$, (5), there is more going on: these geometric structures can be
interpreted as the objects of a groupoid which contains non-identity
morphisms. For example, for a fixed group $G$, the principal
$G$-bundles $P\to X$ over $X$ form a groupoid, with the morphisms
from $P$ to $P'$ being the $G$-equivariant maps that commute with
the projection maps to $X$ (this is Example~\ref{ex:geom}(5)).

This suggests to think of a $\mathcal G$-structure on $X$ as an object of
a groupoid $\cG(X)$ associated to~$X$ (which might be \emph{discrete}
in the sense that the only morphisms in that groupoid are identity
morphisms, as in our examples (1), $(2a)$, $(2b)$, $(2c)$, $(2d)$, $(3a)$, $(3c)$, $(3d)$,
$(3e)$ and (4)). A~crucial feature of all our Examples~\ref{ex:geom} is
that the data of a geometry is \emph{local} in $X$ in a sense to be
made precise. For example, a Riemannian metric on $X$ is determined
by prescribing a Riemannian metric $g_i$ on each open subset $X_i$
belonging to an open cover $\{X_i\}_{i\in I}$ of $X$ in~such a way
that these metrics $g_i$, $g_j$ coincide on the intersection $X_i\cap
X_j$. In other words, the Riemannian metrics on $X$ form a {\em
 sheaf}. The same statement is true in our other examples of~geometric structures~$\cG(X)$, where the groupoid $\cG(X)$ is discrete.

In the case of non-discrete groupoids, for example if $\cG(X)$ is the
groupoid of principal $G$-bundles over $X$ (as in
Example~\ref{ex:geom}(5)), it is still true that $\cG(X)$ is {\em
 local in $X$}, but it is harder to formulate what that means. The
idea is that for any open cover $\{X_i\}_{i\in I}$ of $X$ the
groupoids associated to intersections of the $X_i$ determine the
groupoid $\cG(X)$ of principal bundles over~$X$, up to equivalence.
This is expressed by saying the groupoids $\cG(X)$ form a \emph{stack}
on the site of~mani\-folds. For the precise definition of stack we
refer the reader to Vistoli's survey paper~\cite[cf.~Definition~4.6]{MR2223406}, but it should be possible to follow our
discussion below without prior know\-ledge of stacks. In fact, we hope
that the following might motivate a reader not already familiar with
stacks to learn about them.

\subsection{Digression on stacks}
\label{subsec:stacks}

Our first example of a stack will be the stack $\Vect$ of vector
bundles. We note here the relevant structures.

\begin{itemize}\itemsep=0pt
\item For a fixed manifold $X$ let $\Vect(X)$ be the category of whose
 objects are smooth vector bundles $E\to X$ over $X$ and whose
 morphisms from $E\to X$ to $E'\to X$ are smooth maps $F\colon E\to
 E'$ which commute with the projection to $X$ and whose restriction
 $F_x\colon E_x\to E'_x$ to the fibers over $x\in X$ is a linear map
 for each $x\in X$.
\item Let $\Vect$ be the category whose objects are vector bundles
 $E\to X$ over some manifold $X$ and whose morphisms from $E\to X$ to
 $E'\to X'$ are pairs of smooth maps $\big(\hat f,f\big)$ for which the diagram
 \[
 \begin{tikzcd}
 E\ar[d]\ar[r,"\hat f"]&E'\ar[d]\\
 X\ar[r,"f"]&X'
 \end{tikzcd}
 \]
 is commutative, and the restriction $\hat f_x\colon E_x\to E'_{f(x)}$ of
 $\hat f$ to the fiber over $x$ is linear for all $x\in X$. Abusing
 notation, we often simply write $\phi\colon E\to E'$ for such a
 morphism $\phi=\big(\hat f,f\big)$.
\end{itemize}

There is an obvious functor $p\colon \Vect\to \Man$ to the category of
smooth manifolds that sends a vector bundle $E$ to its base space.
The category $\Vect(X)$ of vector bundles over $X$ is the \emph{fiber}
of $p$, i.e., the subcategory of $\Vect$ consisting of all objects
whose image under $p$ is $X$ and all morphisms of $\Vect$ whose image
under $p$ is the identity of $X$. The functor $p\colon \Vect\to \Man$
has two interesting properties:

\medskip\noindent
{\bf Existence of cartesian lifts.} Given a smooth map $f\colon X\to
 X'$ and a vector bundle $E'$ over~$X'$, we can form the pullback
 bundle $E:=f^*E'$ over $X$, which is the domain of a tautological
 morphism $\phi=\big(\hat f,f\big)\colon E\to E'$. The vector bundle map
 $\phi$ has the property that, for each $x\in X$, the linear map of
 fibers $\hat f_x\colon E_x\to E'_{f(x)}$ is an isomorphism.
 Morphisms with this property are called \emph{cartesian}, and the
 vector bundle morphism $\phi\colon E=f^*E'\to E$ is also referred to
 as the \emph{cartesian lift} of the morphism $f\colon X\to X'$.

 While the characterization of cartesian vector bundle morphisms
 $\phi\colon E\to E'$ as those which restrict to fiberwise
 isomorphisms is hands-on and concrete, it is more common to
 characterize them by the universal property we describe now. The
 advantage is that universal properties make sense in any category.

 A vector bundle morphism $\phi\colon E'\to E$ is \emph{cartesian}
 if, for any vector bundle map $\phi''\colon E''\to E$ and any map
 $f'\colon p(E'') \to p(E')$ such that $p(\phi'') = p(\phi) \circ
 f'$, there exists a unique vector bundle map $\phi'\colon E'' \to
 E'$ with $p(\phi') = f'$. This property can be expressed succinctly
 by saying that, given a commutative diagram consisting of the solid
 arrows in the diagram below, there exists a~unique morphism $\phi'$
 indicated by the dashed arrow that makes the whole diagram
 commutative. Here, $X = p(E)$, $f = p(\phi)$, etc., and by
 commutativity of the squares we mean that applying the functor
 $p\colon \Vect\to \Man$ to the top morphism in $\Vect$ gives the
 bottom morphism in $\Man$:
 \begin{equation}
 \label{eq:cartesian}
 \begin{tikzcd}[row sep = small]
 E''
 \ar[dd,mapsto]
 \ar[rrd,"\phi''" near end,bend left]
 \ar[rd,"\phi'",dashed]
 \\
 & E'\ar[r,"\phi"] & E
 \\
 X''\ar[rrd,"f''" near end,bend left]\ar[rd,"f'"]
 \\
 & X'\ar[from=uu,mapsto,crossing over]\ar[r,"f"] &
 X.\ar[from=uu,mapsto]
 \end{tikzcd}
 \end{equation}

 \smallskip\noindent
{\bf Descent property.} Let $f_i\colon X_i\to X$ be a collection of
 morphisms in $\Man$ that is a \emph{cover of~$X$} in the sense that
 all the $f_i$ are open embeddings and the union of the images
 $f_i(X_i)$ is all of~$X$. Then the category $\Vect(X)$ can be
 reconstructed, up to equivalence, from the categories $\Vect(X_i)$,
 $\Vect(X_i\cap X_j)$, and $\Vect(X_i\cap X_j\cap X_k)$ and the
 restriction functors between them. More precisely, from the diagram
 given by these categories and the restriction functors between them
 one can construct the \emph{descent category} $\Vect(\{X_i\to X\})$
 associated to the cover $\{X_i\to X\}$ (see~\cite[Section~4.1.2]{MR2223406}) and a restriction functor
 \[
 \Vect(X)\longrightarrow \Vect(\{X_i\to X\}),
 \]
 which is an equivalence.

It turns out that the existence of pullbacks and the descent property
can be formulated quite generally for functors $p\colon \cF\to \cS$ as
follows.

\begin{Definition}[cartesian morphism, fibration, prestack]
 Let $p\colon \cF\to \cS$ be a functor. A~morphism $\phi$ in the
 category $\cF$ is \emph{cartesian} if it satisfies the universal
 property expressed by the diagram~\eqref{eq:cartesian}; see also~\cite[Definition~3.1]{MR2223406}. The functor $p\colon \cF\to \cS$
 is called a \emph{Grothendieck fibration} and $\cF$ a \emph{category
 fibered over $\cS$} if, for any morphism $f\colon X'\to X$ in
 $\cS$ and object $E\in \cF$ with $p(E)=X$, there is a cartesian
 morphism $\phi\colon E'\to E$ with $p(\phi)=f$; see also~\cite[Definition~3.1]{MR2223406}.
 We say that a fibration $\cF \to \cS$ is a \emph{prestack} if every
 morphism of $\cF$ is cartesian.
\end{Definition}

\begin{Remark}
 That $\cF \to \cS$ is a {prestack} means that $\mathcal F \to
 \mathcal S$ is \emph{fibered in groupoids}, meaning that the
 subcategory $\mathcal F(S)$ of $\cF$ lying over $\id_S$ is a
 groupoid for every object $S$ of $\mathcal S$. Conversely, it turns
 out that a category fibered in groupoids is automatically a prestack~\cite[Proposition~3.22]{MR2223406}. To any prestack corresponds a~presheaf of groupoids on $\cS$, sending $S$ to $\cF(S)$. This sheaf
 depends on a choice of pullback for every object $F$ of $\cF$ along
 every morphism $f\colon S \to T$ in~$\cS$ (i.e., a cartesian arrow
 $\phi$ in $\cF$ with target $F$ and $p(\phi) = f$), but it is unique
 up to unique isomorphism.
\end{Remark}

As discussed above, for the functor $p\colon \Vect \to \Man$ the
cartesian morphisms in $\Vect $ are the vector bundle morphisms
$\phi\colon E'\to E$ that restrict to isomorphisms on fibers.
Moreover, given a smooth map $f\colon X'\to X$ between manifolds and a
vector bundle $E$ over $X$, the tautological bundle map $\phi$
from the pullback bundle $E':=f^*E$ to $E$ is a cartesian lift of $f$.
Hence $p\colon \Vect\to \Man$ is a Grothendieck fibration; in other
words, $\Vect$ is fibered over $\Man$.

The discussion of the descent property for $p\colon \Vect\to \Man$
above was based on the definition of a cover $\{X_i\to X\}$ of a
manifold $X$. So, before discussing descent in a general category
$\cS$, we~need to clarify what is meant by a ``cover'' of an object $X$
of~$\cS$.

\begin{Definition}[cover, Grothendieck topology, site~{\cite[Definition~2.24]{MR2223406}}] \label{def:cover-topology-site}
 Let $\cS$ be a category. A \emph{Grothendieck topology} on $\cS$ is
 the assignment, to each object $X$ of $\cS$, of a collection of sets
 of~morphisms $\{X_i\to X\}$, called \emph{covers of $X$}, so that
 the following conditions are satisfied:
 \begin{enumerate}[1.]\itemsep=0pt
 \item If $Y\to X$ is an isomorphism, the set $\{Y\to X\}$ is a
 cover.
 \item If $\{X_i\to X\}$ is a cover and $Y\to X$ is any morphism,
 then the pullbacks $X_i\times_XY$ exist, and the collection of
 projections $\{X_i\times_XY\to Y\}$ is a cover.
 \item If $\{X_i\to X\}$ is a cover and, for each index $i$, we have
 a cover $\{Y_{ij}\to X_i\}$, the collection of~composites
 $\{Y_{ij}\to X_i\to X\}$ is a cover of $X$ (here $j$ varies in a
 set depending on $i$).
 \end{enumerate}
 A category equipped with a Grothendieck topology is called a {\em
 site}.
\end{Definition}

\begin{Definition}[descent category, stack]
 If $\cF$ is a prestack over $\cS$, then there is a {\em
 descent category} $\cF(\{X_i\to X\})$ associated to a collection
 $\{X_i\to X\}$ of morphisms in $\cS$ (see~\cite[Section~4.1.2]{MR2223406}) and an associated functor
 \begin{equation}\label{eq:descent}
 \cF(X)\longrightarrow \cF(\{X_i\to X\}).
 \end{equation}
 A prestack $\cF\to \cS$ over a site $\cS$ is called a {\em stack},
 if, for every cover $\{X_i\to X\}$ of every object~$X$ in $\cS$,
 the functor~\eqref{eq:descent} is an equivalence.
\end{Definition}

For a stack, the groupoids $\cF(X_i)$ associated to the patches $X_i$, together with
transition data on double and triple intersections, determine $\cF(X)$.
In fact, this definition makes sense for general fibered categories, not only those fibered in groupoids (i.e., prestacks).
In this paper, we~will use the unqualified term \emph{stack} only for those
stacks which are fibered in groupoids, and say \emph{stack of
 categories} in the general case, when not all morphisms need to be
cartesian.
Similar to the case of sheaves, to any prestack,
there is a canonically associated stack,
its stackification~\cite[p.~18]{arXiv:math/0306176}.

Let $\Man^d$ be the category of $d$-dimensional manifolds and smooth maps.
We will always consider the Grothendieck topology on $\Man^d$ of jointly surjective open
embeddings. Precisely, a collection $\{X_i \to X\}$ is a cover, if each map
$X_i \to X$ is an open embedding and the images of the $X_i$ in $X$ cover $X$.

\begin{Definition}[geometry, preliminary!]
 \label{def:geom}
 A \emph{geometry on $d$-manifolds} is a stack $\cG\to \Man^d$ on~the
 site $\Man^d$ of manifolds of dimension $d$.
\end{Definition}

We end our digression on stacks with a few more general remarks.

\begin{Definition}[fibered functors, base-preserving natural
 transformations]
 Let $\mathcal F, \mathcal G \to \mathcal S$ be two Grothendieck
 fibrations. A functor $H\colon \mathcal F \to \mathcal G$ is called
 a \emph{fibered functor} if it commutes strictly with the
 projections to $\mathcal S$ and sends cartesian morphisms to
 cartesian morphisms. A~natu\-ral transformation $\xi\colon H \to K$
 between fibered functors is \emph{base-preserving} if, for any
 object $x \in \mathcal F$, the morphism $\xi_x\colon H(x) \to K(x)$
 maps to an identity morphism in $\mathcal S$.
\end{Definition}

\begin{Definition}[categories of (pre-)stacks]
For each site $\mathcal S$, we get a $2$-category $\PSt_{\mathcal S}$
of~Gro\-thendieck fibrations, fibered functors and base-preserving
natural transformations.
 The full subcategory of stacks will be denoted by $\St_{\mathcal S}$.
 We will omit the subscript when $\mathcal S = \Man$ is the site of
 smooth manifolds.
\end{Definition}

\subsection{Geometries in families}
\label{sec:geometries}

The preliminary Definition~\ref{def:geom} satisfactorily captures the
contravariance and locality aspects of a geometric structure.
However, as discussed in Section~\ref{sec:build-in-smoothness}, it is
crucial to work with \emph{families} of smooth manifolds. In
particular, we need to talk about geometric structures on families of~$d$-manifolds. This is formalized in Definition~\ref{def:geom-fam}
below by replacing the category $\Man^d$ in the preliminary definition
by the category $\Fam^d$ of families of $d$-dimensional manifolds,
equipped with a suitable Grothendieck topology.

\begin{Definition}[families of manifolds]
Denote by $\Fam$ the category of families of smooth manifolds, where an object, typically denoted by $X/S$, is simply a submersion $X \to S$, and a~morphism $X'/S' \to X/S$ is a fiberwise open embedding
\begin{equation*}
 \begin{tikzcd}
 X' \ar[r, "F"]\ar[d] & X \ar[d]
 \\ S' \ar[r, "f"] & S.
 \end{tikzcd}
\end{equation*}
By that we mean that the diagram commutes and the map $X' \to S'
\times_S X$ is an open embedding.
We denote by $\Fam^d$ the subcategory of families with $d$-dimensional fibers.
\end{Definition}

To turn $\Fam$ into a site, we declare a cover of the
object $X/S \in \Fam$ to be a collection of~morphisms $\{X_i/S_i \to X/S\}_{i \in I}$ such that the images of the $X_i$ form an open cover of $X$. This
satisfies the axioms of a Grothendieck topology: it is clear that
covers of covers determine a~cover, and we check the existence and
stability of base changes (condition~(2) of
Definition~\ref{def:cover-topology-site}) in the following lemma.

\begin{Lemma}
 If $\{ X_i/S_i \to X/S \}$ is a cover and $Y/T \to X/S$ is any
 morphism, then the fiber products $(X_i/S_i) \times_{(X/S)} (Y/T)$
 exist and determine a cover of $Y/T$.
\end{Lemma}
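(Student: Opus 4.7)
The plan is to exhibit the fiber product explicitly as $Z/W := (X_i \times_X Y)/(S_i \times_S T)$, where both pullbacks are formed in the category of smooth manifolds. The key structural observation that organizes the whole argument is that the morphism conditions in $\Fam$ make $X_i$ an open subset of $S_i \times_S X$ and $Y$ an open subset of $T \times_S X$; from this one obtains a natural identification of $Z = X_i \times_X Y$ as an open subset of $W \times_S X = S_i \times_S T \times_S X$. All subsequent verifications reduce to this single picture.

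First I would check that $Z/W$ is an object of $\Fam$: since $X \to S$ is a submersion, so is its base change $W \times_S X \to W$, and restricting along the open inclusion $Z \hookrightarrow W \times_S X$ shows $Z \to W$ is a submersion. Next I would verify that the natural projections $Z/W \to X_i/S_i$ and $Z/W \to Y/T$ are morphisms in $\Fam$. For the first, note $W \times_{S_i} X_i = T \times_S X_i$ is itself open in $W \times_S X$, and $Z$ is cut out inside it by the open condition that $(t,x)$ lie in $Y \subset T \times_S X$; hence $Z \to W \times_{S_i} X_i$ is an open embedding. The second projection is analogous.

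For the universal property, given a test family $V/U$ equipped with compatible morphisms to $X_i/S_i$ and $Y/T$ whose compositions to $X/S$ agree, one applies the universal property of pullbacks in $\Man$ to produce the unique map $V \to Z$ covering the unique map $U \to W$. That this is a morphism in $\Fam$ is again immediate from the organizing picture: $V$ sits as an open subset of $U \times_S X$ via its given morphism to $X/S$, while $U \times_W Z$ is also open in $U \times_S X$ (being the base change of the open embedding $Z \hookrightarrow W \times_S X$ along $U \to W$), so $V \to U \times_W Z$ is an inclusion of one open subset of $U \times_S X$ into another, hence an open embedding.

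Finally I would dispatch the covering condition: the image of $Z_i \to Y$ is exactly the preimage in $Y$ of the image of $X_i$ in $X$, and since the images of the $X_i$ cover $X$, these preimages cover $Y$. The whole argument is essentially bookkeeping, and I do not expect a genuine obstacle; the only thing requiring care is the simultaneous juggling of submersion and open-embedding conditions, which the identification $Z \subset W \times_S X$ makes painless.
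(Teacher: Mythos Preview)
Your proof is correct and more conceptual than the paper's, but there is one step you skip: you assert that $W = S_i \times_S T$ exists in $\Man$, yet neither $S_i \to S$ nor $T \to S$ is a priori a submersion, so transversality is not automatic. The paper fills this explicitly, arguing that the cover condition makes $X_i \to X$ a submersion, whence the composite $X_i \to X \to S = X_i \to S_i \to S$ is a submersion and therefore so is $S_i \to S$. Once that is in hand, $W$ and $W \times_S X$ are manifolds and your organizing identification $Z \subset W \times_S X$ is legitimate.

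Beyond this, the two arguments diverge in style. The paper checks each required property---that $Y_i \to T_i$ is a submersion, that $Y_i \to T_i \times_T Y$ is injective, immersive, and submersive---by explicit curve-lifting in tangent spaces, and then verifies the universal property by a separate diagram chase showing the induced maps assemble into a morphism of $\Fam$. Your approach bypasses all of this by recognizing $Z$ as an open subset of $W \times_S X$: the submersion to $W$, the fiberwise-open-embedding conditions for the two projections, and the universal property (via the comparison of $V$ and $U \times_W Z$ as open subsets of the common ambient $U \times_S X$) all fall out by restricting or base-changing this single open inclusion. This is the cleaner way to package the argument, and it also makes the final covering claim (that the images of the $Z_i$ cover $Y$) a one-line observation, which the paper leaves implicit.
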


\begin{proof}
 Write $Y_i = X_i \times_X Y$, $T_i = S_i \times_S T$.
 Both are manifolds since the maps $X_i \to X$ and~$S_i \to S$ are submersions:
 The first by the requirement that the $X_i$ form an open cover of~$X$; to~see that the latter is, observe that the composition $X_i \to X \to S$ is a submersion, 
 which equals $X_i \to S_i \to S$, hence $S_i \to S$ must be a submersion, too.
 We now have a diagram as follows:
 \begin{equation} \label{CubeDiagram}
 \begin{tikzcd}[sep = scriptsize]
 Y_i \ar[rr] \ar[dd, dashed] \ar[rd] 
 && Y \ar[rd] \ar[dd, two heads]
 \\
 & X_i \ar[rr, crossing over] 
 && X \ar[dd, two heads]
 \\
 T_i \ar[rr] \ar[rd]
 && T \ar[rd]
 \\
 & S_i \ar[rr] \ar[from=uu, two heads, crossing over]
 && S.
 \end{tikzcd}
 \end{equation}
 The dashed map is obtained
 by the cartesian property of the bottom face (containing $S_i$ and~$T$), which implies that all faces of the cube commute.
 To see that $Y_i \to T_i$ is a submersion,
 let $(s_i, t) \in T_i = S_i \times_S T$ be a point
 that is the image of the point $(x_i, y) \in Y_i = X_i \times_X Y$.
 Further, consider a tangent vector to $(s_i, t)$, represented by a pair of paths $(\gamma_{S_i}, \gamma_T)$, where $\gamma_{S_i}\colon I \to S_i$, $\gamma_{S_i}(0) = s_i$, $\gamma_T\colon I \to T$, $\gamma_T(0) = t$ are paths that coincide after passing to $S$.
 Since $Y$ is a~submersion, we can find a lift $\gamma_Y\colon I^\prime \to Y$ (with $I^\prime \subset I$) of $\gamma_T$ with ${\gamma}_Y(0) = y$.
 Let $\tilde{\gamma}_X$ be the image of this lift to $X$.
 By construction, ${\gamma}_X(0)$ lies in the image of the map $X_i \to X$,
 hence (because $X_i$ is an open embedding), we can find a path $\gamma_{X_i}\colon I^{\prime\prime} \to X_i$ (with $I^{\prime\prime} \subset I^\prime$) that maps to $\gamma_X$.
 Now $(\gamma_{X_i}, \gamma_Y)$ represents a tangent vector to $(x_i, y)$ that maps to $(s_i, t)$ under the map~$Y_i \to S_i$.

 To see that $Y_i$ is a fiberwise open embedding, we have to show that $Y_i \to T_i \times_T Y$ is an open embedding.
 To see that it is injective, let $(x_i, y)$ and $(x_i^\prime, y^\prime) \in X_i \times_X Y = Y_i$ be two points that are mapped to the same point in $T_i \times_T \times Y$.
 First, it follows that $x_i$ and $x_i^\prime$ are mapped to the same point in $X$; since $X_i \to X$ is an embedding, we have that $x_i = x_i^\prime$.
 Secondly, since $(x_i, y)$ and $(x_i^\prime, y^\prime)$ coincide in $T_i \times_T Y$, we must have $y = y^\prime$.
 This shows injectivity.
 The map $Y_i = X_i \times_X Y \to T_i \times_T Y$ is a submersion by a similar argument as above. To see that it is also an immersion, take a tangent vector represented by a tuple $(\gamma_{X_i}, \gamma_Y)$ of curves $\gamma_{X_i}\colon I \to X_i$ and $\gamma_Y\colon I \to Y$ that coincide after passing to $X$.
 Then the pushforward of this tangent vector is represented by $(\gamma_{S_i}, \gamma_Y)$, where $\gamma_{S_i}$ is the image of $\gamma_{X_i}$ under the map $X_i \to S_i$.
 Assume that this pushforward is zero, i.e., $\dot{\gamma}_{S_i}(0) = 0$ and $\dot{\gamma}_Y(0) = 0$.
 Then because $\gamma_{X_i}$ and~$\gamma_Y$ are mapped to the same curve $\gamma_X\colon I \to X$ and $X_i \to X$ is an embedding, we have that also $\dot{\gamma}_{X_i}(0) = \dot{\gamma}_X(0) = \dot{\gamma}_Y(0) = 0$.
 Hence the tangent vector represented by $(\gamma_{X_i}, \gamma_Y)$ is zero as well and $Y_i\to T_i \times_T Y$ is an immersion.
 In total, we obtain that it is an open embedding.

 It only remains to see that $Y_i/T_i$ has the universal property of
 the fiber product $(X_i/S_i) \times_{(X/S)} (Y/T)$. In other words, given an
 arbitrary family $Z/U$ and maps $Z/U \to X_i/S_i$, $Z/U \to Y/T$ which
 agree on $X/S$, there exists a unique morphism $Z/U \to Y_i/T$ making the
 diagram
 \begin{equation} \label{UniversalPropertyDiagram}
 \begin{tikzcd}
 Z/U \ar[rrd, bend left=30] \ar[ddr, bend right=30] \ar[dr, dashed]& & \\
 & Y_i/T \ar[r] \ar[d] & Y/T \ar[d]\\
 & X_i/S_i \ar[r] & X/S
 \end{tikzcd}
 \end{equation}
 commute.
 By the cartesian property of two of the
 squares of the cube~\eqref{CubeDiagram}, there exist unique maps $Z \to Y_i$ and $U \to T_i$.
 We claim that these determine a morphism in $\Fam$, that is, the diagram
 \begin{equation*}
 \begin{tikzcd}
 Z \ar[r] \ar[d] & Y_i \ar[d]\\
 U \ar[r] & T_i
 \end{tikzcd}
 \end{equation*}
 commutes and the top map is a fiberwise open embedding. Both maps $Z
 \to T_i$ agree when postcomposed with $T_i \to S_i$ respectively
 $T_i \to T$, so they agree by the universal property of~$T_i = S_i
 \times_S T$. The map $Z \to Y_i$ is an a fiberwise open embedding
 because so is its composition $Z \to Y_i \to Y$ with another fiberwise open embedding.
 Hence this morphism fits in as the dashed morphism in~\eqref{UniversalPropertyDiagram}.
 By uniqueness of the maps $Z \to Y_i$ and $U \to T_i$, this is the unique morphism
 with this property.
\end{proof}

The above lemma shows that our notion of cover defines a Grothendieck
topology, which turns $\Fam$ into a site. By restricting to those
families $X/S$, where the fibers $X_s$, $s \in S$ are all
$d$-dimensional, we get a subcategory, $\Fam^d$. Since our covers do
not mix fiber dimensions, restricting to $d$-dimensional covers
turns also $\Fam^d \subset \Fam$ into a site.
This allows to talk about sheaves and stacks on $\Fam^d$.

We are now ready to give the main definition of this section.

\begin{Definition}[geometry]
 \label{def:geom-fam}
 A $d$-dimensional \emph{geometry} is a stack $\G$ on the site
 $\Fam^d$ of families of manifolds with $d$-dimensional fibers. By
 an $S$-family of $\G$-manifolds we will mean a family $X/S \in
 \Fam^d$ together with an object of $\G(X/S)$.
\end{Definition}

To each family $X/S$ are associated a natural \emph{relative tangent
 bundle} $T(X/S) = \Ker(TX \to TS)$ and variations: the relative
cotangent bundle $T^\vee(X/S) = \Coker\big(T^\vee S \to T^\vee X\big)$, their
tensor powers, etc. For emphasis, we sometimes call their sections
\emph{fiberwise} vector fields, differential forms, etc. This allows
us to define fiberwise versions (or ``families'') of many familiar
structures, such as Riemannian metrics, symplectic and complex
structures, connections on a~principal bundle, and so on. For
instance, a~Riemannian metric on $X/S$ is a positive-definite section
of the second symmetric power of $T^\vee (X/S)$. A family of
connections on a vector bundle $V \to X$ is a~differential operator
$\nabla\colon C^\infty(V) \to C^\infty\big(T^\vee (X/S) \otimes V\big)$
satisfying a version of the Leibniz rule involving the fiberwise
exterior derivative ${\rm d}\colon C^\infty(X) \to \Omega^1(X/S)$. Thus,
$\nabla$ allows us to perform parallel transport only along the fibers
of the submersion $X \to S$.

It is now mostly straightforward to adapt Examples~\ref{ex:geom} to
geometries in families. We spell this out in two cases.

\begin{Example}[families over a manifold $M$] \label{ExManifoldsOver}
 Any manifold $M$ represents a geometry on $d$-dimensional manifolds
 ($d$ arbitrary): $S$-families are manifolds $X/S$ together with a
 smooth function $\gamma\colon X \rightarrow M$, and morphisms $(X'/S',
 \varphi') \to (X/S, \varphi)$ are maps $F\colon X'/S' \to X/S$ such
 that $\gamma' = \gamma \circ F$. This in fact defines a sheaf on
 $\Fam^d$.
\end{Example}

\begin{Example}[rigid geometries]
 \label{Ex:RigidGeometries2}
 We recast the family version of rigid geometries~\cite[Section~2.5]{MR2742432} in the language of this paper. As
 in Example~\ref{ex:geom}(3), fix a $d$-dimensional model manifold~$\mathbb M$ and a Lie group $G$ acting on it. Then we define
 $\mathcal G$, the \emph{stack of $(G,\mathbb M)$-atlases}, to be the
 stackification of the prestack on $\Fam^d$ described as follows:
 \begin{enumerate}[1.]\itemsep=0pt
 \item An object lying over $X/S$ is given by a fiberwise open
 embedding
 \begin{equation*}
 \begin{tikzcd}
 X \ar[r,"\phi"]\ar[d, "p"] & \mathbb M\\S
 \end{tikzcd}
 \end{equation*}
 or, in other words, an open embedding $(p, \phi)\colon X \to S
 \times \mathbb M$.
 \item A morphism lying over $(f, F)\colon X^\prime/S^\prime \rightarrow
 X/S$ is given by a map $g\colon S'' \to G$ such that the diagram
 \begin{equation*}
 \begin{tikzcd}
 X' \ar[r, "F"]\ar[d, "{(p', \phi')}"] & X\ar[d, "{(p, \phi)}"]\\
 S'' \times \mathbb M \ar[r, "\bar g"] & S \times \mathbb M
 \end{tikzcd}
 \end{equation*}
 commutes, where $S^{\prime\prime} = p'(X') \subset S'$ and $\bar
 g\colon (s,x) \mapsto (f(s), g(s)\cdot x)$ is the map induced by
 $f$ and the action by $g$.
 \end{enumerate}
 The composition of morphisms is determined by composition of the $\bar
 g$. Note that every morphism is cartesian.

 The usual stackification procedure~\cite[p.~18]{arXiv:math/0306176}
 exactly recovers the more concrete definition of a rigid geometry
 given in~\cite[Definition~2.33]{MR2742432}: A section of the stack
 $\mathcal G$ over $X/S$, which we call a $(G, \mathbb M)$-atlas, is
 given by the following data: (1) a cover $\{ X_i/S_i \to X/S \}_{i
 \in I}$, (2) fiberwise embeddings $\phi_i\colon X_i \to \mathbb M$
 for each $i \in I$ (the charts of the atlas), and (3) transition
 functions $g_{ij}\colon S_i \times_S S_j \supset p(X_i \times_X X_j)
 \to G$ relating appropriate restrictions of $\phi_i$ and $\phi_j$,
 and satisfying a cocycle condition. Morphisms between atlases based
 on the same cover $\{ X_i/S_i \}_{i \in I}$ are given by collections
 of maps $h_i\colon S_i\to G$, $i\in I$, which interpolate the charts
 $X_i \to \mathbb M$. Moreover, atlases related by a refinement of
 covers must be declared equivalent; this is taken care of by the
 stackification machinery.
\end{Example}

\subsection{Simplicial prestacks and smooth categories}
\label{sec:smooth-categories}

It is well known (see, e.g.,~\cite[Section~2]{MR232393}) that a simplicial set $\mathcal C\colon
\Delta^{\mathrm{op}} \rightarrow \Set$ is equivalent to the nerve of a
category if, and only if, the \emph{Segal maps}
\begin{equation}
 \label{eq:Segal}
 (s_n^*, \dots, s_1^*) \colon\ \mathcal C_n
 \longrightarrow \mathcal C_1 \times_{\mathcal C_0} \cdots
 \times_{\mathcal C_0} \mathcal C_1
\end{equation}
are bijections for $n \geq 2$. Here, $s_i\colon [1] \rightarrow [n]$,
$i=1, \dots, n$, is the morphism sending $0 \mapsto i-1$ and $1
\mapsto i$, and fiber products are taken over the maps $d_0^*,d_1^*\colon \mathcal C_1 \to \mathcal C_0$ induced by the two maps
$d_0,d_1\colon [0] \rightarrow [1]$. (For references, see, e.g.,~\cite{MR1804411, MR232393}.)

This observation allows us to internalize the notion of a category in
other ambient (higher) cate\-gories. In this paper, we would like to
talk about categories endowed with a notion of~''smooth families'' of
objects and morphisms. Thus, we take as ambient the $2$-category
$\PSt$ of~prestacks on $\Man$.

\begin{Definition}[simplicial prestack]
A simplicial prestack (on manifolds) is a pseudofunctor
$\mathcal C\colon \Delta^{\mathrm{op}} \to \mathcal \PSt$.
\end{Definition}

\begin{Remark}
 Here the simplex category $\Delta$ is regarded as a $2$-category
 with only trivial $2$-morphisms, and all constructions are performed
 in the realm of bicategories. That $\mathcal C$ is a pseudofunctor
 then means that for two composable
 morphisms $\eta$, $\kappa$ in $\Delta$, the induced morphisms of
 stacks $\kappa^* \eta^*$ and $(\eta\kappa)^*$ agree only up to a
 coherent natural isomorphism, which is part of the data of~$\mathcal
 C$.
\end{Remark}

A smooth category will be a simplicial prestack satisfying suitable
conditions. Before introducing them, we fix some terminology.
Condition~(2) below assures that the simplicial set $n \mapsto
h_0 \mathcal C_n(S)$ is equivalent to the
nerve of a category $C$ (where $h_0 \mathcal C_n(S)$ is the set of
isomorphism classes of objects in $\mathcal C_n(S)$);
we call an object of $\mathcal C_1(S)$ an \emph{equivalence} if it
represents an invertible morphism in $C$.

\begin{Definition}[smooth category]
 \label{def:sm-cat}
 A \emph{smooth category} $\mathcal C$ is a simplicial stack
 $\mathcal{C}\colon \Delta^{\mathrm{op}} \rightarrow \St$ such that
 \begin{enumerate}[(1)]\itemsep=0pt
 \item the Segal maps~\eqref{eq:Segal} are equivalences of stacks,
 and
 \item the degeneracy map $\mathcal C_0 \to \mathcal C_1$ gives an
 equivalence of the domain with the full substack of~equi\-valences
 in $\mathcal C_1$.
 \end{enumerate}
We will refer to morphisms and $2$-morphisms between smooth categories
as \emph{smooth functors} and~\emph{smooth natural transformations},
respectively.
\end{Definition}

The above conditions are modeled on \emph{complete Segal spaces},
which extends the nerve construction explained above, to give
a model for $(\infty, 1)$-categories (see~\cite{MR1804411} for further details).
Condition~(1) is the \emph{Segal condition}, (2) is the \emph{completeness condition}.
In other words, smooth stacks are complete Segal objects in $\St$.

\begin{Remark}
 The fiber products appearing in the definition (which
 are taken using $d_0^*, d_1^* $: $\mathcal{C}_1 \rightarrow
 \mathcal{C}_0$) are in the bicategorical sense, that is, they are
 what is sometimes called a homotopy fiber product.
\end{Remark}

\begin{Example}[smooth categories from smooth stacks]
 \label{Ex:SmoothCatFromStacks}
 Our most interesting examples of smooth categories will be the
 geometric bordism categories constructed below. However, to get the
 first examples, we now provide a way to construct a smooth category
 from a smooth stack. This is a version of Rezk's classification
 diagram construction~\cite{MR1804411}.

 Let $\mathcal{C}$ be a stack of categories (so that $\mathcal{C}(S)$
 does not need to be a groupoid). In our applications, $\mathcal{C}$
 will be the stack of vector bundles or a stack of sheaves of
 $C^\infty$-modules as in Section~\ref{sec:admissible-stacks}. We
 then construct a smooth category $\mathcal{C}_\bullet$ from this
 input as follows.

 Objects of $\mathcal C_n$ lying over $S \in \Man$ are tuples $(C_n,
 \dots, C_0; f_n, \dots, f_1)$, where the $C_j$ are objects of
 $\mathcal{C}(S)$ and $f_j\colon C_{j-1} \rightarrow C_j$ are morphisms in
 $\mathcal{C}(S)$ (i.e., morphisms in $\mathcal{C}$ covering the
 identity on $S$). Morphisms from an object $(C_n, \dots, C_0; f_n,
 \dots, f_1)$ over $S$ to an object $(C_n^\prime, \dots, C_0^\prime;
 f_n^\prime, \dots, f_1^\prime)$ over $T$ covering $f\colon S
 \rightarrow T$ are tuples $(\alpha_n, \dots, \alpha_0)$, where
 $\alpha_j\colon C_j \rightarrow C_j^\prime$ are cartesian arrows covering
 $f$ such that the diagram
 \begin{equation*}
 \begin{tikzcd}
 C_n \ar[d, "\alpha_n"']
 & \ar[l, "f_n"'] \cdots
 & C_1 \ar[l, "f_2"'] \ar[d, "\alpha_1"]
 & C_0 \ar[l, "f_1"'] \ar[d, "\alpha_0"]
 \\
 C^\prime_n
 & \ar[l, "f_n^\prime"] \cdots
 & C_1^\prime \ar[l, "f_2^\prime"]
 & C_0^\prime \ar[l, "f_1^\prime"]
 \end{tikzcd}
 \end{equation*}
 commutes. The simplicial structure of $\mathcal C_\bullet$ is so
 that face maps perform composition of morphisms and degeneracies
 insert identities. More explicitly, a morphism $\kappa\colon [n]
 \rightarrow [m]$ in $\Delta$ induces the functor $\kappa^*\colon
 \mathcal{C}_m\rightarrow \mathcal{C}_n$ with
 \begin{equation*}
 \kappa^*(\V_m, \dots, \V_0; f_m, \dots, f_1)
 = \big(V_{\kappa(n)}, \dots, \V_{\kappa(0)};
 f_n^\prime, \dots, f_1^\prime\big),
 \end{equation*}
 where
 \begin{equation*}
 f^\prime_j =
 \begin{cases}
 f_{\kappa(j)} \cdots f_{\kappa(j-1)+1}
 & \text{if}\quad \kappa(j-1) < \kappa(j),
 \\
 \id
 & \text{otherwise},
 \end{cases}
 \end{equation*}
 and
 \begin{equation*}
 \kappa^*(\alpha_m, \dots, \alpha_0)
 = \big(\alpha_{\kappa(n)}, \dots, \alpha_{\kappa(0)}\big).
 \end{equation*}
 This gives a (strict) functor $\mathcal C_\bullet\colon
 \Delta^{\mathrm{op}} \rightarrow \SmSt$, and it is obvious that it
 satisfies the Segal condition.
\end{Example}

\begin{Definition}[strictness]
 We say that a simplicial prestack $\mathcal{C}$ is \emph{strict} if
 it is a strict functor, that is, the natural isomorphisms
 $\kappa^*\eta^* \cong (\eta\kappa)^*$ are all identities. We say
 that a smooth functor between two strict smooth categories is
 \emph{strict} if it commutes on the nose with the structure maps in
 $\Delta$, as a natural transformation of strict functors.
\end{Definition}

The following is an easy structure result for the examples just
constructed, whose proof we~omit.

\begin{Lemma}
 \label{LemmaOnNaturalTransformations}
 Let $\mathcal{B}$ be a strict simplicial prestack and let $\mathcal V$
 be a smooth category of the type constructed in
 Example~$\ref{Ex:SmoothCatFromStacks}$. Let $Z, Y\colon \mathcal{B}
 \rightarrow \mathcal V$ be two strict functors. Then the map
 \begin{equation*}
 \mathrm{Nat}(Z, Y) \longrightarrow \mathrm{Nat}(Z_0, Y_0)
 \end{equation*}
 that restricts a smooth natural transformation to the corresponding
 $2$-morphisms of stacks at simplicial level zero is injective.
 Moreover, the map
 \begin{equation*}
 \mathrm{Nat}(Z, Y) \longrightarrow \mathrm{Nat}(Z_1, Y_1)
 \end{equation*}
 that restricts to simplicial level one is an isomorphism.
\end{Lemma}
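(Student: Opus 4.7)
The plan is to exploit the explicit coordinate description of morphisms in $\mathcal{V}_n(S)$ furnished by Example~\ref{Ex:SmoothCatFromStacks}: a morphism over $\mathrm{id}_S$ from $(V_n, \dots, V_0; f_n, \dots, f_1)$ to $(V'_n, \dots, V'_0; f'_n, \dots, f'_1)$ is a tuple $(\alpha_n, \dots, \alpha_0)$ of morphisms in $\mathcal{V}(S)$ satisfying $\alpha_j f_j = f'_j \alpha_{j-1}$, and each $\alpha_j$ is recovered by pulling back the tuple along the $j$-th vertex map $v_j\colon [0] \to [n]$, $0 \mapsto j$. Strictness of $\mathcal{B}$, $\mathcal{V}$, $Z$, and $Y$ ensures that every compatibility with the simplicial structure holds on the nose.

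For \textbf{injectivity at level zero}, suppose $\xi, \eta \in \mathrm{Nat}(Z,Y)$ agree at level zero. For any $n$, $B \in \mathcal{B}_n(S)$, and vertex map $v_j$, strict simplicial compatibility forces $v_j^*(\xi_{n,B}) = \xi_0(v_j^*B) = \eta_0(v_j^*B) = v_j^*(\eta_{n,B})$. Since a morphism in $\mathcal{V}_n(S)$ is determined by its vertex components, $\xi_n = \eta_n$ for every $n$, giving injectivity at level zero; injectivity at level one is a weaker assertion.

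For \textbf{surjectivity at level one}, given $\tau\colon Z_1 \to Y_1$, first recover a candidate $\xi_0$ by restriction to degenerate chains: set $\xi_0(C) := v_0^*(\tau_{\sigma_0^*C})$ for $C \in \mathcal{B}_0(S)$, where $\sigma_0\colon [1] \to [0]$ is the unique degeneracy. This is unambiguous because $Z_1(\sigma_0^*C)$ and $Y_1(\sigma_0^*C)$ are identity chains of the form $(V, V; \mathrm{id})$, so the two coordinates of $\tau_{\sigma_0^*C}$ are forced to agree; naturality of $\xi_0$ then reduces to naturality of $\tau$ under pullback along $\sigma_0^*$. For $n \geq 1$ and $B \in \mathcal{B}_n(S)$, define $\xi_{n,B}$ coordinatewise by declaring its $j$-th coordinate to be $\xi_0(v_j^*B)$. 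That this tuple is a valid morphism in $\mathcal{V}_n(S)$---i.e., satisfies $\alpha_j f_j = f'_j \alpha_{j-1}$---follows by applying $\tau$ to the edge $e_j^*B \in \mathcal{B}_1$, where $e_j\colon [1] \to [n]$ sends $0 \mapsto j-1$ and $1 \mapsto j$: the morphism $\tau_{e_j^*B}$ in $\mathcal{V}_1(S)$ by construction furnishes exactly this commuting square, and the simplicial identities $e_j v_1 = v_j$, $e_j v_0 = v_{j-1}$ identify its two coordinates with $\xi_0(v_j^*B)$ and $\xi_0(v_{j-1}^*B)$. Naturality of $\xi_n$ in $B$ and strict compatibility of $\{\xi_n\}$ with arbitrary $\kappa\colon [n] \to [m]$ in $\Delta$ then both reduce coordinatewise to the analogous properties of $\xi_0$, and by construction $\xi_1 = \tau$.

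The \textbf{main technical obstacle} is precisely the identification of the vertex components of $\tau_{e_j^*B}$ with $\xi_0(v_j^*B)$ and $\xi_0(v_{j-1}^*B)$; equivalently, one must justify $v_1^*(\tau_{e_j^*B}) = v_0^*(\tau_{\sigma_0^* v_j^*B})$. The cleanest route to this equality will likely invoke naturality of $\tau$ along a morphism in $\mathcal{B}_1$ connecting the edge $e_j^*B$ to the degenerate chain $\sigma_0^* v_j^*B$, extracted from the simplicial structure of $\mathcal{B}$ and strictness of $Z$ and $Y$. Once this matching is granted, the remainder is routine coordinatewise bookkeeping.
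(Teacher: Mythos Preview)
The paper itself omits the proof of this lemma, calling it ``an easy structure result'', so there is no explicit argument to compare against; your injectivity argument at level zero is certainly the intended one, and the coordinatewise reconstruction of $\xi_n$ from $\xi_0$ is the natural construction.  However, the ``technical obstacle'' you flag is in fact fatal for the surjectivity claim as literally stated, and your proposed resolution cannot work.

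A natural transformation $\tau \colon Z_1 \to Y_1$ of fibered functors is natural only with respect to morphisms \emph{inside the prestack} $\mathcal{B}_1$, not with respect to the simplicial operators.  There is in general no morphism in $\mathcal{B}_1$ from $e_j^*B$ to $\sigma_0^* v_j^* B$ to which you could apply naturality of $\tau$.  Concretely, take $\mathcal{B}$ to be the nerve of the walking arrow $[1]$, viewed as a constant simplicial prestack, and let $Z = Y$ with $Z_0(0) = V_0$, $Z_0(1) = V_1$, and $Z_1(0\to 1) = (V_1, V_0; f)$.  Since $\mathcal{B}_1(\pt)$ is a discrete three-element groupoid, an element of $\mathrm{Nat}(Z_1,Y_1)$ is simply a free choice of pairs $(\alpha,\alpha)$, $(\beta,\beta)$, $(\gamma_1,\gamma_0)$ at the three $1$-simplices, subject only to $f\gamma_0 = \gamma_1 f$.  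Nothing forces $\gamma_0 = \alpha$ or $\gamma_1 = \beta$, yet any extension to a full smooth natural transformation would impose exactly those equalities.  So the restriction map to level one is injective but not surjective.

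What the paper actually \emph{uses} (see the proof of Proposition~\ref{PropConstructionOfFieldTheories}) is the weaker statement your construction does establish: a transformation $\xi_0 \in \mathrm{Nat}(Z_0,Y_0)$ extends to $\mathrm{Nat}(Z,Y)$ as soon as the tuples $\bigl(\xi_0(v_1^*B),\xi_0(v_0^*B)\bigr)$ define valid morphisms in $\mathcal{V}_1$ for every $B \in \mathcal{B}_1$.  Together with injectivity at level zero this suffices for every application in the paper, and your argument proves it cleanly.  The isomorphism with the bare set $\mathrm{Nat}(Z_1,Y_1)$, however, does not hold without reading that symbol as ``natural transformations of the $1$-truncated simplicial object'', i.e.\ already packaged with a compatible $\xi_0$.
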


\subsection{Symmetric monoidal structures}

To talk about field theories, we need to endow our smooth categories
with symmetric monoidal structures. This requires first to define
symmetric monoidal stacks.

\begin{Definition}[monoidal fibered category] A \emph{symmetric monoidal fibered
category} is a fibered category $\cF \to \cS$ with a (fibered) tensor product functor
\begin{equation*}
 \otimes\colon\ \cF \times_{\cS} \cF \longrightarrow \cF,
\end{equation*}
a (fibered) unit functor $\epsilon \colon \cS \to \cF$ (where $\cS \to
\cS$ denotes the trivial fibered category), together with a collection
of natural transformations (an associator $\alpha\colon (X \otimes Y)
\otimes Z \to X \otimes (Y \otimes Z)$, left and right unitors
$\lambda \colon 1 \otimes X \to X$, $\rho \colon X \otimes 1 \to X$
and a braiding $B \colon X \otimes Y \to Y \otimes X$). These data are
required to be compatible in the sense that they turn each fiber category
$\cF(S)$ into a symmetric monoidal category. A \emph{symmetric
 monoidal stack} is a symmetric monoidal fibered category that is
also a stack.
 \end{Definition}

Symmetric monoidal smooth stacks together with (strong) symmetric monoidal functors and natural transformations form a bicategory which we shall denote $\SmSt^\otimes$.

\begin{Definition}[symmetric monoidal smooth category]
\label{DefSymmMonSmCat}
 A \emph{symmetric monoidal structure} on a simplicial prestack
 $\mathcal{C}\colon \Delta^{\mathrm{op}} \rightarrow \SmSt$ is a lift
 of $\mathcal C$ to a pseudofunctor $\Delta^{\mathrm{op}} \to
 \SmSt^\otimes$. Symmetric monoidal smooth functors and natural
 transformations are likewise defined as $1$- and $2$-morphisms of
 simplicial objects in symmetric monoidal stacks.
 A \emph{symmetric monoidal smooth category} is a smooth category (Definition~\ref{def:sm-cat}) with a
 symmetric monoidal structure.
\end{Definition}

\begin{Example}
 If, in Example~\ref{Ex:SmoothCatFromStacks}, we start with a
 symmetric monoidal smooth stack as input, the result will naturally
 be a symmetric monoidal smooth category.
\end{Example}

\subsection{Geometric bordism categories}
\label{sec:geom-bord-categ}

Let $\G$ be a geometry for $d$-dimensional manifolds, i.e., a stack on
$\Fam^d$. In this section, we will define our symmetric monoidal
smooth category (Definition~\ref{DefSymmMonSmCat}) of $\G$-bordisms, denoted by $\GBord$. We start by
defining a smooth symmetric monoidal stack $\GBord_n$ for every object $n \in \Z_{\geq
 0}$. Afterwards, we define maps of symmetric monoidal stacks
\begin{equation*}
 \kappa^*\colon\ \GBord_n \longrightarrow \GBord_m
\end{equation*}
for every morphism $\kappa\colon [m] \rightarrow [n]$ in $\Delta$. These
maps will satisfy $(\kappa \circ \eta)^* = \eta^* \circ \kappa^*$, so
that we obtain a strict functor $\GBord\colon\Delta^{\mathrm{op}}
\rightarrow \St^\otimes$. At the end of the subsection, we comment on the
smooth category property of $\GBord$.

\begin{Definition}[the stack $\GBord_n$]
\noindent An \emph{object} of the stack $\GBord_n$ lying over a manifold $S$
consists of the following data:
 \begin{enumerate}\itemsep=0pt
 \item[(O1)] A family of $d$-dimensional manifolds, that is,
 an object $X/S \in \Fam^d$.
 \item[(O2)] A $\mathcal G$-structure on $X/S$, that is, an object
 $G_{X/S}$ of $\G({X/S})$.
 \item[(O3)] Smooth functions $\rho_a\colon X \rightarrow \R$ for $a
 = 0, \dots, n$, subject to the following conditions:
 \begin{enumerate}[$(a)$]\itemsep=0pt
 \item $\rho_0 \geq \rho_1 \geq \dots \geq \rho_{n}$,
 \item whenever $\rho_a(x) = 0$, $d\rho_a$ does not vanish on the vertical tangent space
 $T_x (X/S)$,
 \item the subspaces
 \begin{equation}
 \label{SetsXab}
 X_a^b := \{ x \in X \mid \rho_{a}(x) \geq 0 \geq \rho_{b}(x) \},
 \end{equation}
 are proper over $S$ for all $0 \leq a \leq b \leq n$.
 \end{enumerate}
 \end{enumerate}
 We will abuse notation and abbreviate $X/S$ for objects, keeping in
 mind that the collection $\{\rho_a\}$ and the object $G_{X/S}$ are
 also part of the data. The subspace $X_0^n$ defined above will be
 called the \emph{core} of $X/S$.

 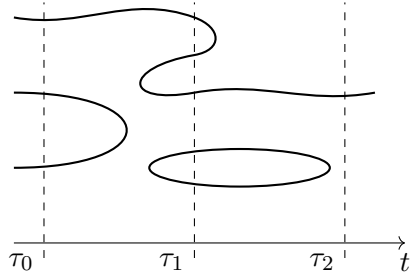
\begin{figure}
 \centering
 \begin{tikzpicture}[x=2cm]
 \draw[bordism] (-.2, 3) to[out=-10, in=160]
 (1, 3) to[out=-20, in=10, looseness=2]
 (1, 2.5) to[out=190, in=190, looseness=5]
 (1, 2) to[out=10, in=190]
 (2.2, 2);
 \draw[bordism] (-.2, 2) .. controls +(0:1) and +(0:1) .. (-.2, 1);
 \draw[bordism] (1.3, 1) circle[x radius = .6, y radius = .25];
 \draw[cut] (0, -.2) -- (0, 3.2);
 \draw[cut] (1, -.2) -- (1, 3.2);
 \draw[cut] (2, -.2) -- (2, 3.2);
 \draw[axis, ->] (-.2, 0) -- (2.4, 0) node[below]{$t$};
 \draw \foreach \i in {0,1,2} {(\i,0) node[below left] {$\tau_\i$}};
 \end{tikzpicture}
 \caption{An object of $1\Bord_2(\pt)$, representing a pair of
 composable $1$-dimensional bordisms. The cut functions are
 $\rho_i = t - \tau_i$.}
\end{figure}

 \emph{Morphisms} of the stack $\GBord_n$ are going to be
equivalence classes of maps, where a \emph{map}
\begin{equation*}
\big(X/S;\rho_0, \dots, \rho_n; G_{X/S}\big) \longrightarrow
\big(Y/T; \rho_0^\prime, \dots,\rho_n^\prime; G_{Y/T}\big)
\end{equation*}
lying over a morphism $f\colon S \rightarrow
T$ consists of the following data:
\begin{enumerate}\itemsep=0pt
\item[(M1)] An open neighborhood $U$ of the core $X_0^n$ such that for some $\varepsilon >0$,
\begin{gather*}
 U \supseteq \rho_n^{-1}(-\infty, \varepsilon) \cap \rho_0^{-1}(-\varepsilon, \infty)
\end{gather*}
and
\begin{gather*}
F(U) \supseteq (\rho_n^\prime)^{-1}(-\infty, \varepsilon) \cap (\rho_0^\prime)^{-1}(-\varepsilon, \infty).
\end{gather*}
\item[(M2)] A smooth map $F\colon U \rightarrow Y$ covering $f$, which is
a fiberwise open embedding onto a neighborhood of the core $Y_0^n$,
 such that for each $0 \leq a \leq n$, there exist positive smooth functions~$\zeta_a$ on $U$ such that $F^*\rho^\prime_a = \zeta_a\rho_a|_U$.
\item[(M3)]
 A morphism $\varphi\colon G_{X/S}|_{U/S} \to G_{Y/T}|_{F(U)/T}$
covering the morphism $(F, f) \colon U/S \rightarrow F(U)/T$ in $\Fam^d$.
\end{enumerate}
We declare two maps to be \emph{equivalent} if they have a common
restriction to a smaller neighborhood of the core. Here a {\em
 restriction} of a map $(f, F, \varphi)$ is a map of the form
 $(f, F|_V, \varphi|_V)$
 for some open neighborhood $V \subseteq U$ of the core satisfying (M1).
\end{Definition}

\begin{Remark}
 The condition relating $F^*\rho'_a$ and $\rho_a|_U$ in (M1) is
 equivalent to saying that these two functions have the same sign.
 We express it in terms of the positive function $\zeta_a$ so that
 our definition still makes sense, without change, in the
 supermanifold case.
\end{Remark}

\begin{Remark}
 To simplify the presentation, we implicitly choose a cleavage for
 the stack $\mathcal{G}$ (i.e., a preferred choice of pullback
 arrows) in order to define ``restrictions'' of objects in
 $\mathcal{G}$ such as $G_{X/S}|_{U/S}$. Explicitly, this means a
 triangle $G_{X/S} \leftarrow G_{U/S} \rightarrow G_{Y/T}$ consisting
 of an object $G_{U/S}$ together with an arrow $G_{U/S} \to G_{X/S}$
 covering the inclusion $U/S \hookrightarrow X/S$ and an arrow
 $G_{U/S} \to G_{Y/T}$ covering the map $(f, F)\colon U/S \to Y/T$.
 These data are unique up to unique isomorphism.
\end{Remark}

Morphisms are composed as follows. Suppose that, in addition to the
map $(f, F, \varphi)$ described above, we are given a second map
$(f^\prime, {F}^\prime, {\varphi}^\prime)$ starting at $Y/T$.
Choose a subset $V \subset U$ satis\-fying~(M1) and
such that $F(V) \subset U^\prime$ (where $U^\prime$ is the domain of $F^\prime$).
Then $\big(f^\prime \circ f, F^\prime \circ F|_V, \varphi^\prime \circ \varphi|_{V/S}\big)$
is a representative for the composition.
It is straightforward to show
that this operation on morphisms is independent of the choice of
representatives.

 Each of the stacks $\GBord_n$ is a symmetric monoidal
 stack with the tensor product given by fiberwise disjoint union,
 that is, $X/S \otimes Y/S := (X \amalg Y)/S$. A $\mathcal
 G$-structure on $X/S \otimes Y/S$ is obtained by the stack property,
 using the obvious cover $\{ X \to X \amalg Y,\ Y \to X \amalg Y \}$.
 Moreover, it~is clear that the pullback maps $\kappa^*$ are
 monoidal, so that $\GBord$ is a symmetric monoidal smooth category.

This concludes the construction of a category $\GBord_n$ with a
projection onto $\Man$, where an~object $\big(X/S; \rho_0, \dots, \rho_n;
G_{X/S}\big)$ is mapped to $S$.

 \begin{Remark}
 We think of an object in $\GBord_n(\mathrm{pt})$ as a sequence of
 $n$ composable bordisms, where the indiviual bordisms are the sets
 $X_{a-1}^a$, $a=1, \dots, n$, defined in~\eqref{SetsXab}. These
 are $d$-ma\-ni\-folds with boundary by requirement $(b)$, unless we are
 in the degenerate case where $\rho_{a-1}$ and $\rho_a$ have common
 zeros. Objects in $\GBord_n(S)$ are thought of as families of
 such bordisms, parametrized by $S$.
 \end{Remark}

\begin{Example}
 For $n = 0$, we just have one cut function $\rho_0$, and the core
 $X_0^0 = \rho_0^{-1}(0)$ is a~codimension $1$ submanifold of $X$, by
 condition (O3)$(b)$ above. This condition also implies that~$X_0^0$
 intersects the fibers of $X \to S$ transversally and hence
 determines a $(d-1)$-dimensional submanifold of each fiber $X_s$.
 Similarly, for $n = 1$, the core $X_0^1$ determines, for each $s \in
 S$ such that $\rho_0(s) < \rho_1(s)$ everywhere, a $d$-manifold
 $X_0^1 \cap X_s$ with boundary.
 It is compact,
 by condition (O3)$(c)$, thus a bordism between $X_0^0 \cap X_s$ and
 $X_1^1 \cap X_s$. In this way, $\GBord_0$ and $\GBord_1$ comprise
 families of bordisms and their boundaries. However, in the presence
 of a nontrivial geometry $\mathcal G$, there is the additional data
 of a neighborhood of the core together with a geometric structure on
 this neighborhood. This prohibits us to simply work directly with
 the cores.
\end{Example}

\begin{Lemma}
For each $n$, $\GBord_n$ is a stack.
\end{Lemma}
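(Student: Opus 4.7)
First, I would verify that $\GBord_n \to \Man$ is fibered in groupoids. Given a smooth map $f\colon S' \to S$ and an object $(X/S, \rho_a, G_{X/S})$, one forms the pullback $X' = S' \times_S X$ with projection to $S'$, pulls back the cut functions as $\rho_a' = \rho_a \circ \mathrm{pr}$, and produces the pullback $\G$-structure $f^*G_{X/S}$ via the stack property of $\G$ on $\Fam^d$. The tautological square is a cartesian morphism. A base-preserving morphism has, by (M2), an underlying fiberwise open embedding $F$ whose differential is invertible on a neighborhood of the core (and equations $F^*\rho_a' = \zeta_a \rho_a$ with $\zeta_a > 0$ force $F$ to be a diffeomorphism of germs), so each fiber groupoid $\GBord_n(S)$ is indeed a groupoid.

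The main content is descent. Let $\{S_i \hookrightarrow S\}$ be an open cover with descent data: objects $(X_i/S_i, \rho_a^i, G_i)$ and base-preserving isomorphisms $\phi_{ij}$ over the pairwise overlaps $S_{ij}$ satisfying a cocycle over triple overlaps. I would first pass to a locally finite refinement (using second countability of $S$). Next I would glue the cores. Each $C_i = (X_i)_0^n$ is proper over $S_i$, and although the $\phi_{ij}$ are only germs near the core in the ambient space, their underlying maps restrict canonically to honest diffeomorphisms $C_i|_{S_{ij}} \to C_j|_{S_{ij}}$ satisfying a strict cocycle, so the $C_i$ glue to a proper family $C \to S$.

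The second stage is to thicken $C$ to a family $X/S$. Choose representatives $\tilde\phi_{ij}$ of the germ isomorphisms on open neighborhoods $V_{ij}$ of $C_i|_{S_{ij}}$ inside $X_i|_{S_{ij}}$. By the germ cocycle together with local finiteness, one can shrink the $V_{ij}$ to smaller neighborhoods on which the strict cocycle $\tilde\phi_{jk} \circ \tilde\phi_{ij} = \tilde\phi_{ik}$ holds on triple overlaps. Gluing open neighborhoods $U_i \subset X_i$ of $C_i$ along the $\tilde\phi_{ij}$ produces a submersion $X \to S$ with $U_i \hookrightarrow X$ open. The cut functions $\rho_a^i$ are only required to agree up to multiplication by positive functions, so they can be assembled into cut functions $\rho_a$ on $X$ by interpolating the $\rho_a^i|_{U_i}$ via a partition of unity subordinate to $\{U_i\}$ (positivity of the combination near the core being preserved). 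Finally, the collection $\{U_i \to X\}$ is a cover in $\Fam^d$, so the $G_i$ together with the transitions induced by $\tilde\phi_{ij}$ constitute descent data for $\G$; since $\G$ is a stack on $\Fam^d$, they glue to a $\G$-structure $G_{X/S}$.

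It is then routine to check that the restriction of $(X, \rho_a, G_{X/S})$ to each $S_i$ is germ-equivalent to $(X_i, \rho_a^i, G_i)$ and that these equivalences recover the original $\phi_{ij}$. Descent of morphisms is handled analogously: compatible germs of maps over the $S_i$ admit common representatives after shrinking, and their uniqueness follows from the equivalence relation defining morphisms in $\GBord_n$ together with descent in $\G$. The principal obstacle is arranging the strict (rather than germ-wise) cocycle condition on representatives $\tilde\phi_{ij}$; this is precisely the step that requires local finiteness and the freedom to pass to arbitrarily small neighborhoods of the core, and once this is in place the rest follows mechanically from the assumption that $\G$ itself is a stack on $\Fam^d$.
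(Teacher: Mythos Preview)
Your outline correctly identifies the three parts (fibered category, groupoid fibers, descent) and handles the first two essentially as the paper does. In the descent argument, however, the sentence ``Gluing open neighborhoods $U_i \subset X_i$ of $C_i$ along the $\tilde\phi_{ij}$ produces a submersion $X \to S$ with $U_i \hookrightarrow X$ open'' glosses over the central difficulty, and your closing remark misidentifies the principal obstacle. Arranging a strict cocycle on representatives $\tilde\phi_{ij}\colon V_{ij} \to V_{ji}$ is indeed possible by local finiteness, but it is not the hard step: even with a strict cocycle, nothing guarantees that you can choose the $U_i$ so that $\tilde\phi_{ij}$ is a diffeomorphism from \emph{all of} $U_i|_{S_{ij}}$ onto \emph{all of} $U_j|_{S_{ij}}$. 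Without this, the quotient of $\coprod U_i$ by the identifications need not be Hausdorff (points of $U_i|_{S_{ij}}\setminus V_{ij}$ acquire non-separable doubles coming from $U_j$). The figure accompanying the paper's proof, and its caption, flag exactly this issue: the problem is to shrink each $X_i$ to some $\tilde X_i$ in such a way that the transition map carries $\tilde X_i|_{S_{ij}}$ \emph{onto} $\tilde X_j|_{S_{ij}}$.

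The paper's resolution is to use the cut functions themselves to carve out compatible regions \emph{before} gluing, rather than to interpolate them afterwards as you propose. In the two-patch case one forms the partition-of-unity averages
\[
\tilde\rho_a^1 = \chi_1\,\rho_a^1 + \chi_2\,F^*\rho_a^2,\qquad
\tilde\rho_a^2 = \chi_1\,(F^{-1})^*\rho_a^1 + \chi_2\,\rho_a^2,
\]
which satisfy $F^*\tilde\rho_a^2 = \tilde\rho_a^1$ \emph{exactly} on the overlap (not merely up to a positive factor). Defining $\tilde X_i$ by the inequalities $\tilde\rho_0^i < \varepsilon$ and $\tilde\rho_n^i > -\varepsilon$, with $\varepsilon$ supplied by condition (M1), then forces $F$ to restrict to a diffeomorphism $\tilde X_1|_{S_{12}} \to \tilde X_2|_{S_{12}}$, so the two pieces glue to an honest manifold over $S$ on which the $\tilde\rho_a^i$ already agree. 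The locally finite case is handled by first rescaling each family $\rho_a^i \mapsto \lambda_i\rho_a^i$ to achieve a uniform $\varepsilon$ and then iterating the two-patch construction. This trick---letting the averaged cut functions dictate the shrinking---is the missing ingredient in your outline; your partition-of-unity step, applied only after the manifold is assumed to exist, comes too late to secure Hausdorffness.
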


\begin{proof}
 We have to show that $\GBord_n$ is a prestack and that it satisfies
 descent.

 First, we show that $\GBord_n$ is a fibered category. To this end,
 let $f\colon S^\prime \to S$ be a morphism, and let $B= \big(X/S; \rho_0,
 \dots, \rho_n; G_{X/S}\big)$ be an object of $\GBord_n$ over $S$. Let
 $X^\prime = S^\prime \times_S X$ be the fiber product over $S$.
 This is an object of $\Fam$ over $S^\prime$, and we obtain a
 morphism $(f, F)\colon X^\prime/S^\prime \to X/S$ in $\Fam$. Set
 $\rho_a^\prime = (F^\prime)^* \rho_a$. Now $B^\prime =
 \big(X^\prime/S^\prime; \rho_0^\prime, \dots, \rho_n^\prime;
 F^*G_{X^\prime/S^\prime}\big)$ is an~object of $\GBord_n$ over $S^\prime$,
 and $(f, F, \varphi) \colon B^\prime \to B$ is a morphism in $\GBord_n$
 which is a cartesian lift of $f$.

 To show that $\GBord_n$ is a prestack, by~\cite[Proposition~3.22]{MR2223406}, it now suffices to show that for any
 manifold $S$, $\GBord_n(S)$ is a groupoid. Let
 \begin{equation*}
 (\mathrm{id}, F,
 \varphi)\colon\ \big(X/S; \rho_0, \dots, \rho_n; G_{X/S}\big) \to \big(Y/S;
 \rho_0^\prime, \dots, \rho_n^\prime; G_{Y/S}\big)
 \end{equation*}
 be a map covering the
 identity, where $F$ is defined on some open neighborhood $U$ of the
 core~$X_0^n$. Because $F|_U$ is a fiberwise diffeomorphism onto its
 image covering the identity, it is in fact a~diffeomorphism onto its
 image. Since $\mathcal{G}$ is a stack on $\Fam$ and $\varphi\colon
 G_{X/S}|_{U/S} \to G_{Y/S}|_{F(U)/S}$ covers the invertible morphism
 $(f, F|_U)$, it is invertible. Hence $\big(\mathrm{id}, F^{-1},
 \varphi^{-1}\big)$ is a morphism in~the direct opposite of
 $(\mathrm{id}, F, \varphi)$. It is clear that both compositions of
 these maps are restrictions of~the identity map, hence equivalent to
 the identity. This shows that each morphism in $\GBord_n(S)$ is
 invertible, hence it is a groupoid.

 To verify the descent property, let $S$ be a manifold and
 $\{S_i\}_{i \in I}$ a covering family of $S$. The objects of the category $\GBord_n(\{S_i \to
 S\})$ of descent data are tuples $\{B_i\}_{i \in I}$ with
 $B_i = \big(X_i/S_i; \rho_1^{i}, \dots, \rho_n^{i}; G_{X_i/S_i}\big)$ and
 isomorphisms $F_{ij}\colon B_i|_{S_{ij}} \to B_j|_{S_{ij}}$, $i, j
 \in I$, where $S_{ij} = S_i \times_S S_j$. Morphisms in this
 category are tuples $\{H_i\}_{i \in I}$ of morphisms $H_i\colon B_i \to B_i^\prime$ in
 $\GBord_n$ that are compatible with the morphisms $F_{ij}$ in the
 sense that $F_{ij} \circ H_i|_{S_{ij}} = H_j|_{S_{ij}} \circ
 F^\prime_{ij}$. The functor $\GBord_n(S) \to \GBord_n(\{S_i \to
 S\})$ is given by restriction, i.e., by pullback along the
 inclusions $S_{ij} \to S_i$, respectively $S_i \to S$. It is clear
 that this functor is fully faithful, as morphisms are locally
 determined and glue.

 To see that it is essentially surjective, let $\{B_i\}_{i \in I}$,
 $\{F_{ij}\}_{i, j \in I}$ be an object as above. The maps~$F_{ij}$
 are defined on some open neighborhood $U_{i}$ of the core of
 $X_i|_{S_{ij}}$ and come with morphisms $\varphi_{ij}\colon
 G_{X_i/S_i}|_{U_{i}/S_{ij}} \to G_{X_j/S_j}|_{U_{j}/S_{ij}}$ (where
 $U_j = F_{ij}(U_i)$). We need to show that the $B_i$ glue together
 to a bordism $B$ over $S$.

\begin{figure}

\centering

\begin{tikzpicture}
 \draw (0, 5) .. controls (2,4.5) and (3.6, 5.5).. (5,5) -- (5, 2);
 \draw[name path=X, dotted] (0, 3.8) .. controls (2, 3.1) and (3.6, 4.7).. (5, 4.2);
 \draw (0, 5) -- (0, 2) .. controls (1.3, 1.5) and (3.1, 2.7).. (5,2);
 \draw[name path=Y, dotted] (0, 2.8) .. controls (2, 2.4) and (3.6, 3.3).. (5, 2.9);
 \draw[ultra thick] (0, 3.4) .. controls (1.3, 2.9) and (3.1, 3.9).. (5,3.5);
 \draw[name path=A, dashed] (3, 4.5) .. controls (3.6, 4.7) and (4.2, 4.9) .. (5, 4.5);
 \draw[name path=B, dashed] (3, 2.7) .. controls (3.6, 2.7) and (4.5, 2.5) .. (5, 2.3);
 \draw[dashed] (3, 4.5) -- (3, 2.7);
\tikzfillbetween[of=X and Y]{opacity=0.1};
\tikzfillbetween[of=A and B]{opacity=0.3};
\node at (4.5, 4) {$U$};

 \draw (0, 1.4) -- (5, 1.4);
 \node at (2.5, 1) {$S_1$};

 \draw (6, 5) .. controls (7, 5.2) and (7.5, 4.7) .. (8, 4.7) .. controls (9, 4.7) and (10, 5.5) .. (11,5);
 \draw[name path=Z, dotted] (6, 3.8) .. controls (7, 4) and (7.5, 3.5) .. (8, 3.5) .. controls (9, 3.5) and (10, 4.3) .. (11, 3.8);
 \draw[name path=W, dotted] (6, 3.2) .. controls (7, 3.4) and (7.5, 2.9) .. (8, 2.9) .. controls (9, 2.9) and (10, 3.7) .. (11, 3.2);
 \draw (11,5) -- (11, 2);
 \draw (6, 2) .. controls (7, 2.2) and (7.5, 1.7) .. (8, 1.7) .. controls (9, 1.7) and (10, 2.5) .. (11,2);

\tikzfillbetween[of=Z and W]{opacity=0.1};

 \draw (6, 5) -- (6,2);
 \draw[ultra thick] (6, 3.5) .. controls (7, 3.7) and (7.5, 3.2) .. (8, 3.2) .. controls (9, 3.2) and (10, 4) .. (11,3.5);
 \draw[name path=A, dashed] (6, 4.2) .. controls (6.6, 4.4) and (7.2, 4.6) .. (8, 4.3);
 \draw[name path=B, dashed] (6, 2.4) .. controls (6.6, 2.4) and (7.5, 2.2) .. (8, 2.3);
 \draw[dashed] (8, 4.3) -- (8, 2.3);
\tikzfillbetween[of=A and B]{opacity=0.3};
\node at (7.5, 4.0) {$F(U)$};

 \draw (6, 1.4) -- (11, 1.4);
 \node at (8.5, 1) {$S_2$};

 \draw[->] (4.5, 2.8) .. controls (5, 2.3) and (6, 2.3) .. (6.5, 2.8);
 \node at (5.5, 2.2) {$F$};
 \node at (0.5, 4.5) {$X_1$};
 \node at (10.5, 4.5) {$X_2$};

\end{tikzpicture}

\caption{Picture of an object in $\GBord_0(\{S_1, S_2 \to S\})$.
The light shaded regions depict the neighborhoods $(\rho^i_0)^{-1}(-\varepsilon, \varepsilon)$ of the core $(X_i)_0^0$ (drawn as a thick line).
When attempting to glue, the difficulty is to shrink $X_i$ to $\tilde{X}_i$ in such a way that the corresponding restriction $\tilde{F}$ of $F$ maps $\tilde{X}_1|_{S_{12}}$ diffeomorphically to $\tilde{X}_2|_{S_{12}}$.}
\end{figure}
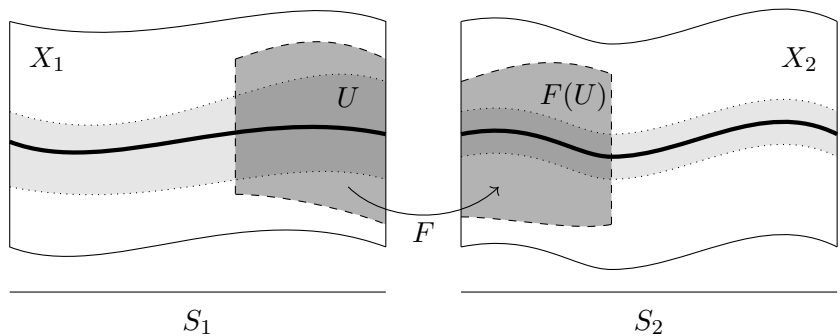

 To begin with, we assume that the given cover of $S$ consists of two
 elements $S_1$ and $S_2$, with $S_{12} = S_1 \times_S S_2$. Let
 $F\colon U/S_{12} \to F(U)/S_{12}$ be the gluing diffeomorphism.
 Let $\chi_1$, $\chi_2$ be a~smooth partition of unity, i.e.,
 non-negative functions with $\mathrm{supp}(\chi_i) \subset S_i$ and
 $\chi_1 + \chi_2 = 1$. We~denote the lifts of $\chi_i$ to ${X}_1$
 and ${X}_2$ via the projections $X_1 \to S_1$ and $X_2 \to S_2$ by
 the same letter. Now for $a= 0, 1, \dots, n$, consider the
 functions defined by
 \begin{gather*}
 \tilde{\rho}_a^1 = \chi_1 \cdot \rho_a^1 + \chi_2 \cdot F^* \rho_a^2,
 \\[.5ex]
 \tilde{\rho}_a^2 = \chi_1 \cdot \big(F^{-1}\big)^*\rho_a^1 + \chi_2 \cdot \rho_a^2.
 \end{gather*}
 Let $\varepsilon >0$ be as in assumption (M3) on $F$ and set
 \begin{equation*}
 \tilde{X}_i = \big(\tilde{\rho}_0^i\big)^{-1}(-\infty, \varepsilon) \cap \big(\tilde{\rho}_n^i\big)^{-1} (-\varepsilon, \infty) \subseteq X_i, \qquad i = 1, 2.
 \end{equation*}
 Let $\tilde{F}$ be the restriction of $F$ to $U_1 \cap
 \tilde{X}_1$. Observe that by construction, $U_1 \cap \tilde{X}_1 =
 \tilde{X}_1|_{S_{12}}$, and that $\tilde{F}$ maps
 $\tilde{X}_i|_{S_{12}}$ diffeomorphically onto
 $\tilde{X}_j|_{S_{12}} = F(U) \cap X_2$. Hence, $\tilde{X}_1$ and
 $\tilde{X_2}$ glue together, via $\tilde{F}_{12}$, to a family of
 manifolds over $S$. Since by construction, for each $a$, the
 functions $\rho_a^1$ and~$\rho_a^2$ coincide over $S_{12}$, they
 combine to give functions $\rho_0, \dots, \rho_n$ on $X$. Let
 $G_{\tilde{X}_i/ S_i} = G_{X_i/S_i}|_{\tilde{X}_i/S_i}$ be the
 restriction of the objects of $\mathcal{G}$ contained in the data of
 $B_i$ and let~$\tilde{\varphi}$ be the corresponding restriction of
 the gluing isomorphism $\varphi$ in $\mathcal{G}$ convering $F$
 (which then covers~$\tilde{F}$). Since $\mathcal{G}$ satisfies
 descent, these objects $G_{\tilde{X}_1/ S_1}$, $G_{\tilde{X}_2/
 S_2}$ glue together to an object~$G_{X/S}$ over~$X/S$. In total,
 we~obtain an object $B = \big(X/S; \rho_0, \dots, \rho_n; G_{X/S}\big)$ over
 $S$, together with maps $B \to B_i$ compatible with the gluing
 isomorphisms. In other words, we have found a~preimage to the given
 object in~$\GBord_n(\{S_i \to S\})$.

 Next, assume that the cover $\{S_i\}_{i \in I}$ is locally finite.
 Then $I$ must be countable, and we can identify $I = \N$.
 By replacing the functions $\rho_a^i$ by $\lambda_i \cdot \rho_a^i$
 for suitable constants $\lambda_i >0$, we may achieve that the value
 $\varepsilon >0$ in the condition (M3) on the cut functions $F_{ij}$
 can be chosen as $\varepsilon = 1$ for all $i$ and $j$. This
 replaces $B_i$ by isomorphic objects in $\GBord_n$ and hence
 does not change the object in the category of descent data. Observe
 here that since the cover is locally finite, there are only finitely
 many non-trivial transition functions $F_{ij}$ for each fixed $i$,
 and hence only finitely many constraints for each $\lambda_i$.
 Hence the modification is indeed possible. Now, using the previous
 case, we may one by one glue together the corresponding bordisms
 $B_1, \dots, B_k$ to a bordism $B^{(k)}$. Since the cover is locally
 finite, this may be achieved in such a way that over each compact
 subset of $S$, the bordisms $B^{(k)}$ are all canonically equivalent
 for $k$ large, hence they all glue together to a bordism $B$ over
 $S$.

 For a general cover $\{S_i\}_{i \in I}$, we choose a locally finite
 subcover, which leads to an equivalent category of descent data.
\end{proof}

\begin{Remark}
 The modification $\rho_a^i \mapsto \lambda_i \rho_a^i$ in the proof
 above was made to deal with the following technical problem: Without
 this change, the construction of $B^{(k+1)}$ might require a~smaller
 choice of $\varepsilon >0$ than the one needed in the construction
 of $B^{(k)}$. Therefore, the total space $X^{(k+1)}$ of $B^{(k+1)}$
 may be strictly smaller than that of $B^{(k)}$ and might degenerate in
 the limit $k \to \infty$.
\end{Remark}

The stacks $\GBord_n$ form a simplicial object in an obvious way:
Given an order-preserving map $\kappa\colon [n] \rightarrow [m]$, we
obtain a functor $\kappa^*\colon \GBord_m \rightarrow \GBord_n$,
given by removing or duplicating the functions $\rho_a$. Forgetting a
cut function has the interpretation of gluing bordisms along the
boundary determined by it. Regarding the morphisms, we would like to
say that they remain the same under $\kappa^*$. Precisely, what
happens is the following. Notice that a map in $\GBord_m$ is also a
map in $\GBord_n$. Hence, if a morphism in $\GBord_m$ is
represented by a map $F$, we let~$\kappa^* [F]$ be again the morphism
represented by $F$, but with respect to the equivalence relation in
$\GBord_n$. This makes sense because any two maps which are
equivalent in $\GBord_m$ are also equivalent in $\GBord_n$, as
follows directly from the definition.

\begin{Theorem}
$\GBord$ is a symmetric monoidal smooth category.
\end{Theorem}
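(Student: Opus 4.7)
The plan is to verify the three conditions of Definition~\ref{def:sm-cat} together with the symmetric monoidal structure from Definition~\ref{DefSymmMonSmCat}. Strict functoriality of $\kappa \mapsto \kappa^*$ is immediate: each $\kappa^*$ is defined by relabeling or duplicating the cut functions $\rho_a$ while leaving $X/S$ and $G_{X/S}$ untouched, and morphisms are tautologically preserved since the equivalence relation on $\GBord_n$ only refines the one on $\GBord_m$. The symmetric monoidal structure via fiberwise disjoint union $X/S \otimes Y/S := (X \amalg Y)/S$ is checked levelwise; each $\GBord_n$ is a symmetric monoidal stack because disjoint union preserves the existence of cut functions, $\mathcal G$-structures (via the stack property of $\mathcal G$ on the cover $\{X \to X \amalg Y, Y \to X \amalg Y\}$), and the properness condition in (O3)$(c)$. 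Since the $\kappa^*$ only modify cut functions, they commute with fiberwise disjoint union, so the lift to $\St^\otimes$ is strict.

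For the Segal condition, I would construct the inverse to the natural map
\[
 \GBord_n \longrightarrow \GBord_1 \times_{\GBord_0} \cdots \times_{\GBord_0} \GBord_1
\]
by gluing. Given objects $B_a = (X_a/S, \rho_0^a, \rho_1^a, G_{X_a/S})$ of $\GBord_1(S)$ for $a = 1, \dots, n$ together with isomorphisms in $\GBord_0(S)$ identifying the ``target'' cut of $B_a$ with the ``source'' cut of $B_{a+1}$, I would run the same partition-of-unity and restriction argument already used in the proof of descent for $\GBord_n$: the isomorphisms in $\GBord_0$ are germs of $\mathcal G$-isometries near the cut hypersurfaces, so they can be combined with a smooth partition of unity to produce a single family $X/S$ with $n+1$ cut functions $\rho_0 \geq \cdots \geq \rho_n$ and a $\mathcal G$-structure inherited by descent of $\mathcal G$. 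The converse direction is just restriction along the inner face maps, and both compositions are equivalent to the identity after passing to a smaller neighborhood of the core, which is exactly the equivalence relation built into the definition of morphisms of $\GBord_n$.

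For the completeness condition, I would analyze the image of the degeneracy $s_0^*\colon \GBord_0 \to \GBord_1$, which sends $(X/S, \rho_0, G_{X/S})$ to $(X/S, \rho_0, \rho_0, G_{X/S})$. In such a degenerate object, the ``bordism'' $X_0^1 = \{\rho_0 = 0\}$ reduces to the cut hypersurface itself, so $s_0^*$ lands in the substack of equivalences. Conversely, given an object $B = (X/S, \rho_0, \rho_1, G_{X/S})$ of $\GBord_1(S)$ that is an equivalence in the associated ordinary category (i.e., representing an invertible morphism at each parameter value), I would argue that $B$ is isomorphic in $\GBord_1(S)$ to an object in the image of $s_0^*$ by exhibiting a fiberwise diffeomorphism between suitable neighborhoods of the core and a collar of $\rho_0^{-1}(0)$. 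Here one uses that in the germ-based formalism, an invertible $1$-bordism admits a representative concentrated in an arbitrarily small collar of its source, which then matches the form of a degenerate object. Fully-faithfulness of $s_0^*$ onto the substack of equivalences is then straightforward since any isomorphism between two degenerate objects is uniquely determined by its restriction to the common cut hypersurface.

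The main obstacle is the Segal condition, since one must honestly construct the $n$-fold gluing and verify that the resulting object is well-defined up to the germ equivalence relation; however, the heavy lifting has already been carried out in the descent lemma for $\GBord_n$, and essentially the same partition-of-unity argument applies. Completeness is comparatively routine given our germ-based definition of morphisms in $\GBord_0$, since it aligns precisely with the image of the degeneracy. Combining these yields that $\GBord$ is a symmetric monoidal smooth category.
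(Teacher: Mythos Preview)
Your treatment of the Segal condition is fine and, if anything, more explicit than the paper's; the paper dismisses this as ``clear, since gluing bordisms over a fixed parameter space is straightforward,'' and your partition-of-unity sketch is a reasonable way to make that precise.

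The gap is in the completeness argument. You assert that ``an invertible $1$-bordism admits a representative concentrated in an arbitrarily small collar of its source,'' but this is precisely what needs to be proven, and it is \emph{not} a matter of choosing a different representative. Recall that the equivalence relation on morphisms in $\GBord_1$ allows you to shrink the ambient family $X$ to a smaller neighborhood of the core $X_0^1 = \{\rho_0 \geq 0 \geq \rho_1\}$, but the core itself---and in particular the cut functions $\rho_0$, $\rho_1$---are fixed data of the object. So unless $\rho_0^{-1}(0) = \rho_1^{-1}(0)$ already, the core is a genuine $d$-dimensional region with nonempty interior, and no germ manipulation will shrink it. What you need to show is that if $B$ is invertible then it is \emph{thin}, meaning $\rho_0$ and $\rho_1$ have the same zero set.

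The paper's mechanism for this is the one you are missing: pick an inverse $B^{-1}$ and represent the composition $[B]\circ[B^{-1}]$ by some $B_2 \in \GBord_2(S)$ with cut functions $\rho_0 \geq \rho_1 \geq \rho_2$, where $s_1^*B_2 \cong B$ and $s_2^*B_2 \cong B^{-1}$. Since this composition equals $\id_{[M]}$, the outer bordism $(X/S;\rho_0,\rho_2;G_{X/S})$ is isomorphic to the thin bordism $(X/S;\rho_0,\rho_0;G_{X/S})$, forcing $\rho_0^{-1}(0) = \rho_2^{-1}(0)$. But $\rho_0 \geq \rho_1 \geq \rho_2$ then squeezes $\rho_1^{-1}(0)$ to the same set, so $B$ is thin. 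This is the substantive step, and your proposal does not contain it.
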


\begin{proof}
To verify Definition~\ref{def:sm-cat}(1), we have to show that the maps
\begin{equation*}
\GBord_n(S) \longrightarrow \GBord_1(S) \times_{\GBord_0(S)} \cdots \times_{\GBord_0(S)} \GBord_1(S)
\end{equation*}
are equivalences for each manifold $S$ and each $n \in \N$. This is
clear, since gluing bordisms over a fixed parameter space is
straightforward.

To verify Definition~\ref{def:sm-cat}(2),
we consider the categories $h_0\GBord(S)$ determined by the
Segal set $n \mapsto h_0 \GBord_n(S)$, for each manifold $S$.
Objects $B$ of the stack $\GBord_1(S)$ then determine morphisms $[B]$
in $h_0 \GBord(S)$, and we have to analyze which objects give
rise to invertible morphisms this way.
We claim that for such a bordism $B = (X/S; \rho_0, \rho_1; G_{X/S})$,
the morphism~$[B]$ is invertible if and only if $B$ is ``thin'', meaning that
$\rho_0$ and $\rho_1$ have the same zero set.
Given this claim, the completeness condition Definition~\ref{def:sm-cat}(2) follows,
since $B$ is isomorphic to $B^\prime = (X/S; \rho_0, \rho_0; G_{X/S})$,
which is in the image of the degeneracy map $\GBord_0(S) \to \GBord_1(S)$.
To show the claim, assume that $[B]$ is invertible.
Let $M = (X/S; \rho_0; G_{X/S})$ be the left boundary of $B$, an object of $\GBord_0(S)$.
Then there exists a bordism $B^{-1}$ such that $[B] \circ \big[B^{-1}\big] = \id_{[M]}$.
The composition $[B] \circ \big[B^{-1}\big]$ can be represented by an element $B_2 \in \GBord_2(S)$
with the property that $s_1^* B_2 = B$ and $s_2^* B_2 = B^{-1}$,
where $s_i\colon [1] \rightarrow [n]$, $i=1, 2$,
is the morphism sending $0 \mapsto i-1$ and $1 \mapsto i$.
On the other hand, since $[B] \circ \big[B^{-1}\big] = \id_{[M]}$, the object
$c^* B_2 = (X/S; \rho_0, \rho_2; G_{X/S})$ must be isomorphic,
as an object of $\GBord_1(S)$,
to the thin bordism $\mathrm{id}_M = (X/S; \rho_0, \rho_0; G_{X/S})$.
Here $c\colon [1] \to [2]$ maps $0 \to 0$ and $1 \to 2$.
Therefore, $c^*B_2$ must be thin, i.e., $\rho_0$ and $\rho_2$ have the same zero sets.
This implies that also $\rho_1$ must have the same zero set, which implies that both~$B$ and $B^{-1}$ are thin as well.
Conversely, if $B$ is thin, then, as mentioned above, it is isomorphic to $B^\prime = (X/S; \rho_0, \rho_0; G_{X/S})$, which implies that $[B] = \id_{[M]}$. In particular, $[B]$ is invertible.
\end{proof}

We conclude this section by giving some examples of geometric bordism categories.

\begin{Example}[``no geometry''] 
 If we choose $\mathcal{G}$ to be the trivial stack on $\Fam^d$
 (whose fibers are single points), we get the realization of the
 $d$-dimensional bordism category in the world of~smooth categories,
 to be denoted $d\Bord$. Objects of $d\Bord_n$ consist simply of a
 family~$X/S$ of~manifolds parametrized by $S$, together with cut
 functions $\rho_0, \dots, \rho_n$. In this case, morphisms from
 $X/S$ to $Y/T$ are just given by smooth fiber-preserving maps $F$
 that are defined on a~neighborhood $U$ of the core, are fiberwise
 open embeddings and send $X_a^b$ to $Y_a^b$; two such maps $F$,
 $F^\prime$ (defined on $U$, $U^\prime$) are identified if they
 coincide on a smaller neighborhood $V \subset U \cap U^\prime$ of
 $X_0^n$.
\end{Example}

\begin{Example}[bordisms over a manifold]
 \label{ExampleBordismsOverAManifold}
 If $\mathcal{G}$ is the $d$-dimensional geometry represented by a
 manifold $M$, as in Example~\ref{ExManifoldsOver}, we get the
 category $d\Bord(M)$ of bordisms over $M$. Clearly, this
 specializes to the previous example if one takes $M$ to be a point.
 This is our main example in the second part of this paper (where
 moreover $d=1$).
\end{Example}

\begin{Example}[orientations] \label{Ex:Orientations}
 If $\mathcal G$ is the $d$-dimensional geometry of fiberwise
 orientations, as in~Example~\ref{Ex:RigidGeometries2}, we denote the
 resulting smooth category by $d\Bord^{\mathrm{or}}$. In this case,
 the fibers $X_s$ of an object $X/S \in d\Bord^{\mathrm{or}}_n$ carry
 orientations, and the maps $F$ are required to be
 orientation-preserving when restricted to the fibers. That this is
 a condition rather than additional data for~$F$ reflects the fact
 that $\mathcal{G}$ is a sheaf on $\Fam^d$, i.e., discrete as a
 stack.
\end{Example}

\subsection{Geometric field theories}
\label{sec:geom-field-theor}

Conceptually, a field theory should be a symmetric monoidal functor
from a suitable bordism category to the category of vector spaces. To
put this concept into our setup, we need our source and target
categories, as well as the functor, to be smooth. As source we take
$\GBord$ for some geometry $\G$, as defined above. To specify the
target, we need to fix a notion of ``smooth family'' of~vector spaces.
Initially, we will study field theories taking values in the smooth
category $\Vect$ of~finite-dimensional vector bundles on $\Man$,
obtained by applying the procedure of
Example~\ref{Ex:SmoothCatFromStacks} to the stack (of categories) of
finite-dimensional vector bundles. Later, in Section~\ref{sec:values-in-sheaves}, we will consider the more general case of
$C^\infty$-modules.
These categories are symmetric monoidal using the tensor product of
vector spaces, respectively modules.

\begin{Definition}[geometric field theories]
 \label{def:gft}
 Let $\G$ be a $d$-dimensional geometry. A ($d$-dimen\-sio\-nal) {\em
 field theory with geometry $\G$} is a symmetric monoidal smooth
 functor
 \begin{equation*}
 Z\colon\ \GBord \longrightarrow \Vect.
 \end{equation*}
 A morphism of field theories is a smooth, symmetric monoidal natural
 transformation.
\end{Definition}

Denoting by $\mathrm{Fun}^{\otimes}$ the groupoid of functors between
smooth categories, together with smooth, invertible natural transformations,
we denote by
\begin{equation*}
 \G\mathrm{FT} := \mathrm{Fun}^{\otimes}(\GBord, \Vect).
\end{equation*}
the groupoid of functorial field theories for a given geometry $\mathcal{G}$.

Field theories as functors between smooth categories are complicated
objects, due to the fact that stacks form a $2$-category. A field
theory $Z$ consists of the following data. First, for every object
$[n] \in \Delta$, there is a map of stacks
\begin{equation*}
Z_n\colon\ \GBord_n \longrightarrow \Vect_n.
\end{equation*}
However, since stacks form a 2-category, we cannot expect that these
strictly commute with the structure maps in $\Delta$; instead, for
each morphism $\kappa\colon [m] \rightarrow [n]$ in $\Delta$, there is
a 2-morphism (i.e., a natural isomorphism) $\zeta^\kappa\colon Z_m
\kappa^*\rightarrow \kappa^*Z_n$. If $\eta\colon [k] \rightarrow [m]$
is another map in $\Delta$, then the corresponding 2-morphisms have to
satisfy the coherence condition
\begin{equation}
 \label{eq:coh-sm-fun}
 \zeta^{\kappa\circ \eta}
 = \eta^* \zeta^\kappa \circ \zeta^\eta \kappa^*.
\end{equation}
Visually, this is depicted by
\begin{equation*}
 \begin{tikzcd}
 \GBord_n \ar[r, "\kappa^*"] \ar[d, "Z_n"'] &
 \GBord_m
 \ar[d, "Z_m"]
 \ar[dl, "\zeta^\kappa", Rightarrow, shorten <=3ex, shorten >=3ex]
 \ar[r, "\eta^*"] &
 \GBord_k \ar[d, "Z_k"]
 \ar[dl, "\zeta^\eta", Rightarrow, shorten <=3ex, shorten >=3ex]
 \\
 \Vect_n \ar[r, "\kappa^*"'] &
 \Vect_m \ar[r, "\eta^*"'] &
 \Vect_k
 \end{tikzcd}
 \cong
 \begin{tikzcd}
 \GBord_n \ar[r, "\eta^*\kappa^*"] \ar[d, "Z_n"'] &
 \GBord_m
 \ar[d, "Z_k"]
 \ar[dl, "\zeta^{\kappa\circ \eta}",
 Rightarrow, shorten <=3ex, shorten >=3ex]
 \\
 \Vect_n \ar[r, "\eta^*\kappa^*"'] &
 \Vect_k.
 \end{tikzcd}
\end{equation*}
(Here, we are using the strictness of $\GBord$ and $\Vect$, that is,
the fact that $(\kappa\eta)^*$ and $\eta^*\kappa^*$ are \emph{equal};
otherwise the identification of the two diagrams would involve,
additionally, the coherence data $\eta^*\kappa^* \cong
(\kappa\eta)^*$.) Fortunately, in our setting, the data of a field
theory can be simplified considerably, as the following lemma shows.

\begin{Lemma}[strictification]
 \label{LemmaStrictification}
 Let $\mathcal B$ be a strict smooth category and let $\mathcal V$ be
 a smooth category obtained from the procedure of
 Example~$\ref{Ex:SmoothCatFromStacks}$. Then any smooth functor
 $Z\colon \mathcal B \rightarrow \mathcal V$ has a~canonical
 strictification, that is, there exists a canonically isomorphic
 functor $Z' = \{Z'_n,\, \zeta'^\kappa\}$ such that $Z'_m \kappa^* =
 \kappa^*Z'_n$ and $\zeta'^\kappa = \id$ for every morphism
 $\kappa\colon [m]\rightarrow [n]$ in $\Delta$.
\end{Lemma}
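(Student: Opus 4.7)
The plan is to construct the strictification by conjugating each $Z_n$ using the 2-isomorphisms $\zeta^{d_j}$ attached to the vertex maps $d_j\colon[0]\to[n]$, $j=0,\dots,n$. The key enabling observation is that, by Example~\ref{Ex:SmoothCatFromStacks}, an object of $\mathcal V_n(S)$ is literally a tuple $(V_n,\dots,V_0;f_n,\dots,f_1)$, so $\mathcal V_n$ agrees on the nose with the iterated strict fiber product $\mathcal V_1\times_{\mathcal V_0}\cdots\times_{\mathcal V_0}\mathcal V_1$, and each simplicial operator $\kappa^*$ on $\mathcal V$ acts by the obvious reindexing of vertices together with composition or insertion of identities for the edges. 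This strictness of the target is what allows us to replace the 2-isomorphisms of $Z$ with honest equalities.

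Concretely, for $B \in \mathcal B_n(S)$ with $Z_n(B) = (V_n,\dots,V_0;f_n,\dots,f_1)$, I set $\beta_j^{(n)} := \zeta^{d_j}_B\colon Z_0(d_j^*B) \xrightarrow{\sim} V_j$ and define
\begin{equation*}
 Z'_n(B) := \bigl(V'_n,\dots,V'_0;\,f'_n,\dots,f'_1\bigr),\qquad V'_j := Z_0(d_j^*B),\qquad f'_j := \bigl(\beta_j^{(n)}\bigr)^{-1} \circ f_j \circ \beta_{j-1}^{(n)}.
\end{equation*}
On morphisms, $Z'_n$ acts component-wise by $Z_0 \circ d_j^*$; the required squares commute by naturality of the $\zeta^{d_j}$ together with functoriality of $Z_n$. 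The tuple $\beta_B := \bigl(\beta_n^{(n)},\dots,\beta_0^{(n)}\bigr)$ at each $B$ then assembles into a natural isomorphism $Z'_n \Rightarrow Z_n$ at every simplicial level.

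The main step is to verify the strict simplicial identities $Z'_m\kappa^* = \kappa^*Z'_n$ for each $\kappa\colon[m]\to[n]$. Agreement on vertices is immediate from strictness of $\mathcal B$ via $\kappa\circ d_j^{[m]} = d_{\kappa(j)}^{[n]}$. Agreement on edges is the crucial point and follows from the coherence cocycle~\eqref{eq:coh-sm-fun}: applied to $d_{\kappa(j)}^{[n]} = \kappa \circ d_j^{[m]}$, it yields $\beta_j^{(m)}(\kappa^*B) = \gamma_j^{-1} \circ \beta_{\kappa(j)}^{(n)}(B)$, where $\gamma_j$ is the $j$-th component of $\zeta^\kappa_B$. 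A short computation, using the compatibility $g_j\circ \gamma_{j-1} = \gamma_j \circ h_j$ built into the definition of a morphism in $\mathcal V_m$ (with $h_j$ the edges of $Z_m(\kappa^*B)$ and $g_j$ the composed edges of $\kappa^*Z_n(B)$ as spelled out in Example~\ref{Ex:SmoothCatFromStacks}), shows that both edges collapse to $\bigl(\beta_{\kappa(j)}^{(n)}\bigr)^{-1}\circ g_j \circ \beta_{\kappa(j-1)}^{(n)}$. Once strictness of $Z'$ is established, the same coherence cocycle promotes $\beta$ to a smooth natural isomorphism $Z' \cong Z$, and canonicity is clear since $\beta$ is read off tautologically from the data of $Z$. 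The main obstacle throughout is purely one of bookkeeping, tracking how simplicial operators simultaneously duplicate vertices and compose edges: this is controlled by the fact that $\beta$-conjugation is compatible with composition of morphisms and sends identities to identities.
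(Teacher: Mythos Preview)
Your proposal is correct and follows essentially the same approach as the paper: both construct $Z'_n$ by conjugating $Z_n$ with the coherence isomorphisms $\zeta^{d_j}$ attached to the vertex inclusions $d_j\colon[0]\to[n]$ (the paper writes these as $\kappa_i$), and both verify strictness by applying the cocycle condition~\eqref{eq:coh-sm-fun} to the factorization $d_{\kappa(j)}^{[n]}=\kappa\circ d_j^{[m]}$. Your algebraic bookkeeping with the $\gamma_j$ is exactly the content of the paper's commutative diagram chase showing that the composite vertical maps equal $\zeta^{\kappa(i)}_X$.
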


\begin{proof}
 Fix an integer $n \geq 0$ and $X \in \mathcal B_n$, and denote by
 $\kappa_i\colon [0] \to [n]$, $0\leq i\leq n$, the morphism in
 $\Delta$ mapping $0 \mapsto i$. Set
 \begin{equation*}
 V_i = Z_n(\kappa_i^* X) \qquad\text{and}\qquad
 W_i = \kappa_i^*(Z_n(X)),
 \end{equation*}
 and note that we have natural isomorphisms
 \begin{equation*}
 \zeta^i = \zeta^{\kappa_i}_X\colon\ V_i \to W_i.
 \end{equation*}
 By our assumption on $\mathcal V$, $Z_n(X) \in \mathcal V_n$ is a
 chain of morphisms
 \begin{equation*}
 Z_n(X) = \big( W_0 \xrightarrow{\alpha_1} W_1 \xrightarrow{\alpha_2} \cdots
 \xrightarrow{\alpha_n} W_n \big)
 \end{equation*}
 in the stack of which $\mathcal V$ is the nerve. Towards defining
 the functor $Z'_n\colon \mathcal B_n \to \mathcal V_n$, set
 $Z'_n(X)$ to be chain of morphisms $V_0 \to \cdots \to V_n$ such the
 diagram below commutes:
 \begin{equation*}
 \begin{tikzcd}
 V_0 \ar[r] \ar[d, "\zeta^0"]
 & V_1 \ar[r] \ar[d, "\zeta^1"]
 & \cdots\ar[r]
 & V_n \ar[d, "\zeta^n"]
 \\
 W_0 \ar[r, "\alpha_1"]
 & W_1 \ar[r, "\alpha_2"]
 & \cdots\ar[r, "\alpha_n"]
 & W_n.
 \end{tikzcd}
 \end{equation*}
 The above diagram also fixes the effect of $Z'_n$ on morphisms of
 $\mathcal B_n$, if we insist that the collection $\big(\zeta^0, \dots,
 \zeta^n\big)$ defines a natural transformation $\xi_n\colon Z'_n \to
 Z_n$.

 It remains to show that the collection $\{ Z'_n \}$ defines a strict
 smooth functor, that is, the diagram
 \begin{equation*}
 \begin{tikzcd}
 \mathcal B_n \ar[r, "\kappa^*"] \ar[d, "Z'_n"]
 & \mathcal B_m \ar[d, "Z'_m"]
 \\
 \mathcal V_n \ar[r, "\kappa^*"]
 & \mathcal V_m
 \end{tikzcd}
 \end{equation*}
 commutes strictly for every morphism $\kappa\colon [m] \to [n]$ in
 $\Delta$. Now, $\kappa^*Z'_n(X)$ is the chain of morphisms
 \begin{equation}
 \label{eq:1}
 V_{\kappa(0)} \to V_{\kappa(1)} \to \cdots \to V_{\kappa(m)},
 \end{equation}
 obtained from $Z'_n(X)$ by appropriate compositions or insertion of
 identities. On the other hand, $Z'_m(\kappa^*X)$ is a chain of~mor\-phisms of the form
 \begin{equation}
 \label{eq:2}
 Z_0(\kappa_0^* (\kappa^* X))
 \to Z_0(\kappa_1^* (\kappa^* X))
 \to \cdots
 \to Z_0(\kappa_m^* (\kappa^* X)).
 \end{equation}
 By strictness of $\mathcal B$ as a simplicial object, we have
 $\kappa_i^* \kappa^* X = (\kappa \circ \kappa_i)^* X =
 \kappa_{\kappa(i)}^* X$, so that the $i$th object in \eqref{eq:2} is
 $Z_0(\kappa_i^*(\kappa^* X)) = V_{\kappa(i)}$. It remains to see
 that the morphisms in the chains~\eqref{eq:1} and~\eqref{eq:2}
 are identical. Consider the commutative diagram below:
 \begin{equation*}
 \begin{tikzcd}[column sep=small]
 Z'_m(\kappa^*X) \ar[d, "\xi_m"]
 & =
 & Z_0(\kappa_0^* (\kappa^* X)) \ar[r] \ar[d, "\zeta^0_{\kappa^*X}"]
 & Z_0(\kappa_1^* (\kappa^* X)) \ar[r] \ar[d, "\zeta^1_{\kappa^*X}"]
 & \cdots \ar[r]
 & Z_0(\kappa_m^* (\kappa^* X)) \ar[d, "\zeta^m_{\kappa^*X}"]
 \\
 Z_m(\kappa^*X) \ar[d, "\zeta^\kappa_X"]
 & =
 & U_0 \ar[r] \ar[d, "\kappa_0^*\zeta^\kappa_X"]
 & U_1 \ar[r] \ar[d, "\kappa_1^*\zeta^\kappa_X"]
 & \cdots \ar[r]
 & U_m \ar[d, "\kappa_m^*\zeta^\kappa_X"]
 \\
 \kappa^* Z_n(X)
 & =
 & W_{\kappa(0)} \ar[r]
 & W_{\kappa(1)} \ar[r]
 & \cdots \ar[r]
 & W_{\kappa(m)}.
 \end{tikzcd}
 \end{equation*}
 We will be done if we show that the $i$th composite vertical map is
 equal to $\zeta^{\kappa(i)}_X\colon V_{\kappa(i)} \to
 W_{\kappa(i)}$, since in this case we can replace the top row by~\eqref{eq:1} and still have a commutative diagram. But this fact is
 simply the coherence condition~\eqref{eq:coh-sm-fun}, applied to the
 case $\eta = \kappa_i$.
\end{proof}

\begin{Remark}[field theories as strict functors]
 Since our bordism categories, as well as the smooth category of
 vector bundles, satisfy the assumptions of the above lemma, it
 follows that we make no mistake by defining field theories as strict
 symmetric monoidal functors $Z\colon \GBord \rightarrow \Vect$, and
 their morphisms as strict natural transformations. We will work in
 this context in the next section, which simplifies our life
 considerably.
\end{Remark}

\section{Classification of one-dimensional field theories}
\label{sec:classification}

In this section, we discuss the classification of one-dimensional
field theories over a manifold $M$. Let us briefly discuss the
classical case (with ordinary categories and $M = \pt$) in order to
see what to expect. The one-dimensional (ordinary) bordism category
$1\Bord$ is easy to describe. The objects, compact zero-dimensional
manifolds, are just finite collections of points. To understand the
morphisms, one needs the classification of compact, connected
one-dimensional manifolds with boundary; this classification is very
simple (say, using Morse theory). Apart from the circle, which is the
only closed example, we have two elbows (the one with two incoming
boundary components and zero outcoming boundary components, as well as
its dual) and the interval (with one incoming and one outgoing
boundary component).

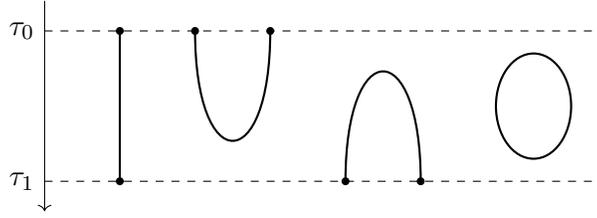
\begin{figure}[h]
 \centering
 \begin{tikzpicture}[x=1cm, y=2cm]
 \draw[bordism]
 (1, 1) pic{dot}
 -- (1, 0) pic{dot} ;
 \draw[bordism]
 (2, 1) pic{dot}
 to[out=-90, in=-90, looseness=5]
 (3, 1) pic{dot} ;
 \draw[bordism]
 (4, 0) pic{dot}
 to[out=90, in=90, looseness=5]
 (5, 0) pic{dot} ;
 \draw[bordism]
 (6.5, 0.5) circle[x radius = .5, y radius = .35];
 \draw[cut] (0, 1) -- (7.5, 1);
 \draw[cut] (0, 0) -- (7.5, 0);
 \draw[axis, ->] (0, 1.2) -- (0, -0.2) ;
 \draw \foreach \i in {0,1} {(0, 1- \i) node[left] {$\tau_\i$}};
 \end{tikzpicture}
 \caption{All possible connected unoriented one-dimensional bordisms.
 We call them interval, left elbow, right elbow, and circle,
 respectively. The cut functions are $\rho_i = t - \tau_i$, so
 these pictures are read from top to bottom.}
\end{figure}

We briefly recall the well-known construction of one-dimensional field theories from vector spaces.

\begin{Construction}[unoriented $1\TFT$s]
 \label{ConstructionUnorientedTFTs}
 In the unoriented case, a field theory can be obtained from the data
 of a finite-dimensional vector space $V$ over $\mathbb{K} = \R$ or
 $\C$ together with a symmetric nondegenerate bilinear form $\beta$
 as follows:
 \begin{enumerate}[1.]\itemsep=0pt
 \item To a collection of $k$ points, we assign the $k$-fold tensor
 product $V^{\otimes k}$. In particular, the ground field $\mathbb{K}$
 corresponds to the empty set.
 \item To the interval, we assign the identity homomorphism on $V$.
 \item To the elbow with two incoming boundary components, we assign
 the bilinear form $\beta$.
 \item To the elbow with two outgoing boundary components, we assign
 $\tau := \sum_{i=1}^n \varepsilon_i b_i \otimes b_i \in V \otimes
 V$, where $b_1, \dots, b_n$ is a generalized orthonormal basis for
 $\beta$. This means that $\beta(b_i, b_j) = \varepsilon_i
 \delta_{ij}$, where $\varepsilon = \pm 1$ depending on the
 signature of $\beta$.
 \item To the circle, we assign the number $n = \dim(V)$.
 \end{enumerate}
\end{Construction}

 There are several things to check in order to see that this defines
 a field theory: For example, one has to check that $\beta \circ \tau
 = n$, as well as the \emph{snake identity}
 \begin{equation}
 \label{SnakeIdentity}
 (\beta \otimes \id) \circ (\id \otimes \tau) = \id.
 \end{equation}
 Conversely, any one-dimensional field theory $Z$ determines such a
 pair $(V, \beta)$: just set $V = Z(\mathrm{pt})$ and $\beta$ to be
 the value of the elbow with two incoming boundary components. Then
 it follows from the snake identity~\eqref{SnakeIdentity} that $V$
 must be finite-dimensional and $\beta$ must be nondegenerate.
 Moreover, the fact that $\beta$ must be symmetric follows from the
 observation that the elbows have an automorphism that switches the
 two boundary components.

The above construction can be upgraded to an equivalence of categories
\begin{equation*}
 1\TFT \cong \Vect^{\sim}_\beta,
\end{equation*}
where $\Vect^\sim_\beta$ is the groupoid of finite-dimensional vector
spaces equipped with a nondegenerate symmetric bilinear form, with
maps being isometries.

Things change if we equip our bordisms with non-discrete data. In the
following, we will consider the geometry where objects $X/S$ come
equipped with a smooth map $\gamma\colon X \rightarrow M$, where $M$
is some fixed target manifold, as in
Example~\ref{ExampleBordismsOverAManifold}. As a first approximation,
we can think of objects of the bordism category as points in $M$,
while morphisms are essentially paths in $M$. The corresponding smooth
category is denoted by $1\Bord(M)$, and the category of field theories
will be denoted by
\begin{equation*}
 1\TFT(M) := \mathrm{Fun}^\otimes\bigl(1\Bord(M), \Vect\bigr).
\end{equation*}
It turns out that this groupoid is equivalent to the groupoid
$\Vect^\sim_{\nabla, \beta}(M)$, the objects of which
 are finite-dimensional vector bundles over $M$ with a fiberwise
nondegenerate symmetric bilinear form and a compatible connection,
and the morphisms of which are connection-preserving isometries.

\begin{Theorem}[classification of $1$-TFTs]
 \label{Thm:Classification}
 There is an equivalence of categories
 \begin{equation*}
 1\TFT(M) \cong \Vect^{\sim}_{\nabla, \beta}(M),
 \end{equation*}
 which is natural in $M$.
\end{Theorem}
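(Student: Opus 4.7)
The plan is to construct a pair of functors $\Phi \colon \Vect^{\sim}_{\nabla,\beta}(M) \to 1\TFT(M)$ and $\Psi \colon 1\TFT(M) \to \Vect^{\sim}_{\nabla,\beta}(M)$ and verify that they are mutually inverse up to natural isomorphism. Naturality in $M$ will fall out of the constructions, so for the bulk of the argument we fix~$M$.

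\emph{From bundles to field theories.} Given a triple $(E, \nabla, \beta)$, build $Z = \Phi(E, \nabla, \beta)$ simplicial level by level, exploiting the strictification of Lemma~\ref{LemmaStrictification}. For an object $(X/S; \rho_0; \gamma)$ of $1\Bord_0(M)$, condition~(O3)$(b)$ forces the core $X_0^0$ to be transverse to the fibers of $X \to S$, so the projection $X_0^0 \to S$ is a local diffeomorphism; after shrinking to a neighborhood of the core (which is invisible to the equivalence relation on morphisms) one may treat it as a diffeomorphism, and set $Z_0(X/S)$ to be the pullback of $E$ along $S \xrightarrow{\sim} X_0^0 \xrightarrow{\gamma} M$. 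On $1\Bord_1(M)$, the core $X_0^1$ is an $S$-family of compact intervals, elbows and circles endowed with a map to~$M$; assign fiberwise parallel transport along the intervals, the form $\beta$ on a left elbow, its dual tensor $\tau$ on a right elbow, and the trace on each circle. Higher simplicial levels are forced by the Segal condition; compatibility under the face and degeneracy maps reduces to the snake identity and $\nabla$-parallelism of $\beta$.

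\emph{From field theories to bundles.} Given $Z \in 1\TFT(M)$, define $\Psi(Z) = (E, \nabla, \beta)$ as follows. Consider the tautological $M$-family $X = M \times \R$ with $S = M$, cut function $\rho_0(m, t) = t$, and geometric map $\gamma = \pr_M$; let $E = Z_0(X/M)$, which is a vector bundle on $M$ by smoothness of $Z$. To read off $\beta$, evaluate $Z_1$ on the analogous $M$-family of left elbows based at the diagonal. For $\nabla$, apply $Z_1$ to one-parameter deformations of paths: a tangent vector $v \in T_x M$ is represented by a short curve $c \colon (-\varepsilon, \varepsilon) \to M$, and the derivative at $0$ of $Z_1$ applied to the corresponding path bordism defines $\nabla_v$. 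Symmetry of $\beta$ comes from the automorphism exchanging the boundary components of the elbow, nondegeneracy and fiberwise finite-dimensionality of $E$ follow from the snake identity applied in families, and $\nabla$-parallelism of $\beta$ is a consequence of monoidal naturality applied to a family of elbows joined to a path bordism.

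\emph{Mutual inverseness and the main obstacle.} The equivalence $\Psi \circ \Phi \cong \id$ is a direct unwinding: parallel transport of~$(E, \nabla)$ read off infinitesimally returns~$\nabla$, and the elbow recipe returns~$\beta$. The real content lies in $\Phi \circ \Psi \cong \id$, i.e., the germ-invariance statement highlighted in Section~\ref{SubsectionBordismsSegalApproach}: for an arbitrary object $(X/S; \rho_0; \gamma)$, the bundle $Z_0(X/S)$ must be canonically isomorphic to the pullback of $\Psi(Z)$ along $(X_0^0 \to S)^{-1} \circ \gamma|_{X_0^0}$, independently of the remaining germ data of~$\gamma$. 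The strategy, as flagged in Section~\ref{sec:introduction}, is to exploit higher simplicial data: interpolate between $(X/S; \rho_0; \gamma)$ and its ``thin'' reduction (a tubular neighborhood of the core with a standard extension of $\gamma|_{X_0^0}$) by a one-parameter family realized as an element of $1\Bord_1(S \times \R)$, and apply $Z$ together with the completeness condition of Definition~\ref{def:sm-cat}(2) to extract a canonical comparison isomorphism. An analogous argument at simplicial level one takes care of morphisms, after which Lemma~\ref{LemmaOnNaturalTransformations} ensures that a natural transformation on level-one data uniquely extends. This germ-invariance step is where the main difficulty lies; once it is established, the rest of the equivalence is the family version of the classical argument underlying Construction~\ref{ConstructionUnorientedTFTs}.
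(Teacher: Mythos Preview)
Your overall architecture---build $\Phi$ and $\Psi$, then prove $\Phi\circ\Psi\cong\id$---matches the paper's, and you have correctly located the heart of the matter in the germ-invariance step. However, the mechanism you propose for that step is a genuine gap.

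You propose to interpolate between an arbitrary object $(X/S;\rho_0;\gamma)$ and its ``thin'' reduction via a one-parameter family in $1\Bord_1(S\times\R)$, and then invoke the completeness condition of Definition~\ref{def:sm-cat}(2). The trouble is twofold. First, any such interpolation necessarily passes, at one endpoint, through a map which is \emph{not} a diffeomorphism (it collapses the fiber direction onto the core), so it is not a morphism in $1\Bord_0$ and the completeness condition, which concerns invertible morphisms arising from thin bordisms in $1\Bord_1$, does not directly supply the comparison isomorphism you want. Second, and relatedly, even if you phrase things as a family of nonthin bordisms shrinking to a thin one, you need to know that $Z$ of every such bordism is \emph{invertible}---this is not automatic and is the content of the invertibility lemma (Lemma~\ref{LemmaIso}), which is proved by a nontrivial bootstrapping/continuity argument. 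Without invertibility you cannot conclude anything about the limit. The paper handles germ-invariance not through completeness but through an explicit technique: it first restricts to the subcategory $\Path_c(M)$ of paths with sitting instants (where the germ is manifestly irrelevant), proves the classification there, and then uses \emph{modification functions} (Definition~\ref{DefModFunction}) together with the invertibility lemma and an independence-of-choice result (Lemma~\ref{LemmaIndependenceOfChi}) to build an explicit extension functor and a natural isomorphism $\eta\colon\id\to\ext\circ\res$ (Proposition~\ref{PropConstructionOfSection}). Your interpolation idea is morally the same as the modification function idea, but the argument only goes through once you have Lemma~\ref{LemmaIso} and Lemma~\ref{LemmaSittingInstant} (introducing a sitting instant via a limit of diffeomorphisms).

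A second, related gap is in your extraction of $\nabla$: differentiating $Z_1$ on a short path produces a candidate for $\nabla_v$, but you must verify that this is well-defined (independent of the representing curve and the germ) and that it is a connection whose parallel transport actually agrees with $Z_1$ on all paths. The paper does this via Proposition~\ref{PropMultiplicativity}, which requires as input the multiplicativity property $P(\mathfrak{s}_{a,1}\gamma)\circ P(\mathfrak{s}_{0,a}\gamma)=P(\gamma)$; establishing this multiplicativity for the section $P$ extracted from $Z$ again hinges on the modification function technique and Lemma~\ref{LemmaIndependenceOfChi}. As written, your proposal asserts the conclusion without supplying this verification.
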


The remainder of this section is dedicated to the proof of this
result. First, in Section~\ref{SectionConstructionOfFieldTheories},
we explain how to construct elements of $1\TFT(M)$ from a vector
bundle with connection and bilinear form, in a functorial way, cf.\
Proposition~\ref{PropConstructionOfFieldTheories} below. While a
little tricky in the detail, this is more or less the standard
construction. The main work of the proof is done in
Section~\ref{SectionFunctorsOnPathM}, where we restrict our attention
to the path subcategory of the bordism category, where all issues already arise.
First, we restrict to the case that paths have sitting
instants near the marked points (see Definition~\ref{DefPathCat} for the precise definition),
which is rather standard. The main
new idea is then to reduce the general case to this one using
so-called \emph{modification functions}. The proof is then finished in
Section~\ref{SectionFinishProof}. Finally, in
Section~\ref{SectionTheOrientedCase}, we comment on the oriented case.

\medskip

In order to prove Theorem~\ref{Thm:Classification}, one needs a result
that reconstructs a connection from parallel transport data. To set
up the one we use, denote by $C^\infty([0, 1], M)$ the set of smooth
maps from $[0, 1]$ to $M$. It has a natural (infinite-dimensional)
smooth manifold structure modelled on a~nuclear Fréchet space, and
there are smooth evaluation maps
\begin{equation*}
 \mathrm{ev}_t \colon\quad C^\infty([0, 1], M) \rightarrow M,
 \qquad
 \gamma \mapsto \gamma(t)
\end{equation*}
for $t \in [0, 1]$. Thus, given a vector bundle $\V$ over $M$, we can
form the pullback bundles $\mathrm{ev}_t^*\V$. The tensor product
$\mathrm{ev}_0^* \V^\vee\otimes \mathrm{ev}_1^*\V$ is the vector
bundle over $C^\infty([0, 1], M)$ whose fiber at a path~$\gamma$ is
given by $\mathrm{Hom}\big(\V_{\gamma(0)}, \V_{\gamma(1)}\big)$. Finally,
given $0 \leq a \leq b \leq 1$, we let $\mathfrak{s}_{a, b}$ be the
smooth map from $C^\infty([0, 1], M)$ to itself defined by
\begin{equation*}
 (\mathfrak{s}_{a, b}\gamma)(t) = \gamma(a + (b-a)t).
\end{equation*}

\begin{Proposition}
 \label{PropMultiplicativity}
 Let $\V$ be a vector bundle over $X$ and let $P$ be a smooth section
 of the bundle $\mathrm{ev}_0^* \V^\vee \otimes \mathrm{ev}_1^* \V$
 over $C^\infty([0, 1], X)$. Assume that $P$ maps constant paths to
 the identity and that we have
 \begin{equation} \label{Multiplicativity}
 P(\mathfrak{s}_{a, 1} \gamma) \circ P(\mathfrak{s}_{0, a}\gamma)
 = P(\gamma),
 \end{equation}
 for all $\gamma \in C^\infty([0, 1], M)$, where $\mathfrak s_{a, b}$
 is the cutting-and-rescaling map defined above. Then there exists a
 unique connection $\nabla$ on $\V$ such that $P$ is the parallel transport
 along $\nabla$.
\end{Proposition}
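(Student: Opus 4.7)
My plan is to construct the desired connection $\nabla$ locally by differentiating $P$ at constant paths, and then use the multiplicativity~\eqref{Multiplicativity} to verify that $P$ coincides with the parallel transport of this $\nabla$. Uniqueness is then automatic from the standard fact that a connection on a vector bundle is determined by its parallel transport operators.

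\textbf{Defining the connection form.} The key infinitesimal observation is that the smooth curve $t \mapsto \mathfrak{s}_{0,t}\gamma$ in $C^\infty([0,1], M)$ starts at $t = 0$ at the constant path $c_{\gamma(0)}$, with velocity
\begin{equation*}
 \frac{d}{dt}\Big|_{t=0}\mathfrak{s}_{0,t}\gamma \;=\; \bigl( s \mapsto s\,\dot\gamma(0) \bigr) \;\in\; T_{c_{\gamma(0)}}C^\infty([0,1], M) \;\cong\; C^\infty\bigl([0,1], T_{\gamma(0)}M\bigr).
\end{equation*}
More generally, for any smooth curve $h \mapsto \eta_h$ in $C^\infty([0,1], M)$ with $\eta_0 = c_x$ and velocity $s \mapsto sv$ at $h = 0$, the chain rule (applied in any local trivialization of $\V$) shows that $\frac{d}{dh}\bigl|_{h=0} P(\eta_h)$ depends only on $(x, v)$ and is linear in $v$. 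Fixing a local trivialization of $\V$ over a chart $U \subseteq M$, I then define
\begin{equation*}
 A_x(v) := -\frac{d}{dt}\Big|_{t=0} P(\mathfrak{s}_{0,t}\gamma) \;\in\; \End(V_x)
\end{equation*}
for any smooth $\gamma$ with $\gamma(0) = x$, $\dot\gamma(0) = v$; smoothness of $A$ in $x$ follows by choosing straight-line paths $\gamma_{x,v}$, smoothly depending on $(x,v)$, and using smoothness of $P$. Set $\nabla := d + A$ in the trivialization.

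\textbf{Recovering $P$ as parallel transport.} Fix $\gamma$ and set $U_\gamma(t) := P(\mathfrak{s}_{0,t}\gamma)$, so that $U_\gamma(0) = \id$. With $a = t/(t+h)$, a direct computation gives the affine reparametrization identities $\mathfrak{s}_{0,a}\mathfrak{s}_{0,t+h}\gamma = \mathfrak{s}_{0,t}\gamma$ and $\mathfrak{s}_{a,1}\mathfrak{s}_{0,t+h}\gamma = \mathfrak{s}_{t,t+h}\gamma$, so applying~\eqref{Multiplicativity} to $\mathfrak{s}_{0,t+h}\gamma$ yields
\begin{equation*}
 U_\gamma(t+h) \;=\; P(\mathfrak{s}_{t,t+h}\gamma)\,U_\gamma(t).
\end{equation*}
The smooth curve $h \mapsto \mathfrak{s}_{t,t+h}\gamma$ starts at $c_{\gamma(t)}$ with velocity $s \mapsto s\dot\gamma(t)$ at $h = 0$, so the previous paragraph gives $\frac{d}{dh}\bigl|_{h=0} P(\mathfrak{s}_{t,t+h}\gamma) = -A_{\gamma(t)}(\dot\gamma(t))$. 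Therefore
\begin{equation*}
 U_\gamma'(t) \;=\; -A_{\gamma(t)}(\dot\gamma(t))\,U_\gamma(t), \qquad U_\gamma(0) = \id,
\end{equation*}
which is precisely the parallel transport ODE for $\nabla$. By uniqueness of ODE solutions, $P(\gamma) = U_\gamma(1)$ is the parallel transport of $\nabla$ along $\gamma$.

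\textbf{Globalization, uniqueness, and expected difficulty.} On the overlap of two trivializations, both locally defined connections realize $P$ (restricted to paths in the overlap) as their parallel transport, so they agree; hence $\nabla$ patches to a global connection on $\V$. The same rigidity yields uniqueness: any connection with parallel transport $P$ must equal the one constructed. Paths leaving a single chart are handled by subdividing into finitely many chart pieces and reapplying~\eqref{Multiplicativity}. The main technical obstacle is justifying the differentiations rigorously in the Fréchet manifold $C^\infty([0,1], M)$—in particular, identifying the tangent vector to the curve $t \mapsto \mathfrak{s}_{0,t}\gamma$ at a constant path, and verifying the chain-rule statement for the composite $h \mapsto P(\eta_h)$—but once these analytic foundations are in place, the algebraic content of the proof is straightforward.
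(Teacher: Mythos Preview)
Your proposal is correct and follows essentially the same approach as the paper: both define the connection by differentiating $P$ along the curve of rescaled paths at a constant path, identify the velocity of that curve as $s \mapsto sv$, and then use the multiplicativity relation to show that $t \mapsto P(\mathfrak{s}_{0,t}\gamma)$ satisfies the parallel transport ODE. The only cosmetic difference is that the paper packages the derivative as a covariant derivative $\nabla_v u$ on sections (extracting the Christoffel symbols $\omega^i_j$ afterward), whereas you go directly to the connection $1$-form $A$; the underlying computation and logic are identical.
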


Similar results were obtained by Freed~\cite[Proposition~B1]{MR1337109} and Schreiber and Waldorf~\cite[Lemma~4.1]{MR2520993}; cf.\ also~\cite[Lemma~4.9]{arXiv:1501.00967}.

\begin{proof}
 For $v \in T_pM$, let $\gamma \in C^\infty([0, 1], M)$ be a path
 such that $\gamma(t) = p$ and $\dot{\gamma}(t) = v$ for some $t \in
 (0, 1]$. For any section $u$ of $\V$, set
 \begin{equation*}
 \nabla_v u(p)
 := -\frac{\rm d}{{\rm d} \varepsilon}\bigg|_{\varepsilon=0}
 P(\mathfrak{s}_{t-\varepsilon, t} \gamma)
 u\bigl(\gamma(t-\varepsilon)\bigr),
 \end{equation*}
 noting that $P(\mathfrak{s}_{t-\varepsilon, 1} \gamma)
 u(\gamma(t-\varepsilon)) \in \V_p$ for each $\varepsilon$, hence
 differentiation makes sense. We proceed to show that this
 definition is independent of the choice of $\gamma$ and $t$ and
 defines a connection on~$\V$. In fact, in a local trivialization of
 the bundle $\V$, we have
 \begin{align*}
 (\nabla_v u)^i(p)
 & = -\frac{\rm d}{{\rm d} \varepsilon}\bigg|_{\varepsilon=0}
 P^i_j(\mathfrak{s}_{t-\varepsilon, t} \gamma)
 u^j\bigl(\gamma(t-\varepsilon)\bigr)
 \\
 & = - P^i_j(\mathfrak{s}_{t, t} \gamma)
\frac{\rm d}{{\rm d} \varepsilon}\bigg|_{\varepsilon=0}
 u^j\bigl(\gamma(t-\varepsilon)\bigr)
 - \bigg(\frac{\rm d}{{\rm d} \varepsilon}\bigg|_{\varepsilon=0}
 P^i_j(\mathfrak{s}_{1-\varepsilon, 1} \gamma)\bigg) u^j(p)
 \\
 & = (\partial_v u^i)(p)
 - u^j(p) {\rm d P}^i_j(\mathfrak{s}_{t, t} \gamma)
\frac{\rm d}{{\rm d} \varepsilon}\bigg|_{\varepsilon=0}
 \mathfrak{s}_{t-\varepsilon, t}\gamma,
 \end{align*}
 where we used that $(\mathfrak{s}_{t, t} \gamma)(t) \equiv p$ and
 $P^i_j(\mathfrak{s}_{t, t} \gamma) = \delta^i_j$, since $P$ maps
 constant paths to the identity. Now notice the vector field
 $\frac{\rm d}{{\rm d} \varepsilon}|_{\varepsilon=0} \mathfrak{s}_{t -
 \varepsilon, t}\gamma$ along the constant path $\mathfrak{s}_{t,
 t}\gamma$ is given by
 \begin{equation*}
 \bigg(\frac{\rm d}{{\rm d} \varepsilon}\bigg|_{\varepsilon=0}
 \mathfrak{s}_{t-\varepsilon, t}\gamma\bigg)(s)
 = \frac{\rm d}{{\rm d} \varepsilon}\bigg|_{\varepsilon=0}
 (\mathfrak{s}_{t-\varepsilon, t}\gamma)(s)
 = \frac{\rm d}{{\rm d} \varepsilon}\bigg|_{\varepsilon=0}
 \gamma(t-\varepsilon + \varepsilon s) = s\dot{\gamma}(t) = s v.
 \end{equation*}
 Hence $\nabla_v u(p)$ is independent of the choice of $\gamma$.
 Define
 \begin{equation*}
 \omega^i_j(p) v := -{\rm d} P^i_j(p) V,
 \end{equation*}
 where $p$ denotes the path constant equal to $p$ and $V$ denotes the $T_p M$-valued function $V(t) = tv$. Observe that $\omega^i_j$ defines a matrix of one-forms on $T_p M$. Then
 \begin{equation*}
 (\nabla_v u)^i(p) = \partial_v u^i(p) + u^j(p)\,\omega^i_j(p) v,
 \end{equation*}
 hence $\nabla_v$ is a connection with Christoffel symbols
 $\omega^i_j$. Finally, fix $\gamma \in C^\infty([0, 1], X)$ and
 $u_0 \in \gamma(0)$ and let $u(t) := P(\mathfrak{s}_{0, t}\gamma)
 u_0$. Then we have $u(0) = u_0$ and using $(\mathfrak{s}_{t -
 \varepsilon, t} \gamma)^{\cdot}(t) = t \dot{\gamma}(t)$, we obtain
 \begin{align*}
 \frac{\nabla}{{\rm d} t} u(t)
 & = -\frac{\rm d}{{\rm d}\varepsilon}\bigg|_{\varepsilon=0}
 P(\mathfrak{s}_{t-\varepsilon, t} \gamma) u(t-\varepsilon) \\
 & = -\frac{\rm d}{{\rm d}\varepsilon}\bigg|_{\varepsilon=0}
 P(\mathfrak{s}_{t-\varepsilon, t} \gamma)
 P(\mathfrak{s}_{0, t-\varepsilon}) u_0\\
 & = -\frac{\rm d}{{\rm d} \varepsilon}\bigg|_{\varepsilon=0}
 P(\mathfrak{s}_{0, t} \gamma) u_0 = 0.
 \end{align*}
 Hence $u(t)$ is the parallel transport of $u_0$ along $\gamma$.
\end{proof}

\subsection{Construction of field theories from vector bundles}
\label{SectionConstructionOfFieldTheories}

Let $M$ be some fixed target manifold. In this section, we construct
a field theory from the data of an object $(\V, \nabla, \beta) \in
\Vect_{\nabla, \beta}^\sim(M)$. More precisely, we prove the
following proposition.

\begin{Proposition}[construction of field theories]
 \label{PropConstructionOfFieldTheories}
 There is a functor
 \begin{equation*}
 \Phi \colon\ \Vect_{\nabla, \beta}^\sim (M) \longrightarrow 1\TFT(M),
 \end{equation*}
 which is fully faithful and natural in $M$.
\end{Proposition}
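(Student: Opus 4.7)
The plan is to construct $\Phi(\V, \nabla, \beta)$ explicitly as a strict symmetric monoidal smooth functor $Z \colon 1\Bord(M) \to \Vect$, then exhibit the action of $\Phi$ on morphisms of bundles, and finally prove fully faithfulness by using Lemma~\ref{LemmaOnNaturalTransformations} to reduce natural transformations of field theories to data living at the lowest simplicial levels.

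For the object part, given $B = (X/S; \rho_0, \dots, \rho_n; \gamma) \in 1\Bord_n(M)$, the cut conditions ensure that each zero set $Z_a := \rho_a^{-1}(0)$ is a codimension-one submanifold of $X$ meeting every fiber in a finite set, and properness of the core $X_0^n$ over $S$ makes the ``indexed tensor product'' bundle with fiber $\bigotimes_{x \in (Z_a)_s} \V_{\gamma(x)}$ a smooth finite-dimensional vector bundle over $S$, which I take as the $a$-th object of $Z_n(B)$. The simplicial connecting maps are built componentwise on the core: parallel transport along interval pieces, contraction by $\beta$ on right-elbow components, contraction by the canonical dual tensor $\tau$ on left-elbow components (as in Construction~\ref{ConstructionUnorientedTFTs}), and multiplication by $\dim \V$ on closed circle components. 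Smoothness in $S$ follows from smooth dependence of parallel transport on initial data, and since everything is computed from a neighbourhood of the core, the construction descends to germs and hence to the equivalence classes used in the definition of $1\Bord_n(M)$.

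The simplicial axioms and the Segal compatibility follow from multiplicativity of parallel transport together with the snake identity~\eqref{SnakeIdentity} and the relation $\beta \circ \tau = \dim \V$; the symmetric monoidal structure follows from the identification of fiberwise disjoint union with tensor product. I then invoke Lemma~\ref{LemmaStrictification} (with $\mathcal V = \Vect$ in the form of Example~\ref{Ex:SmoothCatFromStacks}) to strictify $Z$ canonically. The action of $\Phi$ on morphisms is defined by observing that a connection-preserving isometry $f \colon (\V, \nabla, \beta) \to (\V', \nabla', \beta')$ acts fiberwise by the tensor powers $f^{\otimes k}$: these commute with parallel transport because $f$ intertwines $\nabla$ and $\nabla'$, and with $\beta$- and $\tau$-contractions because $f$ is an isometry, so they assemble into a strict smooth natural transformation $\Phi(f)$. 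Naturality of $\Phi$ in $M$ then follows because the whole recipe only uses the bundle, connection and form pulled back along $\gamma$, which is compatible with precomposition by a smooth map $M \to M'$.

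Faithfulness is immediate: restricting $\Phi(f)$ to the level-zero object given by a constant path at $x \in M$ recovers the fiber map $f_x \colon \V_x \to \V'_x$, so $f$ can be reconstructed from $\Phi(f)$. For fullness, a natural transformation $\xi \colon \Phi(\V,\nabla,\beta) \to \Phi(\V',\nabla',\beta')$ is determined, by Lemma~\ref{LemmaOnNaturalTransformations}, by its level-one component, and a fortiori by its level-zero component. Evaluating $\xi_0$ on the universal $M$-family of constant-path objects produces a smooth bundle morphism $f \colon \V \to \V'$; naturality of $\xi$ with respect to interval bordisms over arbitrary smooth paths forces $f$ to intertwine parallel transport, hence $\nabla$, while naturality with respect to right-elbow bordisms forces $f^{\otimes 2}$ to intertwine $\beta$ with $\beta'$. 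The equality $\Phi(f) = \xi$ then reduces, again via Lemma~\ref{LemmaOnNaturalTransformations}, to agreement at level zero, which holds by construction. The main obstacle I anticipate is this last fullness step: the level-zero groupoid of $1\Bord(M)$ consists of germs of paths rather than points, so extracting a bundle morphism over $M$ from $\xi_0$ requires showing that $\xi_0$ is blind to the germ. This is the ``germs versus points'' issue emphasized in Section~\ref{SectionDiscussion}, and is precisely where the sitting-instant reductions and modification functions of Section~\ref{SectionFunctorsOnPathM} will do the real work.
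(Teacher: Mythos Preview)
Your construction and overall strategy match the paper's proof closely: build $Z_{\V,\nabla,\beta}$ by assigning indexed tensor products of fibers at the marked hypersurfaces and parallel transport (suitably contracted by $\beta$ and $\tau$) along the pieces of the core, define $\Phi$ on morphisms by pulling back $\alpha$ along the evaluation maps, and prove fully faithfulness by evaluating a natural transformation at the universal point $\point$ of~\eqref{UniversalPoint}.

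The one place you go astray is your final paragraph. You correctly notice that agreement of $\Phi(f)$ and $\xi$ at level zero is not literally ``by construction'', since level zero contains all germs of paths, not just constant ones. But you then overestimate the difficulty by invoking the sitting-instant and modification-function machinery of Section~\ref{SectionFunctorsOnPathM}. That machinery is needed for \emph{essential surjectivity} (Theorem~\ref{Thm:Classification}), where one must show an \emph{arbitrary} field theory is blind to germs. Here, for fully faithfulness, both the source and target of $\xi$ are already of the form $Z_{\V,\nabla,\beta}$, so their values on objects depend only on $\gamma(a)$ by construction. The remaining issue---that $\xi_{(\gamma;a)}$ itself might depend on the germ---is dispatched by a single naturality square at simplicial level one: given any $(\gamma;a)$, extend $\gamma$ to a path $\tilde\gamma$ which agrees with $\gamma$ near $a$ and is constant near some $b>a$; then the bordism $(\tilde\gamma;a,b)$ has $d_1^*$ with a sitting instant, so $\xi$ and $\Phi(f)$ agree there by the stack naturality you already used, and the invertibility of parallel transport propagates this back to $(\gamma;a)$. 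This is presumably what the paper means by ``it is easy to check that $\Phi(\eta_{\point})=\eta$''. No modification functions are required.
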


Breaking down our general definition to this special case, an object
of $1\Bord(M)_n$ lying over a manifold $S$ is given by a family $X/S$
of one-dimensional manifolds $X_s$, $s \in S$, together with a~map
$\gamma\colon X \rightarrow M$ and functions $\rho_0, \dots, \rho_n
\colon X \rightarrow \R$ which cut out codimension-one submanifolds
\begin{equation*}
 X_a^a = \{ x\in X \mid \rho_a(x) = 0\}.
\end{equation*}
The properness assumption (O3c) implies that the restrictions $X_a^a \cap
X_s$ to the fibers are compact, i.e., finite collections of points.
More generally, we have the following result.

\begin{Lemma}
 \label{Lemma:Sigma00IsCover}
 The submanifold $X_a^a$ of $X$ is a finite covering of $S$ $($with possibly empty fibers$)$.
\end{Lemma}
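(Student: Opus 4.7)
My plan is to show that $\pi|_{X_a^a} \colon X_a^a \to S$ is a proper local diffeomorphism, and then invoke the standard fact that such maps are finite coverings (with possibly empty fibers).

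First I would verify that $X_a^a$ is a smooth codimension-one submanifold of $X$. Note $X_a^a = \rho_a^{-1}(0)$ since the definition in (O3)$(c)$ for $a=b$ becomes $\{x \in X \mid \rho_a(x) \geq 0 \geq \rho_a(x)\} = \rho_a^{-1}(0)$. By condition (O3)$(b)$, at every $x \in X_a^a$, the restriction $d\rho_a|_{T_x(X/S)}$ is non-zero; since $T_x(X/S) \subseteq T_xX$, the full differential $d\rho_a \colon T_x X \to \R$ is surjective. Hence $0$ is a regular value, and $X_a^a$ is a smooth submanifold.

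Next I would show that $\pi|_{X_a^a}$ is a local diffeomorphism by a dimension count. At $x \in X_a^a$, the tangent space $T_x X_a^a = \ker(d\rho_a \colon T_xX \to \R)$ has dimension $\dim S$. Because $d\rho_a$ is non-zero on the $1$-dimensional vertical subspace $T_x(X/S) = \ker(d\pi_x)$, we have $T_x X_a^a \cap \ker(d\pi_x) = 0$. Consequently $d\pi|_{T_x X_a^a} \colon T_x X_a^a \to T_{\pi(x)} S$ is injective between spaces of equal dimension, so it is an isomorphism, and $\pi|_{X_a^a}$ is a local diffeomorphism.

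Finally, properness of $\pi|_{X_a^a}$ is immediate from (O3)$(c)$ with $a=b$. A proper local diffeomorphism between smooth manifolds is a finite covering map (fibers are finite because they are discrete and compact, and local triviality follows from the local diffeomorphism property together with properness); fibers may of course be empty at points $s \in S$ where $\rho_a$ has no zero on $X_s$. I don't anticipate any real obstacle here; the statement is essentially a direct unpacking of axioms (O3)$(b)$ and (O3)$(c)$ in the codimension-one fiber-dimension-one situation.
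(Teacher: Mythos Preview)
Your proposal is correct and follows essentially the same route as the paper: show that $\pi|_{X_a^a}$ is a local diffeomorphism (via the non-vanishing of $d\rho_a$ on the vertical tangent space), invoke properness from (O3)$(c)$, and conclude that a proper local diffeomorphism is a finite covering. The paper's proof is somewhat terser and phrases the local-diffeomorphism step as transversality of $X_a^a$ with the fibers $X_s$, but this is the same argument you spell out via the dimension count.
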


\begin{proof}
 Let $s \in S$ and $x \in X_a^a \cap X_s$, and denote by $\pi\colon X
 \rightarrow S$ the projection. Since ${\rm d} \rho_a|_{X_s}(x)\allowbreak \neq 0$,
 $X_a^a$ intersects the fiber $X_s$ transversally, that is, ${\rm d}\pi(x)$
 is an isomorphism when restricted to the tangent space $T_x X_a^a$.
 Therefore, $\pi|_{X_a^a}$ is a local diffeomorphism. Furthermore,
 by assumption, $X_a^a$ is proper over $S$, meaning that
 $\pi|_{X_a^a}$ is a proper map. However, a proper local
 diffeomorphism is a covering map the fibers of which have at most
 finitely many points (possibly zero).
\end{proof}

To prove Proposition~\ref{PropConstructionOfFieldTheories}, we start
by constructing a field theory from the data of a vector bundle with
non-degenerate bilinear form and compatible connection.

Let $(\V, \nabla, \beta) \in \Vect_{\nabla, \beta}^\sim(M)$. Our goal
is to construct a field theory $Z_{\V, \nabla, \beta}$, which will be
the value of $(\V, \nabla, \beta)$ under the functor $\Phi$ in
Proposition~\ref{PropConstructionOfFieldTheories}. Recall that the
smooth functor $Z_{\V, \nabla, \beta}$, as a morphism of simplicial
objects, will consist of a sequence of stack maps $1\Bord(X)_n \to
\Vect_n$, $n \in \Delta$. At the $n$th simplicial level, we must have
\begin{equation*}
 Z_{\V, \nabla, \beta}(X/S; \rho_0, \dots, \rho_n; \gamma)
 = (\W_0, \dots, \W_n; {f}_1, \dots, {f}_n)
\end{equation*}
for some vector bundles $\W_a$ over $S$ and vector bundle maps
${f}_a\colon \W_{a-1} \rightarrow \W_a$. To define $\W_a$, for each
$a = 0, \dots, n$, set first $\tilde{\W}_a := (\gamma|_{X_a^a})^*\V$,
which makes sense since, for each $a=0, \dots, n$, $X_a^a$ is a
codimension-one submanifold of $X$. By
Lemma~\ref{Lemma:Sigma00IsCover}, $X_a^a$ is a finite covering of $S$,
hence any small enough open $U \subset S$ is covered by $U_1, \dots,
U_k \subset X_a^a$ such that the projection map $\pi$ provides
diffeomorphisms $\pi|_{U_j}\colon U_j \rightarrow U$. Hence we can
set
\begin{equation}
 \label{DefinitionOfWOverU}
 \W_a|_U :=
 \big(\pi|_{U_1}^{-1}\big)^* \tilde{\W}_a
 \otimes \cdots \otimes
 \big(\pi|_{U_k}^{-1}\big)^* \tilde{\W}_a,
\end{equation}
where $\W_a|_U = \C$ if $k=0$.
These vector bundles glue together to a vector bundle $\W_a$ over $S$.

To define ${f}_a$ for each $a = 1, \dots, n$, consider the subsets
$X_{a-1}^a$. Let $Y^1, \dots, Y^k$ be the connected components of
$X_{a-1}^a|_U$, where $U \subset S$ is a small connected open
as above for both $a-1$ and $a$. The map
${f}_a\vert_U$ will be the tensor product of maps ${f}^j_a$, where
${f}^j_a$ is determined by the connected component $Y^j$. Each ${f}^j$
will be a vector bundle map
\begin{equation}
 \label{RequiredAlphaJ}
 {f}^j_a \colon\
 \bigotimes_{Z} \big(\pi|_{Z}^{-1}\big)^* \tilde{\W}_{a-1}
 \longrightarrow
 \bigotimes_{Z^\prime} \big(\pi|_{Z^\prime}^{-1}\big)^* \tilde{\W}_{a-1},
\end{equation}
where $Z$ runs over the connected components of $Y^j \cap
X_{a-1}^{a-1}$ and $Z^\prime$ runs over the connected components of
$Y^j \cap X_a^a$ (by possibly making $U$ smaller, we can assume that
the projection map is a diffeomorphism to $U$ when restricted to any
one of these sets $Z$ and $Z^\prime$). The tensor product ${f}_a|_U
:= {f}^1_a \otimes \cdots \otimes {f}^k_a$ is then indeed a vector bundle
map $\W_{a-1}|_U\rightarrow \W_a|_U$, by definition~\eqref{DefinitionOfWOverU}.

Now, each $Y^j$ is, essentially, either a circle bundle or an interval
bundle over $U$; this only fails to be the case if $\rho_{a-1}(x) =
\rho_a(x)$ at some points $x \in X_{a-1}^a|_U$. To address this
issue, we let
\begin{equation*}
 U^j_\circ := \bigl\{
 s \in U \mid \rho_{a-1}(y) \neq \rho_a(y)
 \text{ for each } y \in Y^j \cap X_s
 \bigr\},
\end{equation*}
which is an open subset of $U$. The complement $U \setminus
U^j_\circ$ is the set where $Y^j$ is a ``thin bordism'', in the sense
that
\begin{equation*}
 Y^j|_s = \big(X_{a-1}^{a-1} \cap Y^j\big)\big|_s =\big (X_{a}^{a} \cap Y^j\big)\big|_s
 \qquad\text{for}\quad
 s \in U \setminus U^j_\circ.
\end{equation*}
Note in particular that for $s \in U \setminus
U^j_\circ$, $Y^j|_s$ consists of finitely many points.

\begin{Lemma}
 For each $j=1, \dots, k$, write $Y^j_\circ := Y_j|_{U^j_\circ}$.
 Then $Y^j_\circ \to U^j_\circ$ is a fiber bundle whose fibers are
 compact one-dimensional manifolds with boundary $($thus, either
 intervals or circles$)$. Moreover, if $Y^j_\circ$ is a circle
 bundle, then $U^j_\circ = U$ and $Y^j_\circ = Y^j$.
\end{Lemma}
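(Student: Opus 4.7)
My plan is to first establish the fiber bundle structure via the Ehresmann fibration theorem for manifolds with boundary, and then deduce the circle bundle dichotomy via a clopen argument applied to the projection of the boundary locus.

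For the first claim, I would observe that, over $U^j_\circ$ and by definition, the zero loci $\{\rho_{a-1} = 0\}$ and $\{\rho_a = 0\}$ meet $Y^j$ in disjoint sets; so by (O3)$(b)$ they cut out disjoint codimension-one smooth submanifolds of $X|_{U^j_\circ}$. Hence $Y^j_\circ$ inherits the structure of a smooth manifold with boundary, the boundary $\partial Y^j_\circ$ being their union and having no corners. The projection $\pi \colon Y^j_\circ \to U^j_\circ$ is proper: indeed $Y^j \subset X_{a-1}^a|_U$ is closed, and the latter is proper over $U$ by (O3)$(c)$, so $\pi|_{Y^j}$ is proper, whence so is its restriction to the open preimage $Y^j_\circ = \pi^{-1}(U^j_\circ)$. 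Submersiveness of $\pi|_{Y^j_\circ}$ follows from that of $X \to S$ at interior points and from (O3)$(b)$ at the boundary, and Lemma~\ref{Lemma:Sigma00IsCover} exhibits $\pi|_{\partial Y^j_\circ}$ as a proper local diffeomorphism. Applying Ehresmann's theorem in the manifold-with-boundary version yields local triviality of $\pi \colon Y^j_\circ \to U^j_\circ$, and since each fiber is a compact $1$-manifold with boundary, its components are intervals or circles.

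For the second claim, assume the fibers of $Y^j_\circ \to U^j_\circ$ contain no interval components, so that $\partial Y^j_\circ = \emptyset$. Set $\partial Y^j := (\{\rho_{a-1} = 0\} \cup \{\rho_a = 0\}) \cap Y^j$. By Lemma~\ref{Lemma:Sigma00IsCover}, the restricted projection $\pi|_{\partial Y^j} \colon \partial Y^j \to U$ is a local diffeomorphism; moreover it is proper, as the restriction to a closed subset of the proper map $\pi|_{Y^j}$. A proper local diffeomorphism is an open map with closed image, so $\pi(\partial Y^j)$ is clopen in $U$. Since $U$ is connected, $\pi(\partial Y^j)$ equals $U$ or is empty. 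But the circle-bundle hypothesis gives $\pi(\partial Y^j) \cap U^j_\circ = \emptyset$, and $U^j_\circ$ must be nonempty (else $Y^j_\circ$ is empty, contradicting that it is a non-trivial circle bundle). Hence $\pi(\partial Y^j) \neq U$, so $\partial Y^j = \emptyset$, i.e., $\rho_{a-1} > 0$ and $\rho_a < 0$ everywhere on $Y^j$; in particular $\rho_{a-1} \neq \rho_a$ everywhere on $Y^j$, so $U^j_\circ = U$ and $Y^j_\circ = Y^j$.

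The main obstacle I anticipate is the careful manifold-with-boundary bookkeeping in the first step: one must ensure that the two candidate boundary faces truly do not meet on $U^j_\circ$ (so no corners appear) and that the submersion property extends up to the boundary in a form that cleanly matches the hypotheses of Ehresmann. Both points are immediate from (O3)$(b)$ together with the definition of $U^j_\circ$, but it is easy to miswrite. Once the fiber bundle is in place, the clopen argument in the second step is short and essentially formal, relying only on properness of $\pi|_{Y^j}$, the covering property of Lemma~\ref{Lemma:Sigma00IsCover}, and the connectedness of $U$.
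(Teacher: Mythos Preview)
Your proof is correct and follows essentially the same overall strategy as the paper: invoke Ehresmann's theorem (in its version for manifolds with boundary) for the fiber bundle claim, and then argue that the circle case cannot coexist with any ``degenerate'' fiber. The paper's proof is, however, much terser than yours: it simply asserts that $Y^j_\circ \to U^j_\circ$ is a proper submersion and hence a fiber bundle, and for the second claim says only that ``circle bundles cannot degenerate, by the requirements on the functions $\rho_a$.'' Your clopen argument via $\pi(\partial Y^j)$ is a clean and rigorous way to make that last sentence precise, and your careful verification of the Ehresmann hypotheses (disjointness of the boundary faces on $U^j_\circ$, properness inherited from (O3)$(c)$, submersiveness at the boundary via (O3)$(b)$) fills in details the paper leaves implicit. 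So: same approach, but your version is considerably more explicit.
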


\begin{proof}
By construction, the total space $Y^j_\circ$ is a compact manifold and the
projection $\pi|_{Y_\circ^j}$: $Y^j_\circ \to U_\circ^j$ is a proper
submersion, hence a fiber bundle (with possibly empty fibers).
The last statement follows from the fact that circle bundles cannot degenerate,
by the requirements on the functions $\rho_a$.
\end{proof}

By possibly shrinking $U$ further, we may assume moreover that all
these bundles are trivial. We now define ${f}^j$ case by case.

Suppose first that $Y^j_\circ$ is a circle bundle, so that $Y^j_\circ
= Y^j$ and $U^j_\circ = U$. In this case, $Y^j \cap X_{a-1}^{a-1} =
Y^j \cap X_{a}^{a} = \varnothing$, hence we have to produce a vector
bundle map from the trivial line bundle to itself, that is, a function
on $U$. Choose a trivialization $\varphi\colon U \times S^1
\rightarrow Y^j$. Now set
\begin{equation*}
 {f}^j := \tr P(\varphi),
\end{equation*}
where $P(\varphi)$ is the parallel transport around the loops
$\varphi_s\colon S^1 \rightarrow Y^j \subset X$ given by $\varphi_s(t)
= \varphi(s, t)$ with respect to the pullback connection of
$\gamma^*\V \rightarrow X$. We claim that ${f}^j$ is
independent of the choice of $\varphi$. If $\tilde{\varphi}$ is
another trivialization of $Y^j$ that induces the same orientation on
the fibers and agrees with $\varphi$ at the basepoint $1 \in S^1$,
then it is just a reparametrization of $\varphi$, and our claim
follows from the invariance of parallel transport under
reparametrizations. Without the assumption on basepoints,
$P(\varphi_s)$ and $P(\tilde\varphi_s)$ are conjugates for each $s \in
U$, so the trace ${f}^j$ is still independent of the choice of
$\varphi$. Finally, if $\tilde\varphi$ induces the opposite
orientation, then $P(\tilde\varphi_s) = P(\varphi_s)^{-1}$. This
yields the same trace, since $P(\varphi)$ preserves the bilinear form $\beta$;
the calculation is
\begin{align*}
 \tr P(\varphi_s)
= \!\sum_{i=1}^n \varepsilon_i \beta\bigl(P(\varphi_s) b_i, b_i\bigr)
 = \!\sum_{i=1}^n \varepsilon_i \beta\bigl( b_i, P(\varphi_s) b_i\bigr)
=\! \sum_{i=1}^n \varepsilon_i \beta\bigl( P(\varphi_s)^{-1} b_i, b_i\bigr)
 = \tr P(\tilde\varphi_s),
\end{align*}
where $b_1, \dots, b_n$ is a generalized orthonormal basis for $\beta$
and we used the symmetry of $\beta$ (note that $\beta$ is \emph{not}
assumed to be Hermitian in the complex case).

Suppose now that $Y^j_\circ$ is a bundle of intervals. In this case,
we have the following lemma.

\begin{Lemma}
 There exists a smooth map $\varphi\colon U \times [0, 1] \rightarrow
 Y^j \subset X_{a-1}^a$ such that $(\pi \circ \varphi)(s, t) = s$ and
 such that $\varphi|_{U^j_\circ \times [0, 1]}$ is a trivialization
 of the interval bundle $Y^j_\circ$.
\end{Lemma}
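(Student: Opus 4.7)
The plan is to construct $\varphi$ by linearly interpolating, in a local trivialization of the submersion $\pi$, between two smooth endpoint sections $\sigma_0, \sigma_1\colon U \to Y^j$.

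First, using condition (O3)(b), which ensures that $d\rho_a$ is nonzero on the vertical tangent space at every point of $X_a^a$, the implicit function theorem produces, after possibly shrinking $U$, a smooth section $\sigma_0 \colon U \to X_a^a$ of $\pi$ with $\sigma_0(s) \in Y^j$ for every $s \in U$: for $s \in U^j_\circ$ this is the ``left'' endpoint of the interval $Y^j|_s$, and for $s \in U \setminus U^j_\circ$, after further shrinking so that the degenerate fibers consist of a single point, it is that point $p_s \in X_a^a \cap X_{a-1}^{a-1}$. Analogously, $\rho_{a-1}$ gives a smooth section $\sigma_1 \colon U \to X_{a-1}^{a-1} \cap Y^j$. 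Since both sections must coincide with the unique point of $Y^j|_s$ on the degenerate locus, $\sigma_0 = \sigma_1$ on $U \setminus U^j_\circ$.

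Next, since $\pi$ is a submersion, after shrinking $U$ once more we obtain an open neighborhood $W$ of $\sigma_0(U)$ in $X$ together with a fiber-preserving diffeomorphism $W \cong U \times I$ for some open interval $I \subset \R$. By continuity of the fibers $Y^j|_s$ in $s$ and the properness condition (O3)(c), we may arrange in addition that $Y^j|_s \subset W$ for every $s \in U$. Writing $\sigma_0(s) = (s, a(s))$ and $\sigma_1(s) = (s, b(s))$ in this trivialization, we set
\begin{equation*}
 \varphi(s, t) := \bigl(s,\ (1-t)a(s) + tb(s)\bigr).
\end{equation*}
This map is manifestly smooth, satisfies $(\pi \circ \varphi)(s, t) = s$ and $\varphi(s, i) = \sigma_i(s)$ for $i = 0, 1$. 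For $s \in U \setminus U^j_\circ$, $a(s) = b(s)$ and $\varphi(s, \cdot)$ collapses to the constant map at $p_s \in Y^j$. For $s \in U^j_\circ$, the path $\varphi(s, \cdot)$ traverses the line segment from $a(s)$ to $b(s)$ in $\{s\} \times I$, which coincides with $Y^j|_s$ because the latter is a compact connected subset of $\{s\} \times I$ whose two endpoints (the unique points of $X_a^a$ and of $X_{a-1}^{a-1}$ in this fiber) are $\sigma_0(s)$ and $\sigma_1(s)$. Hence $\varphi|_{U^j_\circ \times [0,1]}$ is a diffeomorphism onto $Y^j_\circ$, i.e.\ a trivialization as required.

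The main technical work lies in the preparatory steps rather than in the interpolation: one must shrink $U$ enough to guarantee simultaneously that (a) the sections $\sigma_0$ and $\sigma_1$ furnished by the implicit function theorem exist smoothly on all of $U$ and remain inside $Y^j$ (rather than escaping through the boundary of $X_{a-1}^a$), and (b) a single fiberwise trivialization of $\pi$ contains every fiber $Y^j|_s$ with $s \in U$. Once this is set up, the linear interpolation automatically handles the collapse of intervals to points as $s$ crosses from $U^j_\circ$ to $U \setminus U^j_\circ$, and no ad hoc smoothing near $\partial U^j_\circ$ is needed.
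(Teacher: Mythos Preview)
Your proposal is correct and follows essentially the same approach as the paper. The paper's proof simply asserts that over $U$ the bordism is isomorphic to the standard form $((\R\times U)/U;\rho_0,\rho_1;\gamma)$ with $\rho_i(t,s)=t-\tau_i(s)$, and then writes down the linear interpolation $\varphi_s(t)=\tau_0(s)+(\tau_1(s)-\tau_0(s))t$; your argument unpacks that assertion into its constituents (implicit function theorem for the endpoint sections, local trivialization of the submersion, then the same linear interpolation) and arrives at the identical formula.
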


\begin{proof}
 If $U^j_\circ = U$, the lemma is clear, because then $Y^j$ is an interval
 bundle over $U$, which must be necessarily trivial: it admits two
 nowhere agreeing sections, given by the zero sets of~$\rho_{a-1}$,
 respectively $\rho_a$. In general, over $U$, our bordism is
 isomorphic to a bordism of the form $((\R \times
 U)/U; \rho_0, \rho_1; \gamma)$, where $\rho_i(t, s) = t-\tau_i(s)$,
 $i=0, 1$, for smooth functions $\tau_0, \tau_1\colon U \rightarrow
 \R$ with $\tau_0 \leq \tau_1$.

 In that case, we have $U^j_\circ = \{s
 \in U \mid \tau_0(s) \neq \tau_1(s)\}$, and
 \begin{equation*}
 \varphi_s(t) = \tau_0(s) + \bigl(\tau_1(s)-\tau_0(s)\bigr)t
 \end{equation*}
 gives the desired parametrization.
\end{proof}

In particular, this means that the paths $\varphi_s\colon [0, 1]
\rightarrow X$ given by $\varphi_s(t) := \varphi(s, t)$ map to the
fibers $Y^j|_s$, and, for $s \in U \setminus U^j_\circ$, $\varphi_s$
is constant (since for such $s$, $Y^j_\circ$ is a collection of
finitely many points).

Let $P(\varphi)$ be the vector bundle isomorphism between the bundles
$(\gamma \circ \varphi\circ(\id \times i))^*\V$, $i=0, 1$, over $U$
given over $s \in U$ by parallel translation along the path
$\varphi_s$. Now notice that $\varphi \circ(\id \times i)$, $i=0, 1$
is a section of $\pi\colon X \rightarrow S$, with image contained in either
$X_{a-1}^{a-1}$ or $X_a^a$; hence
\begin{equation*}
 \bigl(\gamma \circ \varphi\circ(\id \times i)\bigr)^*\V
 = \bigl(\gamma \circ \big(\pi|_{C}^{-1}\big)\bigr)^*\V
 = \big(\pi|_{C}^{-1}\big)^* \tilde{\W}_b
\end{equation*}
for some connected component $C$ of $Y^j \cap X_{b}^{b}$; here either
$b=a-1$ or $a$. For all $s \in U$, since~$X_{a-1}^a|_s$ is a (possibly degenerate)
interval, $\big(V^j \cap X_{a-1}^{a-1}\big)|_s$ has either zero, one or two
elements. Correspondingly, $\big(V^j \cap X_{a}^{a}\big)|_s$
has two, one or zero elements. In either case, we can use the
bilinear form $\beta$ to turn $P(\varphi)$ into a morphism of the
required form~\eqref{RequiredAlphaJ}. This defines~${f}^j$ in this
case. As before, we use the parametrization independence as well as the
fact that parallel transport preserves $\beta$ in order to show that
this definition is independent of the choice of $\varphi$.

This defines $Z_{\V, \nabla, \beta}$ on objects. On morphisms in
$1\Bord(M)$, we declare that $Z_{\V, \nabla, \beta}$ acts by pullbacks
in the obvious way. This concludes the definition of $Z_{\V, \nabla,
 \beta}$. Of course, there are several things to check in order to
show that this is a field theory. However, all checks can be made
\emph{pointwise}, hence are very similar to the classical arguments
outlined above (cf.~Construction~\ref{ConstructionUnorientedTFTs}).

\begin{proof}[Proof of Proposition~\ref{PropConstructionOfFieldTheories}]
 Of course, we set $\Phi(\V, \nabla, \beta) = Z_{\V, \nabla, \beta}$
 for $(\V, \nabla, \beta) \in \Vect^{\sim}_{\nabla, \beta}(M)$, where
 $Z_{\V, \nabla, \beta}$ is the field theory constructed above. We
 now discuss how $\Phi$ acts on morphisms in~$\Vect_{\nabla,
 \beta}^\sim(M)$. To this end, let $(\V, \nabla, \beta)$ and
 $(\V^\prime, \nabla^\prime, \beta^\prime)$ be vector bundles on $M$
 with connection and a compatible bilinear form, and let
 $\alpha\colon \V \rightarrow \V^\prime$ be a vector bundle
 isomorphism preserving these additional structures. We now define
 the smooth natural transformation
 \begin{equation*}
 \eta^\alpha := \Phi(\alpha)
 \colon\ Z_{\V, \nabla, \beta}
 \longrightarrow Z_{\V^\prime, \nabla^\prime, \beta^\prime}.
 \end{equation*}
 First, we look at the simplicial level zero. If $X/S = (X/S;
 \rho_0; \gamma)$ is a \emph{single point}, meaning that~$X_0^0$ has
 connected fibers (in other words, $\pi\colon X_0^0 \rightarrow S$ is
 a diffeomorphism), we have
 \begin{equation*}
 Z_{\V, \nabla, \beta}(X/S)
 = \bigl(\gamma \circ \big(\pi|_{X_0^0}^{-1}\big)\bigr)^* \V, \qquad
 Z_{\V^\prime, \nabla^\prime, \beta^\prime}(X/S)
 = \bigl(\gamma \circ \big(\pi|_{X_0^0}^{-1}\big)\bigr)^* \V'.
 \end{equation*}
 Hence we can set
 \begin{equation*}
 \eta^\alpha_{X/S}
 := \bigl(\gamma \circ \big(\pi|_{X_0^0}^{-1}\big)\bigr)^* \alpha
 \end{equation*}
 in this case. Any object in $1\Bord(M)_0$ can, at least locally, be
 uniquely decomposed into a~union of single-point-objects just
 discussed, and hence the requirement that $\eta^\alpha$ is symmetric
 monoidal determines it on all of $1\Bord(M)_0$.

 By Lemma~\ref{LemmaOnNaturalTransformations}, any smooth natural
 transformation $\eta\colon Z_{\V, \nabla, \beta} \rightarrow
 Z_{\V^\prime, \nabla^\prime, \beta^\prime}$ is determined by its
 component $\eta_0$ at the simplicial level zero; however, it is not
 clear that $\eta^\alpha$ defined above on~simplicial level zero
 indeed extends to all higher simplicial levels. Here, again by
 Lemma~\ref{LemmaOnNaturalTransformations} it~suffices to consider
 the simplicial level one. To this end, let $X/S = (X/S; \rho_0,
 \rho_1;\gamma)$ be an~object of $1\Bord(M)_1$ and write $Z_{\V,
 \nabla, \beta}(X/S) = (\W_0, \W_1; f_0)$ and $Z_{\V, \nabla,
 \beta}(X/S) = (\W_0, \W_1; f_0)$. We~have to check that the
 diagram
 \begin{equation*}
 \begin{tikzcd}
 \W_1 \ar[d, "\eta^\alpha_1"']
 & \W_0 \ar[l, "f_0"'] \ar[d, "\eta^\alpha_0"]
 \\
 \W^\prime_1
 & \W_0^\prime \ar[l, "f_0^\prime"']
 \end{tikzcd}
 \end{equation*}
 commutes, where $\eta^\alpha_i = \eta^\alpha_{d_i^*X/S}$, with
 $d_i\colon [0]\rightarrow [1]$ the usual boundary maps. Since the
 morphisms~$f_0$ and $f^\prime_0$ are essentially given by parallel
 transport, respectively the bilinear form $\beta$, it~is now easy to
 check that this diagram commutes for all bordisms if and only if
 $\alpha$ intertwines the connections and bilinear forms on $\V$,
 respectively $\V^\prime$.

 Finally, we show that $\Phi$ is fully faithful. To this end, let
 $\point \in 1\Bord(M)_0$ be the \emph{universal point}, which is the
 object over $M$ given by
 \begin{equation}
 \label{UniversalPoint}
 \point = \bigl(
 (\R \times M)/M;
 \rho_0 = \pr_{\mathrm{\R}};
 \gamma = \pr_M
 \bigr).
 \end{equation}
 For any natural transformation $\eta \colon Z_{\V, \nabla, \beta}
 \rightarrow Z_{\V^\prime, \nabla^\prime, \beta^\prime}$, the
 component $\eta_{\point}$ is a vector bundle isomorphism $\V
 \rightarrow \V$. In particular, if $\alpha\colon \V \rightarrow \V$ is a
 vector bundle isomorphism preserving connections and bilinear forms,
 tracing through the above definitions shows that
 $\eta^\alpha_{\point} = \alpha$. Hence if $\eta^\alpha =
 \eta^{\alpha^\prime}$, this implies $\alpha = \alpha^\prime$; in
 other words, $\Phi$ is faithful. Conversely, it is easy to check
 that $\Phi\big(\eta_{\point}\big) = \eta$ for any natural transformation
 $\eta$, so that $\Phi$ is also full.
\end{proof}

\subsection{Functors from the path category}
\label{SectionFunctorsOnPathM}

Let $M$ be a fixed target manifold. In this section, we restrict our
attention to the smooth path category of $M$, a certain subcategory of
$1\Bord(M)$ which is somewhat easier to describe.

\begin{Definition}[smooth path category]
 \label{DefPathCat}
 Write $\Path(M)$ for the full smooth subcategory of $1\Bord(X)$
 consisting of those objects $(X/S; \rho_0, \dots, \rho_n; \gamma) \in
 1\Bord(M)_n$ such that each of the sets $X_a^b$, $0 \leq a \leq b \leq
 n$, defined in~\eqref{SetsXab} has connected fibers. Let moreover
 $\Path_c(M)$ be the full subcategory of $\Path(M)$ consisting of those
 objects $(X/S; \rho_0, \dots, \rho_n; \gamma)$ such that $\gamma$ is
 fiberwise constant in a neighborhood of $X_a^a$ for each $0 \leq a
 \leq n$.
\end{Definition}

Objects in $\Path(M)_n$ over $S \in \Man$ can be thought of as
$S$-families of paths in $M$ with $n + 1$ marked points, while the
full subcategory $\Path_c(M)_n$ consists of those paths that have
sitting instants at the marked points.

\begin{Notation}[standard objects] \label{Notation:StandardObjects}
 We denote by
 \begin{equation*}
 (\gamma; \tau_0, \dots, \tau_n) := \bigl((\R\times S)/S; \rho_0,
 \dots, \rho_n; \gamma\bigr) \in \Path(M)_n
 \end{equation*}
 the object where $\R \times S \rightarrow S$ is the projection onto
 the second factor, $\gamma\colon \R \times S \rightarrow M$ is a smooth
 map, and the cut functions $\rho_0, \dots, \rho_n$ are given by
 $\rho_j(x, s) = x - \tau_j(s)$ for smooth functions $\tau_j\colon S
 \rightarrow \R$ satisfying $\tau_0 \leq \dots \leq \tau_n$.
\end{Notation}

\begin{Remark}
 Going through the definition of the morphisms in $1\Bord(M)$ shows
 that morphisms between standard objects over $S=\mathrm{pt}$ are
 (equivalence classes of) diffeomorphisms $F$ of~$\R$, which must be
 orientation preserving, as they need to preserve the sign of the cut
 functions~$\rho_i$.
\end{Remark}

\begin{Remark}
 \label{RemarkStackification}
 Denote by $\Path(M)^\circ_n$ the full subcategory of the fibered
 category $\Path(M)_n$ consisting of all objects $(\gamma; \tau_0,
 \dots, \tau_n)$ and the morphisms between them. This forms a prestack
 over $\Man$; since any object in $\Path(M)_n$ is locally isomorphic to
 $(\gamma; \tau_0, \dots, \tau_n)$ for suitable $\gamma$ and~$\tau_a$,
 the stack $\Path(M)_n$ is a stackification of $\Path(M)^\circ_n$. In
 particular, this means that any map from $\Path(M)_n$ to a smooth stack
 $\mathcal V$ is determined on $\Path(M)^\circ_n$ (up to unique
 isomorphism), and, conversely, any map from $\Path(M)^\circ_n$ to
 $\mathcal V$ extends, uniquely up to unique isomorphism, to~$\Path(M)_n$.

 Similar remarks hold for the subcategory $\Path_c(M)_n^\circ \subseteq
 \Path_c(M)_n$; a path $(\gamma; \tau_0, \dots, \tau_n)$ is contained in
 $\Path_c(M)_n$ if for any $s \in S$ and each $j=0, \dots, n$, $t \mapsto
 \gamma(t, s)$ is constant near $t = \tau_j(s)$.
\end{Remark}

\begin{Construction}
 \label{ConstructionZVNabla}
 Given a vector bundle $\V$ with connection $\nabla$, we can define a
 functor
 \begin{equation*}
 Z_{\V, \nabla} \colon\ \Path(M) \longrightarrow \Vect
 \end{equation*}
 as follows. For an $S$-family $(\gamma; \tau_0, \dots, \tau_n)$ of
 paths in $M$, we set
 \begin{equation*}
 Z(\gamma; \tau_0, \dots, \tau_n)
 = (\W_0, \dots, \W_n, P_1, \dots, P_n),
 \end{equation*}
 where $\W_a := (\gamma\circ(\tau_a \times \id))^*\V$ is a vector
 bundle over $S$ and, for each $s\in S$, $P_j(s)$ is the parallel
 transport via $\nabla$ along the path $t \mapsto \gamma(t, s)$, $t \in
 [\tau_{j-1}, \tau_j]$. To a morphism between two standard objects
 $(\gamma; \tau_0, \dots, \tau_n)$ and $(\gamma^\prime;\tau_0^\prime,
 \dots, \tau_n^\prime)$ we assign the identity; this is well-defined
 because automorphisms in $\Path(M)$ are reparametrizations of paths
 that fix the marked points, and parallel transport is invariant under
 reparametrizations. This defines the functor on the subcategory
 $\Path^\circ(M) \subseteq \Path(M)$ of standard objects; by
 Remark~\ref{RemarkStackification}, we get a functor on all of
 $\Path(M)$, unique up to unique isomorphism.
\end{Construction}

Now let $(\V, \nabla)$ and $(\V^\prime, \nabla')$ be two vector
bundles with connection. Clearly, any vector bundle isomorphism
$\alpha$ defines a natural transformation
\begin{equation*}
 \begin{tikzcd}[column sep = large]
 \Path(M)_0
 \ar[r, bend left, "(Z_{\V, \nabla})_0", ""' name = U]
 \ar[r, bend right, "(Z_{\V^\prime, \nabla^\prime})_0"', "" name = D]
 \ar[from=U, to=D, Rightarrow, "\eta_0^\alpha"]
 &
 \Vect_0
 \end{tikzcd}
\end{equation*}
at simplicial level zero. As in the proof of Proposition~\ref{PropConstructionOfFieldTheories}, it follows that the condition
for $\eta_0^\alpha$ to extend to higher simplicial levels is precisely
the condition that $\alpha$ preserves connections.

Hence if
$\alpha\colon \V \rightarrow \V^\prime$ is a connection-preserving
isomorphism of vector bundles, we get a~natu\-ral transformation
$\eta^\alpha\colon Z_{\V, \nabla} \rightarrow Z_{\V^\prime,
 \nabla^\prime}$. This yields a functor
\begin{equation*}
\begin{aligned}
\Phi\colon\ \Vect^\sim_\nabla(M)
 & \longrightarrow \mathrm{Fun}\bigl(\Path(M), \Vect\bigr),
 \\ (\V, \nabla) & \longmapsto Z_{\V, \nabla},
 \\ \alpha & \longmapsto \eta^\alpha.
\end{aligned}
\end{equation*}
The fundamental result is now the following.

\begin{Theorem}
 \label{ThmEquivalencePathCat}
 The functor $\Phi$ just constructed is an equivalence of categories.
\end{Theorem}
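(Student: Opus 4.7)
The plan is to construct an essentially-inverse functor $\Psi$ to $\Phi$. Given a smooth functor $Z\colon \Path(M) \to \Vect$, define the vector bundle $\V := Z_0(\point)$ by evaluating at the universal point~\eqref{UniversalPoint}. The connection $\nabla$ on $\V$ will come from $Z_1$ via Proposition~\ref{PropMultiplicativity}. First I would restrict attention to the full subcategory $\Path_c(M) \subseteq \Path(M)$ of paths with sitting instants, where the standard construction applies. Fix a smooth nondecreasing function $\psi\colon \R \to [0,1]$ with $\psi \equiv 0$ on $(-\infty, 0]$ and $\psi \equiv 1$ on $[1, \infty)$, and consider the $C^\infty([0,1], M)$-family of standard objects $(\gamma_\psi; 0, 1) \in \Path_c(M)_1$ given by $\gamma_\psi(t, \sigma) := \sigma(\psi(t))$. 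Applying $Z_1$ to this universal family produces a smooth section $P$ of $\mathrm{ev}_0^*\V^\vee \otimes \mathrm{ev}_1^*\V$. Functoriality of $Z$ with respect to the composition face map $\Path(M)_2 \to \Path(M)_1$ forces $P$ to satisfy the multiplicativity identity~\eqref{Multiplicativity}, and compatibility with the degeneracy $\Path(M)_0 \to \Path(M)_1$ ensures $P$ sends constant paths to the identity. Proposition~\ref{PropMultiplicativity} then yields a unique connection $\nabla$ on $\V$ whose parallel transport recovers $P$; by the stackification argument of Remark~\ref{RemarkStackification}, the ensuing natural isomorphism $Z \cong Z_{\V,\nabla}$ on standard objects extends canonically to all of $\Path_c(M)$.

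The main obstacle is extending this identification from $\Path_c(M)$ to all of $\Path(M)$. Morphisms in $\Path(M)$ are only reparametrizations, so one cannot literally insert sitting instants into a path via a single morphism. Here I would use the \emph{modification function} technique: for any standard object $(\gamma; \tau_0, \dots, \tau_n) \in \Path(M)_n$ over $S$, construct a smooth $[0,1]$-family of orientation-preserving reparametrizations $\phi_s\colon \R \to \R$ that fix each marked point $\tau_a$, with $\phi_0 = \mathrm{id}$ and $\phi_1$ inserting sitting instants on every subinterval $[\tau_{a-1}, \tau_a]$ via the shape function $\psi$. For $s \in [0, 1)$ each $\phi_s$ is a genuine diffeomorphism, hence determines an isomorphism $(\gamma \circ \phi_s; \tau_0, \dots, \tau_n) \to (\gamma; \tau_0, \dots, \tau_n)$ in $\Path(M)_n$, and $Z$ therefore assigns the same vector-bundle datum to all of them. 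A smoothness argument in the parameter $s$, using that $Z_1$ is a map of stacks and hence continuous in families, identifies this common value with the value at the endpoint $s = 1$, which lies in $\Path_c(M)_n$ and therefore already agrees with $Z_{\V,\nabla}$ by the previous step. Glueing these identifications coherently across different standard objects uses naturality together with the stack property.

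Finally, for fully faithfulness, Lemma~\ref{LemmaOnNaturalTransformations} reduces a~smooth natural transformation $\eta\colon Z_{\V,\nabla} \to Z_{\V',\nabla'}$ to its simplicial-level-zero component, and further to the vector-bundle map $\alpha := \eta_{\point}\colon \V \to \V'$. Naturality applied to the universal path family $(\gamma_\psi; 0, 1)$ forces $\alpha$ to intertwine the operators $P$ and $P'$, equivalently to preserve the connections $\nabla$ and $\nabla'$. Conversely, any such connection-preserving $\alpha$ gives rise to the natural transformation $\eta^\alpha$ constructed in Section~\ref{SectionConstructionOfFieldTheories}, closing the loop and establishing the equivalence.
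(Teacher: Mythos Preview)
Your overall strategy matches the paper's, but two steps you treat as routine are precisely where the paper does its real work, and as written they contain genuine gaps.

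\textbf{Multiplicativity of $P$.} You assert that functoriality with respect to the face map $\Path(M)_2 \to \Path(M)_1$ forces $P(\gamma) = P(\mathfrak{s}_{a,1}\gamma)\circ P(\mathfrak{s}_{0,a}\gamma)$. Functoriality gives you $Z(\eta;0,2)=Z(\eta;1,2)\circ Z(\eta;0,1)$ for a path $\eta$ with sitting instants at $0,1,2$; the right-hand side is indeed $P(\mathfrak{s}_{a,1}\gamma)\circ P(\mathfrak{s}_{0,a}\gamma)$ for a suitable $\eta=\gamma\circ\xi$. But the left-hand side is $Z(\gamma\circ\xi_2;0,1)$ for a \emph{different} modification function $\xi_2$, not your fixed $\psi$. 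To conclude $Z(\gamma\circ\xi_2;0,1)=Z(\gamma\circ\psi;0,1)=P(\gamma)$ you must show that $P$ is independent of the choice of modification function. This is the paper's Lemma~\ref{LemmaIndependenceOfChi}, and its proof in turn relies on invertibility of $Z(\gamma;a,b)$ (Lemma~\ref{LemmaIso}) and the ability to introduce sitting instants in the interior (Lemma~\ref{LemmaSittingInstant}). None of this is automatic.

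\textbf{The extension from $\Path_c(M)$ to $\Path(M)$.} Your family $\phi_s$ fixes the marked points $\tau_a$ but is not the identity on neighborhoods of them (indeed $\phi_1$ is constant near each $\tau_a$). Hence the boundary objects $(\gamma\circ\phi_s;\tau_a)\in\Path(M)_0$ have \emph{varying germs} as $s$ varies, so the linear maps $Z(\gamma\circ\phi_s;\tau_{a-1},\tau_a)$ live between genuinely different vector spaces for different $s$, connected only by the nontrivial isomorphisms $Z([\phi_s]_{\tau_a})$. As $s\to 1$ these isomorphisms degenerate (since $\phi_1$ is not a diffeomorphism), and there is no evident reason the ``common value'' you invoke should exist, let alone coincide with the $s=1$ value. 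The paper's continuity argument (Lemma~\ref{LemmaSittingInstant}) works precisely because it takes $F$ equal to the identity near the endpoints, so the boundary objects are literally fixed and one compares honest linear maps between the same spaces. The full extension (Proposition~\ref{PropConstructionOfSection}) then requires constructing an explicit natural isomorphism $\eta_{\gamma,a}\colon Z(\gamma;a)\to Z(\gamma(a);a)$ using one-sided modification functions, and verifying its equivariance under reparametrizations; this is substantially more delicate than a limiting argument.

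Your treatment of full faithfulness via Lemma~\ref{LemmaOnNaturalTransformations} and evaluation at $\point$ is correct and agrees with the paper.
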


\begin{Remark}
 At first sight, there may seem to be a clash of notations with the functor
 $\Phi\colon \Vect_{\nabla, \beta}^\sim(M) \rightarrow 1\TFT(M)$
 constructed in Section~\ref{SectionConstructionOfFieldTheories}.
 However, it is easy to check that in~fact the $\Phi$ just constructed
 is the composition of the functor $\Phi$ from before with the
 restriction functor $1\TFT(M) \rightarrow \mathrm{Fun}(\Path(M),
 \Vect)$. In particular, $Z_{\V, \nabla}$ is the restriction of
 $Z_{\V, \nabla, \beta}$ to~$\Path(M)$. Notice that the information
 about $\beta$ is lost in this restriction process.
\end{Remark}

\begin{Remark}[simplification]
 \label{RemarkSimplification}
 Let $\gamma, \eta\colon \R \rightarrow M$ be two smooth paths and
 suppose that \mbox{$\gamma(t) = \eta(t)$} for $t$ in some neighborhood of
 $a \in \R$. Then the identity map of $\R$ induces an~isomorphism
 $[\id]_a\colon (\gamma, a) \rightarrow (\eta, a)$. If now $\xi\colon \R
 \rightarrow M$ is a third path with $\xi(t) = \gamma(t) = \eta(t)$
 for~$t$ near $a$, we have the commutative diagram
 \begin{equation*}
 \begin{tikzcd}[column sep = small]
 & (\gamma, a) \ar[dr, "{[\id]_a}"] \ar[dl, "{[\id]_a}"'] &
 \\ (\eta, a) \ar[rr, "{[\id]_a}"'] & & (\xi, a)
 \end{tikzcd}
 \end{equation*}
 in $\Path(M)_0$. A smooth functor $Z\colon \Path(M) \rightarrow
 \Vect$ now comes with canonical coherent isomorphisms between the
 vector spaces $Z(\gamma, a)$, $Z(\eta, a)$ and $Z(\xi, a)$, given by
 the various $Z([\id]_a)$; this means that we can (and will)
 assume in the future that $Z(\gamma, a)$ is \emph{equal to} $Z(\eta,
 a)$ for paths that are equal near $a$.
\end{Remark}

In this section, we will prove the following weaker version of
Theorem~\ref{ThmEquivalencePathCat}, which states that $\Phi$ is an
equivalence when considered as a functor to $\mathrm{Fun} (\Path_c(M),
\Vect)$. The proof of Theorem~\ref{ThmEquivalencePathCat} will then
be completed by Proposition~\ref{PropConstructionOfSection} from
Section~\ref{SectionGeneralCase}, which reduces the general case to
the one just below.

\begin{Proposition}
 \label{PropClassificationSittingInstants}
 The composition $\res \circ\, \Phi$ is an equivalence of
 categories, where
 \begin{equation*}
 \res\colon\
 \mathrm{Fun}\bigl(\Path(M), \Vect\bigr)
 \longrightarrow \mathrm{Fun}\bigl(\Path_c(M), \Vect\bigr)
 \end{equation*}
 is the obvious restriction functor.
\end{Proposition}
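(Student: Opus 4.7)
The plan is to verify full faithfulness and essential surjectivity of $\res\circ\Phi\colon \Vect^\sim_\nabla(M) \to \mathrm{Fun}(\Path_c(M), \Vect)$ separately, reducing essential surjectivity to Proposition~\ref{PropMultiplicativity}. For faithfulness and fullness, I would argue as in the proof of Proposition~\ref{PropConstructionOfFieldTheories}: the universal point $\point$ of~\eqref{UniversalPoint} lies in $\Path_c(M)_0$, since the associated map $\gamma = \pr_M$ is fiberwise constant on all of $\R \times M$. Hence for any smooth natural transformation $\eta\colon Z_{V,\nabla} \to Z_{V',\nabla'}$ defined on $\Path_c(M)$, the component $\eta_\point$ is a morphism of vector bundles $V \to V'$, which by Lemma~\ref{LemmaOnNaturalTransformations} determines $\eta$; moreover, the existence of an extension $\eta_1$ on standard one-simplices $(\gamma; \tau_0, \tau_1)\in \Path_c(M)_1^\circ$ forces $\eta_\point$ to intertwine parallel transports and thus to preserve the connection.

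For essential surjectivity, given $Z\colon \Path_c(M) \to \Vect$, I would set $V := Z_0(\point)$, a vector bundle over $M$. To extract parallel transport, fix a smooth monotone reparametrization $\psi\colon \R \to [0,1]$ with $\psi(t) = 0$ for $t \leq 0$ and $\psi(t) = 1$ for $t \geq 1$. For any smooth path $\gamma\colon [0,1] \to M$, the composition $\gamma \circ \psi$ extends to a smooth map $\tilde\gamma\colon \R \to M$ that is constant outside $[0,1]$; then $(\tilde\gamma; 0, 1) \in \Path_c(M)_1^\circ$, and I define $P(\gamma)$ to be the morphism component of the tuple $Z_1(\tilde\gamma; 0, 1)$, an element of $\Hom(V_{\gamma(0)}, V_{\gamma(1)})$. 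Performing the same construction over $S$-families of paths yields a smooth section $P$ of $\mathrm{ev}_0^* V^\vee \otimes \mathrm{ev}_1^* V$ over $C^\infty([0,1], M)$. Independence of $P$ from the auxiliary choice of $\psi$ follows because any two such reparametrizations are smoothly homotopic through reparametrizations, producing a smooth family of mutually isomorphic objects in $\Path_c(M)_1$ with canonically identified endpoint bundles (via Remark~\ref{RemarkSimplification}). Multiplicativity~\eqref{Multiplicativity} follows from the Segal condition applied to a two-simplex $(\hat\gamma; 0, a, 1) \in \Path_c(M)_2^\circ$, where $\hat\gamma$ reparametrizes $\gamma$ with sitting instants at $0, a, 1$: the Segal equivalence identifies $Z_2$ of this object with a pair of composable morphisms corresponding to $\mathfrak s_{0,a}\gamma$ and $\mathfrak s_{a,1}\gamma$, whose composition (obtained by forgetting the middle cut) equals $Z_1(\hat\gamma; 0, 1) = P(\gamma)$. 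Constant paths yield thin bordisms, sent to identity morphisms by the completeness condition (Definition~\ref{def:sm-cat}(2)). Proposition~\ref{PropMultiplicativity} then furnishes a connection $\nabla$ on $V$ whose parallel transport is $P$.

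It remains to exhibit a natural isomorphism $Z \cong Z_{V, \nabla}$ on $\Path_c(M)$. By Remark~\ref{RemarkStackification} and Lemma~\ref{LemmaOnNaturalTransformations}, it suffices to produce the natural transformation at simplicial level zero on standard objects and verify its extension to simplicial level one. At level zero, both functors assign to a standard object $(\gamma; \tau_0)$ over $S$ the pullback of $V$ along $s \mapsto \gamma(\tau_0(s), s)$, essentially tautologically using the sitting-instant condition together with Remark~\ref{RemarkSimplification}; at level one, the required compatibility is exactly the statement that $P$ coincides with $\nabla$-parallel transport on standard one-simplices, which holds by construction of $\nabla$. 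The main obstacle in this program is the verification of smoothness of $P$ as a section over the Fréchet manifold $C^\infty([0,1], M)$: extracting this from the smooth stack data of $Z$ (which a priori provides smoothness only when tested on finite-dimensional parameter spaces $S$) requires systematically interpreting smooth maps $S \to C^\infty([0,1], M)$ as $S$-families $[0,1] \times S \to M$ and checking that the pointwise constructions assemble coherently. A secondary technical point is the bookkeeping of isomorphisms in $\Path_c(M)$ needed to eliminate dependence on the auxiliary choices of reparametrization $\psi$, of smooth extension to $\R$, and of open neighborhoods of the cores.
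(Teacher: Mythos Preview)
Your overall strategy matches the paper's, but the step ``Independence of $P$ from the auxiliary choice of $\psi$ follows because any two such reparametrizations are smoothly homotopic through reparametrizations, producing a smooth family of mutually isomorphic objects in $\Path_c(M)_1$'' does not go through. A smooth homotopy $\psi_t$ gives you a one-parameter family of objects $(\gamma\circ\psi_t;0,1)$ with canonically identified endpoint bundles, hence a smooth path $t\mapsto P_t$ in $\Hom(V_{\gamma(0)},V_{\gamma(1)})$; but nothing forces this path to be constant. The objects $(\gamma\circ\psi_0;0,1)$ and $(\gamma\circ\psi_1;0,1)$ are in general \emph{not} isomorphic in $\Path_c(M)_1$: an isomorphism would require a diffeomorphism $F$ with $\gamma\circ\psi_1\circ F=\gamma\circ\psi_0$ near $[0,1]$, which is impossible when $\psi_0$ and $\psi_1$ have different flat loci. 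And even when an isomorphism exists, one must still verify that $Z$ sends it to the identity on the endpoint bundles, which is itself a nontrivial statement (the paper's Lemma~\ref{LemmaActionTrivial}).

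The paper handles exactly this point by first proving that every $Z(\gamma;a,b)$ is invertible (Lemma~\ref{LemmaIso}, via a continuity argument), then using invertibility to show that precomposing $\gamma$ with a monotone map equal to the identity near the endpoints does not change $Z(\gamma;a,b)$ (Lemma~\ref{LemmaSittingInstant}), and finally combining these to prove independence of the modification function (Lemma~\ref{LemmaIndependenceOfChi}). The invertibility lemma is the missing idea in your proposal; it is also what underlies the normalization you invoke at level zero (your claim that $Z_0(\gamma;\tau_0)$ is tautologically the pullback of $V$), which the paper establishes separately in Lemma~\ref{LemmaSimplification} using Lemma~\ref{LemmaActionTrivial}. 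Once those lemmas are in place, your multiplicativity argument via the Segal condition and the final identification $Z\cong Z_{V,\nabla}$ are essentially what the paper does.
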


We start our preparations for the proof of the above proposition with
a couple of lemmas, for which we fix a smooth functor $Z\colon
\Path(M)\rightarrow \Vect$. The following lemma uses Notation~\ref{Notation:StandardObjects}.

\begin{Lemma}[invertibility]
 \label{LemmaIso}
 For all paths $(\gamma; a, b)$ in $\Path(M)_1$, the vector bundle map $Z(\gamma; a, b)$ is
 invertible.
\end{Lemma}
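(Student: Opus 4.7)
The plan is to reduce to $S = \mathrm{pt}$, verify that $Z(\gamma; \alpha, \beta)$ is invertible for $\beta - \alpha$ uniformly small along a compact portion of a fixed path, and then use the Segal condition to conclude for arbitrary $(a, b)$ by subdivision. First, since invertibility of a vector bundle map is a fiberwise condition and $Z$ is functorial under pullback along $\{s\} \hookrightarrow S$, the fiber at $s$ of $Z(\gamma; a, b)$ agrees with $Z$ applied to the restricted $1$-simplex $(\gamma_s; a(s), b(s))$ over a point, so it suffices to treat the case $S = \mathrm{pt}$. By Lemma~\ref{LemmaStrictification}, I may also replace $Z$ by its canonical strictification, so that $Z$ sends every thin $1$-simplex (that is, one in the image of the degeneracy $\Path(M)_0 \to \Path(M)_1$) to a literal identity morphism in $\Vect_1$.

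For the local step, fix a smooth $\gamma\colon \R \to M$ and consider the $\R^2$-family of standard objects $(\gamma; \tau_0, \tau_1)$ with $\tau_0(\alpha, t) = \alpha$ and $\tau_1(\alpha, t) = \alpha + t^2$; this satisfies $\tau_0 \le \tau_1$ everywhere, with equality exactly along the line $t = 0$. Applying $Z_1$ yields a vector bundle morphism $F$ over $\R^2$ which, by functoriality applied to the inclusion $\{t = 0\} \hookrightarrow \R^2$ together with strictness, restricts to the identity along $t = 0$. Since invertibility of a fiberwise linear map is an open condition on the base and the segment $[a, b] \times \{0\}$ is compact, the locus of invertibility contains an open tube $[a, b] \times (-\delta, \delta)$ for some $\delta > 0$. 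Unpacking, $Z(\gamma; \alpha, \beta)$ is invertible whenever $\alpha \in [a, b]$ and $0 \le \beta - \alpha < \delta^2$.

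The last step uses the Segal condition to globalize. Choose $k$ with $(b-a)/k < \delta^2$ and set $b_i = a + i(b-a)/k$, giving a $k$-simplex $(\gamma; b_0, b_1, \dots, b_k) \in \Path(M)_k$ whose Segal decomposition is the chain $((\gamma; b_{i-1}, b_i))_{i=1,\dots,k}$ and whose long edge (pulled back along the map $[1] \to [k]$, $0 \mapsto 0$, $1 \mapsto k$) is $(\gamma; a, b)$. By the Segal condition applied to $Z$ together with strictness, $Z(\gamma; a, b)$ equals the composition $Z(\gamma; b_{k-1}, b_k) \circ \cdots \circ Z(\gamma; b_0, b_1)$, and each factor is invertible by the local step; hence so is $Z(\gamma; a, b)$. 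The main obstacle I foresee is the bookkeeping in the local step: producing a smooth $\R^2$-family that is strictly thin along a line while the ambient cut functions remain smooth, and then extracting from the induced map to $\Vect_1$ a uniform $\delta$ along the compact segment $[a, b] \times \{0\}$, so that the Segal subdivision closes cleanly. Once that is arranged, the rest of the argument is formal.
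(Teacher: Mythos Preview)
Your proof is correct and shares the same core strategy as the paper: reduce to $S=\mathrm{pt}$, use that degenerate $1$-simplices map to identities, exploit openness of invertibility to get local invertibility near the thin locus, and then globalize via the factorization $Z(\gamma;a,b)=Z(\gamma;c,b)\circ Z(\gamma;a,c)$.

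The packaging differs. You build an explicit $\R^2$-family, invoke the tube lemma over the compact segment $[a,b]\times\{0\}$ to obtain a uniform $\delta$, and then subdivide $[a,b]$ into pieces of length below $\delta^2$, composing via the Segal condition. The paper instead fixes $a$, sets $b_0=\inf\{b\mid Z(\gamma;a,b)\text{ not invertible}\}$, and derives a contradiction: near $b_0$ both $Z(\gamma;a,b)$ and $Z(\gamma;b,b_0)$ are invertible (the latter because $Z(\gamma;b_0,b_0)=\id$), so their composite $Z(\gamma;a,b_0)$ would be too. The infimum argument is slightly slicker in that it avoids the explicit two-parameter family and the strictification lemma, while your compactness-plus-subdivision approach is more constructive and makes the use of the smooth family structure more visible. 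Both are standard ways to promote ``invertible near the diagonal'' to ``invertible everywhere'' under a semigroup law.
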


\begin{proof}
 Invertibility can be checked pointwise, hence we may assume that
 $\gamma$ is a single path (i.e., a family over the point). We have
 $Z(\gamma; a, a) = \id$ as $Z(\gamma; a, a)$ is the image of
 $Z(\gamma; a)$ under the degeneracy $[1] \rightarrow [0]$. Since
 the set of invertible linear maps is open and the value of
 $Z(\gamma; a, b)$ depends smoothly (in particular continuously) on
 $a$ and $b$, we have
 \begin{equation*}
 b_0 := \inf\{ b \mid Z(\gamma; a, b) \text{ is not invertible} \} > 0.
 \end{equation*}
 Suppose that $b_0 < \infty$. Then, since the set of $b$ such that
 $Z(\gamma; a, b)$ is not invertible is a closed set, the infimum is
 actually a minimum. Therefore $Z(\gamma; a, b_0)$ is not invertible,
 but $Z(\gamma; a, b)$ is invertible for each $b< b_0$. Now
 \begin{equation*}
 Z(\gamma; a, b_0) = Z(\gamma; b, b_0) Z(\gamma; a, b)
 \end{equation*}
 for all $b \in [0, b_0]$. If now $b < b_0$, then $Z(\gamma; a, b)$
 is invertible by definition of $b_0$. On the other hand, since
 $Z(\gamma; b_0, b_0) = \id$, the linear map $Z(\gamma; b, b_0)$ is
 invertible for $b$ close enough to $b_0$. This leads to a
 contradiction to the assumption that $Z(\gamma; a, b_0)$ is not
 invertible, as for such $b$ close to $b_0$, the right hand side is a
 composition of two invertible maps. Hence we must have $b_0 =
 \infty$, which proves the lemma.
\end{proof}

\begin{Lemma}[trivial action]
 \label{LemmaActionTrivial}
 For any path $\gamma$ in $M$ that is constant near $a$ and any
 orientation-preserving diffeomorphism $F$ of $\R$ such that $F(a) =
 a$, the isomorphism
 \begin{equation*}
 Z([F]_a)\colon\ Z(\gamma; a)
 \longrightarrow Z(\gamma \circ F; a) = Z(\gamma; a)
 \end{equation*}
 is the identity. Here, the last identity is the one given by
 Remark~$\ref{RemarkSimplification}$.
\end{Lemma}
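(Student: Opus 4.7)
Since morphisms in $\Path(M)_0$ depend only on the germ of $F$ at $a$, I may assume, by replacing $F$ with a suitable representative, that $F$ equals the identity outside a compact subset of the neighborhood $(a-\epsilon, a+\epsilon)$ on which $\gamma$ is constant. The orientation-preserving condition then says $F'(t) > 0$ everywhere.

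The first step is to construct an explicit smooth isotopy from the identity to $F$ by the convex combination
\[
F_s(t) = (1-s)t + s F(t) = t + s\bigl(F(t) - t\bigr), \qquad s \in [0,1].
\]
Each $F_s$ fixes $a$, and the derivative $F_s'(t) = (1-s) + s F'(t) > 0$ shows that $F_s$ is an orientation-preserving diffeomorphism of $\R$; moreover, $F_s$ agrees with the identity outside the support of $F$. Since $\gamma$ is constant on this support, the germ $[F_s]_a$ defines an automorphism of $(\gamma;a)$ in $\Path(M)_0$ for each $s$, and the whole collection assembles into a smooth $[0,1]$-family of automorphisms, i.e., an automorphism of the pullback of $(\gamma;a)$ along $[0,1] \to \mathrm{pt}$ inside $\Path(M)_0([0,1])$. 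By smoothness of $Z$, applying $Z_0$ produces a smooth path
\[
\psi\colon\ [0,1] \longrightarrow \GL\bigl(Z(\gamma; a)\bigr), \qquad
\psi(0) = \id, \quad \psi(1) = Z\bigl([F]_a\bigr).
\]

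The remaining task—showing $\psi \equiv \id$—is the main obstacle, since smoothness alone never forces a smooth path to be constant. My plan is to exploit the constancy of $\gamma$ near $a$ via a second parameter: for each $s_0 \in [0,1]$, I aim to embed $F_{s_0}$ into a further smooth $\R$-parameter family $\{G_{s_0,u}\}_{u \in \R}$ of diffeomorphisms supported in the constant region of $\gamma$ and fixing $a$, with the property that $G_{s_0,0} = F_{s_0}$ but $G_{s_0,u} = \id$ for $u$ in some open set. Because any two choices of smooth representative of the same germ-family yield the same morphism in $\Path(M)_0$ (up to the restriction equivalence), and because the germ at $a$ of $G_{s_0,u}$ can be arranged to be independent of $u$ on an appropriate open subset, the smooth map $u \mapsto Z([G_{s_0,u}]_a)$ will be forced to be locally constant on that subset and yet take both the values $\psi(s_0)$ and $\id$. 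The hard part will be constructing this two-parameter family correctly so that the germ really does vary as required, which is where the hypothesis that $\gamma$ is constant near $a$ enters essentially (without it, such a deformation would alter the underlying object). Granted this, one concludes $\psi(s_0) = \id$ for every $s_0$, and in particular $Z([F]_a) = \psi(1) = \id$.
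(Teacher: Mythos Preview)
Your approach has a genuine gap at the crucial step. The isotopy $F_s$ in your first step is fine and does produce a smooth path $\psi\colon [0,1]\to\GL(Z(\gamma;a))$ with $\psi(0)=\id$ and $\psi(1)=Z([F]_a)$. But the plan for showing $\psi\equiv\id$ does not work. In your proposed two-parameter family $\{G_{s_0,u}\}$, you need $G_{s_0,0}=F_{s_0}$ (whose germ at $a$ is nontrivial for $s_0>0$) and $G_{s_0,u}=\id$ on some open set of $u$'s (whose germ at $a$ is the identity). These two requirements force the germ at $a$ to \emph{vary} with $u$ somewhere between those regions; there is no single connected open set on which the germ is constant and which contains both a point giving value $\psi(s_0)$ and a point giving value $\id$. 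So local constancy on disjoint pieces tells you nothing, and no amount of smoothness of $u\mapsto Z([G_{s_0,u}]_a)$ will bridge them. More fundamentally, your entire argument lives at simplicial level $0$: you are trying to show that the stack map $Z_0\colon\Path(M)_0\to\Vect_0$ kills a particular automorphism, using only $Z_0$ and smoothness. But $Z_0$ alone carries no such constraint—one can easily write down stack maps $\Path(M)_0\to\Vect_0$ that send $[F]_a$ to a nontrivial automorphism.

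The paper's proof instead uses simplicial level $1$, and this is the missing idea. One first replaces $F$ by a diffeomorphism $G$ with the same germ at $a$ but equal to the identity outside the interval $U$ on which $\gamma$ is constant (as you also do). The key observation is that then $\gamma\circ G=\gamma$ globally, so $G$ defines an automorphism $[G]_{[a,a+1]}$ of the \emph{bordism} $(\gamma;a,a+1)\in\Path(M)_1$, not merely of the object $(\gamma;a)$. Functoriality of $Z$ at level $1$ gives a commuting square with $Z(\gamma;a,a+1)$ on the horizontal edges and $Z([G]_a)$, $Z([G]_{a+1})$ on the vertical edges. Now $Z([G]_{a+1})=\id$ because $G$ is literally the identity near $a+1$, and $Z(\gamma;a,a+1)$ is invertible by the preceding Lemma~\ref{LemmaIso}. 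Reading off the square forces $Z([F]_a)=Z([G]_a)=\id$. The essential input your argument lacks is precisely this invertibility at level $1$, which is what transports the triviality from the far endpoint (where $G=\id$) back to $a$.
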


\begin{proof}
 Let $U \ni a$ be a small neighborhood in which $\gamma$ is constant
 and let $G\colon \R \rightarrow \R$ be a~dif\-feomorphism satisfying
 \begin{equation*}
 G(t) =
 \begin{cases}
 F(t), & t\ \text{ near }\ a, \\
 t, & t \not\in U.
 \end{cases}
 \end{equation*}
 Since $G$ has the same germ at $a$ as $F$, we have $[F]_a = [G]_a$.
 Moreover, $G(U) = U$, so $\gamma = \gamma \circ G$ and therefore $G$
 defines an automorphism $[G]_{[a, a+1]}$ of $(\gamma, a, a+1)$.
 Thus, the diagram
 \begin{equation*}
 \begin{tikzcd}
 Z(\gamma; a)
 \ar[rr, "{Z(\gamma; a, a+1)}"]
 \ar[d, "{Z([G]_a})"']
 & & Z(\gamma; a+1) \ar[d, "{Z([G]_{a+1})}"]
 \\
 Z(\gamma; a) \ar[rr, "{Z(\gamma; a, a+1)}"']
 & & Z(\gamma; F(a+1))
 \end{tikzcd}
 \end{equation*}
 commutes. However, $Z(\gamma, a, a+1)$ is invertible by
 Lemma~\ref{LemmaIso}, and $G$ is the identity near $t = a + 1$, so
 that $Z([G]_{a+1}) = \id$. Hence $Z([F]_a) = Z([G]_a) = \id$.
\end{proof}

Lemma~\ref{LemmaActionTrivial} lets us simplify our analysis as
follows. Denote by $T_a$ the translation diffeomorphism given by
\begin{equation}
 \label{Translations}
 T_a(t) = t+a.
\end{equation}
For any path $(\gamma, a)$ which is constant near $t=a$, this induces
an isomorphism $[T_{-a}]_a \colon (\gamma, a) \rightarrow (\gamma
\circ T_{a}, 0)$. Now, let $Z\colon \Path_c(M) \rightarrow \Vect$ be
a functor that satisfies the simplifying assumption of
Remark~\ref{RemarkSimplification}. We obtain a canonical isomorphism
\begin{equation}
 \label{DefinitionOfTau}
 \mathcal{T}_{\gamma, a}
 := Z([T_{-a}]_a) \colon\ Z(\gamma, a) \longrightarrow
 Z(\gamma \circ T_{a}, 0)
 = Z(\gamma(0), 0),
\end{equation}
as $\gamma \circ T_{a}$ has a sitting instant at $t=0$; here by abuse of
notation, $\gamma(0)$ denotes the constant path equal to $\gamma(0)$.
 From an orientation preserving diffeomorphism $F$, we get a commutative
diagram
\begin{equation*}
\begin{tikzcd}
 Z(\gamma, a) \ar[d, "{Z([F]_a)}"'] \ar[rr, "{Z([T_{-a}]_a)}"]
 & &
 Z(\gamma \circ T_{a}, 0)
 \ar[d, "{Z([T_{-F(a)} \circ F \circ T_{a}]_{0})}"]
 \\
 Z\big(\gamma \circ F^{-1}, F(a)\big) \ar[rr, "{Z([T_{-F(a)}]_{F(a)})}"']
 & &
 Z\big(\gamma \circ F^{-1} \circ T_{F(a)}, 0\big).
\end{tikzcd}
\end{equation*}
Now both $\gamma \circ T_{a}$ and $\gamma \circ F^{-1} \circ T_{F(a)}$
have a sitting instant at $t=0$, so the assumption from
Remark~\ref{RemarkSimplification} on $Z$ tells us that the two vector
spaces in the right column agree; $Z(\gamma \circ T_{a}, 0) = Z\big(\gamma
\circ F^{-1} \circ T_{F(a)}, 0\big)$. A priori, $Z\big([T_{-F(a)} \circ F
\circ T_{a}]_{0}\big)$ could be a nontrivial automorphism of this vector
space; however, since $\big(T_{-F(a)} \circ F \circ T_{a}\big)(0) = 0$, we
have $Z\big([T_{-F(a)} \circ F \circ T_{a}]_{0}\big) = \id$ by
Lemma~\ref{LemmaActionTrivial}. We see that the isomorphisms
$\mathcal{T}_{\gamma, a} = Z([T_{-a}]_a)$ satisfy the equivariance
property
\begin{equation}
 \label{EquivarianceT}
 \mathcal{T}_{\gamma, a}
 = \mathcal{T}_{\gamma \circ F^{-1}, F(a)} \circ Z([F]_a)
\end{equation}
for any diffeomorphism $F$ and all $a \in \R$. Using this
equivariance property, we obtain the following lemma.

\begin{Lemma}[normalization]
 \label{LemmaSimplification}
 Given a strict functor ${Z}\colon \Path_c(M)^\circ \rightarrow
 \Vect$, there exists a~strict functor $\tilde{Z}\colon
 \Path_c(M)^\circ \rightarrow \Vect$ together with a natural
 isomorphism $\mathcal{T}\colon Z \rightarrow \tilde{Z}$ such that
 $\tilde{Z}(\gamma, a) = \tilde{Z}(\gamma(a), 0)$ for all standard
 objects $(\gamma, a)$. For such a functor $\tilde{Z}$
 \begin{equation*}
 Z([F]_a)\colon\ \tilde{Z}(\gamma, a)
 = \tilde{Z}(\gamma(a), 0) \longrightarrow
 \tilde{Z}\big(\gamma \circ F^{-1}, F(a)\big)
 = \tilde{Z}(\gamma(a), 0)
 \end{equation*}
 acts as the identity for any diffeomorphism $F$ of $\R$.
\end{Lemma}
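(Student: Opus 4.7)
The plan is to take the isomorphism $\mathcal T_{\gamma,a} := Z([T_{-a}]_a)$ from~\eqref{DefinitionOfTau} as the natural transformation data and use it to transport $Z$ onto a new functor $\tilde Z$ whose value depends only on the endpoint $\gamma(a)$ rather than on the germ of $\gamma$. Concretely, I will define on objects
\begin{equation*}
\tilde Z(\gamma, a) := Z(\gamma \circ T_a, 0).
\end{equation*}
The sitting-instant condition forces $\gamma \circ T_a$ to be constant near $0$ with value $\gamma(a)$, so under the convention of Remark~\ref{RemarkSimplification} this equals $Z(c_{\gamma(a)}, 0)$, where $c_p$ denotes the constant path at $p \in M$. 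In particular the required equality $\tilde Z(\gamma, a) = \tilde Z(\gamma(a), 0)$ holds by construction.

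For a morphism $[F]_a \colon (\gamma, a) \to (\gamma \circ F^{-1}, F(a))$ in $\Path_c(M)^\circ$, note that the source and target of $\tilde Z([F]_a)$ both equal $Z(c_{\gamma(a)}, 0)$, since $(\gamma \circ F^{-1})(F(a)) = \gamma(a)$. I therefore set $\tilde Z([F]_a) := \id$. Identities and composites are then compatible for free, so $\tilde Z$ is a strict functor; over a general parameter manifold $S$ the same recipe applies with $a$ replaced by a smooth function $\tau \colon S \to \R$, giving $\tilde Z$ on the full $\Path_c(M)^\circ$. The second assertion of the lemma, that $\tilde Z([F]_a)$ acts as the identity, is then tautological.

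It remains to check naturality of $\mathcal T$. The square for $[F]_a$, using $\tilde Z([F]_a) = \id$, reduces to the equation
\begin{equation*}
\mathcal T_{\gamma, a} = \mathcal T_{\gamma \circ F^{-1}, F(a)} \circ Z([F]_a),
\end{equation*}
which is precisely the equivariance relation~\eqref{EquivarianceT} derived in the paragraph preceding the lemma. Invertibility of each $\mathcal T_{\gamma, a}$ is automatic, as $[T_{-a}]_a$ is an isomorphism in the groupoid $\Path_c(M)^\circ$ and functors preserve isomorphisms.

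The substantive input to the argument is~\eqref{EquivarianceT}, which rests on Lemma~\ref{LemmaActionTrivial} and hence on the sitting-instant hypothesis; granting that, the present lemma is essentially bookkeeping. The only delicate point I foresee is that when $a$ is replaced by a smooth function $\tau \colon S \to \R$, one must verify that the translations $T_{-\tau(s)}$ assemble into a genuine morphism of $\Path_c(M)^\circ$ over $S$, so that $\mathcal T$ is a smooth (not merely pointwise) natural transformation. This will follow from the smoothness of $\tau$ together with the stack structure on $\Path_c(M)^\circ$, and is where I expect to have to argue carefully, though no serious obstacle should arise.
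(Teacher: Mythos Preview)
Your proposal is correct and follows essentially the same approach as the paper: define $\tilde Z$ by conjugating $Z$ with the isomorphisms $\mathcal T_{\gamma,a}=Z([T_{-a}]_a)$, set $\tilde Z([F]_a)=\id$, and deduce naturality from the equivariance relation~\eqref{EquivarianceT}. The paper is slightly more explicit at higher simplicial levels, writing $\tilde Z(\gamma;\tau_0,\dots,\tau_n)=(W_0,\dots,W_n;f_1,\dots,f_n)$ with $W_j=Z(\gamma(\tau_j),0)$ and $f_j=\mathcal T_{\gamma,\tau_j}\circ Z(\gamma;\tau_{j-1},\tau_j)\circ \mathcal T_{\gamma,\tau_{j-1}}^{-1}$, but this is exactly the conjugation forced by your choice of $\mathcal T$, so no new idea is involved.
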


\begin{proof}
 We may assume that $Z$ satisfies the simplifying assumptions of
 Remark~\ref{RemarkSimplification}. Set
 \begin{equation*}
 \tilde{Z}(\gamma; \tau_0, \dots, \tau_n)
 := \bigl(\W_0, \dots, \W_n; {f}_1, \dots, {f}_n\bigr),
 \end{equation*}
 with $\W_j := Z(\gamma(\tau_j), 0)$, and
 \begin{equation*}
 {f}_j := \mathcal{T}_{\gamma, \tau_j}
 \circ Z(\gamma;\tau_{j-1}, \tau_j)
 \circ \mathcal{T}_{\gamma, \tau_{j-1}}^{-1},
 \end{equation*}
 where the $\mathcal{T}_{\gamma, a}$ are defined as in~\eqref{DefinitionOfTau}. Moreover, for a diffeomorphism $F$, set
 $Z\big([F]_{[\tau_0, \tau_n]}\big) := \id$. By the equivariance property~\eqref{EquivarianceT}, this gives a well-defined functor $\tilde
 Z\colon \Path_c(M)^\circ \rightarrow \Vect$.

 Finally, let $\mathcal{T}$ be given by
 \begin{equation*}
 \mathcal{T}_{(\gamma; \tau_0, \dots, \tau_n)}
 = (\mathcal{T}_{\gamma, \tau_0}, \dots, \mathcal{T}_{\gamma, \tau_n}).
 \end{equation*}
 This clearly defines a natural transformation $Z \rightarrow
 \tilde{Z}$. That $\tilde{Z}([F]_a)$ acts as the identity also
 follows directly from~\eqref{EquivarianceT}.
\end{proof}

We are now in a position to prove the main result of this section.

\begin{proof}[Proof of Proposition~\ref{PropClassificationSittingInstants}]
 That $\res\circ\Phi$ is full and faithful is shown just as
 in the proof of~Proposition~\ref{PropConstructionOfFieldTheories}. It therefore remains to show
 that $\res\circ\Phi$ is essentially surjective. Moreover, it~suffices to consider the functor on the subcategory of standard
 objects.

 Let $Z\colon \Path_c(M)^\circ \rightarrow \Vect$ be a strict smooth
 functor. We assume moreover that $Z$ is \emph{normalized} in the sense
 of Lemma~\ref{LemmaSimplification}; in other words, $Z(\gamma, a) =
 Z(\gamma(a), 0)$ for all paths $\gamma$ and all $a \in \R$, and
 $Z([F]_a) = \id$ for all diffeomorphisms $F$ on $\R$. In particular,
 this implies that
 \begin{equation}
 \label{ParametrizationInvariance}
 Z(\gamma \circ F; a, b) = Z(\gamma; F(a), F(b))
 \end{equation}
 as map from $Z(\gamma(a), 0)$ to $Z(\gamma(b), 0)$.

 First let us extract a vector bundle from $Z$. To this end, let $\point$ be
 the universal point introduced in \eqref{UniversalPoint}. Note that $\point
 \in \Path_c(M)$ and set $\V := Z(\point)$. For any path $(\gamma, a)$, our
 assumption on $Z$ then implies that $Z(\gamma, a) = \V_{\gamma(a)}$.
 Now in general for objects $(\gamma; \tau_0, \dots, \tau_n) \in
 \Path_c(M)^\circ_n$, we have
 \begin{equation*}
 Z(\gamma; \tau_0, \dots, \tau_n)
 = (\W_0, \dots, \W_n; {f}_1, \dots, {f}_n).
 \end{equation*}
 If $d_i\colon [0] \rightarrow [n]$, $i = 0, \dots, n$, is the map
 with image $i \in [n]$, we have
 \begin{equation*}
 \W_i = d_i^*Z(\gamma; \tau_0, \dots, \tau_n)
 = Z(d_i^*(\gamma; \tau_0, \dots, \tau_n))
 = Z(\gamma; \tau_i) = \V_{\gamma(\tau_i)},
 \end{equation*}
 hence the $\W_i$ are already determined by $\V$.

 It remains to determine the vector bundle maps ${f}_i$; we will
 use Proposition~\ref{PropMultiplicativity} for this. To~obtain a
 section $P$ as in the proposition, we use modification functions,
 which are defined as follows.

 \begin{Definition}[modication function]
 \label{DefModFunction}
 A two-sided \emph{modification function} is a smooth function
 $\chi\colon \R \rightarrow [0, 1]$ such that
 \begin{enumerate}[(1)]\itemsep=0pt
 \item $\chi$ is nondecreasing,
 \item $\chi(t) = 0$ for $t$ near zero,
 \item $\chi(t) = 1$ for $t$ near one.
 \end{enumerate}
 By $\chi_{a, b}$, $a \leq b$ we denote
 the function given by
 \begin{equation*}
 \chi_{a, b}(t) := a + (b-a)\chi\bigg( \frac{t-a}{b-a}\bigg),
 \end{equation*}
 for $t \in \R$. This function is then only nonconstant on $[a, b]$
 and takes values in $[a, b]$.
 \end{Definition}

 Later, in Definition~\ref{def:mod-fun-2}, we will introduce also
 left and right modification functions, as well as their family
 versions. For now, we drop the adjective ``two-sided''. If $\chi$ is
 a modification function and $\gamma \in C^\infty([0, 1], M)$, then
 $\gamma \circ \chi$ is a path that is defined on all of $\R$ and
 which is constant on $(-\infty, \varepsilon] \cup [1-\varepsilon,
 \infty)$ for some $\varepsilon >0$. Hence
 \begin{equation*}
 P(\gamma) := Z(\gamma \circ \chi; 0, 1)
 \end{equation*}
 is well-defined for each $\gamma \in C^\infty([0, 1], X)$. Note
 that $P(\gamma)$ maps $Z(\gamma \circ \chi, 0) = Z(\gamma(0); 0) =
 \V_{\gamma(0)}$ to $Z(\gamma \circ \chi; 1) = Z(\gamma(1);1) =
 \V_{\gamma(1)}$. This construction works in families and therefore
 we get a smooth section $P$ of the bundle $\mathrm{ev}_0^* \V^\vee
 \otimes \mathrm{ev}_1^*\V$, as required. The crucial result, which
 will be shown in Lemma~\ref{LemmaIndependenceOfChi} below, is then
 that $P(\gamma)$ is independent of the choice of modification
 function.

 We need to check that $P$ is multiplicative, in the sense of~\eqref{Multiplicativity}. For this, we must use the next simplicial
 level. For $a \in [0, 1]$ fixed, define $\xi\colon \R \rightarrow
 [0, 1]$ by
 \begin{gather*}
 \xi(t) :=
 \begin{cases}
 a\chi(t), & t \in [0, 1],
 \\ a + (1 - a)\chi(t-1), & t \in [1, 2].
 \end{cases}
 \end{gather*}
 Then $\xi^\prime(t) \geq 0$ everywhere, and $\xi$ is constant near
 $t= 0, 1, 2$ (with values $0$, $a$, $1$). Hence $t \mapsto \xi_2(t) :=
 \xi(2t)$ is a modification function in the sense of
 Definition~\ref{DefModFunction}. Therefore, by independence of the
 modification function (Lemma~\ref{LemmaIndependenceOfChi} below), we have
 \begin{equation*}
 P(\gamma)
 = Z(\gamma \circ \chi; 0, 1)
 = Z(\gamma \circ \xi_2; 0, 1)
 = Z(\gamma \circ \xi; 0, 2).
 \end{equation*}
 Notice that in the last step, we used the parametrization independence~\eqref{ParametrizationInvariance}.

 Now, since $\gamma \circ \xi$ is constant near $1$, $(\gamma \circ
 \xi; 0, 1)$ and $(\gamma \circ \xi; 1, 2)$ are also objects of
 $\Path_c(M)$, and we get
 \begin{equation*}
 Z(\gamma \circ \xi; 0, 2)
 = Z(\gamma \circ \xi; 1, 2) \circ Z(\gamma \circ \xi; 0, 1).
 \end{equation*}
 However, by definition, we have
 \begin{equation*}
 (\gamma \circ \xi)(t) =
 \begin{cases}
 (\mathfrak{s}_{0, a}\gamma \circ \chi)(t), & t \in [0, 1],
 \\ (\mathfrak{s}_{a, 1}\gamma \circ \chi)(t-1), & t \in [1, 2].
 \end{cases}
 \end{equation*}
 Hence, $Z(\gamma \circ \xi; 0, 1) = P(\mathfrak{s}_{0, a}\gamma)$
 and, again by \eqref{ParametrizationInvariance},
 \begin{equation*}
 Z(\gamma \circ \xi; 1, 2)
 = Z\big(\mathfrak{s}_{a, 1}\gamma \circ \chi \circ T_{-1}; 1, 2\big)
 = Z(\mathfrak{s}_{a, 1}\gamma \circ \chi; 0, 1)
 = P(\mathfrak{s}_{a, 1}\gamma).
 \end{equation*}
 Therefore,
 \begin{equation*}
 P(\gamma) = P(\mathfrak{s}_{a, 1}\gamma)P(\mathfrak{s}_{0, a}\gamma ),
 \end{equation*}
 that is, $P$ is indeed multiplicative. By Proposition~\ref{PropMultiplicativity}, there exists a connection $\nabla$ on
 $\V$ such that $P(\gamma)$ is given by parallel transport along
 $\gamma$ with respect to $\nabla$.

 To conclude the argument, let $s_i\colon [1] \rightarrow [n]$, $i=1,
 \dots, n$ be the map with $s_i(0) = i-1$ and $s_i(1) = i$, and
 notice that
 \begin{equation*}
 {f}_i = s_i^*Z(\gamma; \tau_0, \dots, \tau_n)
 = Z\bigl(s_i^*(\gamma; \tau_0, \dots, \tau_n)\bigr)
 = Z(\gamma; \tau_{i-1}, \tau_i).
 \end{equation*}
 Define $\tilde{\gamma}$ by $\tilde{\gamma}(t) := \gamma(\tau_{i-1} +
 (\tau_i - \tau_{i-1})t)$; then $Z(\gamma;\tau_{i-1}, \tau_i) =
 Z(\tilde{\gamma}; 0, 1)$, once more by~\eqref{ParametrizationInvariance}.
 Since $\tilde{\gamma}$ has sitting instants at $t = 0$ and $1$, we
 have $\tilde{\gamma} = \tilde{\gamma} \circ \chi$ for a suitable
 modification function (just choose $\chi$ in such a way that
 $\chi(t) = t$ wherever $\tilde{\gamma}$ is not constant). Finally,
 we get
 \begin{equation*}
 {f}_j = Z(\tilde{\gamma}; 0, 1)
 = Z(\tilde{\gamma}\circ \chi; 0, 1)
 = P(\tilde{\gamma}).
 \end{equation*}
 Since parallel transport is parametrization independent, this
 coincides with the parallel transport along $\gamma$ from
 $\tau_{i-1}$ to $\tau_i$, with respect to the connection $\nabla$.
 Hence $Z$ coincides with the restriction of $Z_{\V, \nabla} =
 \Phi(\V, \nabla)$ to $\Path_c(M)$, where $Z_{\V, \nabla}$ is the
 functor from Construction~\ref{ConstructionZVNabla}.
\end{proof}

\subsection{General paths} \label{SectionGeneralCase}

In this section, we finish the proof of Theorem~\ref
{ThmEquivalencePathCat}. Having Proposition~\ref
{PropClassificationSittingInstants} at hand, this will be achieved by
establishing the following result.

\begin{Proposition}
 \label{PropConstructionOfSection}
 The functor $\res$ from the previous section is an
 equivalence of categories. More specifically, there is a functor
 \begin{equation*}
 \ext \colon\ \mathrm{Fun}\bigl(\Path_c(M), \Vect\bigr)
 \longrightarrow \mathrm{Fun}\bigl(\Path(M), \Vect\bigr)
 \end{equation*}
 such that $\res \circ \ext = \id$, together with a natural
 isomorphism $\eta\colon \id \rightarrow \ext \circ \res $.
\end{Proposition}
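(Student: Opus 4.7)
The strategy is to extend a given functor $Z_c$ on $\Path_c(M)$ by composing each path with a modification function that imposes sitting instants at the marked points. Given $Z_c \in \mathrm{Fun}(\Path_c(M), \Vect)$ and a standard object $(\gamma; \tau_0, \dots, \tau_n) \in \Path(M)_n^\circ$ over a manifold $S$, I choose an $S$-family of nondecreasing smooth maps $\chi\colon \R \times S \to \R$ that coincides with $\tau_j(s)$ on a small neighborhood of the slice $\{t = \tau_j(s)\}$ for each $j$, and with the identity in the $t$-coordinate outside a slightly larger neighborhood. Such a $\chi$ can be assembled from two-sided modification functions of Definition~\ref{DefModFunction} and a partition of unity in $S$. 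Then $\tilde{\gamma} := \gamma \circ \chi$ has sitting instants at each $\tau_j$, so that $(\tilde{\gamma}; \tau_0, \dots, \tau_n) \in \Path_c(M)_n(S)$, and one defines
\[
\ext(Z_c)(\gamma; \tau_0, \dots, \tau_n) := Z_c(\tilde{\gamma}; \tau_0, \dots, \tau_n).
\]

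For well-definedness, any two valid choices $\chi$ and $\chi'$ produce paths both equal to the constant $\gamma(\tau_j(s), s)$ in an open neighborhood of $\{t = \tau_j(s)\}$; hence the identity map of $\R \times S$, restricted to a common neighborhood of the core, defines a canonical isomorphism of the resulting objects in $\Path_c(M)_n$, giving a canonical identification under $Z_c$. A morphism in $\Path(M)^\circ_n$ is a germ of a smooth map at the core, which can be combined with the modification functions to yield a morphism in $\Path_c(M)_n$ between the modified objects; this extends $\ext(Z_c)$ to morphisms, and then to all of $\Path(M)_n$ via the stackification (Remark~\ref{RemarkStackification}). Functoriality and simplicial compatibility are straightforward. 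For the strict equality $\res \circ \ext = \id$, simply choose $\chi = \id$ whenever $(\gamma; \tau_0, \dots, \tau_n)$ already lies in $\Path_c(M)$: since $\gamma$ is then already constant near each $\tau_j$, this yields $\tilde{\gamma} = \gamma$ and hence $\ext(Z_c)(\gamma; \dots) = Z_c(\gamma; \dots)$ on the nose.

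The delicate part is constructing the natural isomorphism $\eta\colon \id \to \ext \circ \res$. For $Z \in \mathrm{Fun}(\Path(M), \Vect)$, the component at an object $(\gamma; \tau_0, \dots, \tau_n)$ must be an isomorphism from $Z(\gamma; \tau_0, \dots, \tau_n)$ to $Z(\tilde{\gamma}; \tau_0, \dots, \tau_n)$. These two objects are \emph{not} isomorphic in $\Path(M)$ in general (their germs at the $\tau_j$ differ), so the isomorphism cannot be pulled back from a morphism in the source. I would instead build $\eta_Z$ using refined simplicial data: insert additional cut functions at $\tau_j \pm \delta$ lying at the boundary of the modification region. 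Because $\chi = \id$ there, the germs of $\gamma$ and $\tilde{\gamma}$ at $\tau_j \pm \delta$ agree, giving canonical identifications $Z(\gamma; \tau_j - \delta) = Z(\tilde{\gamma}; \tau_j - \delta)$, $Z(\gamma; \tau_j + \delta) = Z(\tilde{\gamma}; \tau_j + \delta)$, and likewise $Z(\gamma; \tau_j - \delta, \tau_j + \delta) = Z(\tilde{\gamma}; \tau_j - \delta, \tau_j + \delta)$. Each of these 1-bordisms factors through $Z(\gamma; \tau_j)$ in one case and through $Z(\tilde{\gamma}; \tau_j)$ in the other, and by invertibility of the intermediate 1-morphisms (Lemma~\ref{LemmaIso}), these two factorizations of the same map determine a unique isomorphism $\eta_Z(\gamma; \tau_j)\colon Z(\gamma; \tau_j) \to Z(\tilde{\gamma}; \tau_j)$ intertwining them. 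One then verifies independence of the auxiliary choices ($\delta$ and $\chi$), naturality in $Z$, compatibility with face and degeneracy maps and with the symmetric-monoidal structure, and assembles the components at all simplicial levels; invertibility of the resulting $\eta_Z$ is automatic from its construction via invertible maps.
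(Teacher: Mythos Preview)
Your overall strategy---define $\ext$ by precomposing with a modification function and build $\eta$ from short bordisms flanking each marked point---is exactly the paper's. The gap is in your well-definedness argument for $\ext Z_c$. You claim that for two choices $\chi,\chi'$ the identity of $\R\times S$ gives an isomorphism $(\tilde\gamma;\tau_0,\dots,\tau_n)\cong(\tilde\gamma';\tau_0,\dots,\tau_n)$ in $\Path_c(M)_n$. It does not: a morphism at simplicial level $n$ is a germ at the \emph{entire core} $X_0^n=[\tau_0,\tau_n]\times S$, and $\tilde\gamma=\gamma\circ\chi$ and $\tilde\gamma'=\gamma\circ\chi'$ disagree on the transition regions between the constant plateaus and the identity parts. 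Agreement of germs at the individual $\tau_j$ only identifies the vertex objects $W_j$, not the edge maps $f_j=Z_c(\tilde\gamma;\tau_{j-1},\tau_j)$. Showing that these edge maps coincide for different modification functions is precisely the content of Lemma~\ref{LemmaIndependenceOfChi}, whose proof is not formal: it uses invertibility (Lemma~\ref{LemmaIso}) and the ability to introduce a sitting instant in the interior (Lemma~\ref{LemmaSittingInstant}).

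The same issue reappears in your treatment of morphisms and of $\eta$. A morphism $[F]$ in $\Path(M)_n$ will not in general intertwine $\widetilde{\gamma\circ F}$ and $\tilde\gamma\circ F$ on a neighborhood of the core (the transition regions again fail to match), so ``combining $F$ with the modification functions'' does not literally produce a morphism in $\Path_c(M)_n$; the paper instead assigns a translation $[T_{F(a)-a}]_a$ and checks functoriality using Lemmas~\ref{LemmaActionTrivial} and~\ref{LemmaIndependenceOfChi}. Likewise, in your construction of $\eta$ the equality $Z(\gamma;\tau_j-\delta,\tau_j+\delta)=Z(\tilde\gamma;\tau_j-\delta,\tau_j+\delta)$ is \emph{not} a germ statement (the two paths differ on the interior of $[\tau_j-\delta,\tau_j+\delta]$); it is exactly Lemma~\ref{LemmaSittingInstant}. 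Once you invoke these lemmas at the right places, your argument coincides with the paper's---your one-sided formula $\eta=Z(\tilde\gamma;\tau_j-\delta,\tau_j)\circ Z(\gamma;\tau_j-\delta,\tau_j)^{-1}$ is equivalent to the paper's $\eta_{\gamma,a}=Z(\gamma\circ\chi_{a,a+\ell};a,a+\delta)^{-1}\circ Z(\gamma;a,a+\delta)$.
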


We need several lemmas, for which we fix a smooth functor $Z\colon
\Path(M)\rightarrow \Vect$. We will assume that $Z$ is a strict
functor (which is possible by Lemma~\ref{LemmaStrictification}), and
we will also make the simplifying assumptions discussed in
Remark~\ref{RemarkSimplification}.

\begin{Lemma}[introducing a sitting instant]
 \label{LemmaSittingInstant}
 Fix numbers $a \leq b$ and let $F\colon \R \rightarrow \R$ be a
 smooth monotonically increasing function such that $F(t) = t$ for
 $t$ near $a$ and near $b$. Then, for all paths $\gamma\colon \R
 \rightarrow M$, we have $Z(\gamma; a, b) = Z(\gamma \circ F; a, b)$.
\end{Lemma}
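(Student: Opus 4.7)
The plan is to first handle the case where $F$ is a local diffeomorphism near $[a,b]$ by constructing an explicit morphism in $\Path(M)_1$, and then to reduce the general monotonic case via a smooth one-parameter family of local diffeomorphisms interpolating between $\id$ and $F$, using smoothness of $Z$ to pass to the limit.

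For the diffeomorphism case, suppose $F' > 0$ on a neighborhood of $[a,b]$, so that $F$ restricts to a smooth open embedding there. Then $F$ itself, viewed as a map between total spaces, defines a morphism $G\colon (\gamma \circ F; a, b) \to (\gamma; a, b)$ in $\Path(M)_1$: the geometry compatibility $\gamma \circ F = \gamma \circ F$ is tautological, and the cut-function condition $F^*\rho_i' = \zeta_i \rho_i$ holds with $\zeta_i(t) = (F(t) - c_i)/(t - c_i)$ (for $c_0 = a$, $c_1 = b$), which extends smoothly across $c_i$ by Hadamard's lemma (since $F(c_i) = c_i$) and is everywhere positive (equal to $1$ near $c_i$ because $F = \id$ there, and positive elsewhere by strict monotonicity). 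Since $F = \id$ in a neighborhood of $a$ and $b$, the germs of $F$ at these points are identity germs, so by Remark~\ref{RemarkSimplification} the level-zero components of $Z(G)$ at the two endpoints are identity maps. Commutativity of the resulting square in $\Vect_1$ then yields $Z(\gamma \circ F; a, b) = Z(\gamma; a, b)$.

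For general monotonic $F$, pick a smooth non-decreasing cutoff $\phi \colon \R \to [0, 1]$ with $\phi(s) = 0$ for $s \leq 0$ and $\phi(s) = 1$ for $s \geq 1$, and set $F_s(t) := (1 - \phi(s)) t + \phi(s) F(t)$. Each $F_s$ equals the identity on the same neighborhood of $\{a, b\}$ as $F$, uniformly in $s$, and $F_s'(t) \geq 1 - \phi(s) > 0$ whenever $s < 1$, so $F_s$ satisfies the hypothesis of the diffeomorphism case for every such $s$; moreover $F_1 = F$. Form the family $\Gamma := ((\R \times \R)/\R;\, t - a,\, t - b;\, (t, s) \mapsto \gamma(F_s(t)))$ in $\Path(M)_1(\R)$, whose core $[a, b] \times \R$ is proper over $\R$. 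Applying $Z$, the endpoint bundles over $\R$ are canonically the trivial bundles $\R \times Z(\gamma; a)$ and $\R \times Z(\gamma; b)$ (the germ of the geometry at $\{a, b\} \times \R$ is constant in $s$ since $F_s = \id$ uniformly there), so $Z(\Gamma)$ amounts to a smooth family $L \colon \R \to \Hom(Z(\gamma; a), Z(\gamma; b))$ with $L(s) = Z(\gamma \circ F_s; a, b)$ by functoriality. The diffeomorphism case gives $L(s) = Z(\gamma; a, b)$ for every $s < 1$, and smoothness (hence continuity) of $L$ at $s = 1$ forces $Z(\gamma \circ F; a, b) = L(1) = Z(\gamma; a, b)$.

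The principal technical hurdle is the construction of the morphism $G$ in the diffeomorphism case—specifically, verifying the global positivity of the factors $\zeta_i$ and that the level-zero components of $Z(G)$ are identity maps via the germ identification; once this is in hand, the family argument is a clean smoothness/continuity reduction.
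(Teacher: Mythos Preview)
Your proof is correct and follows essentially the same approach as the paper: first treat the case where $F$ is a diffeomorphism near $[a,b]$ by exhibiting $F$ as a morphism in $\Path(M)_1$ and using that the boundary components of $Z(G)$ are identities (since $F=\id$ near $a$ and $b$), then reduce the general case via the convex interpolation $F_s = (1-\phi(s))\id + \phi(s)F$ and a continuity argument. The paper's version is terser---it uses the linear family $F_\varepsilon = (1-\varepsilon)F + \varepsilon\,\id$ without the cutoff and appeals to continuity without explicitly packaging the family as an $\R$-parametrized bordism---but your more explicit setup (checking the positivity of the $\zeta_i$ via Hadamard and writing out the family $\Gamma \in \Path(M)_1(\R)$) amounts to the same argument.
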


We remark that if we choose $F$ to be constant somewhere in between $a$ and $b$ (as allowed by the lemma), the path $\gamma \circ F$ will have a sitting instant somewhere in between $a$ and $b$.

\begin{proof}
 If $F^\prime(x) \neq 0$ for all $x \in [a, b]$, so that $F$ is a
 diffeomorphism onto its image in a~neigh\-borhood of $[a, b]$, the
 simplicial structure yields the commutative diagram
 \begin{equation*}
 \begin{tikzcd}
 Z(\gamma; a)
 \ar[rr, "{Z(\gamma; a, b)}"]
 \ar[d, "{Z([F]_a)}"']
 & &
 Z(\gamma; b)
 \ar[d, "{Z([F]_b)}"]
 \\
 Z(\gamma \circ F; a)
 \ar[rr, "{Z(\gamma \circ F; a, b)}"']
 & &
 Z(\gamma \circ F; b).
 \end{tikzcd}
 \end{equation*}
 This, together with the fact that $F = \id$ near $a$ and $b$, so
 that $Z(\gamma\circ F; a) = Z(\gamma; a)$ and $Z([F]_a) = \id$, and
 similarly for $b$, proves the result in this case.

 The general case now follows from the fact that
 \begin{equation*}
 F_\varepsilon(t) := (1-\varepsilon)F(t) + \varepsilon t
 \end{equation*}
 is a one-parameter family of maps such that $F_0 = F$ and such that
 $F_{\varepsilon}$ is a diffeomorphism whenever $\varepsilon \in (0,
 1]$. Therefore, by the observations above, we have $Z(\gamma; a, b)
 = Z(\gamma \circ F_\varepsilon; a, b)$ for all $\varepsilon \in (0, 1]$, and, by
 continuity, the equality persists for $\varepsilon = 0$.
\end{proof}

We are now able to prove the following essential lemma, which proves
the independence of the choice of modification function (see
Definition~\ref{DefModFunction}).

\begin{Lemma}[independence of $\chi$]
 \label{LemmaIndependenceOfChi}
 Let $Z\colon \Path_c(X) \rightarrow \Vect$ be a functor. Then for
 any two modification functions $\chi$, $\tilde{\chi}$, we have
 \begin{equation*}
 Z(\gamma \circ \chi_{a, b}, a, b)
 = Z(\gamma \circ \tilde{\chi}_{a, b}, a, b)
 \end{equation*}
 as morphisms from $Z(\gamma(a), a)$ to $Z(\gamma(b), b)$.
\end{Lemma}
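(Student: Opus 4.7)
The plan is a homotopy argument combining Lemma~\ref{LemmaSittingInstant} with the smoothness of $Z$, passing through a reference modification function. Fix a modification function $\chi_0$ that is strictly increasing on some open interval $(\delta_0, 1 - \delta_0)$, with the explicit profile $\chi_0(u) = \exp(-1/(u - \delta_0))$ for $u$ slightly larger than $\delta_0$ (and symmetrically near $1 - \delta_0$). For an arbitrary modification function $\chi$, introduce the smooth family
\[
\chi_r = (1-r)\chi + r \chi_0, \qquad r \in [0, 1],
\]
each $\chi_r$ being again a modification function (convex combinations of nondecreasing functions are nondecreasing, and both summands are $0$ near $0$ and $1$ near $1$). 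This yields a smooth $[0,1]$-family of objects in $\Path_c(M)_1$, since each $\gamma \circ (\chi_r)_{a,b}$ has sitting instants at $a$ and $b$; applying $Z$ produces a continuous map $f\colon [0,1] \to \Hom(Z(\gamma(a), 0), Z(\gamma(b), 0))$ with $f(r) = Z(\gamma \circ (\chi_r)_{a,b}; a, b)$.

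The key claim is that $f$ is constant on the open interval $(0, 1)$. To establish it, I would construct, for each pair $r, r' \in (0, 1)$, a smooth monotonically increasing function $F_{r, r'}\colon \R \to \R$ with $F_{r, r'} = \id$ near $a$ and near $b$, satisfying $(\chi_r)_{a,b} \circ F_{r, r'} = (\chi_{r'})_{a,b}$; Lemma~\ref{LemmaSittingInstant} then yields $f(r) = f(r')$. For $r \in (0, 1)$ the flat regions of $\chi_r$ are independent of $r$ (being the intersections of the flat regions of $\chi$ and $\chi_0$), and on the strictly increasing regions the formula $F_{r, r'} = (\chi_r)_{a,b}^{-1} \circ (\chi_{r'})_{a,b}$ is uniquely determined. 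Near the boundary of a flat region one of the two summands in $\chi_r$ dominates, and in the dominant regime the explicit computation $r \exp(-1/(F - \delta_0)) = r' \exp(-1/(t - \delta_0))$ gives $F(t) = \delta_0 + (t - \delta_0)/\bigl(1 - (t - \delta_0)\ln(r'/r)\bigr)$, real analytic in $t - \delta_0$ with a prescribed Taylor expansion. On the flat regions themselves, $F_{r, r'}$ is extended smoothly so as to match this Taylor expansion at the flat-region boundary and coincide with the identity near the endpoints $a$ and $b$.

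By continuity of $f$ at the endpoints $r = 0$ and $r = 1$, the common value on $(0, 1)$ extends to all of $[0, 1]$, yielding $Z(\gamma \circ \chi_{a,b}; a, b) = Z(\gamma \circ (\chi_0)_{a,b}; a, b)$. Applying the same procedure to $\tilde\chi$ in place of $\chi$ gives the desired equality $Z(\gamma \circ \chi_{a,b}; a, b) = Z(\gamma \circ \tilde\chi_{a,b}; a, b)$. The main technical obstacle is the smooth gluing of the explicit formula for $F_{r, r'}$ across the flat-region boundary, where a naive gluing with the identity would produce mismatched higher derivatives; the specific exponential profile of $\chi_0$ is chosen precisely so that the required Taylor coefficients are computable in closed form, allowing the smooth extension over the flat region to be constructed by standard Borel-type arguments.
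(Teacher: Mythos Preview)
Your homotopy-and-continuity strategy is sound in outline, and the use of Lemma~\ref{LemmaSittingInstant} is appropriate since $\gamma \circ (\chi_r)_{a,b}$ always has sitting instants at $a$ and $b$. The gap is in the construction of $F_{r,r'}$. You fix $\chi_0$ first, so your explicit formula applies only when the $\chi_0$ summand in $\chi_r = (1-r)\chi + r\chi_0$ is the one that ``switches on'' at the flat-region boundary. But if $\chi$ is identically $0$ on $[0,\delta_1]$ with $\delta_1 < \delta_0$, the boundary of the first flat region of $\chi_r$ lies at $\delta_1$; just to the right one has $\chi_0 = 0$ and hence $\chi_r = (1-r)\chi$, so the equation for $F_{r,r'}$ becomes $\chi(F_{r,r'}(t)) = \tfrac{1-r'}{1-r}\,\chi(t)$, which involves only the uncontrolled profile of $\chi$. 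For suitable $\chi$ this $F_{r,r'}$ is not even $C^1$ at $\delta_1$: take for instance $\chi(\delta_1 + s) = e^{-1/s}\bigl(2 + \sin(1/s)\bigr)$ for small $s > 0$, which is smooth, strictly increasing, and flat to infinite order at $s = 0$; a direct computation shows that $(F_{r,r'})'$ oscillates without limit as $t \to \delta_1^+$. Choosing $\chi_0$ with $\delta_0 < \delta_1$ would make your formula apply, but then the reference function depends on $\chi$ and you must still compare two different references, reintroducing the same difficulty.

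The paper's proof sidesteps the reparametrization problem entirely. Rather than solving $\chi_r \circ F = \chi_{r'}$, it first arranges (by precomposition with diffeomorphisms) that both $\chi$ and $\tilde\chi$ equal the identity near $\tfrac12$, then extends each to a monotone map $F$ (resp.\ $\tilde F$) on the larger interval $[-\tfrac12,\tfrac12]$, equal to the identity near $-\tfrac12$ and to each other on $(-\infty,0]$. Lemma~\ref{LemmaSittingInstant} gives $Z(\gamma \circ F; -\tfrac12, \tfrac12) = Z(\gamma; -\tfrac12, \tfrac12) = Z(\gamma \circ \tilde F; -\tfrac12, \tfrac12)$; splitting at $0$ and invoking invertibility (Lemma~\ref{LemmaIso}) then isolates the desired equality on $[0,\tfrac12]$, and symmetrically on $[\tfrac12,1]$. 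No equation of the form $\chi(F) = c\,\chi(t)$ ever has to be solved.
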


\begin{proof}
 Clearly, we may assume for simplicity that $a=0$ and $b=1$. Now, we
 first argue that we may furthermore assume that $\chi(t) =
 \tilde{\chi}(t) = t$ for $t$ in a neighborhood of $\tfrac{1}{2}$.
 Since $\chi$ is not constant, there exists some $t_0 \in (0, 1)$
 such that $\chi^\prime(t_0) \neq 0$. We may arrange that $t_0 =
 \tfrac{1}{2}$: Choose some diffeomorphism $F$ that is the identity
 near $t=0, 1$ and sends $\tfrac{1}{2}$ to $t_0$. Then $\xi := \chi
 \circ F$ is again a modification function, now satisfying
 $\xi^\prime(\tfrac{1}{2}) \neq 0$. We get that there exists a~neighborhood of $\tfrac{1}{2}$, where $\xi^\prime$ is invertible.
 Hence there exists some small $\varepsilon >0$ such that we can find
 a diffeomorphism $G$ of $\R$ with
 \begin{equation*}
 G(t) =
 \begin{cases}
 t
 & \text{if} \quad t \in
 (-\infty, \varepsilon] \cup [1-\varepsilon, \infty),
 \\[.5ex]
 \xi^{-1}(t)
 & \text{if} \quad t \in \big[\tfrac{1}{2}-\varepsilon,
 \tfrac{1}{2}+\varepsilon \big].
 \end{cases}
 \end{equation*}
 Then $\xi \circ G$ is a modification function that is the identity
 near $t = \tfrac{1}{2}$, and we have
 \begin{equation*}
 Z\bigl(\gamma \circ (\xi \circ G); 0, 1\bigr)
 = Z\bigl(\gamma \circ \chi \circ (F \circ G); 0, 1\bigr)
 = Z(\gamma \circ \chi; 0, 1)
 \end{equation*}
 since $F \circ G$ is the identity near $t = 0,\ 1$.

 By the above, after replacing $\chi$ and $\tilde{\chi}$ with
 equivalent modification functions, we may assume that $\chi(t) =
 \tilde{\chi}(t) = t$ near $t=\tfrac{1}{2}$. Now let $F$ and
 $\tilde{F}$ be monotonic functions on $\R$ such that
 \begin{enumerate}[(1)]\itemsep=0pt
 \item $F(t) = \tilde{F}(t) = t$ on $\big({-}\infty, -\tfrac{1}{2}+\varepsilon\big]$,
 \item $F(t) = \tilde{F}(t)$ on $(-\infty, 0]$,
 \item $F(t) = \chi(t)$ and $\tilde{F}(t) = \tilde{\chi}(t)$ on $[0, \infty)$.
 \end{enumerate}
 This is possible since $\chi$ and $\tilde{\chi}$ are both constant
 near zero. Now since $F$, $\tilde{F}$ are the identity near~$-\tfrac{1}{2}$ and $\tfrac{1}{2}$, Lemma~\ref{LemmaSittingInstant}
 gives
 \begin{equation}
 \label{FandTilde}
 Z\bigg(\gamma \circ F; -\frac{1}{2}, \frac{1}{2}\bigg)
 = Z\bigg(\gamma;-\frac{1}{2}, \frac{1}{2}\bigg)
 = Z\bigg(\gamma \circ \tilde{F};-\frac{1}{2}, \frac{1}{2}\bigg).
 \end{equation}
 On the other hand,
 \begin{gather}
 Z\bigg(\gamma \circ F; -\frac{1}{2}, \frac{1}{2}\bigg)
 = Z\bigg(\gamma \circ \chi; 0, \frac{1}{2}\bigg)
 \circ Z\bigg(\gamma \circ F;-\frac{1}{2}, 0\bigg),\nonumber
 \\[.5ex]
 Z\bigg(\gamma \circ \tilde{F}; -\frac{1}{2}, \frac{1}{2}\bigg)
 = Z\bigg(\gamma \circ \tilde{\chi}; 0, \frac{1}{2}\bigg)
 \circ Z\bigg(\gamma \circ F;-\frac{1}{2}, 0\bigg),\label{FandTilde2}
 \end{gather}
 by the construction of $F$ and $\tilde{F}$. As all morphisms
 involved are invertible in view of Lemma~\ref{LemmaIso}, combining~\eqref{FandTilde} with~\eqref{FandTilde2} implies that $Z\big(\gamma
 \circ \chi; 0, \tfrac{1}{2}\big) = Z\big(\gamma \circ \tilde{\chi}; 0,
 \tfrac{1}{2}\big)$. A similar argument shows that $Z\big(\gamma \circ \chi;
 \tfrac{1}{2}, 1\big) = Z\big(\gamma \circ \tilde{\chi}; \tfrac{1}{2}, 1\big)$;
 combining these observations finishes the proof.
\end{proof}

We are now ready to give the proof of the main result of this section.

\begin{proof}[Proof of Proposition~\ref{PropConstructionOfSection}]
We first define the functor $\ext $. For a smooth functor
 \begin{equation*}
 Z\colon\ \Path_c(M)\allowbreak \rightarrow \Vect,
 \end{equation*}
 we set $\ext Z(\gamma,
 a) := Z(\gamma(a); a)$ and
 \begin{equation*}
 \ext Z(\gamma; \tau_0, \dots, \tau_n)
 = Z(\bar\gamma; \tau_0, \dots, \tau_n),
 \end{equation*}
 where $\bar\gamma$ is given on the interval $[\tau_{j-1}, \tau_j]$
 by
 \begin{equation*}
 \bar\gamma(t) = \big(\gamma\circ\chi_{\tau_{i-1}, \tau_i}\big)(t)
 \end{equation*}
 for some modification function $\chi$. By
 Lemma~\ref{LemmaIndependenceOfChi}, this is independent of the
 choice of modification function. Let $F$ be a diffeomorphism of
 $\R$, which defines a morphism
 \begin{equation*}
 [F]_{a} \colon\ (\gamma \circ F, a) \longrightarrow (\gamma, F(a))
 \end{equation*}
 in $\Path(M)_0$. Let $T_a$, $a \in \R$ be the translations defined
 in~\eqref{Translations} above; we then set
 \begin{equation*}
 \ext Z([F]_a) := Z\bigl( \big[T_{F(a)-a}\big]_a\colon \bigl(\gamma(F(a)), a\bigr)
 \rightarrow \bigl(\gamma(F(a)), F(a)\bigr)\bigr).
 \end{equation*}
 Using the simplicial structure and Remark~\ref{RemarkStackification}, this determines the functor
 $\ext Z$ completely.

 In order to show that $\ext Z$ is well defined, we need to
 check functoriality. Let $F$ be a diffeomorphism of $\R$, which
 defines an automorphism
 \begin{equation*}
 [F]_{[a, b]} \colon\
 (\gamma\circ F;a, b) \longrightarrow (\gamma; a^\prime, b^\prime),\qquad
 \text{where}\quad
 a^\prime := F(a),\quad b^\prime := F(b),
 \end{equation*}
 in $\Path(M)_q$. We need to show that the square
 \begin{equation*}
 \begin{tikzcd}
 Z\bigl((\gamma \circ F)(a), a\bigr)
 \ar[d, "{Z([T_{a^\prime - a}]_{a}})"']
 \ar[rr, "{Z(\overline{\gamma\circ F}; a, b)}"]
 & &
 Z\bigl((\gamma \circ F)(b); b\bigr)
 \ar[d, "{Z([T_{b^\prime - b}]_{b}})"]
 \\
 Z\bigl(\gamma(a^\prime), a^\prime\bigr)
 \ar[rr, "{Z(\bar{\gamma}; a^\prime, b^\prime)}"']
 & &
 Z\bigl(\gamma(b^\prime), b^\prime\bigr)
 \end{tikzcd}
 \end{equation*}
 commutes. We have
 \begin{equation*}
 Z\big(\overline{\gamma\circ F}; a, b\big)
= Z\big(\gamma \circ F \circ \chi_{a, b}; a, b\big)
= Z([F]_b)^{-1}
 Z\big(\gamma \circ F \circ \chi_{a, b} \circ F^{-1}; a^\prime, b^\prime\big) Z([F]_a).
 \end{equation*}
 Now first notice that $F \circ \chi_{a, b} \circ F^{-1} =
 \tilde{\chi}_{a^\prime, b^\prime}$ for some modification function
 $\tilde{\chi}$, hence the middle term equals $Z(\bar{\gamma};
 a^\prime, b^\prime)$ (here, of course, we use
 Lemma~\ref{LemmaIndependenceOfChi} again). Secondly,
 \begin{equation*}
 Z([F]_a) = Z\big([T_{a^\prime - a}]_a\big) Z\big([T_{a^\prime - a}^{-1} \circ F]_a\big),
 \end{equation*}
 with $Z\big([T_{a^\prime - a}^{-1} \circ F]_a\big) = \id$ by
 Lemma~\ref{LemmaActionTrivial}, as $\big(T_{a^\prime - a}^{-1} \circ
 F\big)(a) = a$. Using a similar argument for $Z([F]_b)$, we obtain
 \begin{equation*}
 Z\big(\overline{\gamma\circ F}; a, b\big) = Z\big([T_{b^\prime-b}]_b\big)^{-1}
 Z(\bar{\gamma}; a^\prime, b^\prime) Z\big([T_{a^\prime-a}]_a\big),
 \end{equation*}
 which was the claim.

 If $\eta\colon Z \rightarrow Z^\prime$ is a smooth natural
 transformation between smooth functors $Z, Z^\prime\colon
 \Path_c(M) \rightarrow \Vect$, we set
 \begin{equation*}
 (\ext \eta)_{(\gamma, a)} := \eta_{(\gamma(a), a)}.
 \end{equation*}
 It is straightforward to check that this gives a natural
 transformation $\ext \eta\colon \ext Z \rightarrow
 \ext Z^\prime$ and that the assignment $\eta \mapsto
 \ext \eta$ is functorial in $\eta$. This finishes the
 definition of $\ext $.

\medskip

To see that $\res \circ \ext = \id$, we have to check
that for paths $\gamma$ that already have sitting instants at $\tau_0,
\dots, \tau_n$, one has $Z(\gamma; \tau_0, \dots, \tau_n) =
Z(\bar\gamma; \tau_0, \dots, \tau_n)$. However, for such a path
$\gamma$, there exists a modification function $\chi$ such that on
each subinterval $[\tau_{j-1}, \tau_j]$, we have $\gamma = \gamma
\circ \chi_{\tau_{j-1}, \tau_j}$. Moreover, the composition of two
modification functions is again a modification function, hence the
claim follows once more from Lemma~\ref{LemmaIndependenceOfChi}.

\medskip

Finally, we construct a natural isomorphism $\eta\colon \id
\rightarrow \ext \circ \res$. To this end, choose
numbers $0 < \delta < \ell$ and a modification function $\chi$ such
that $\chi(t) = t$ for $t$ near $\delta$ and set
\begin{equation*}
 \eta_{\gamma, a}
 := Z(\gamma\circ \chi_{a, a+\ell}; a, a+\delta)^{-1}
 \circ Z(\gamma; a, a+\delta)
 \colon\ Z(\gamma; a) \longrightarrow Z(\gamma(a), a).
\end{equation*}

We claim that this definition is independent of the modification
function $\chi$ and the choice of~$\delta$ and $\ell$. For notational
simplicity, let $a = 0$ and suppose $\ell = 1$; the case $a \neq 0$ is
similar. Now, let $\tilde{\chi}$ be another modification function
also satisfying $\tilde{\chi}(t) = t$ for $t$ near $\delta$. Since the
values of a~modification function on $[\delta, 1]$ are irrelevant for
the definition of $\eta$, we may as well assume that $\tilde{\chi} =
\chi$ on $[\delta, 1]$ (which then implies that they in fact agree on
a neighborhood of $[\delta, 1]$). Now, by~Lemma~\ref{LemmaIndependenceOfChi}, we have $Z(\gamma\circ \chi; 0, 1)
= Z(\gamma \circ \tilde{\chi}; 0, 1)$. On the other hand
\begin{equation*}
 Z(\gamma\circ \chi; 0, 1)
 = Z(\gamma \circ \chi; \delta, 1)
 \circ Z(\gamma \circ \chi; 0, \delta)
 = Z(\gamma \circ \tilde{\chi}; \delta, 1)
 \circ Z(\gamma \circ \chi; 0, \delta)
\end{equation*}
since $\chi$ and $\tilde{\chi}$ agree in a neighborhood of
$\delta$. But this equals
\begin{equation*}
 Z(\gamma\circ \tilde{\chi}; 0, 1)
 = Z(\gamma \circ \tilde{\chi}; \delta, 1)
 \circ Z(\gamma \circ \tilde{\chi}; 0, \delta),
\end{equation*}
from which we obtain the desired equality $Z(\gamma \circ \chi; 0,
\delta) = Z(\gamma \circ \tilde{\chi}; 0, \delta)$, by virtue of
Lemma~\ref{LemmaIso}. To see the independence from $\delta$, let
$\tilde{\chi}$ be a modification function with $\tilde{\chi}(t) = t$
near $\tilde{\delta}$. Without loss of generality, suppose that
$\tilde{\delta} < \delta$. By the first step, we are free to choose
the modification function $\chi$ any way we like, under the constraint
that $\chi(t) = t$ near $\delta$. We now choose it in such a way that
in fact $\chi(t) = t$ in a neighborhood of the interval
$\big[\tilde{\delta}, \delta\big]$. Then
\begin{equation*}
 Z(\gamma \circ \chi; 0, \delta)
 = Z\big(\gamma \circ \chi; \tilde{\delta}, \delta\big)
 \circ Z\big(\gamma\circ \chi; 0, \tilde{\delta}\big).
\end{equation*}
However, on $\big[\delta, \tilde{\delta}\big]$, we have $\gamma \circ \chi =
\gamma$, and by the first step, we have $Z\big(\gamma\circ \chi; 0,
\tilde{\delta}\big) = Z\big(\gamma\circ \tilde{\chi}; 0, \tilde{\delta}\big)$,
so that
\begin{equation*}
 Z(\gamma \circ \chi; 0, \delta)
 = Z\big(\gamma; \tilde{\delta}, \delta\big) \circ
 Z\big(\gamma\circ \tilde{\chi}; 0, \tilde{\delta}\big).
\end{equation*}
This finishes the argument that $\eta_{\gamma, a}$ does not depend on
$\delta$. The independence of the choice of $\ell$ now follows
immediately.

In order to show that $\eta$ indeed gives rise to a natural
transformation, we have to show that for any diffeomorphism $F$ of
$\R$, we have the equivariance property
\begin{equation}
 \label{EquivarianceEta}
 \eta_{\gamma\circ F, a} = \eta_{\gamma, F(a)} \circ Z([F]_a).
\end{equation}
First notice that for any path $\xi$ and any diffeomorphism $F$, we
have a commuting square
\begin{equation*}
 \begin{tikzcd}
 Z(\xi \circ F; a) \ar[rr, "Z(\xi \circ F; a\text{,} b)"]
 \ar[d, "Z(\text{[}F\text{]}_a)"']
 & &Z(\xi; F(a)) \ar[d, "Z(\text{[}F\text{]}_b)"]
 \\
 Z(\xi; F(a)) \ar[rr, "Z(\xi; F(a)\text{,} F(b))"']
 & &Z(\xi; F(b)).
 \end{tikzcd}
\end{equation*}
Applying this to $\xi = \gamma \circ F \circ \chi_{a, a+1} \circ
F^{-1}$, we have
\begin{align*}
 \eta_{\gamma \circ F, a}
 & = Z(\xi \circ F; a, a+\delta)^{-1}
 \circ Z(\gamma \circ F; a, a+\delta)
 \\
 & = Z([F]_a)^{-1}
 \circ Z\bigl(\xi; F(a), F(a+\delta)\bigr)^{-1}
 \circ Z\bigl(\gamma; F(a), F(a+\delta)\bigr)
 \circ Z([F]_a)
\end{align*}
Notice that the two appearances of $Z([F]_a)$ above in fact denote
different things: The one on~the right is an isomorphism $Z(\gamma
\circ F; a+\delta) \rightarrow Z(\gamma; F(a+\delta))$, while the one
on the left is an~isomorphism $Z(\xi \circ F; a) \rightarrow Z(\xi;
F(a))$. The latter is trivial by Lemma~\ref{LemmaActionTrivial} since
$\xi \circ F$ is constant near $a$ and $\xi$ is constant near $F(a)$.
Finally, it is easy to see that $F \circ \chi_{a, a+1} \circ F^{-1} =
\tilde{\chi}_{F(a), F(a+1)}$ for some modification function
$\tilde{\chi}$. Hence the last expression equals $\eta_{\gamma, F(a)}
\circ Z([F]_a)$, which finishes the proof of identity~\eqref{EquivarianceEta}.

Now the functions $\eta_{\gamma, a}$ give a natural transformation
$\eta\colon\id \rightarrow \ext \circ \res$ as
follows. For each functor $Z\colon \Path_c(M)\! \rightarrow\! \Vect$, we
need to give a natural transformation $\eta^{Z}\colon Z\! \rightarrow\!
\ext \big(Z|_{\Path_c(M)}\big)$. The components of this natural
transformation are given by
\begin{equation*}
 \eta^Z_{(\gamma; \tau_0, \dots, \tau_n)}
 = (\eta_{\gamma, \tau_0}, \dots, \eta_{\gamma, \tau_n}),
\end{equation*}
with the $\eta_{\gamma, \tau_j}$ as constructed above. This gives an
isomorphism from
\begin{equation*}
Z(\gamma; \tau_0, \dots, \tau_n) \longrightarrow \ext
Z(\gamma; \tau_0, \dots, \tau_n) = Z(\overline{\gamma}; \tau_0, \dots,
\tau_n),
\end{equation*}
 as required. The equivariance property~\eqref{EquivarianceEta} shows that these isomorphisms indeed fit
together to give a natural transformation.

The last thing to show is that if $\mu\colon Z \rightarrow Y$ is a
smooth natural transformation of smooth functors $Y, Z\colon
\Path_c(M) \rightarrow \Vect$, then $\eta^Y \circ \mu = \ext
\mu \circ \eta^Z$. But this is trivial, since $\eta^Z$ and $\eta^Y$
act as the identity on $\mathrm{Path}_c(M)$ and $\ext
\mu|_{\Path_c(M)} = \mu$.
\end{proof}

\subsection{Proof of the classification theorem} \label{SectionFinishProof}

We are now in a position to prove Theorem~\ref{Thm:Classification};
more precisely, we will prove that the functor~$\Phi$ from
Proposition~\ref{PropConstructionOfFieldTheories} is essentially
surjective.

\begin{proof}[Proof of Theorem~\ref{Thm:Classification}]
 Let $Z\colon 1\Bord(M) \rightarrow \Vect$ be a field theory. By the
 results of the previous section, we may assume that $Z|_{\Path(M)} =
 \Phi(\V, \nabla)$, where
 \begin{equation*}
 \Phi\colon\
 \mathrm{Fun}\bigl(\Path(M), \Vect\bigr) \longrightarrow \Vect_\nabla^\sim(M)
 \end{equation*}
 is the equivalence constructed in Section~\ref{SectionFunctorsOnPathM}. This means that $Z(\gamma, a) =
 \V_{\gamma(a)}$ for all paths $\gamma$ in~$M$ and all $a$, and that
 $Z(\gamma; a, b)$ is given by parallel transport along $\gamma$ from
 $a$ to $b$, with respect to the connection $\nabla$.

 To get a bilinear form $\beta$ on $\V$, consider the \emph{constant
 right elbow}, which is the bordism $R = ((\R\times M)/M; \rho_0,
 \rho_1; \gamma_{\mathrm{const}})$, where $\gamma_{\mathrm{const}}(t,
 p) = p$, $\rho_1 \equiv -1$ and $\rho_0(t, p) := t(1-t)$. Then,
 canonically,
 \begin{equation*}
 d_0^* R \cong (\gamma_{\mathrm{const}}, 0)
 \amalg (\gamma_{\mathrm{const}}, 1)\qquad
 \text{and}\qquad
 d_1^*R = \varnothing.
 \end{equation*}
 Hence $Z(R)$ is a linear map from $Z(\gamma_{\mathrm{const}}, 0)
 \otimes Z(\gamma_{\mathrm{const}}, 1)$ to $\mathbb{K}$. However,
 $Z(\gamma_{\mathrm{const}}, 0) = Z(\gamma_{\mathrm{const}}, 1) = \V$,
 so we get a bilinear form $\beta := Z(R)$ on $\V$. It is symmetric
 because the diffeomorphism $F\colon \mathbb R \to \mathbb R$,
 \begin{equation*}
 F(t) = \frac{1}{2} - t,
 \end{equation*}
 determines an automorphism of $R$ in $1\Bord(M)$, the restriction of which
 to $R_0^0$ gets mapped to
 the symmetry isomorphism of $\V \otimes \V$ under $Z$. This is because
 $F$ swaps the two components of~$R^0_0 = (\{0, 1\} \times M)/M$.

 To see that $\beta$ is nondegenerate, let $L$ be the \emph{constant left
 elbow}, which is the bordism given by $L = (\R\times M/M;
 \vartheta_0, \vartheta_1; \gamma_{\mathrm{const}})$, where
 $\vartheta_0(t) \equiv 1$ and $\vartheta_1(t) = -\rho_0(t)$. Set
 $\tau := Z(L)$, which is a section of the bundle
 $\mathrm{Hom}(\mathbb{K}, \V \otimes \V)$, i.e., a section of $\V
 \otimes \V$. The ``snake identity''
 \begin{equation*}
 (\id \otimes \beta) \circ (\tau \otimes \id) = \id
 \end{equation*}
 is satisfied. (To be precise, the left hand side is in fact a map
 from $\mathbb{K} \otimes \V$ to $\V \otimes \mathbb{K}$, but this
 can be canonically be identified with an endomorphism of $\V$; the
 requirement is that this endomorphism be the identity.) We now
 analyze this identity on each fiber; to this end, write $\tau_p
 = \sum_{ij} v_i \otimes w_j$ for some elements $v_i, w_j \in
 \V_p$. Then for any $u \in \V_p$, we have
 \begin{equation*}
 (\id \otimes \beta) \circ (\tau \otimes \id)(1 \otimes u)
 = \sum_{i, j}(\id\otimes \beta)(v_i\otimes w_j \otimes u)
 = \sum_{i,j} \beta(w_j, u) v_i \otimes 1.
 \end{equation*}
 The requirement that this be equal to $u \otimes 1$ implies that
 $\beta$ must be nondegenerate, as claimed. Note that it also
 implies that $\tau = \sum_j \varepsilon_j b_j \otimes b_j$, where
 $b_1, \dots, b_n$ is a generalized orthonormal basis for $\beta$;
 thus, $\epsilon$ is determined by $\beta$.

 We now want to show that $Z = Z_{\V, \nabla, \beta} = \Phi(\V,
 \nabla, \beta)$, where $\Phi$ denotes the functor constructed in
 Proposition~\ref{PropConstructionOfFieldTheories}. We already know
 that $Z= Z_{\V, \nabla, \beta}$ on simplicial level zero and $Z(B) =
 Z_{\V, \nabla, \beta}(B)$ on all bordisms $B$ that are (tensor
 products of) intervals and/or constant elbows. Using the techniques
 from the previous section, we can introduce sitting instants into
 any non-constant elbow $B$ and then express $B$ as the composition
 of two intervals and a constant elbow. This determines the field
 theory on all intervals and elbows, as well as on circles, since any
 circle can be decomposed into two elbows. Hence we have $Z =
 \Phi(\V, \nabla, \beta)$.
\end{proof}

\subsection{The oriented case}
\label{SectionTheOrientedCase}

We now briefly comment on the oriented case. The geometry considered
here is the one considered in Example~\ref{Ex:Orientations}, where manifolds are endowed with
orientations. In dimension one, the resulting bordism category will be
denoted $1\Bord^{\mathrm{or}}(M)$ and we write
$1\TFT^{\mathrm{or}}(M)$ for the corresponding groupoid of field
theories.

The main difference of $1\Bord^{\mathrm{or}}(M)$ to the unoriented
bordism category is that there are now two different kinds of points:
Remember that a point is given as the zero set of the cut function~$\rho_0$ on a one-dimensional manifold (respectively a family of
such). Now because $\rho_0(x) = 0$ for $x \in X$, the orientation allows to
ask whether ${\rm d}\rho_0(x)$ (which is non-zero and hence a basis of~$T_x
X$) is positively oriented or negatively oriented. This leads to
\emph{positive} respectively \emph{negative points}. Correspondingly,
we have one more elementary bordism, as displayed in
Figure~\ref{fig:or-1-bord}.

\begin{figure}[h]
 \centering
 \begin{tikzpicture}[x=1cm, y=2cm]
 \draw[bordism, oriented]
 (1, 1) pic{dot} node[above]{$+$}
 -- (1, 0) pic{dot} node[below]{$+$};
 \draw[bordism, oriented]
 (2, 0) pic{dot} node[below]{$-$}
 -- (2, 1) pic{dot} node[above]{$-$} ;
 \draw[bordism, oriented=0.25]
 (3, 1) pic{dot} node[above]{$+$}
 to[out=-90, in=-90, looseness=5]
 (4, 1) pic{dot} node[above]{$-$};
 \draw[bordism, oriented=0.25]
 (5, 0) pic{dot} node[below]{$-$}
 to[out=90, in=90, looseness=5]
 (6, 0) pic{dot} node[below]{$+$};
 \draw[bordism, oriented]
 (7.5, 0.5) circle[x radius = .5, y radius = .35];
 \draw[cut] (0, 1) -- (8.5, 1);
 \draw[cut] (0, 0) -- (8.5, 0);
 \draw[axis, ->] (0, 1.2) -- (0, -0.2) node[below]{$t$};
 \draw \foreach \i in {0,1} {(0, 1- \i) node[left] {$\tau_\i$}};
 \end{tikzpicture}
 \caption{All possible connected oriented bordisms. We call them
 positively and negatively oriented intervals, left elbow, right
 elbow, and circle, respectively. The cut functions are $\rho_i = t
 - \tau_i$, so these pictures are read from top to bottom.}
 \label{fig:or-1-bord}
\end{figure}
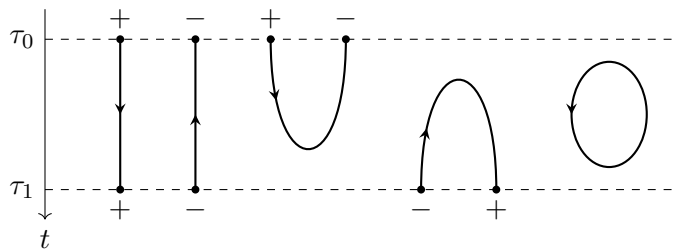

A field theory $Z \in 1\TFT^{\mathrm{or}}$ will assign vector spaces
$V^\pm$ to the two different points. The elbow then provides a
pairing $V^+ \otimes V^- \rightarrow \C$, which must be non-degenerate
due to the snake identity; hence $V^-$ can be identified with the dual
space of $V^+$ and vice versa. In particular, we~do \emph{not} obtain
the additional datum of a nondegenerate bilinear form on our vector
spaces.

Passing from ordinary field theories to oriented field theories over a
target $M$, the result is the following.

{\samepage\begin{Theorem}[classification of $1$-TFTs, oriented case]
 \label{Thm:ClassificationOriented}
 There is an equivalence of groupoids
 \begin{equation*}
 1\TFT^{\mathrm{or}}(M) \cong \Vect^{\sim}_{\nabla}(M),
 \end{equation*}
 which is natural in $M$.
\end{Theorem}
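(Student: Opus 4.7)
The plan is to follow the same strategy as in the proof of Theorem~\ref{Thm:Classification}, with modifications that reflect the additional orientation data. First, I would construct a functor
\begin{equation*}
 \Phi^{\mathrm{or}} \colon\ \Vect^{\sim}_{\nabla}(M) \longrightarrow 1\TFT^{\mathrm{or}}(M)
\end{equation*}
by assigning to $(\V, \nabla)$ the field theory which maps a positive point $x$ to $\V_x$, a negative point $x$ to $\V_x^\vee$, a positively (resp.\ negatively) oriented interval over a path $\gamma$ to parallel transport $P_\gamma^\nabla$ (resp.\ its inverse transpose $(P_\gamma^\nabla)^{\vee,-1}$), a right elbow to the natural evaluation $\V \otimes \V^\vee \to \mathbb{K}$ (composed with parallel transport), a left elbow to the coevaluation $\mathbb{K} \to \V \otimes \V^\vee$, and a circle to the trace of the holonomy. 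That this is functorial is verified fiberwise, just as in Construction~\ref{ConstructionUnorientedTFTs}. The arguments for $\Phi^{\mathrm{or}}$ being fully faithful are identical to those for $\Phi$ in Proposition~\ref{PropConstructionOfFieldTheories}.

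For essential surjectivity, given a field theory $Z \in 1\TFT^{\mathrm{or}}(M)$, I would begin by considering its restriction to the oriented path subcategory $\Path^{\mathrm{or}}(M) \subset 1\Bord^{\mathrm{or}}(M)$, consisting of objects whose intervals are positively oriented. The arguments of Section~\ref{SectionFunctorsOnPathM} carry over without essential change: the key lemmas (invertibility, triviality of the action of reparametrizations, independence of modification function, extension from $\Path_c$ to $\Path$) never used the unoriented structure in an essential way, since they all deal with single paths of fixed orientation. Applying the oriented analogue of Theorem~\ref{ThmEquivalencePathCat} yields a vector bundle $\V$ with connection $\nabla$ on $M$ such that $Z|_{\Path^{\mathrm{or}}(M)} \cong \Phi^{\mathrm{or}}(\V, \nabla)|_{\Path^{\mathrm{or}}(M)}$, and in particular $Z$ assigns $\V_x$ to a positive point $x$ and parallel transport to positively oriented intervals.

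Next, I would extract the structure at negative points. Let $\V^- := Z(\mathrm{pt}_M^-)$, an \textit{a priori} unknown vector bundle over $M$. The constant right elbow $R$ (joining a positive and a negative point, cf.\ Figure~\ref{fig:or-1-bord}) gets mapped by $Z$ to a bundle morphism $\beta_Z \colon \V \otimes \V^- \to \mathbb{K}$, and the constant left elbow $L$ produces $\tau_Z \colon \mathbb{K} \to \V^- \otimes \V$. The snake identity, derived from a decomposition of the positively oriented interval as a composition of an elbow pair, forces $\beta_Z$ to be a non-degenerate duality pairing; hence there is a canonical isomorphism $\V^- \cong \V^\vee$ under which $\beta_Z$ is the evaluation map and $\tau_Z$ the coevaluation map. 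Note that, crucially, no symmetry of $\beta_Z$ arises here, because the oriented elbow has no automorphism swapping its two (oppositely oriented) boundary components -- this is exactly the difference from the unoriented case which eliminates the bilinear form datum.

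With this identification in hand, the final step is to check that $Z \cong \Phi^{\mathrm{or}}(\V, \nabla)$ on all of $1\Bord^{\mathrm{or}}(M)$. As in the end of the proof of Theorem~\ref{Thm:Classification}, one uses the sitting-instant technique from Section~\ref{SectionGeneralCase} to decompose an arbitrary interval, elbow, or circle into a composition of positively oriented intervals and constant elbows, on which the two functors agree by construction. The only genuine obstacle I anticipate is ensuring that the canonical identification $Z(\mathrm{pt}_M^-) \cong \V^\vee$ is smooth in families over a parameter space $S$; however, since $\beta_Z$ and $\tau_Z$ come from stack morphisms and the snake identity holds fiberwise, their invertibility is automatic and the induced isomorphism $\V^- \cong \V^\vee$ is smooth by the smooth-inverse-function property of stacks of vector bundles.
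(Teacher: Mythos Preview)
Your proposal is correct and takes essentially the same approach as the paper, which simply states that the proof is analogous to that of Theorem~\ref{Thm:Classification}; you have correctly filled in what ``analogous'' means here, including the key observation that the oriented elbow lacks the boundary-swapping automorphism, so no symmetric bilinear form arises and the elbow pairing instead identifies $\V^-$ with $\V^\vee$.
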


}

Here $\Vect^{\sim}_{\nabla}(M)$ denotes the groupoid of vector bundles
with connection on $M$, with morphisms given by connection-preserving
bundle isomorphisms. The proof is analogous to that of~Theorem~\ref{Thm:Classification}.

\section{Field theories with values in sheaves}
\label{sec:values-in-sheaves}

In this section, we consider notions of ``families of vector spaces''
more general than vector bundles. Let $\mathcal V$ denote some stack
of $C^\infty$-modules. This means that for each manifold $S$, objects
of $\mathcal V(S)$ are modules over $C^\infty(S)$, possibly with a
bornology or topology of a particular kind. Below, we will fix suitable
conditions $\mathcal V$ should satisfy, but we would like, at a
minimum, that the operation of taking global sections determines an
embedding $\Vect \to \mathcal V$, and a symmetric monoidal structure
on $\mathcal V$ compatible with this embedding. In line with
Definition~\ref{def:gft}, we write
\begin{equation*}
 1\TFT(M; \mathcal V)
 = \Fun^\otimes(1\Bord(M), \mathcal V)
\end{equation*}
for the groupoid of $1$-dimensional field theories over $M$ taking
values in $\mathcal V$, and similarly for the oriented variant. In
the TFT case, it is expected that this introduces no new examples,
that is, a field theory with values in $\mathcal V$ automatically
takes values in $\Vect$. In this section, we verify that this is
indeed the case.

\begin{Definition}
 We will call a symmetric monoidal smooth stack $\mathcal V$ an
 \emph{admissible stack of $C^\infty$-modules} if the following
 holds.
 \begin{enumerate}\itemsep=0pt
 \item[(C1)] For each $S$, $\mathcal V(S)$ is an additive category
 with kernels, naturally in $S$, such that $\otimes$ is additive in
 both variables.
 \item[(C2)] There is a linear symmetric monoidal fibered functor
 $\Vect \to \mathcal V$ which determines, for each~$S$, an
 equivalence between $\Vect(S)$ and the dualizable objects of
 $\mathcal V(S)$.
 \item[(C3)] Let $V \in \mathcal V(S \times \mathbb R)$, $W \in
 \mathcal V(S)$, and $\pr\colon S \times \mathbb R \to S$ be the
 projection. If $\sigma\colon V \to \pr^* W$ is such that
 $\sigma\vert_U = 0$ for every open $U \subset S \times \R$ with $U
 \cap (S \times \{0\}) = \varnothing$, then $\sigma = 0$.
 \end{enumerate}
\end{Definition}

Condition (C3) is a kind of separation axiom. It is often useful to
regard a morphism $W^\prime \to W$ in $\mathcal V(S)$ as a generalized
element of $W$. Then a morphism $\sigma\colon V \to \pr^*W$ as above
can be interpreted as a ``generalized path of elements'' of $W$, and
the axiom says that a generalized path vanishes altogether provided
it vanishes on $\mathbb R \setminus 0$.

In Section~\ref{sec:admissible-stacks}, we give examples of
admissible stacks of $C^\infty$-modules. Before doing this, we state
and prove the following theorem, which contains the previously
mentioned result that the target does not matter for topological field
theories.

\begin{Theorem}
\label{thm:sheafy}
 For any admissible stack of $C^\infty$-modules $\mathcal V$ and any manifold $M$, the
 inclusions
 \begin{equation*}
 1\TFT^{\mathrm{or}}(M) \to 1\TFT^{\mathrm{or}}(M; \mathcal V),
 \qquad
 1\TFT(M) \to 1\TFT(M; \mathcal V)
 \end{equation*}
 are equivalences of groupoids.
\end{Theorem}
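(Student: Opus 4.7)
The plan is to adapt the proof of Theorem~\ref{Thm:Classification} (and its oriented counterpart Theorem~\ref{Thm:ClassificationOriented}) to the setting where $\mathcal V$ replaces $\Vect$, exploiting the fully faithful embedding $\Vect \hookrightarrow \mathcal V$ guaranteed by condition (C2). Full faithfulness of the inclusions $1\TFT(M) \to 1\TFT(M;\mathcal V)$ is immediate from this embedding, since smooth natural transformations are detected simplicial-level by simplicial-level. The content of the theorem is therefore essential surjectivity: every $Z\colon 1\Bord(M) \to \mathcal V$ must factor through $\Vect$.

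Given such a $Z$, the first task is to extract the same data $(\V, \nabla, \beta)$ as in the proof of Theorem~\ref{Thm:Classification}. Applying $Z$ to the constant right elbow $R$ and constant left elbow $L$ produces morphisms $\beta\colon V \otimes V \to \Bbbone$ and $\tau\colon \Bbbone \to V \otimes V$ in $\mathcal V(M)$, where $V := Z(\point)$. Because the snake identity $(\id \otimes \beta) \circ (\tau \otimes \id) = \id$ is a valid relation in $1\Bord(M)$, it is preserved by $Z$; combined with the additive structure on $\mathcal V(M)$ from (C1), this exhibits $V$ as a self-dual dualizable object. Condition (C2) then identifies $V$ with (the image of) a vector bundle $\V$ on $M$ carrying a non-degenerate symmetric bilinear form, the symmetry coming from the involutive automorphism of $R$.

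To produce the connection, I would restrict $Z$ to sitting-instant interval bordisms (objects of $\Path_c(M)_1$). Since these bordisms connect fibers of the dualizable object $V$, the arrows $Z$ assigns to them are morphisms between dualizable objects, which again land in $\Vect$. The modification function machinery of Section~\ref{SectionFunctorsOnPathM} then promotes $Z$ on such bordisms to a smooth section $P$ of $\mathrm{ev}_0^*\V^\vee \otimes \mathrm{ev}_1^*\V$ over $C^\infty([0,1], M)$, with multiplicativity coming from the Segal condition. Proposition~\ref{PropMultiplicativity} delivers a unique connection $\nabla$ whose parallel transport is $P$, and compatibility with $\beta$ follows from the elbow-interval relations in $1\Bord(M)$.

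The principal obstacle is to show $Z = \Phi(\V, \nabla, \beta)$ as smooth functors on all of $1\Bord(M)$, not merely on the generators identified above. This requires adapting Sections~\ref{SectionFunctorsOnPathM} through~\ref{SectionFinishProof} to the $\mathcal V$-valued setting, where arguments such as Lemma~\ref{LemmaIso} on invertibility and Lemma~\ref{LemmaIndependenceOfChi} on independence of modification functions rely on pointwise manipulations that do not directly transcribe. Condition (C3) is the key substitute: each such step is reduced to the vanishing of a morphism over $S \times \R$ (with $\R$ parametrizing the position of a cut point) that can be shown to vanish on opens missing the locus of degenerate cut, whereupon (C3) forces it to vanish identically. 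The oriented case runs in parallel: the elbow produces a pairing $V^+ \otimes V^- \to \Bbbone$ whose snake identity exhibits $V^+$ and $V^-$ as mutually dual, hence as vector bundles by (C2), and the connection argument then proceeds without the $\beta$ bookkeeping and recovers Theorem~\ref{Thm:ClassificationOriented}.
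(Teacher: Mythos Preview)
Your overall outline is reasonable, but there are two concrete gaps.

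First, the snake identity $(\id \otimes \beta) \circ (\tau \otimes \id) = \id$ is \emph{not} a relation in $1\Bord(M)$. Composing the two constant elbows yields the interval bordism $(c;0,\delta)$ for some $\delta>0$, which is not the thin (identity) bordism. In the $\Vect$-valued proof of Theorem~\ref{Thm:Classification} this did not matter because one had already identified $Z|_{\Path(M)}$ with parallel transport, so $Z(c;0,\delta)=\id$ was known. In the $\mathcal V$-valued setting that identification is exactly what you are trying to establish, so you must first prove $Z(c;0,\delta)=\id$ directly. The paper does this by regarding $\delta$ as a parameter, observing that $Z(c;0,\delta^2)$ is independent of $\delta$ away from $\delta=0$ and equal to $\id$ at $\delta=0$, and invoking (C3). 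Only then does dualizability of $Z(c;0)$ follow.

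Second, and more seriously, you have not explained why $Z(\gamma;a)$ lies in $\Vect$ for a \emph{non-constant} germ $(\gamma;a)$. Your argument shows this only for objects of $\Path_c(M)_0$, where the germ is constant and hence $Z(\gamma;a)\cong V_{\gamma(a)}$. For general germs, none of the arguments of Section~\ref{SectionFunctorsOnPathM} apply: Lemma~\ref{LemmaIso} uses that invertible maps form an open set, a finite-dimensional fact with no analogue in $\mathcal V$, and the natural transformation $\eta$ of Proposition~\ref{PropConstructionOfSection} is built from inverses of such maps. The paper's route around this is different in kind from ``adapt the pointwise arguments via (C3)'': using one-sided modification functions it constructs idempotent maps $i_a$, $p_a$ exhibiting a splitting $Z(\gamma;a)\cong V_{\gamma(a)}\oplus Z'(\gamma;a)$, shows that $Z(\gamma;a,b)$ is block-diagonal with zero $Z'$-block whenever $a<b$, and then applies (C3) to the family $Z'(\gamma;a-\delta^2,a)$ to conclude $\id_{Z'(\gamma;a)}=0$. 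This decomposition-and-kill step is the actual content of the $\mathcal V$-valued theorem, and your proposal does not supply it.
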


\begin{Remark}
 \label{rmk:final-version}
 Combining the above with Theorems~\ref{Thm:Classification} and~\ref{Thm:ClassificationOriented} gives us equivalences of
 groupoids
 \begin{equation*}
 1\TFT(M; \mathcal V) \cong \Vect_{\nabla, \beta}^\sim(M),
 \qquad
 1\TFT^{\mathrm{or}}(M; \mathcal V) \cong \Vect_\nabla^\sim(M)
 \end{equation*}
 for any admissible $\mathcal V$. This is the exact meaning we
 intend for Theorems~\ref{MainThm_or} and~\ref{MainThm_nonor}, stated
 in~less detail in the introduction.
\end{Remark}

\subsection{Proof of Theorem~\ref{thm:sheafy}}

We will focus on the unoriented case, the oriented one being similar.
Throughout, we fix a field theory $Z\colon 1\Bord(M) \to \mathcal V$,
which we assume to be strict as a morphism of simplicial stacks.

\begin{Lemma}
 Let $c\colon M \times \mathbb R \to M$ be the tautological
 $M$-family of constant paths. Then the object $Z(c; 0) \in
 \mathcal V$ is dualizable.
\end{Lemma}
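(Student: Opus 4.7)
The strategy is to exhibit $V := Z(c; 0)$ as self-dual in $\mathcal{V}$ by producing explicit evaluation and coevaluation maps from constant elbow bordisms over $M$. Since $Z$ is a strong symmetric monoidal functor, it preserves dualizable objects, so it suffices to verify that $(c; 0)$ is dualizable in the smooth symmetric monoidal category $1\Bord(M)$.

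Following the pattern used in the proof of Theorem~\ref{Thm:Classification}, I would introduce the constant right elbow $R$ and constant left elbow $L$ as objects of $1\Bord(M)_1$: both have underlying family $(M \times \mathbb{R})/M$ with cut functions whose zero sets cut out pairs of points in the fibers (specifically, as in the proof of Theorem~\ref{Thm:Classification}, $\rho_0(p,t) = t(1-t)$, $\rho_1 \equiv -1$ for $R$ and dually for $L$), equipped with geometric datum the constant map $c$. The source of $R$ and the target of $L$ each identify canonically, via translations in the $\mathbb{R}$-direction, with the monoidal product $(c; 0) \otimes (c; 0)$. Setting $\beta := Z(R)$ and $\tau := Z(L)$ thus provides candidate duality data $\beta\colon V \otimes V \to \mathbf{1}$ and $\tau\colon \mathbf{1} \to V \otimes V$ in $\mathcal{V}$.

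The snake identity $(\beta \otimes \id_V) \circ (\id_V \otimes \tau) = \id_V$ is then verified directly at the level of $1\Bord(M)$: the composite on the left is represented by a zig-zag bordism over $M$, which is isomorphic to the straight interval representing $\id_{(c; 0)}$ via an $M$-family of straightening diffeomorphisms. Because $R$ and $L$ are constant $M$-families, this family of diffeomorphisms may be taken to be constant in $M$, so the classical one-dimensional snake identity applies fiberwise without modification. The analogous identity with the two factors swapped follows by the same argument, so $V$ is self-dual and hence dualizable. The main obstacle is purely organizational — carefully matching the Segal face maps of $1\Bord(M)_n$ with tensor products via the aforementioned translations in $\mathbb{R}$, and checking that the straightening homotopy yields a genuine isomorphism of smooth $M$-families in the bordism category — since the geometric content reduces to the classical snake identity.
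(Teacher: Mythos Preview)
There is a genuine gap in your argument: the snake identity does \emph{not} hold in $1\Bord(M)$ itself, so $(c;0)$ is not dualizable there and you cannot simply push dualizability through $Z$.

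Concretely, the zig-zag composite of the constant elbows $R$ and $L$ is represented in $1\Bord(M)_1$ by a non-thin interval bordism $(c;0,\delta)$ for some $\delta>0$. This object is \emph{not} isomorphic to the identity bordism $(c;0,0)$: any morphism between them would require a fiberwise diffeomorphism $F$ of a neighborhood of $[0,\delta]$ in $\mathbb R$ with $F^*\rho_0'=\zeta_0\rho_0$ and $F^*\rho_1'=\zeta_1\rho_1$ for positive $\zeta_i$, forcing $F(0)=0$ and $F(\delta)=0$, which is impossible for an open embedding. Your proposed ``straightening diffeomorphism'' therefore cannot exist, and the constant nature of $c$ does not help---it is the cut functions, not the map to $M$, that obstruct the isomorphism. (In the proof of Theorem~\ref{Thm:Classification} to which you refer, the snake identity is asserted for $Z$, not for $1\Bord(M)$; it holds there because by that stage one already knows $Z(c;0,\delta)$ is parallel transport along a constant path.)

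The paper's proof addresses exactly this point. It shows the snake composite equals $Z(c;0,\delta)$, observes that this value is independent of $\delta>0$ (by a genuine diffeomorphism, since reparametrizing $[0,\delta]$ to $[0,1]$ \emph{is} possible), and then considers the $(M\times\mathbb R)$-family $\big(c;0,\delta^2\big)$ with $\delta$ the $\mathbb R$-coordinate. Applying $Z$ gives a family of endomorphisms of $\pr^*Z(c;0)$ that is constant away from $\delta=0$ and equals $\id$ at $\delta=0$; the separation axiom~(C3) then forces $Z(c;0,\delta)=\id$ for all $\delta$. The key idea you are missing is precisely this use of~(C3): dualizability of $Z(c;0)$ is a property of the target $\mathcal V$, not of the bordism category.
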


\begin{proof}
 For $\gamma\colon S \times \mathbb R \to M$ and $a < b$, we denote
 by $L_{\gamma; a, b}$ the left elbow given by this data, that is,
 the $S$-family of bordisms with underlying map $\gamma$ and cut
 functions $\rho_0(s, t) = (t - a)(t - b)$ and $\rho_1(s, t)=
 - \frac{1}{4}(b-a)^2$, where $t$ is the coordinate of $\mathbb R$
 (here any $\rho_1$ can be chosen such that it has no zeroes, and such that
 $\rho_1 \leq \rho_0$ everywhere). We write
 similarly $R_{\gamma; a, b}$ for the right elbow. To prove the lemma,
 we consider in particular $S = M$ and $\gamma = c$; then for each
 $\delta > 0$, the images of $L_{c; 0, \delta}$ and $R_{c; 0,
 \delta}$ are canonically identified as maps
 \begin{gather*}
 Z(L_{c; 0, \delta}) \colon\ Z(c; 0) \otimes Z(c; 0) \to \epsilon_M,
 \\
 Z(R_{c; 0, \delta})\colon\ \epsilon_M \to Z(c; 0) \otimes Z(c; 0),
 \end{gather*}
 where $\epsilon_M$ is the monoidal unit of $\mathcal V(M)$. We
 claim that these morphisms are independent of $\delta$. To see
 this, pick any diffeomorphism $\chi\colon \mathbb R \to \mathbb R$
 with $\chi(0) = 0$, $\chi(1) = \delta$ which is affine of slope $1$
 near $0$ and $1$. Then $\chi$ determines a morphism in
 $1\Bord(M)_1$ between $L_{c; 0, \delta}$ and $L_{c \circ \chi; 0, 1}
 = L_{c; 0, 1}$, compatible with the usual identifications of their
 boundary components with $(c; 0)$. This proves the claim.

 Write $L = Z(L_{c; 0, \delta})$, $R = Z(R_{c; 0, \delta})$ for this
 common value. Then, expressing the interval bordism $(c, 0,
 \delta)$ as a suitable composition of $L_{c; 0, \delta}$ and $R_{c;
 0, \delta}$, we get
 \begin{equation*}
 (\mathrm{id} \otimes L) \circ (R \otimes \mathrm{id})
 = Z(c; 0, \delta)
 \end{equation*}
 for any $\delta > 0$. Now consider, for instance, $(c, 0,
 \delta^2)$ as an $(M \times \mathbb R)$-family of intervals, where
 $\delta$ now denotes the coordinate on the factor $\mathbb R$. Its
 image under $Z$ is a family of endomorphisms of $Z(c; 0)$ that is
 constant away from $\delta = 0$, and equal to the identity at
 $\delta = 0$. Thus, from the separation axiom of $\mathcal V$,
 $Z(c; 0, \delta) = Z(c; 0, 0) = \mathrm{id}_{Z(c; 0)}$ for all
 $\delta$. This proves that $L$ and $R$ provide the desired
 evaluation and coevaluation maps.
\end{proof}

Recall our notation $\Path_c(M) \subset 1\Bord(M)$ for
the subcategory of paths with sitting ins\-tants (Definition~\ref{DefPathCat}). Using the above lemma
and axiom (C2) for $\mathcal V$, we find a factorization of~$Z\vert_{\Path_c(M)}$ through $\Vect \hookrightarrow \mathcal V$
(essentially uniquely). Proposition~\ref{PropClassificationSittingInstants} then gives us a vector bundle with
connection $(V, \nabla)$ on $M$ and an isomorphism
\begin{equation*}
 Z\vert_{\Path_c(M)} \cong Z_{V, \nabla}\vert_{\Path_c(M)}
\end{equation*}
of smooth functors, where $Z_{V, \nabla}$ denotes the field theory
determined by parallel transport on $V$. We will eventually show that
$Z \cong Z_{V, \nabla, \beta}$ for some pairing $\beta$.

Lemma~\ref{LemmaSittingInstant} is easily restated for families of
bordisms and reproved for $\mathcal V$-valued field theories, using
the separation axiom. We now adapt some other definitions and lemmas
from Section~\ref {sec:classification} for which special care with the
family aspect is crucial. Let $\gamma\colon S \times \mathbb R \to M$
be an $S$-family of paths and let $a\colon S \to \mathbb R$ be any smooth
function, so that we can talk about the $S$-family of objects
$(\gamma; a)$ (where its cut function is $\rho_a(s, t) = t - a(s)$). Note
that by the implicit function theorem, any cut function $\rho$ on $S
\times \mathbb R$ is equivalent to some $\rho_a$: Just take $a$
defined by $\rho(s, a(s)) = 0$. Similarly, two functions $a \leq
b\colon S \to \mathbb R$ determine a family of intervals $(\gamma; a,
b)$. These examples exhaust all possible isomorphism classes of objects, respectively bordisms.

\begin{Definition}
 \label{def:mod-fun-2}
 Given functions $a, b\colon S \to \mathbb R$ with $a < b$, a
 \emph{left modification function} is a smooth map $\chi\colon S \times
 \mathbb R \to S \times \mathbb R$ of the form $\chi(s, t) = (s,
 \tilde{\chi}(s, t))$ for some $\tilde\chi\colon S \times \R \to \R$
 such that
 \begin{enumerate}[(1)]\itemsep=0pt
 \item for each $s \in S$, $t \mapsto \tilde\chi(s, t)$ is nondecreasing,
 \item there exists a neighborhood of $\{ t = b \} \subset S \times
 \mathbb R$ where $\chi$ equals the identity,
 \item there exists a neighborhood of $\{ t = a\}$, where $\tilde\chi(s, t) = a(s)$.
 \end{enumerate}
 We define \emph{right} modification functions by swapping the roles
 of $a$ and $b$, and \emph{two-sided} modification functions by
 requiring fiberwise constancy near both ends.
\end{Definition}

\begin{Lemma} \label{LemmaExistenceOfModFunct}
 For any $a > b\colon S \times \mathbb R \to \mathbb R$, there exist
 left, right and two-sided modification functions on $S
 \times \mathbb R$.
\end{Lemma}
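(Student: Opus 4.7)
The plan is to produce the modification functions by building universal prototypes on $\R$ and then lifting them to $S \times \R$ via an affine rescaling determined by the functions $a$ and $b$.

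First I would construct a left prototype $\psi_L\colon \R \to \R$ that is smooth, nondecreasing, satisfies $\psi_L(t) = 0$ for $t \leq 1/4$, and $\psi_L(t) = t$ for $t \geq 3/4$. On the middle interval $[1/4, 3/4]$, such a function needs to rise smoothly from the value $0$ (with derivative $0$) to the value $3/4$ (with derivative $1$); this is standard: pick a smooth nonnegative bump $\eta\colon \R \to [0,1]$ with $\eta = 0$ on $(-\infty, 1/4]$ and $\eta = 1$ on $[3/4, \infty)$, choose constants so that $\int_{1/4}^{3/4} \eta(r)\,{\rm d}r = 1/2$ (rescaling $\eta$ on the interior as needed while keeping it equal to $1$ near $3/4$ and $0$ near $1/4$), and set $\psi_L(t) = \int_{-\infty}^t \eta(r)\,{\rm d}r$, with the constant of integration arranged so that $\psi_L(3/4) = 3/4$. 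Then $\psi_L' = \eta \geq 0$, so $\psi_L$ is nondecreasing; near $1/4$ it is identically $0$, and near $3/4$ it has derivative $1$ and value $3/4$, so it agrees with $t \mapsto t$ there. A completely analogous construction yields a right prototype $\psi_R$ (identity near $t = 1/4$, constantly equal to $1$ near $t = 3/4$) and a two-sided prototype $\psi_0$ (constantly $0$ near $t = 1/4$, constantly $1$ near $t = 3/4$).

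Next I would lift. Given smooth $a, b \colon S \to \R$ with $a(s) < b(s)$ for all $s$, set $\ell(s) := b(s) - a(s) > 0$ and define
\begin{equation*}
 \tilde\chi_L(s, t) := a(s) + \ell(s)\, \psi_L\!\left(\frac{t - a(s)}{\ell(s)}\right),
\end{equation*}
together with $\chi_L(s,t) := (s, \tilde\chi_L(s,t))$. Since $a$, $b$, and $\psi_L$ are smooth and $\ell > 0$, the function $\tilde\chi_L$ is smooth on $S \times \R$; monotonicity of $t \mapsto \tilde\chi_L(s,t)$ follows from $\psi_L' \geq 0$. For $t$ in an open neighborhood of $\{t = a(s)\}$, the argument of $\psi_L$ lies in $[0, 1/4]$, giving $\tilde\chi_L(s,t) = a(s)$; for $t$ near $\{t = b(s)\}$, the argument lies in $[3/4, 1]$, so $\psi_L$ acts as the identity and $\tilde\chi_L(s,t) = a(s) + (t - a(s)) = t$. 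The right and two-sided modification functions are produced identically from $\psi_R$ and $\psi_0$.

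There is no real obstacle here; the only thing to be careful about is that the prototype functions genuinely satisfy the sitting-instant conditions exactly (not just up to error), which is why the construction uses integration of a bump function rather than an arbitrary interpolation. Once the prototypes are in place, the smoothness, monotonicity, and boundary behavior of the family versions are immediate from $\ell(s) > 0$ and the chain rule.
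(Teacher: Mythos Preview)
Your approach is correct in spirit and arguably cleaner than the paper's: you build a single prototype on $\R$ and pull it back via the fiberwise affine map $t \mapsto (t-a(s))/\ell(s)$, whereas the paper works directly on $S \times \R$, first choosing a bump $f$ supported in the strip $\{a < t < b\}$ and then integrating and normalizing fiberwise (with an extra correction term $\kappa(s)f + g$ in the one-sided case to force the right boundary value). Your method amounts to the special choice $f(s,t) = \eta((t-a(s))/\ell(s))$, so the two are closely related; yours avoids the somewhat fiddly normalization constant $\kappa(s)$ at the cost of relying on the positivity of $\ell(s)$ for smoothness, which you correctly note.

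There is, however, a small arithmetic inconsistency in your prototype construction: if $\eta$ takes values in $[0,1]$, is $0$ on $(-\infty,1/4]$ and $1$ on $[3/4,\infty)$, then $\int_{1/4}^{3/4}\eta < 1/2$, and in any case $\psi_L(t) = \int_{-\infty}^t \eta$ already has its constant of integration fixed by $\psi_L|_{(-\infty,1/4]} = 0$, so you cannot separately ``arrange'' $\psi_L(3/4) = 3/4$. The fix is trivial: drop the upper bound on $\eta$ and require $\int_{1/4}^{3/4}\eta = 3/4$ (letting $\eta$ exceed $1$ on the interior), or simply assert the existence of a smooth nondecreasing $\psi_L$ with the stated boundary behavior without writing down a formula. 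Once the prototype is correct, your lifting argument goes through verbatim.
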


\begin{proof}
 We can certainly find a nonnegative smooth function $f\colon S \times
 \mathbb R \to \mathbb R$ with support in the open set $\{(s, t) \mid a(s) < t < b(s) \}
 \subset S \times \mathbb R$ and such that $f|_{\{ s \} \times
 \mathbb R}$ is not identically zero for any $s \in S$. Setting
 \begin{equation*}
 g(s, t)
 = \frac
 {\int_{-\infty}^t f(s, u)\, \mathrm{d} u}
 {\int_{-\infty}^{+\infty} f(s, u)\, \mathrm{d} u}
 \end{equation*}
 gives us a smooth, fiberwise nondecreasing function which is
 identically $0$ in a neighborhood of $\{ t \leq a \}$ and
 identically $1$ in a neighborhood of $\{ t \geq b \}$. Then
 \begin{equation*}
 \chi_{LR}(s, t) = (s, a(s) + (b(s) - a(s))g(s, t))
 \end{equation*}
 is a two-sided modification function. Next, write
 \begin{equation*}
 \kappa(s) =
 \frac1{\int_{-\infty}^{+\infty} f(s,u)\,\mathrm{d} u}
 \bigg( b(s) - a(s) - \int_{-\infty}^{b(s)} g(s, u)\, \mathrm{d}u \bigg).
 \end{equation*}
 Due to the bounds on $g$, this is nonnegative. Finally, set
 \begin{equation*}
 h_s(t) = a(s) + \int_{-\infty}^t (\kappa(s)f(s,u) + g(s, u))\, \mathrm{d}u.
 \end{equation*}
 Clearly, $h_s$ is nondecreasing, constant near $t = a(s)$, and
 affine with slope $1$ near $t = b(s)$. Moreover, by the choice of
 $\kappa(s)$, we have $h_s(a(s)) = a(s)$ and $h_s(b(s)) = b(s)$. Thus,
 \begin{equation*}
 \chi_L(s, t) = (s, h_s(t))
 \end{equation*}\looseness=1
 defines a left modification function. Right modification functions
 can be constructed similarly.
\end{proof}

\begin{Lemma}
 \label{lem:independence-of-modification}
 Let $\gamma\colon S \times\mathbb R \to M$ be an $S$-family of
 paths, fix smooth functions $a < b\colon S \to \mathbb R$, and let $\chi_0$,
 $\chi_1$ be left modification functions. Then $Z(\gamma \circ
 \chi_0; a, b) = Z(\gamma \circ \chi_1; a, b)$. The same holds for
 right modifications and two-sided modifications.
\end{Lemma}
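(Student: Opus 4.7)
The plan is to prove the lemma uniformly for all three flavors of modification function, by combining an interpolation argument with axiom (C3) of the admissible target $\mathcal V$.

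Fix a smooth function $\phi\colon \R \to [0, 1]$ with $\phi(0) = 0$ and $\phi(\varepsilon) > 0$ for $\varepsilon \ne 0$, and for $i = 0, 1$ introduce the $\R$-family of maps
\[
 F^i_\varepsilon(s, t) := \bigl(s,\ (1 - \phi(\varepsilon))\,\tilde\chi_i(s, t) + \phi(\varepsilon)\, t\bigr).
\]
For each $\varepsilon \ne 0$ the fiberwise $t$-derivative of $F^i_\varepsilon$ is bounded below by $\phi(\varepsilon) > 0$, so $F^i_\varepsilon$ is a fiberwise diffeomorphism of $S \times \R$. Since each $\chi_i$ fixes both hypersurfaces $\{t = a\}$ and $\{t = b\}$ pointwise (true for left, right, and two-sided modification functions alike), so does $F^i_\varepsilon$, and one verifies that the ratios $(F^i_\varepsilon(s, t) - a(s))/(t - a(s))$ and $(F^i_\varepsilon(s, t) - b(s))/(t - b(s))$ extend smoothly and positively through the respective cut loci. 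Hence $F^i_\varepsilon$ induces an isomorphism $(\gamma \circ F^i_\varepsilon; a, b) \xrightarrow{\sim} (\gamma; a, b)$ in $1\Bord(M)_1$, and functoriality of $Z$ gives a commutative square in $\mathcal V(S)$ relating $Z(\gamma \circ F^i_\varepsilon; a, b)$ to $Z(\gamma; a, b)$ through vertical isomorphisms $A^i_\varepsilon$ and $B^i_\varepsilon$ of the corresponding source and target objects.

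The crucial observation is that in every one of the three cases, $\chi_0$ and $\chi_1$ coincide on fiberwise neighborhoods of both hypersurfaces $\{t = a\}$ and $\{t = b\}$: there, each is either the identity or the same constant projection. Hence $F^0_\varepsilon = F^1_\varepsilon$ on such neighborhoods, so the germs of $\gamma \circ F^0_\varepsilon$ and $\gamma \circ F^1_\varepsilon$ at $\{t = a\}$ and $\{t = b\}$ coincide. This makes the source and target objects in the two commutative squares, together with the vertical comparison isomorphisms $A^i_\varepsilon$ and $B^i_\varepsilon$, literally independent of $i$; cancelling the shared verticals yields
\[
 Z(\gamma \circ F^0_\varepsilon; a, b) = Z(\gamma \circ F^1_\varepsilon; a, b)
 \quad \text{in } \mathcal V(S) \text{ for every } \varepsilon \ne 0.
\]

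To extend the equality to $\varepsilon = 0$, assemble the entire data into an $(S \times \R)$-family of bordisms in $1\Bord(M)_1$, with $\varepsilon$ the extra coordinate. Applying $Z$ yields, for each $i$, a morphism $\alpha_i$ in $\mathcal V(S \times \R)$; the germ argument above, carried out fiberwise over $\varepsilon$, shows that $\alpha_0$ and $\alpha_1$ share a common source and target, so their difference is well-defined in the additive category $\mathcal V(S \times \R)$ by axiom (C1). This difference vanishes on the open subset $\{\varepsilon \ne 0\} \subset S \times \R$ by the previous step; axiom (C3) then forces it to vanish identically, and pulling back along $\iota_0\colon S \hookrightarrow S \times \R$ at $\varepsilon = 0$ gives $Z(\gamma \circ \chi_0; a, b) = Z(\gamma \circ \chi_1; a, b)$, as required. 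The main technical subtlety is verifying that $F^i_\varepsilon$ genuinely induces a bordism isomorphism for $\varepsilon \ne 0$: this amounts to positivity of the smooth extensions of $(F^i_\varepsilon - a)/(t - a)$ and $(F^i_\varepsilon - b)/(t - b)$ uniformly as $\varepsilon \to 0$, which follows from the monotonicity of $\tilde\chi_i$ together with the strict positivity of $\phi(\varepsilon)$ away from zero.
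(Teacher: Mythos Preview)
Your interpolation argument is a genuinely different route from the paper's, and for \emph{left} modification functions it works: near $\{t=b\}$ both $\chi_i$ equal the identity, so $F^i_\varepsilon$ is the identity there for every $\varepsilon$, the germ of $\Gamma^i$ at $\{t=b\}$ is independent of $\varepsilon$, and the common target of $\alpha_0,\alpha_1$ is literally $\pr^*Z(\gamma;b)$. Axiom~(C3) then applies as you say.

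The gap is in the claimed uniformity. Axiom~(C3) is asymmetric: it requires the \emph{target} of $\sigma$ to be of the form $\pr^*W$. For \emph{right} or \emph{two-sided} modifications, $\chi_i$ is the constant $b$ near $\{t=b\}$, so $F^i_\varepsilon(s,t)=(1-\phi(\varepsilon))b(s)+\phi(\varepsilon)t$ there, and the germ of $\Gamma^i$ at $\{t=b\}$ depends nontrivially on $\varepsilon$. Hence the target object $Z(\Gamma^i;b)\in\mathcal V(S\times\R)$ is not pulled back from $S$ (at this stage of the argument nothing is known about $Z$ on non-constant germs), and (C3) does not apply to $\alpha_0-\alpha_1$. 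For right modifications the \emph{source} is of the form $\pr^*W$, but no dual version of (C3) is assumed. So the uniform argument breaks precisely where the modification is constant at the outgoing end.

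By contrast, the paper first dispatches the two-sided case by observing that both boundary germs are then constant, so source and target are dualizable and the comparison reduces, via (C2), to the $\Vect$-valued Lemma~\ref{LemmaIndependenceOfChi} already proved in Section~\ref{sec:classification}. The left and right cases are then reduced to the two-sided one by inserting an additional sitting instant between $a$ and $b$ (via the family version of Lemma~\ref{LemmaSittingInstant}) and factoring through it. Your interpolation idea could replace this last reduction for the left case, but you still need the two-sided case as an independent input, and for that the paper's route through the $\Vect$-valued result seems unavoidable given only the stated axioms on $\mathcal V$.
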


Note that all modification functions agree in a neighborhood of the
boundaries, so the domain and codomain of the maps $Z(\gamma \circ
\chi, a, b)$ are canonically identified, as in Remark~\ref{RemarkSimplification}.

\begin{proof}
 The case of two-sided modifications follows immediately from
 Lemma~\ref{LemmaIndependenceOfChi}, since $\Vect$ is full in
 $\mathcal V$, so that the maps $Z(\gamma \circ \chi_i, a, b)$ are
 identified with maps of vector bundles. Suppose now that
 $\chi_0$, $\chi_1$ are left modification functions. Let $U$ be a
 neighborhood of $\{ t = b \}$, where~$\chi_0$ and $\chi_1$ both agree
 with the identity. Then we can find a function $a'$ with $a > a' >
 b$ and $\{ t = a' \} \subset U$, as well as a map $F\colon S \times
 \mathbb R \to S \times \mathbb R$ over $S$, fiberwise nondecreasing,
 which is the identity away from $U$ and near $\{ b = 0 \}$, and
 fiberwise constant on $\{ t = a' \}$. Thus, using Lemma~\ref{LemmaSittingInstant}, we get
 \begin{equation*}
 Z(\gamma \circ \chi_i, a, b)
 = Z(\gamma \circ \chi_i \circ F, a, b)
 = Z(\gamma \circ F, a', b)
 \circ Z(\gamma \circ \chi_i \circ F, a, a')
 \end{equation*}
 for both $i=0, 1$. Finally, notice that
 \begin{equation*}
 Z(\gamma \circ \chi_0 \circ F, a, a')
 = Z(\gamma \circ \chi_1 \circ F, a, a')
 \end{equation*}
 by the two-sided version of this lemma. This finishes the proof of
 the left-sided case; the right-sided case is similar.
\end{proof}

\begin{Lemma} \label{LemmaDecomposition}
 The field theory $Z$ determines a stack map $Z'\colon 1\Bord(M)_0
 \to \mathcal V$ and a distinguished decomposition
 \begin{equation*}
 Z(\gamma; a) = V_{\gamma(a)} \oplus Z'(\gamma; a)
 \end{equation*}
 for each $S$-family of objects $(\gamma, a)$, where $V_{\gamma(a)}$
 is the pullback of the vector bundle $V \to M$ via the map
 $\gamma(a)\colon S \to M$.
\end{Lemma}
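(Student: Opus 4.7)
The plan is to construct, for each $S$-family $(\gamma, a)$ of objects of $1\Bord(M)_0$, a pair of morphisms $i\colon V_{\gamma(a)} \to Z(\gamma, a)$ and $p\colon Z(\gamma, a) \to V_{\gamma(a)}$ with $p \circ i = \mathrm{id}$. The composite $e := i \circ p$ is then an idempotent on $Z(\gamma, a)$ whose image is canonically identified with $V_{\gamma(a)}$; since $\mathcal V(S)$ is additive with kernels by axiom (C1), such an idempotent splits, yielding the decomposition $Z(\gamma, a) = V_{\gamma(a)} \oplus Z'(\gamma, a)$ with $Z'(\gamma, a) := \ker(e)$. Naturality of the construction will then promote $(\gamma, a) \mapsto Z'(\gamma, a)$ to a stack map $Z'\colon 1\Bord(M)_0 \to \mathcal V$.

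To build $p$, I would fix an auxiliary smooth function $c\colon S \to \R$ with $c > a$ and a right modification function $\chi_R$ on $[a, c]$ (provided by Lemma~\ref{LemmaExistenceOfModFunct}). Since $\chi_R$ is the identity near $\{t = a\}$, the source of the bordism $(\gamma \circ \chi_R; a, c)$ is $(\gamma, a)$, while its target lies in $\Path_c(M)$ and is sent by $Z$ to $V_{\gamma(c)}$; post-composing with parallel transport $V_{\gamma(c)} \to V_{\gamma(a)}$ along $\gamma$ with respect to $\nabla$ gives $p$. Dually, I would pick $b < a$ and a left modification $\chi_L$ on $[b, a]$; then $(\gamma \circ \chi_L; b, a)$ has source $V_{\gamma(b)}$ and target $(\gamma, a)$, and pre-composing with parallel transport $V_{\gamma(a)} \to V_{\gamma(b)}$ defines $i$.

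Independence of $p$ and $i$ from the chosen modification functions is the content of Lemma~\ref{lem:independence-of-modification} in its one-sided versions. Independence from the parameters $b$ and $c$ is a Segal-style insertion: for $a < c < c'$, arrange a right modification $\chi_R^{c'}$ of $[a, c']$ that is additionally constant equal to $c$ near $\{t = c\}$. Its restrictions exhibit $(\gamma \circ \chi_R^{c'}; a, c')$ as the composition of $(\gamma \circ \chi_R^c; a, c)$ with a two-sided-modified piece over $[c, c']$. By Lemma~\ref{lem:independence-of-modification} and the already-established identification $Z|_{\Path_c(M)} \cong Z_{V, \nabla}|_{\Path_c(M)}$, this two-sided piece is precisely parallel transport $V_{\gamma(c)} \to V_{\gamma(c')}$, which cancels the change in the outer transport factor.

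The key identity $p \circ i = \mathrm{id}$ comes from the same circle of ideas applied to a Segal $2$-simplex. By the Segal condition, $Z(\gamma \circ \chi_R; a, c) \circ Z(\gamma \circ \chi_L; b, a)$ is $Z$ applied to a single composite bordism $(\gamma \circ \chi''; b, a, c) \in 1\Bord(M)_2$, where $\chi''\colon [b, c] \to [b, c]$ agrees with $\chi_L$ on $[b, a]$ and with $\chi_R$ on $[a, c]$. A direct verification shows that $\chi''$ is constant equal to $b$ near $b$, the identity near $a$, and constant equal to $c$ near $c$; in particular $\chi''$ is itself a two-sided modification of $[b, c]$, so $Z(\gamma \circ \chi''; b, c)$ equals the parallel transport $V_{\gamma(b)} \to V_{\gamma(c)}$. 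Composing with the bookending parallel transports then yields $\mathrm{id}_{V_{\gamma(a)}}$. The main technical obstacle I anticipate is the bookkeeping for family versions of modification functions and verifying naturality: morphisms in $1\Bord(M)_0$ are only germs of $\mathcal G$-isomorphisms at the marked point, so one must extend each such germ to a diffeomorphism on a large enough interval and then invoke independence of all auxiliary choices to see that the resulting decomposition is canonical.
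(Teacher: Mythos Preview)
Your proposal is correct and is essentially the same argument as the paper's, only packaged differently. The paper chooses $a' < a < a''$ and a single function $\chi_1$ that is simultaneously a left modification on $[a',a]$ and a right modification on $[a,a'']$, together with a function $\chi_2$ that is two-sided on both subintervals; it then assembles the four bordisms into a single commutative square and defines $i_a$, $p_a$ by making the two triangles commute. Unwinding, the paper's $i_a$ is exactly your $Z(\gamma\circ\chi_L;b,a)$ precomposed with the inverse of the two-sided piece (i.e., parallel transport), and likewise for $p_a$; the identity $p_a i_a = \id$ follows from commutativity of the outer square, which is precisely your observation that the composite $\chi''$ is a two-sided modification on $[b,c]$ so that $Z(\gamma\circ\chi'';b,c)$ agrees with parallel transport. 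The paper then sets $Z'(\gamma;a)=\Ker p_a$, which equals your $\Ker(i\circ p)$ since $i$ is split mono.
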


Above, we used the suggestive notation $\gamma(a)$ for the map $s \mapsto \gamma(s, a(s))$.
Note that $V_{\gamma(a)} \cong Z(\gamma(a), a)$ is
what $Z$ assigns to the family of constant germs with the same value as
$\gamma$ on $\{ t = a \}$.

\begin{proof}
 Choose smooth functions $a^\prime, a^{\prime\prime} \colon S \to \R$ with
 $a^\prime < a < a^{\prime\prime}$ and smooth, fiberwise
 nondecreasing functions $\chi_1, \chi_2\colon S \times \R \to S \times
 \R$ such that $\chi_1$ is fiberwise constant near $a^\prime$ and
 $a^{\prime\prime}$ and equal to the identity near $a$, while
 $\chi_2$ is fiberwise constant near $a^\prime$, $a$ and
 $a^{\prime\prime}$. In other words, $\chi_1$ is a left modification
 function for $(\gamma; a^{\prime}, a)$ and a right modification
 function for $(\gamma; a, a^{\prime\prime})$, while~$\chi_2$ is a
 two-sided modification function for both $(\gamma; a^{\prime}, a)$
 and $(\gamma; a, a^{\prime\prime})$. In particular, both~$\chi_1$
 and~$\chi_2$ are two-sided modification functions for $(\gamma;
 a^{\prime}, a^{\prime\prime})$. The existence of such functions is
 clear from Lemma~\ref{LemmaExistenceOfModFunct} and the fact that
 modification functions can be ``patched together'' in the obvious
 way for neighboring intervals. We obtain the following diagram of
 bordisms:
 \begin{equation*}
 \begin{tikzcd}
 &
 (\gamma; a)
 \ar[rd, "{(\gamma\circ\chi_1;\,a,\,a^{\prime\prime})}"]
 & \\
 \bigl(\gamma(a^{\prime}); a^{\prime}\bigr)
 \ar[ru, "{(\gamma\circ\chi_1;\,a^{\prime}\!,\,a)}"]
 \ar[rd, "{(\gamma\circ\chi_2;\,a^{\prime}\!,\,a)}"']
 & &
 \bigl(\gamma(a^{\prime\prime}); a^{\prime\prime}\bigr)
 \\ &
 (\gamma(a); a)
 \ar[ru, "{(\gamma\circ\chi_2;\,a,\,a^{\prime\prime})}"']
 \end{tikzcd}
 \end{equation*}
 It certainly does not commute in the sense of there being an
 isomorphism between the two compositions: the bottom one has a
 sitting instant at $a$, while in general the top one does
 not. However, since the composition of the upper two bordisms is
 $(\gamma \circ \chi_1; a^\prime, a^{\prime\prime})$ while the lower
 composition is $(\gamma \circ \chi_2; a^\prime, a^{\prime\prime})$,
 Lemma~\ref{lem:independence-of-modification} implies that the
 diagram obtained after applying $Z$
 \begin{equation}
 \label{DiagramZip}
 \begin{tikzcd}
 &
 Z(\gamma; a) \ar[dd, dashed, shift left=2, "p_a"]
 \ar[rd, "{Z(\gamma\circ\chi_1;\,a,\,a^{\prime\prime})}"]
 & \\
 Z\bigl(\gamma(a^{\prime}); a^{\prime}\bigr)
 \ar[ru, "{Z(\gamma\circ\chi_1;\,a^{\prime}\!,\,a)}"]
 \ar[rd, "{Z(\gamma\circ\chi_2;\,a^{\prime}\!,\,a)}"']
 & &
 Z\bigl(\gamma(a^{\prime\prime}); a^{\prime\prime}\bigr)
 \\ &
 Z(\gamma(a); a) \ar[uu, dashed, shift left=2, "i_a"]
 \ar[ru, "{Z(\gamma\circ\chi_2;\,a,\,a^{\prime\prime})}"']
 \end{tikzcd}
 \end{equation}
 commutes. We now define maps $i_a$ and $p_a$ as indicated in the
 diagram by requiring that they make the left, respectively right,
 triangle commute. These maps exist and are well-defined as the
 bottom maps are invertible, by Lemma~\ref{LemmaIso}. By
 commutativity of the diagram, we have $p_a i_a = \mathrm{id}$.

 We now set $Z'(\gamma; a) = \Ker p_a$. By standard arguments, $i_a$
 and the inclusion map $Z'(\gamma; a) \to Z(\gamma; a)$ exhibit
 $Z(\gamma; a)$ as the direct sum
 \begin{equation*}
 Z(\gamma; a) \cong V_{\gamma(a)} \oplus Z'(\gamma; a).
 \end{equation*}
 This identification does not depend on any of the choices by
 Lemma~\ref{lem:independence-of-modification}.
\end{proof}

\begin{Lemma}
 For any nowhere thin $S$-family of bordisms $(\gamma; a, b)$, by
 which we mean that \mbox{$a(s) < b(s)$} for all $s \in S$, the map
 $Z(\gamma;a, b)$ has matrix representation
 \begin{equation*}
 Z(\gamma; a, b) =
 \begin{pmatrix}
 P(\gamma; a, b) & 0 \\
 0 & 0
 \end{pmatrix}
 \end{equation*}
 with respect to the direct sum decompositions of $Z(\gamma; a)$ and
 $Z(\gamma; b)$ from Lemma~$\ref{LemmaDecomposition}$.
\end{Lemma}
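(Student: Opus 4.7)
The plan is to show directly that $Z(\gamma; a, b) = i_b \circ P(\gamma; a, b) \circ p_a$; with respect to the decompositions furnished by Lemma~\ref{LemmaDecomposition}, this map is exactly the one with block matrix $\begin{pmatrix} P(\gamma; a, b) & 0 \\ 0 & 0 \end{pmatrix}$. The core idea is to factor the bordism through an intermediate sitting instant of $\gamma$, at which $Z$ takes the value of the finite-dimensional vector bundle $V$.

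I would first pick a smooth function $c\colon S \to \R$ with $a < c < b$ (say $c = (a+b)/2$) and a smooth, fiberwise nondecreasing map $F\colon S \times \R \to S \times \R$ over $S$ that equals the identity in neighborhoods of $\{t = a\}$ and $\{t = b\}$ while satisfying $F(s,t) = (s, c(s))$ in a neighborhood of $\{t = c\}$; such $F$ exists by a minor variant of Lemma~\ref{LemmaExistenceOfModFunct}. The family version of Lemma~\ref{LemmaSittingInstant} then gives $Z(\gamma; a, b) = Z(\gamma \circ F; a, b)$. Since $\gamma \circ F$ is fiberwise constant near $\{t = c\}$, we have $Z(\gamma \circ F; c) = V_{\gamma(c)}$, and the Segal decomposition of $(\gamma \circ F; a, c, b) \in 1\Bord(M)_2$ yields
\begin{equation*}
 Z(\gamma; a, b) = Z(\gamma \circ F; c, b) \circ Z(\gamma \circ F; a, c)
\end{equation*}
as a composition $Z(\gamma; a) \to V_{\gamma(c)} \to Z(\gamma; b)$.

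It remains to identify each factor with the appropriate piece of $i_b \circ P \circ p_a$. The restriction of $F$ to a neighborhood of $[a, c]$ is a right modification function for $(a, c)$ in the sense of Definition~\ref{def:mod-fun-2}. Specializing the construction of $p_a$ in the proof of Lemma~\ref{LemmaDecomposition} to the parameter $a'' = c$, and invoking the right-modification case of Lemma~\ref{lem:independence-of-modification}, gives $Z(\gamma \circ F; a, c) = P(\gamma; a, c) \circ p_a$. A symmetric argument with $b' = c$ and left modifications yields $Z(\gamma \circ F; c, b) = i_b \circ P(\gamma; c, b)$. Multiplicativity of parallel transport, $P(\gamma; c, b) \circ P(\gamma; a, c) = P(\gamma; a, b)$, then delivers the desired identity. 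The main delicacies are checking that the family versions of Lemmas~\ref{LemmaSittingInstant} and~\ref{lem:independence-of-modification} transfer cleanly to the $\mathcal V$-valued setting (flagged earlier as routine adaptations using the separation axiom (C3)), and using the canonical nature of $i_b$ and $p_a$ in Lemma~\ref{LemmaDecomposition} to legitimately specialize the auxiliary parameters to $a'' = c$ and $b' = c$.
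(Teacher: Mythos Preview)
Your proposal is correct and follows essentially the same approach as the paper: both factor $Z(\gamma;a,b)$ through the finite-dimensional bundle $V$ at intermediate sitting instants, using the family version of Lemma~\ref{LemmaSittingInstant} and the independence of modification (Lemma~\ref{lem:independence-of-modification}) together with the defining triangles for $p_a$ and $i_b$. The only difference is cosmetic: the paper patches two copies of diagram~\eqref{DiagramZip} (with auxiliary points $a'<a<a''<b'<b<b''$) into one large commutative diagram and reads off the matrix via a diagram chase, whereas you collapse $a''$ and $b'$ to a single midpoint $c$ and compute each factor directly from the defining equations $p_a = P(\gamma;a,c)^{-1}\circ Z(\gamma\circ\chi_1;a,c)$ and $i_b = Z(\gamma\circ\chi_1;c,b)\circ P(\gamma;c,b)^{-1}$.
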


\begin{proof}
 Choose $a' < a < a'' < b' < b < b''$ (as functions $S \to \mathbb
 R$) and consider the commutative diagram
 \begin{equation*}
 \begin{tikzcd}
 &
 Z(\gamma; a)
 \ar[rrr, "{Z(\gamma;\,a,\,b)}"]
 \ar[dd, "p_a", shift left=2]
 \ar[dr]
 &&&
 Z(\gamma; b)
 \ar[dd, "p_b", shift left=2]
 \ar[dr]
 \\
 V_{\gamma(a')} \ar[dr] \ar[ur]
 &
 &
 V_{\gamma(a'')} \ar[r]
 &
 V_{\gamma(b')} \ar[dr] \ar[ur]
 &
 &
 V_{\gamma(b'')} \\
 &
 V_{\gamma(a)}
 \ar[uu, "i_a", shift left=2]
 \ar[ur]
 &&&
 V_{\gamma(b)}
 \ar[uu, "i_b", shift left=2]
 \ar[ur]
 &
 \end{tikzcd}
 \end{equation*}
 obtained by patching together the corresponding diagrams~\eqref{DiagramZip} in the obvious way. In~par\-ti\-cu\-lar, all bottom
 maps are parallel translations along segments of $\gamma$.

 To prove that the second columns of the matrix representing
 $Z(\gamma; a, b)$ is zero, we need to show that this map, restricted
 to the kernel of $p_a$, is zero. This follows from the diagram,
 since it shows that $Z(\gamma; a, b)$ factors through $p_a$.

 A diagram chase shows that the square composed by $Z(\gamma; a, b)$,
 $i_a$, $i_b$, and the parallel translation $P\colon V_{\gamma(a)}
 \to V_{\gamma(b)}$ commutes. This implies that the first column of
 $Z(\gamma; a, b)$ is as claimed.
\end{proof}

The proof of Theorem~\ref{thm:sheafy} concludes with the next lemma.

\begin{Lemma}
 For any $S$-family $(\gamma; a)$, $Z'(\gamma; a) = 0 \in \mathcal
 V(S)$.
\end{Lemma}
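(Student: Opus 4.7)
The plan is to apply the separation axiom (C3) to a one-parameter family of bordisms whose underlying geometry degenerates at a single value of the parameter. Write $\pr \colon \tilde S := S \times \R \to S$ for the projection, with $\delta$ the coordinate on $\R$, and consider the $\tilde S$-family of intervals
\[
 B_\delta \;:=\; \bigl(\gamma \circ \pr;\; a \circ \pr - \delta^2,\; a \circ \pr\bigr)
 \;\in\; 1\Bord(M)_1(\tilde S).
\]
Its source is $W_0 := Z(\gamma \circ \pr;\, a \circ \pr - \delta^2)$, and its target is $\pr^* Z(\gamma; a) = \pr^* V_{\gamma(a)} \oplus \pr^* Z'(\gamma; a)$, which is pulled back from $S$; this is the crucial feature that will let me invoke (C3). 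Set $f := Z(B_\delta)$ and let $\pi \colon \pr^* Z(\gamma; a) \to \pr^* Z'(\gamma; a)$ denote the projection onto the second summand.

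On the open set $U := S \times (\R \setminus \{0\})$ the family $B_\delta|_U$ is nowhere thin, so the previous lemma gives
\[
 f|_U \;=\; \begin{pmatrix} P & 0 \\ 0 & 0 \end{pmatrix},
\]
whose image lies entirely in $\pr^* V_{\gamma(a)}|_U$; hence $\pi|_U \circ f|_U = 0$. The morphism $\pi \circ f \colon W_0 \to \pr^* Z'(\gamma; a)$ now satisfies the hypotheses of axiom (C3), its target being pulled back from $S$ and its restriction to any open set disjoint from $S \times \{0\}$ being zero. Axiom (C3) therefore forces $\pi \circ f = 0$ on all of $\tilde S$.

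To conclude, I pull back along the section $\id_S \times 0 \colon S \hookrightarrow \tilde S$. At $\delta = 0$ the bordism $B_0$ is thin, lying in the image of the degeneracy $s_0$ applied to $(\gamma; a)$; the strictness of $Z$ (Lemma~\ref{LemmaStrictification}) together with the explicit description of $s_0^*$ in $\mathcal V_\bullet$ from Example~\ref{Ex:SmoothCatFromStacks} identifies $f|_{\delta = 0}$ with $\id_{Z(\gamma; a)}$. Consequently the direct summand projection $\pi|_{\delta = 0} \colon Z(\gamma; a) \to Z'(\gamma; a)$ must vanish, and composing with the summand inclusion $Z'(\gamma; a) \hookrightarrow Z(\gamma; a)$ yields $\id_{Z'(\gamma; a)} = 0$. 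Hence $Z'(\gamma; a) = 0$.

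The main subtlety is engineering the family so that its \emph{target}, rather than its source, is a pullback from $S$: this is why I deform the left endpoint downward by $-\delta^2$ rather than moving the right endpoint up. Had I done the latter, the target would be $Z(\gamma \circ \pr;\, a \circ \pr + \delta^2)$, which does not pull back from $S$, and (C3) would be inapplicable. The only other point that requires care is verifying that $B_0$ is sent by $Z$ to the genuine identity morphism in $\mathcal V(S)$, which ultimately rests on the strictness of $Z$ and on the completeness condition in Definition~\ref{def:sm-cat}(2) identifying thin bordisms with degeneracies.
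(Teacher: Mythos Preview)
Your proof is correct and follows essentially the same approach as the paper: both construct the $(S\times\R)$-family $B_\delta = \bigl((\pr\times\id)^*\gamma;\,\pr^*a-\delta^2,\,\pr^*a\bigr)$, use the previous lemma to see the relevant component of $Z(B_\delta)$ vanishes away from $\delta=0$, invoke (C3), and then restrict to $\delta=0$ where $Z(B_0)=\id$. The only cosmetic difference is that the paper tracks the $(2,2)$ matrix entry $Z'(B_\delta)\colon Z'(\pr^*\gamma;\pr^*a-\delta^2)\to \pr^*Z'(\gamma;a)$ directly, whereas you track the full second row $\pi\circ Z(B_\delta)$ and precompose with the inclusion only at the end; either way one lands on $\id_{Z'(\gamma;a)}=0$.
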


\begin{proof}
 Let $\pr\colon S \times \mathbb R \to S$ be the projection and
 let $\delta\colon S \times \R \rightarrow \R$ be the coordinate
 function on the $\R$-factor. Consider the $(S \times \R)$-family of
 bordisms $B = \big((\mathrm{pr} \times \mathrm{id})^*{\gamma};
 \mathrm{pr}^*a- \delta^2, \mathrm{pr}^*a\big)$. Then
 \begin{equation*}
 Z'(B)\colon\ Z'\big(\pr^*\gamma, a - \delta^2\big)
 \to Z'(\pr^*\gamma, a) \cong \pr^*(Z'(\gamma, a))
 \end{equation*}
 vanishes away from $S \times 0$ by the previous lemma, and therefore
 is identically $0$ by the separation axiom. But this implies that
 \begin{equation*}
 \mathrm{id}_{Z'(\gamma; a)} = Z'(B)\vert_{S \times 0} = 0,
 \end{equation*}
 so $Z'(\gamma; a)$ is the zero object of $\mathcal V(S)$.
\end{proof}

\subsection[Examples of admissible stacks of C infty-modules]
{Examples of admissible stacks of $\boldsymbol{C^\infty}$-modules}
\label{sec:admissible-stacks}

In this section, we give two examples of suitable target stacks
$\mathcal V$ for TFTs:
\begin{enumerate}[1.]\itemsep=0pt

\item The stack $\mathcal V_\alg$ of sheaves of
 $C^\infty$-modules with the algebraic tensor product.
\item The stack $\mathcal V_\vN$ of sheaves of complete bornological
 $C^\infty$-modules with the completed borno\-lo\-gical tensor product,
 constructed in Appendix~\ref{sec:construction-V-vN} and briefly
 reviewed below. Here, $C^\infty(S)$ is endowed with its von
 Neumann bornology (see Example~\ref{ex:von-Neumann}), hence the
 notation.
\end{enumerate}

Recall that a \emph{bornology} on a vector space is a collection of
subsets deemed to be bounded and satisfying appropriate axioms.
Bornological vector spaces and bounded linear maps form a category
$\Born$. There is an appropriate notion of completeness, and thus a
full subcategory $\CBorn \subset \Born$ of \emph{complete}
bornological vector spaces. We are interested in $\CBorn$ for its
pleasant categorical properties. In particular, there is a completed
tensor product $\otimeshat$ which makes $\CBorn$ into a closed
symmetric monoidal category. As is well known, none of the many
possible tensor products on the various categories of topological
vector spaces have this property. See Appendices~\ref{sec:BVSs} to~\ref{sec:smooth-functions} for a quick overview, or Meyer~\cite[Chapter~1]{MR2337277} for a comprehensive introduction to the
theory of bornological vector spaces.

The goal of this subsection, which will follow immediately by
combining Propositions~\ref{prop:V-alg-C2} and~\ref{prop:V-vN-C3} and
Corollaries~\ref{cor:V-vN-C2} and~\ref{cor:V-alg-C3} below, is to
prove the following.

\begin{Theorem}
 $\mathcal V_\alg$ and $\mathcal V_\vN$ are admissible stacks of
 $C^\infty$-modules.
\end{Theorem}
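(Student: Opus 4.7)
The plan is to verify the three admissibility conditions (C1), (C2), (C3) in turn for each of the two target stacks $\mathcal V_\alg$ and $\mathcal V_\vN$. Axiom (C1) should be entirely formal in both cases: $\mathcal V_\alg(S)$ is the abelian category of sheaves of $C^\infty_S$-modules, while $\mathcal V_\vN(S)$ is the additive category with kernels of sheaves of complete bornological $C^\infty_S$-modules, whose relevant structural properties are recorded in the appendix. Naturality in $S$ follows from the standard compatibility of pullback with these structures. Additivity of the tensor product in each variable is immediate in the algebraic case, and in the bornological case follows from the fact that $\otimeshat$ is constructed via a universal property and therefore preserves colimits separately in each variable.

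For (C2), the functor $\Vect \to \mathcal V$ sends a vector bundle $E \to S$ to its sheaf of smooth sections $C^\infty(-; E)$, endowed in the bornological case with its natural von Neumann bornology. This is manifestly fibered and linear, and monoidality amounts to the usual identification of sections of a tensor product bundle with the tensor product of their sheaves of sections (for $\mathcal V_\vN$ one uses that sections of smooth bundles are nuclear Fr\'echet spaces, so that the completed bornological tensor product matches the projective one). Full faithfulness is the standard recovery of bundle maps from module maps. That the image consists of dualizable objects is witnessed by the dual bundle $E^\vee$. The nontrivial direction is the converse: a dualizable object in $\mathcal V(S)$ should be locally finitely generated and projective, whence by the Serre--Swan correspondence in the smooth category it arises from a vector bundle; in the bornological case one additionally checks that dualizability forces the bornology on a locally free module to agree with the natural von Neumann one, so that the embedding is full on dualizables.

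The substance of the proof, and the main obstacle, lies in axiom (C3). Applying the sheaf axiom to the cover of $(S \times \mathbb R) \setminus Z$ (where $Z = S \times \{0\}$) by all opens disjoint from $Z$, the hypothesis yields $\sigma|_{(S \times \mathbb R) \setminus Z} = 0$, so what must be shown is that a morphism $\sigma\colon V \to \pr^* W$ vanishing on the open complement of $Z$ is zero on $S \times \mathbb R$. This reduces to showing that for every open $U \supseteq Z$ the restriction map $\pr^* W(U) \to \pr^* W(U \setminus Z)$ is injective -- that is, that $\pr^* W$ carries no local sections supported on $Z$. For $\mathcal V_\alg$ the plan is to work stalkwise at points $(s_0, 0)$: the stalk of $\pr^* W$ is an algebraic tensor product $C^\infty_{S \times \mathbb R, (s_0, 0)} \otimes_{C^\infty_{S, s_0}} W_{s_0}$, and the desired injectivity follows from injectivity of $C^\infty_{S \times \mathbb R, (s_0, 0)} \to C^\infty_{(S \times \mathbb R) \setminus Z, (s_0, 0)}$ together with the fact, proved by a direct calculation with Taylor expansion transverse to $Z$, that this injectivity is preserved under tensoring with $W_{s_0}$ over $C^\infty_{S, s_0}$. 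For $\mathcal V_\vN$ the delicate point is the completion in the definition of $\otimeshat$: the plan is to approximate by bump functions, choosing smooth $\chi_n$ vanishing in a shrinking tube around $Z$ with $\chi_n \to 1$ off $Z$, so that for any section $v$ of $V(U)$ each $\chi_n v$ is supported away from $Z$ (hence $\sigma(\chi_n v) = 0$), and then to pass to the limit using completeness and the required boundedness of $\sigma$. The main technical challenge will be to rigorously establish the bornological convergence $\chi_n v \to v$ inside the relevant completed modules and verify that $\sigma$ preserves this limit; this is essentially the content of the separation property of $\pr^* W$ in the bornological setting, and constitutes the single substantive step of the whole theorem.
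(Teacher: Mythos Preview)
Your treatment of (C1) and (C2) is fine and broadly matches the paper's. The paper organizes (C2) slightly differently: it first constructs a fully faithful functor $i\colon \mathcal V_\alg \to \mathcal V_\vN$ (assigning to an algebraic $C^\infty_S$-module its natural ``von Neumann'' bornology), proves (C2) for $\mathcal V_\alg$ via Serre--Swan, and then deduces (C2) for $\mathcal V_\vN$ by noting that a dualizable bornological $C^\infty_{\vN,S}$-module has finite rank and is therefore already dualizable in $\mathcal V_\alg$. Your direct route would also work.

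The real problems are in your two proposed strategies for (C3).

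\emph{Bornological case.} Your bump-function argument establishes $\sigma(\chi_n v)=0$ correctly (restrict to a neighbourhood of $Z$ where $\chi_n$ vanishes, and to $U\setminus Z$ where $\sigma$ vanishes, then glue). But the passage to the limit fails: the functions $1-\chi_n$ do \emph{not} form a bounded set in $C^\infty_\vN$ (their first derivatives are of order $n$), so $\{\chi_n v\}$ need not be bornologically bounded, let alone convergent to $v$. Already for $V=C^\infty_{\vN,S\times\mathbb R}$ and $v=1$ one has $\chi_n v=\chi_n\not\to 1$. So the ``technical challenge'' you flag is not a challenge but an obstruction. The paper's argument avoids this entirely: on a product open $S_i\times T_i$ it unwinds the presheaf underlying $\pr^*W$ and uses the appendix identifications
\[
C^\infty_\vN(S_i\times T_i)\ \otimeshat_{C^\infty_\vN(S_i)}\ W(S_i)
\;\cong\;
C^\infty_\vN(T_i)\ \otimeshat\ W(S_i)
\;\cong\;
C^\infty_\vN\bigl(T_i;\,W(S_i)\bigr),
\]
so that $\sigma$ becomes a smooth map from $T_i$ into a Banach space $V_B\subset W(S_i)$. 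A Banach-valued smooth function vanishing off $\{0\}$ vanishes identically; no approximation is needed.

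\emph{Algebraic case.} Your stalkwise plan requires that the injection of germ rings remains injective after $\otimes_{C^\infty_{S,s_0}} W_{s_0}$ for an \emph{arbitrary} module $W_{s_0}$; that is a flatness statement about the cokernel, and ``Taylor expansion transverse to $Z$'' does not supply it. The paper explicitly remarks that the corresponding global statement---that
\[
\sigma\in C^\infty(\mathbb R\times S)\otimes_{C^\infty(S)}V
\quad\text{mapping to }0\text{ in }\;
C^\infty((\mathbb R\setminus 0)\times S)\otimes_{C^\infty(S)}V
\]
forces $\sigma=0$---is ``not immediately clear,'' and it proves it by passing to bornologies: give $V$ the fine bornology, use that fine $\otimes$ is already complete, and then invoke the $\mathcal V_\vN$ argument above. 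In other words, the paper deduces the algebraic case \emph{from} the bornological one (either via the embedding $i$ or via the fine-bornology trick), reversing the order you propose.

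So the missing idea is the tensor-product identification of $\pr^*W$ with smooth Banach-valued functions in the $\mathbb R$-direction; once you have that, both (C3) statements follow cleanly, and the algebraic one comes for free from the bornological one rather than the other way around.
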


\begin{Remark}
 The purely algebraic $\mathcal V_\alg$ is familiar and attractive in
 its simplicity, but it has the drawback of not admitting a
 reasonable ``sheaf of sections'' functor from any category of
 infinite-dimensional vector bundles. Moreover, our proof that it
 satisfies the separation axiom requires the corresponding fact for
 $\mathcal V_\vN$.
\end{Remark}

Before proceeding, let us briefly review the definitions of $\mathcal
V_\alg$ and $\mathcal V_\vN$. It will be convenient to use the
sheaf-of-categories approach to stacks.

We define $\mathcal V_\alg\colon \Man^{\mathrm{op}} \to
\mathrm{Cat}^\otimes$ to be the functor assigning to each manifold $S$
the symmetric monoidal category of sheaves of $C^\infty_S$-modules and
$C^\infty_S$-linear maps, with the algebraic tensor product. To a map
$f\colon T \to S$ is associated the pullback functor $f^*$ defined by
\begin{equation}
 \label{eq:pullback-sheaf}
 f^*V = C^\infty_T \otimes_{f^{-1}(C^\infty_S)} f^{-1}(V).
\end{equation}
That this indeed defines a symmetric monoidal stack on the site of
manifolds and satisfies axiom~(C1) is a standard fact.

Similar constructions can be carried out in the setting of (complete)
bornological vector spaces. This is outlined in
Appendices~\ref{sec:bornological-sheaves} and~\ref{sec:construction-V-vN} and is mostly a formality, except for the
fact that sheafification of presheaves with values in $\CBorn$
requires additional care. In a nutshell, we promote the sheaf of
smooth functions on a manifold $S$ to a sheaf of complete bornological
algebras, denoted by $C^\infty_{\vN,S}$, and then define $\mathcal
V_\vN(S)$ to be the category of sheaves of complete bornological
$C^\infty_{\vN, S}$-modules. We have a completed tensor product of
bornological sheaves, and the pullback functors $f^*\colon \mathcal
V_\vN(S) \to \mathcal V_\vN(T)$ are defined similarly to~\eqref{eq:pullback-sheaf}, using this completed tensor product. Thus
$\mathcal V_\vN$ is a symmetric monoidal stack satisfying axiom (C1).

Next, we construct a functor $i\colon \mathcal V_\alg \to \mathcal V_\vN$.
Let $S$ be a manifold and $V \in \mathcal{V}_{\alg}(S)$.
Given $U \subset S$ open, we say that a subset $B \subset V(U)$ is $\vN$-bounded
if each point $x \in U$ has an open neighborhood $W \subset U$ such that
\begin{equation*}
 B|_W \subset C_1 b_1 + \dots + C_n b_n
\end{equation*}
for $\vN$-bounded subsets $C_1, \dots, C_n \subset C^\infty(U)$ and $b_1, \dots, b_n \in V(W)$.
It is routine to check that this indeed satisfies the axioms of a (vector) bornology on $V(U)$.
We denote the resulting presheaf of bornological vector spaces by $V_{\vN}$.

\begin{Example}
 If $V = \Gamma_E$ is the sheaf of sections of a finite-dimensional
 vector bundle over $E \to S$, then the bornology constructed above
 coincides with the bornology associated to its Fr\'echet topology,
 where a set of sections is bounded if all derivatives of a given order are uniformly bounded
 on compact sets.
\end{Example}

\begin{Proposition}
For any sheaf $V$ of $C^\infty_S$-modules, $V_\vN$ is a sheaf of complete bornological $C^\infty_{\vN, S}$-modules.
\end{Proposition}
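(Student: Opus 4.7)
The plan is to break the statement into three tasks, each of which can be tackled with the local definition of $\vN$-boundedness: (i) $V_\vN(U)$ carries a bornological $C^\infty_{\vN}(U)$-module structure, compatible with restriction; (ii) $V_\vN$ satisfies descent; (iii) each $V_\vN(U)$ is a complete bornological space.

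For (i), I would first check the bornology axioms (closure under addition, scalar multiples, subsets, unions of finitely many) by observing that each is preserved under writing a set locally as $\sum_i C_i b_i$: two such decompositions can be concatenated, and scalar multiplication or the addition of a finite set of sections affects only the coefficient sets $C_i$, which remain $\vN$-bounded in $C^\infty(W)$. Boundedness of the module action $C^\infty_{\vN}(U) \times V_\vN(U) \to V_\vN(U)$ reduces, on a small $W$ around each point, to boundedness of the multiplication $C^\infty(W) \times C^\infty(W) \to C^\infty(W)$ in its $\vN$-bornology, which is a standard property of smooth functions. Restriction maps preserve boundedness tautologically, since a local decomposition $B|_W \subset \sum C_i b_i$ restricts to one of the same shape.

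For (ii), the underlying sheaf-of-vector-spaces property of $V$ already tells us that sections glue uniquely. To see that the bornology glues, let $\{U_j\}$ cover $U$, let $B \subset V(U)$ and suppose each $B|_{U_j}$ is $\vN$-bounded. Given $x \in U$, pick $j$ with $x \in U_j$ and apply the definition inside $U_j$ to get a neighborhood $W \subset U_j$ of $x$ with $B|_W \subset \sum_i C_i b_i$; since $W$ is also a neighborhood of $x$ in $U$, this witnesses $\vN$-boundedness of $B$ in $V_\vN(U)$. The converse is immediate from the definition.

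The main obstacle is (iii), completeness. The strategy I would pursue is to show that every bounded disk $B \subset V_\vN(U)$ is contained in a completant bounded disk, which is equivalent to Mackey-completeness. Locally on a neighborhood $W$ of each point $x \in U$, the decomposition $B|_W \subset C_1 b_1 + \dots + C_n b_n$ presents $B|_W$ as the image of the bounded set $C_1 \times \dots \times C_n \subset C^\infty(W)^n$ under the bounded map $(f_1, \dots, f_n) \mapsto \sum f_i b_i$. Since $C^\infty_{\vN,S}(W)$ is complete (as shown in the appendix, where $C^\infty(W)$ is a Fréchet space whose $\vN$-bornology is complete), the closed absolutely convex hull of each $C_i$ inside $C^\infty_{\vN,S}(W)$ is a Banach disk. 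I would then argue that the image of a product of Banach disks under the module action remains bounded and completant locally, so locally $B$ is contained in a completant disk. To promote this to $V_\vN(U)$, I would use a locally finite refinement $\{W_\alpha\}$ of the cover by such neighborhoods together with a subordinate partition of unity $\{\chi_\alpha\}$ in $C^\infty(U)$: given a Mackey-Cauchy sequence $(s_n)$ bounded by $B$, I would Mackey-sharpen the local completeness on each $W_\alpha$ to produce a local Mackey-limit $s^\alpha \in V(W_\alpha)$, check using the local completeness argument that these limits agree on overlaps (so they glue via the sheaf property to an $s \in V(U)$), and verify that $s_n \to s$ Mackey by controlling the convergence $\chi_\alpha$-piece by $\chi_\alpha$-piece.

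The technical subtlety in (iii) lies in the passage from pointwise completeness of the module action over $C^\infty(W)$ to a global completant disk in $V(U)$; it is here that the completeness of $C^\infty_{\vN,S}$ established in the appendix, together with the local finiteness of the cover (ensuring the partition-of-unity sum is bornologically well-behaved), will do the work.
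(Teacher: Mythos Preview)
Your outline for (i) and (ii) matches the paper's argument closely. The substantive divergence is in (iii), and there is one genuine gap and one methodological difference worth noting.

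The gap is in the local step. You assert that ``the image of a product of Banach disks under the module action remains bounded and completant locally'', but this is precisely where the work lies: the image of a completant disk under a bounded linear map is \emph{not} completant in general. Writing $X = \prod_k C^\infty(W)_{C_k}$ and $\pi\colon X \to V(W)$, $(f_k)\mapsto \sum_k f_k b_k$, what you need is that the Minkowski functional of $B = \pi(\text{unit ball})$ is a complete norm on $V(W)_B$. The paper establishes this by showing directly that the induced bijection $\bar\pi\colon X/\Ker(\pi) \to V(W)_B$ has bounded inverse (so the Minkowski functional agrees with the quotient norm on $X/\Ker(\pi)$), and then invokes closedness of $\Ker(\pi)$ to conclude that the quotient is Banach. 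You have not isolated this step, and treating it as routine is where your proposal would stall.

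The methodological difference is in the passage from local to global. Rather than chasing a Mackey--Cauchy sequence through a partition of unity, the paper forms the global disk $B = \{v \in V(U) : v|_{W_i} \in B_i \text{ for all } i\}$, notes that every $\vN$-bounded set lies in some such $B$, and then observes that the restriction map realizes $V(U)_B$ as a \emph{closed} subspace of the Banach space $\prod_i V(W_i)_{B_i}$ (closedness being exactly the algebraic sheaf equalizer condition, and properness of the embedding following from $r^{-1}(\prod_i B_i) = B$). Completeness of $V(U)_B$ is then immediate. Your scheme could in principle be pushed through, but the step ``verify $s_n \to s$ Mackey piece by piece'' conceals the need for a single bounded disk in $V(U)$ witnessing convergence with a rate uniform over all $W_\alpha$; producing that disk is tantamount to writing down the paper's $B$ anyway. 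The embedding argument sidesteps the partition of unity and the sequence-chasing entirely.
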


\begin{proof}
It follows directly from the definition of the $\vN$-bornology that for each open $U \subseteq S$, the bornological vector space $V_{\vN}(U)$ is a bornological $C^\infty_{\vN}(U)$-module (i.e., the module action map is bounded), and that restriction maps are bounded.
We have to show that $V_{\vN}$ satisfies descent.
Let $U \subset S$ be open and let $\{U_i\}_{i \in I}$ be an open cover of $U$.
Since $V$ is an (algebraic) sheaf, we know that the map
\begin{equation*}
 V_\vN(U) \stackrel{r}{\longrightarrow}
 \Ker \bigg(\prod\limits_{i \in I} V_\vN(U_i)
 \longrightarrow \prod\limits_{i, j \in I} V_\vN(U_i \cap U_j)\bigg)
\end{equation*}
is a vector space isomorphism. We have to show that it is an
isomorphism of bornological vector spaces. Since we already know that
restriction maps are bounded, we have to show that the inverse image under $r$ of bounded sets in
$\prod_{i \in I} V_\vN(U_i)$ is bounded. It suffices to show this for
sets of the form $\prod_{i \in I} B_i$ with $B_i \subset V_{\vN}(U_i)$
bounded, as these generate the product bornology. Here we have
\begin{equation*}
 r^{-1}\bigg(\prod\limits_{i \in I} B_i \bigg)
 = \{ v \in V(U) \mid v|_{U_i} \in B_i \}.
\end{equation*}
That this is $\vN$-bounded follows again directly from the definition
of the bornology, as boundedness can be checked locally. This shows
that $V_{\vN}$ is a sheaf of bornological $C^\infty_{\vN, S}$-modules.

We now show that $V_{\vN}(U)$ is complete for each open $U \subseteq S$.
To this end, we have to show that each bounded subset of $V_{\vN}(U)$ is contained in a complete bounded disc.
Here a~bounded disc is complete if the normed space $V(U)_B = \mathrm{span}(B) \subset V(U)$ is complete, where the norm is the Minkowski functional defined by $B$.
Let $\{W_i\}_{i \in I}$ be an open cover of $U$, let $b_1^i, \dots, b_{n_i}^i \in V(W_i)$ and $C_1^i, \dots, C_{n_i}^i \subset C^\infty(W_i)$ be complete $\vN$-bounded discs.
Then the sets $B_i := C_1^i b_1^i + \dots + C_{n_i}^i b_{n_i}^i \subset V(W_i)$ and
\begin{equation*}
 B:= \big\{v \in V(U) \mid v|_{W_i} \in B_i \big\}
\end{equation*}
are $\vN$-bounded discs.
Clearly, any $\vN$-bounded set in $V(U)$ is contained in such a disc, for some choice of $\{W_i\}$, $\{b_i\}$ and $\{C_k^i\}$, hence it suffices to show that $V(U)_B$ is a Banach space.

We first show that each of the normed spaces $V(W_i)_{B_i}$ are complete.
To this end, consider the linear map
\begin{equation*}
 \pi\colon\quad X \longrightarrow V(W_i)_{B_i}, \qquad X := \prod_{k=1}^{n_i} C^\infty(W_i)_{C_k^i},
\end{equation*}
sending $(f_1, \dots, f_{n_i})$ to $f_1 b_1 + \dots + f_{n_i} b_{n_i}$.
It is surjective by definition of $B_i$, and it is bounded since any bounded set $C \subset X$ is contained in $\lambda(C_1^i \times \cdots \times C_{n_i}^i)$ for some $\lambda > 0$, hence $\pi(C) \subset \lambda \pi(C_1^i \times \cdots \times C_{n_i}^i) = \lambda B$.
Notice that $X$ is a Banach space as the discs $C_k^i$ are complete.
Because $\pi$ is bounded, its kernel is closed, so that the quotient $X / \Ker(\pi)$ is a Banach space.
The quotient map $\bar\pi\colon X / \Ker(\pi) \to V(W_i)$ is a bounded vector space isomorphism.
To see that it is a homeomorphism, it remains to show that the inverse is bounded.
To this end, let $v \in V(W_i)_{B_i}$.
Let $\lambda > 0$ be such that $v \in \lambda B_i$.
Because $\pi\big(C_1^i \times \cdots \times C_{n_i}^i\big) = B_i$, we can choose a preimage $v^\prime \in \lambda (C_1^i \times \cdots \times C_{n_i}^i) \subset X$.
Then
\begin{align*}
 \big\|\bar\pi^{-1}(v)\big\|_{X/\Ker(\pi)}
 &= \inf_{w \in \Ker(\pi)} \|v^\prime + w\|_X
\leq \|v^\prime\|_X
 \\
 &= \inf \big\{ \mu > 0
 \mid v^\prime \in \mu \big(C_1^i \times \cdots \times C_{n_i}^i\big)\big\}
 \leq \lambda.
 \end{align*}
This holds for any $\lambda$ such that $v \in \lambda B_i$. Therefore, we get
\begin{equation*}
 \big\|\bar\pi^{-1}(v)\big\|_{X/\Ker(\pi)} \leq \inf\{ \lambda >0 \mid v \in \lambda B_i\} = \|v\|_{B_i}.
\end{equation*}
This shows that $\bar\pi^{-1}$ is bounded, hence $\bar\pi\colon X /\Ker(\pi) \to V(W_i)_{B_i}$ is a homeomorphism.
Since $X/\Ker(\pi)$ is complete, so is $V(W_i)_{B_i}$.

Now consider the map
\begin{equation*}
 V(U)_B \stackrel{r}{\longrightarrow} \prod_{i \in I} V(W_i)_{B_i},
 \qquad
 v \mapsto (v|_{W_i})_{i \in I},
\end{equation*}
where the right hand side is a product of Banach spaces,
endowed with the norm obtained by taking the supremum over all the norms of the $V(W_i)_{B_i}$.
In other words, the unit ball of the product is $\prod_{i \in I} B_i$.
Observe that
this map is well-defined since if $v \in \lambda B$, then $v|_{W_i}
\in \lambda B_i$ and~$(v|_{W_i}) \in \lambda \prod_{i \in I}
B_i$. The map is bornologically proper, since $r^{-1}\big(\prod_{i \in I}
B_i\big) = B$. On the other hand, its image is the kernel of the bounded
map
\begin{equation*}
 \prod_{i \in I} V(W_i)_{B_i}
 \longrightarrow \prod_{i, j \in I} V(W_i \cap W_j)_{B_{ij}},
 \qquad
 (v_i)_{i \in I}
 \longmapsto
 (v_i|_{W_i \cap W_j} - v_j|_{W_i \cap W_j})_{i, j \in I},
\end{equation*}
hence closed.
Here $B_{ij} = B_i|_{W_i \cap W_j} + B_j|_{W_i \cap W_j}$.
 This shows that $V(U)_B$ is isomorphic to a~closed
subspace of a Banach space, hence complete.
\end{proof}

To promote the assignment $V \mapsto V_\vN$ to a functor, we have to
deal with morphisms. Here we have the following lemma.

\begin{Lemma}
 Let $V$ and $V^\prime$ be $($algebraic$)$ $C^\infty_S$-modules and let
 $\Phi\colon V \to V^\prime$ be a morphism of sheaves. Then, for
 each $U \subseteq S$, $\Phi_U\colon V(U) \to V^\prime(U)$ is
 $\vN$-bounded.
\end{Lemma}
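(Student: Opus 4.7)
The plan is to exploit three facts: the $\vN$-bornology is defined by a local condition on $U$, the morphism $\Phi$ commutes with restrictions (as a morphism of sheaves), and each $\Phi_W$ is $C^\infty(W)$-linear (as a morphism of $C^\infty_S$-modules). Together these immediately imply that $\Phi_U$ preserves generators of the $\vN$-bornology.

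More concretely, I would fix a $\vN$-bounded subset $B \subset V(U)$ and show $\Phi_U(B) \subset V'(U)$ is $\vN$-bounded. Since bornological boundedness is a local condition, it suffices to show that each $x \in U$ has a neighborhood $W \subset U$ on which $\Phi_U(B)|_W$ has the desired generator form. By assumption on $B$, I can choose $W$ together with $\vN$-bounded discs $C_1, \dots, C_n \subset C^\infty(W)$ and sections $b_1, \dots, b_n \in V(W)$ such that $B|_W \subset C_1 b_1 + \cdots + C_n b_n$. Applying $\Phi_W$ and using naturality of $\Phi$ with respect to the restriction $U \supset W$, I get $\Phi_U(B)|_W = \Phi_W(B|_W)$. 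Because $\Phi_W$ is $C^\infty(W)$-linear, it commutes with multiplication by elements of the $C_i$, so
\[
\Phi_W(C_1 b_1 + \cdots + C_n b_n) \subset C_1 \Phi_W(b_1) + \cdots + C_n \Phi_W(b_n),
\]
which is manifestly in the required form with the same $\vN$-bounded discs $C_i$ and the new sections $\Phi_W(b_i) \in V'(W)$. This proves $\Phi_U(B)$ is $\vN$-bounded.

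There is essentially no hard step here: the lemma is a bookkeeping check that the definition of the $\vN$-bornology has been set up compatibly with $C^\infty_S$-linear sheaf morphisms. The only thing to be careful about is the order of quantifiers in the bornology definition---one must use naturality of $\Phi$ to pass from $U$ to the local neighborhood $W$ before pulling the $C_i$ through $\Phi_W$---but this is automatic from $\Phi$ being a morphism of sheaves.
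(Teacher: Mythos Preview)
Your proof is correct and essentially identical to the paper's own argument: both fix a $\vN$-bounded $B \subset V(U)$, pass to a local $W$ where $B|_W \subset C_1 b_1 + \cdots + C_n b_n$, and then use that $\Phi$ is a $C^\infty_S$-linear sheaf morphism to conclude $\Phi_U(B)|_W \subset C_1 \Phi_W(b_1) + \cdots + C_n \Phi_W(b_n)$. If anything, your version is slightly more careful in writing $\Phi_U(B)|_W = \Phi_W(B|_W)$ rather than conflating the two.
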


\begin{proof}
Let $B \subset V(U)$ be $\vN$-bounded.
We have to show that also $\Phi_U(B)$ is $\vN$-bounded.
To~this end let $x \in U$ and choose $W$ such that $B|_W \subseteq C_1 b_1 + \cdots+ C_n b_n$ for elements $b_1, \dots, b_n \allowbreak\in V(W)$ and $\vN$-bounded subsets $C_1, \dots, C_n\subset C^\infty(W)$.
Then since $\Phi$ is a morphism of sheaves and $C^\infty_S$-linear,
\begin{gather*}
\Phi_U(B) = \Phi_W(B|_W) \subseteq \Phi(C_1b_1 + \cdots + C_n b_n) = C_1 \Phi(b_1) + \cdots + C_n \Phi(b_n).
\end{gather*}
Hence $\Phi_U(B)$ is $\vN$-bounded.
\end{proof}

We therefore obtain a functor $i\colon \mathcal{V}_{\alg} \to \mathcal{V}_{\vN}$ sending $V$ to $V_\vN$ and which sends morphisms $V \to V^\prime$ of sheaves to the corresponding morphism $V_\vN \to V^\prime_\vN$, which exists by the previous lemma.
Since conversely, each morphism of sheaves $V_\vN \to V^\prime_\vN$ gives a morphism of the underlying algebraic sheaves of $C^\infty_S$-modules, we obtain the following corollary.

\begin{Corollary}
The functor $i\colon \mathcal{V}_{\alg} \to \mathcal{V}_{\vN}$ described above is fully faithful.
\end{Corollary}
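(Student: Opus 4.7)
The plan is to observe that the functor $i$ is essentially the identity at the level of underlying $C^\infty_S$-module structures: it only equips the modules of sections with the $\vN$-bornology. Faithfulness and fullness should therefore both reduce to checking that passing back and forth between morphisms of algebraic sheaves and morphisms of their bornological enrichments loses no information.

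For faithfulness, suppose $\Phi, \Psi\colon V \to V'$ are two morphisms in $\mathcal{V}_{\alg}$ with $i(\Phi) = i(\Psi)$. By construction of $i$, the underlying maps $V_\vN(U) \to V'_\vN(U)$ of $i(\Phi)$ and $i(\Psi)$ are just $\Phi_U$ and $\Psi_U$ as linear maps of sections. Equality as maps of bornological sheaves therefore forces $\Phi_U = \Psi_U$ for every open $U \subseteq S$, hence $\Phi = \Psi$.

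For fullness, let $\Xi\colon V_\vN \to V'_\vN$ be a morphism of sheaves of complete bornological $C^\infty_{\vN,S}$-modules. For each open $U \subseteq S$, the map $\Xi_U\colon V(U) \to V'(U)$ is linear over $C^\infty_{\vN,S}(U)$, which has $C^\infty(U)$ as its underlying algebra; hence $\Xi_U$ is $C^\infty(U)$-linear. Compatibility with restriction maps for $V$ and $V'$ holds because the restrictions used on $V_\vN$ and $V'_\vN$ are (by definition of the bornological sheaf structure in Appendix~\ref{sec:construction-V-vN}) the same underlying linear restrictions. Therefore the collection $\{\Xi_U\}$ assembles into a morphism $\Phi\colon V \to V'$ in $\mathcal{V}_{\alg}(S)$, and by construction $i(\Phi) = \Xi$.

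I do not anticipate a genuine obstacle here; the argument is essentially bookkeeping, and the only potentially subtle point is verifying that the $C^\infty_S$-module structure on $V_\vN$ as used in $\mathcal{V}_{\vN}$ agrees with the original one on $V$, which is immediate from the construction of the $\vN$-bornology and of the sheaf $C^\infty_{\vN,S}$ out of $C^\infty_S$.
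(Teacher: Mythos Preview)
Your proposal is correct and follows essentially the same approach as the paper. The paper's argument is condensed into a single sentence preceding the corollary: since every morphism $V_\vN \to V'_\vN$ of bornological sheaves yields a morphism of the underlying algebraic sheaves of $C^\infty_S$-modules, and since $i$ is the identity on underlying morphisms, full faithfulness follows; your write-up simply spells this out in more detail.
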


\begin{Proposition}
 \label{prop:V-alg-C2}
 $\mathcal V_\alg$ satisfies axiom {\rm (C2)}.
\end{Proposition}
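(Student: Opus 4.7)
My plan is to prove (C2) by defining the functor as the usual ``sheaf of sections'' assignment and then verifying full faithfulness, monoidality, dualizability, and the identification of its essential image with the dualizables.

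First I would define a fibered functor $\iota\colon \Vect \to \mathcal V_\alg$ by sending a vector bundle $p\colon E \to S$ to its sheaf of sections $\Gamma_E$ on $S$, viewed as a sheaf of $C^\infty_S$-modules, and a bundle morphism $\phi\colon E \to E'$ covering $f\colon S \to S'$ to the map $\Gamma_E \to f^*\Gamma_{E'}$ induced by postcomposition with $\phi$ over fibers. That $\iota$ is strictly compatible with pullbacks follows from the fact that for a smooth map $f\colon T \to S$ and a finite-dimensional bundle $E \to S$, the sheaf of sections of $f^*E$ is canonically $C^\infty_T \otimes_{f^{-1}(C^\infty_S)} f^{-1}(\Gamma_E)$, i.e., exactly $f^*\Gamma_E$ in the sense of~\eqref{eq:pullback-sheaf}. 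Symmetric monoidality reduces to the natural isomorphism $\Gamma_{E \otimes F} \cong \Gamma_E \otimes_{C^\infty_S} \Gamma_F$, which is immediate from local trivializations (both sides are locally just free modules of rank equal to the product of the ranks, and agree on the nose once local frames are chosen).

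Next I would verify full faithfulness: a $C^\infty_S$-linear sheaf morphism $\Gamma_E \to \Gamma_F$ is determined, at each point $x \in S$, by a $C^\infty_{S,x}$-linear map between stalks, which on fibers $E_x, F_x$ (quotienting by the maximal ideal) gives a linear map that depends smoothly on $x$; this assembles to a unique bundle morphism. Dualizability of $\iota(E) = \Gamma_E$ is then straightforward: the dual object is $\Gamma_{E^\vee}$, and the evaluation/coevaluation sections $\mathrm{ev}\colon E^\vee \otimes E \to \underline{\R}_S$ and $\mathrm{coev}\colon \underline{\R}_S \to E \otimes E^\vee$ (constructed pointwise using the canonical pairing and copairing on a finite-dimensional vector space) pass to the corresponding evaluation and coevaluation in $\mathcal V_\alg(S)$ and satisfy the triangle identities since they do so fiberwise.

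The main obstacle is the converse: showing that every dualizable $V \in \mathcal V_\alg(S)$ is isomorphic to $\Gamma_E$ for some finite-dimensional vector bundle $E \to S$. The plan is: given a dualizable $V$ with dual $V^\vee$ and evaluation/coevaluation, observe that for any $x \in S$, the stalk $V_x$ is a dualizable module over the local ring $C^\infty_{S,x}$, hence finitely generated projective, hence free of some finite rank $n(x)$. Writing $\mathrm{coev}_x(1) = \sum_{i=1}^{n(x)} e_i \otimes e_i^\vee \in V_x \otimes_{C^\infty_{S,x}} V^\vee_x$ and using that the germs $e_i$ lift to sections over some neighborhood $U$ of $x$, one checks via the triangle identity that $e_1, \dots, e_{n(x)}$ trivialize $V|_U$ (after possibly shrinking $U$, using that a stalkwise basis remains a basis on a neighborhood when both $V$ and its dual are generated by the chosen elements; this is where the existence of $V^\vee$ and $\mathrm{ev}$ is used crucially, giving an explicit inverse to the evaluation map $(C^\infty_U)^{n(x)} \to V|_U$). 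Thus $V$ is locally free of finite rank, so the rank function is locally constant; on each connected component of $S$ the local frames and transition functions obtained this way define a finite-dimensional vector bundle $E$, and by construction $V \cong \Gamma_E$, giving essential surjectivity and completing the proof of (C2).
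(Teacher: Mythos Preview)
Your proposal is correct and follows the same overall approach as the paper: both define the functor $\Vect \to \mathcal V_\alg$ as the sheaf-of-sections assignment $E \mapsto \Gamma_E$, check compatibility with pullbacks via the identification $f^*\Gamma_E \cong \Gamma_{f^*E}$, and then argue that the essential image consists precisely of the dualizable objects.

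The difference is one of detail and attribution. The paper's proof is very short: it verifies the pullback isomorphism on stalks and then simply declares that ``$\Gamma$ has all dualizable objects as essential image is the content of the Serre--Swan theorem,'' without spelling out full faithfulness, monoidality, or the dualizability argument. You instead give a self-contained sheaf-theoretic proof of that last step, passing to stalks over the local rings $C^\infty_{S,x}$, using that dualizable modules over a commutative local ring are free of finite rank, and then lifting the coevaluation to local sections to produce local frames. This is a legitimate and arguably cleaner route in the sheafy setting (it avoids the global-module formulation of Serre--Swan and any compactness hypotheses), at the cost of being longer. Both arguments are standard; yours is just the unpacked version of what the paper cites.
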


\begin{proof}
 For each $S$, there is a ``sheaf of sections'' functor $\Gamma\colon
 \Vect(S) \to \mathcal V_\alg(S)$. To see that these fit together to
 a map of stacks, we need to show that for each vector bundle $V \in
 \Vect(S)$ and each map $f\colon T \to S$, the natural maps
 $f^*(\Gamma_V) \to \Gamma_{f^*V}$
 are isomorphisms. On the stalk at~$x \in T$, this yields a map
 \begin{equation*}
 f^*(\Gamma_V)_x
 \cong C^\infty_{T, x} \otimes_{C^\infty_{S, f(x)}} \Gamma_{V,f(x)}
 \to \Gamma_{f^*V, x}
 \end{equation*}
 compatible with the identifications $\Gamma_{V, f(x)} \cong
 C^\infty_{S, f(x)} \otimes V_{f(x)}$ and $\Gamma_{f^*V, x} \cong
 C^\infty_{T, x} \otimes V_{f(x)}$. Here, $V_{f(x)}$ is the fiber of
 the vector bundle $V$; all other subscripts denote stalks at the
 given point. Thus, the above is an isomorphism on stalks and hence
 an isomorphism of sheaves. That $\Gamma$ has all dualizable objects
 as essential image is the content of the Serre--Swan theorem.
\end{proof}

\begin{Corollary}
 \label{cor:V-vN-C2}
 $\mathcal V_\vN$ satisfies axiom {\rm (C2)}.
\end{Corollary}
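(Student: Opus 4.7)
The plan is to define the structure functor $\Gamma_\vN\colon \Vect \to \mathcal V_\vN$ as the composition $i \circ \Gamma$, where $\Gamma\colon \Vect \to \mathcal V_\alg$ is the sheaf-of-sections functor from Proposition~\ref{prop:V-alg-C2} and $i\colon \mathcal V_\alg \to \mathcal V_\vN$ is the fully faithful embedding just constructed. That $\Gamma_\vN$ is a fiberwise fully faithful, linear, fibered functor follows immediately from the corresponding properties of $\Gamma$ and $i$. For the symmetric monoidal structure, one checks that the natural map
\begin{equation*}
 i(\Gamma_V) \hat\otimes_{C^\infty_{\vN,S}} i(\Gamma_W)
 \longrightarrow i(\Gamma_{V \otimes W})
\end{equation*}
is an isomorphism for any finite-dimensional vector bundles $V, W \to S$. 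Since this is local on $S$, one may assume both bundles are trivial, whereupon the statement reduces to the identification $C^\infty_{\vN,S}^{\oplus m} \hat\otimes C^\infty_{\vN,S}^{\oplus n} \cong C^\infty_{\vN,S}^{\oplus mn}$, which follows from additivity of the completed bornological tensor product.

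The main obstacle is essential surjectivity onto the dualizable objects of $\mathcal V_\vN(S)$. The strategy is to show that any dualizable $V \in \mathcal V_\vN(S)$ is, locally on $S$, a retract of a free sheaf of finite rank. Concretely, the coevaluation $\mathrm{coev}\colon C^\infty_{\vN,S} \to V \hat\otimes V^\vee$ evaluated at $1$ yields a distinguished element of the completed bornological tensor product; on a sufficiently small open $U \subseteq S$, one seeks to represent this element as a finite sum $\sum_{i=1}^n v_i \otimes w_i$ with $v_i \in V(U)$ and $w_i \in V^\vee(U)$. Verifying that such a finite truncation is possible is the delicate technical point: it relies both on the local structure of $\hat\otimes$ for complete bornological $C^\infty_\vN$-modules and on the rigidity imposed by the triangle identities, which force the coevaluation element to act as the identity after pairing through $\mathrm{ev}$ and thus constrain its shape. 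Once a finite representation is in hand, the triangle identity exhibits $V|_U$ as the image of a bounded idempotent $e$ on the free sheaf $(C^\infty_{\vN,S})^{\oplus n}|_U = \Gamma_\vN(\underline{\R}^n)|_U$.

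To conclude, since $\Gamma_\vN$ is fully faithful, the idempotent $e$ corresponds uniquely to an idempotent endomorphism of the trivial bundle $\underline{\R}^n$ on $U$; its image, by the Serre--Swan argument already invoked in the proof of Proposition~\ref{prop:V-alg-C2}, is the sheaf of sections of a finite-dimensional vector bundle $E_U \to U$, and hence $V|_U \cong \Gamma_\vN(E_U)$. Finally, the locally defined vector bundles $E_U$ glue to a global vector bundle $E \to S$: transition isomorphisms $V|_{U_i \cap U_j}$ translate, through full faithfulness of $\Gamma_\vN$, to bundle isomorphisms $E_{U_i}|_{U_i \cap U_j} \cong E_{U_j}|_{U_i \cap U_j}$ satisfying the cocycle condition, and $V \cong \Gamma_\vN(E)$ by descent.
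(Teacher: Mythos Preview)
Your setup matches the paper exactly: the functor $\Vect \to \mathcal V_\vN$ is defined as the composite $i \circ \Gamma$, and full faithfulness follows from that of $\Gamma$ and $i$. The difference lies entirely in how essential surjectivity onto dualizables is handled.

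The paper's argument is a two-line reduction: it asserts that any dualizable $C^\infty_{\vN,S}$-module has finite rank, hence any dualizable object of $\mathcal V_\vN(S)$ lies in the essential image of $i$ and is already dualizable in $\mathcal V_\alg(S)$; Proposition~\ref{prop:V-alg-C2} then applies verbatim. No separate Serre--Swan or gluing argument is repeated in the bornological setting.

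Your approach instead attempts to prove this finite-rank claim directly, by showing that the coevaluation element in $V \hat\otimes V^\vee$ is locally a finite sum. You correctly identify this as the crux and correctly note that it is not automatic for a \emph{completed} tensor product---but you do not actually carry it out, so as written the proposal has a genuine gap at precisely the point where the work lies. The subsequent steps (idempotent on a free module, Serre--Swan, gluing) are fine but redundant: once you know the object is finite rank, you are back in $\mathcal V_\alg$ via $i$, and Proposition~\ref{prop:V-alg-C2} already contains all of that.

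In short: the content of your ``delicate technical point'' and the paper's one-line assertion ``dualizable $\Rightarrow$ finite rank'' are the same fact. The paper treats it as known and reduces immediately to the algebraic case; you try to unpack it but stop short of a proof. If you can complete that step, the cleaner conclusion is to invoke Proposition~\ref{prop:V-alg-C2} directly rather than rerun its argument.
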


\begin{proof}
 Composition with $i$ gives the map $\Gamma\colon \Vect \to \mathcal
 V_\vN$. Since every dualizable $C^\infty_{\vN, S}$-module has
 finite rank, any dualizable object in $\mathcal V_\vN(S)$ is already
 dualizable in $\mathcal V_\alg(S)$.
\end{proof}

\begin{Proposition}
 \label{prop:V-vN-C3}
 $\mathcal V_\vN$ satisfies the separation axiom {\rm (C3)}.
\end{Proposition}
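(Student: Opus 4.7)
The plan is to reduce the statement to a local assertion about stalks of $\pr^*W$ at points $x = (s_0, 0) \in S \times \{0\}$ and then to exploit the smoothness and bornological completeness underlying the construction of $\mathcal V_\vN$. Given $\sigma\colon V \to \pr^*W$ satisfying the hypothesis and a section $v \in V(U_0)$ over a small open $U_0 \ni x$, I would first show that for every smooth cutoff $\chi$ on $S \times \R$ vanishing in an open neighborhood $N$ of $U_0 \cap (S \times \{0\})$, the product $\chi \sigma(v) = \sigma(\chi v)$ is identically zero as a section of $\pr^*W$ over $U_0$. Indeed, $\sigma(\chi v)$ vanishes on $N$ because $\chi v$ does, and on $U_0 \setminus (S \times \{0\})$ by the hypothesis on $\sigma$; these two opens cover $U_0$, so the sheaf property of $\pr^*W$ yields vanishing of $\chi \sigma(v)$.

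The core analytic step is to pass from ``$\sigma(v)$ is annihilated by all cutoffs vanishing near $S \times \{0\}$ and vanishes off $S \times \{0\}$'' to ``$\sigma(v)$ has zero germ at $x$.'' For this, I would use the structure of $\pr^*W$ as a completed bornological tensor product to interpret $\sigma(v)$ fiberwise along $\{s_0\} \times \R$ as a smooth germ with values in the complete bornological vector space $W_{s_0}$. Restriction of $\pr^*W$ along this $\R$-fiber yields, by the construction in Appendix~\ref{sec:construction-V-vN}, a germ in $C^\infty_{\vN, \R, 0} \otimeshat W_{s_0}$, and elements of this completed tensor product can be realized as bornologically smooth germs $\R \to W_{s_0}$ at the origin. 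The hypothesis tells us that this smooth germ vanishes for $t \neq 0$; by continuity (smooth maps into separated complete bornological vector spaces are continuous), the value at $t=0$ is forced to be zero as well. Running this argument along each nearby fiber $\{s\} \times \R$ and reassembling via the completeness of the bornological tensor product gives that the germ of $\sigma(v)$ at $x$ vanishes, and since sheaf morphisms are determined by stalks, this completes the proof.

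The main obstacle I anticipate lies in this second step: rigorously identifying the appropriate restrictions of $\pr^*W$ with genuine spaces of smooth $W$-valued germs, and in verifying that this identification is faithful and compatible with the vanishing hypothesis. The potential subtlety here is that the completed bornological tensor product $C^\infty_{\vN, \R, 0} \otimeshat W_{s_0}$ need not be literally equal to smooth $W_{s_0}$-valued germs, but rather contain them as a natural subspace, and one must check that no extra elements supported at a point can appear. Once the appropriate statements about completed bornological tensor products and smooth vector-valued functions from the appendix are in hand, the conclusion reduces to the essentially tautological fact that a bornologically smooth function from a line into a separated complete bornological vector space cannot take a nonzero value at a point where it vanishes on an open punctured neighborhood.
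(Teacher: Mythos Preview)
Your overall strategy is sound in spirit --- the endgame really is ``a smooth function into a Banach space that vanishes off a point vanishes identically'' --- but the route you propose through stalks and fiberwise restriction has a genuine obstruction that the paper's proof is specifically engineered to avoid.

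The problem is in your second step. You want to restrict to the fiber $\{s_0\}\times\R$ and land in $C^\infty_{\vN,\R,0}\otimeshat W_{s_0}$, then interpret this as a smooth germ $\R\to W_{s_0}$. But stalks of bornological sheaves are filtered colimits in $\Born$, and such colimits are in general \emph{not} separated: the paper gives exactly this as a cautionary example (the germ stalk $C^\infty_{X,x}$ with its colimit bornology is non-separated, since functions vanishing to infinite order at $x$ lie in the closure of $\{0\}$). So neither $C^\infty_{\vN,\R,0}$ nor $W_{s_0}$ need be separated, and your appeal to ``smooth maps into separated complete bornological spaces are continuous'' fails at precisely the point where you need it. The ``extra elements supported at a point'' that you worry about are a real phenomenon at the stalk level, not just a technicality to be checked away.

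The paper's proof sidesteps this entirely by never passing to stalks. Instead, it works on \emph{product opens} $S_i\times T_i$ and computes the value of the \emph{presheaf} $V'$ (whose sheafification is $\pr^*W$) directly:
\[
V'(S_i\times T_i)\;\cong\; C^\infty_\vN(T_i\times S_i)\otimeshat_{C^\infty_\vN(S_i)} W(S_i)\;\cong\; C^\infty_\vN(T_i)\otimeshat W(S_i)\;\cong\; C^\infty_\vN(T_i;W(S_i)),
\]
using Propositions~\ref{prop:smooth-functions-on-product} and~\ref{prop:smooth-functions-in-V}. The crucial point is that $W(S_i)$, the space of sections over an \emph{open} set, is genuinely complete (no colimit is taken), so a section over $S_i\times T_i$ is literally a smooth map $T_i\to W(S_i)$ factoring through some Banach space $W(S_i)_B$. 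Now the vanishing off $T_i\cap\{0\}$ forces vanishing everywhere by ordinary Banach-space continuity. The moral: keep the $S$-direction intact and only ``unravel'' in the $\R$-direction, using sections over opens rather than germs at points.
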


\begin{proof}
 Fix a $C^\infty_{\vN, S}$-module $W$, and denote by $V$ its pullback
 to $S \times \mathbb R$. It suffices to show that if a section
 $\sigma \in V(U)$ is such that $\sigma\vert_{U \setminus (S \times
 0)}$ vanishes, then in fact $\sigma = 0$. By definition, $V$ is
 the sheafification of the presheaf
 \begin{equation*}
 V'\colon\ U \mapsto
 C^\infty_\vN(U) \otimeshat_{\left(\pr^{-1}C^\infty_{\vN, S}\right)(U)} \big(\pr^{-1}W\big)(U),
 \end{equation*}
 so we know that $\sigma$ is determined by a coherent collection of
 sections $\sigma_i \in V'(S_i \times T_i)$ for some collection of
 opens $S_i \subset S$, $T_i \subset \mathbb R$, since products $S_i
 \times T_i$ form a basis for the topology of~$U \subset S \times
 \mathbb R$. Our question is reduced to showing that if $\sigma_i$
 vanishes on $S_i \times (T_i \setminus 0)$, then it vanishes on $S_i
 \times T_i$. Now,
 \begin{equation*}
 \begin{aligned}
 V'(S_i \times T_i)
 &
 \cong C^\infty_\vN(T_i \times S_i) \otimeshat_{C^\infty_\vN(S_i)} V(S_i)
 \\ &
 \cong (C^\infty(T_i) \otimeshat C^\infty(S_i))
 \otimeshat_{C^\infty(S_i)} V(S_i)
 \\ &
 \cong C^\infty_\vN(T_i) \otimeshat V(S_i)
 \\ &
 \cong C^\infty_\vN(T_i; V(S_i)).
 \end{aligned}
 \end{equation*}
 The first identification is by definition, the second follows from
 Proposition~\ref{prop:smooth-functions-on-product}, the third from
 the fact that $C^\infty(T_i) \otimeshat \mathemdash$ commutes with
 the colimit defining the tensor product over $C^\infty(S_i)$, and
 the fourth from Proposition~\ref{prop:smooth-functions-in-V}. Thus,
 $\sigma_i$ gets identified with a smooth function $f\colon T_i \to
 V(S_i)$ which, by definition, is a smooth function with values in
 some Banach space $V_B \subset V(S_i)$ (cf. Appendix~\ref{sec:smooth-functions}). Hence it vanishes
 identically if it vanishes away from~$0$.
\end{proof}

\begin{Corollary}
 \label{cor:V-alg-C3}
 $\mathcal V_\alg$ satisfies the separation axiom {\rm (C3)}.
\end{Corollary}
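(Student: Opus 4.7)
The plan is to deduce the separation axiom for $\mathcal V_\alg$ from the corresponding statement for $\mathcal V_\vN$ (Proposition \ref{prop:V-vN-C3}) by exploiting the fully faithful embedding $i\colon \mathcal V_\alg \to \mathcal V_\vN$ constructed above.

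First, I would construct, for each $W \in \mathcal V_\alg(S)$, a canonical comparison morphism
\begin{equation*}
 j\colon\ i(\pr^*_\alg W) \longrightarrow \pr^*_\vN i(W)
\end{equation*}
in $\mathcal V_\vN(S \times \mathbb R)$, arising from the universal property of the completed bornological tensor product: the natural map from an algebraic tensor product to its bornological completion gives a morphism at the presheaf level, and sheafifying yields $j$. Given $\sigma\colon V \to \pr^*_\alg W$ in $\mathcal V_\alg(S \times \mathbb R)$ satisfying the hypothesis of (C3), the composite $j \circ i(\sigma) \colon i(V) \to \pr^*_\vN i(W)$ inherits the vanishing on all opens $U$ with $U \cap (S \times \{0\}) = \varnothing$, and hence vanishes by Proposition \ref{prop:V-vN-C3}.

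Since $i$ is fully faithful, proving $\sigma = 0$ is equivalent to proving $i(\sigma) = 0$; this reduces the problem to showing that $j$ is a monomorphism in $\mathcal V_\vN$, which I expect to be the main technical obstacle. I would verify injectivity locally on product opens $U = S_0 \times T_0$, where the algebraic pullback section $\sum_k f_k \otimes w_k \in C^\infty(S_0 \times T_0) \otimes_{C^\infty(S_0)} W(S_0)$ is sent by $j$ to the smooth function $t \mapsto \sum_k f_k({-}, t)\, w_k \in C^\infty_\vN(T_0; W(S_0))$, using the identification $C^\infty_\vN(S_0 \times T_0) \otimeshat_{C^\infty_\vN(S_0)} W(S_0) \cong C^\infty_\vN(T_0; W(S_0))$ from the proof of Proposition \ref{prop:V-vN-C3}. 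To conclude that an algebraic tensor element whose associated smooth function vanishes identically must itself be zero, one can fix a point $t_0 \in T_0$, use the surjective evaluation $\mathrm{ev}_{t_0}\colon C^\infty(S_0 \times T_0) \to C^\infty(S_0)$ inducing $R \otimes_A M \twoheadrightarrow M$, and combine this with a Hadamard-type decomposition $f_k(s, t) = f_k(s, 0) + t\, g_k(s, t)$ to extract the successive Taylor coefficients $\sum_k (\partial_t^n f_k)({-}, 0)\, w_k \in W(S_0)$. Iterating yields that the original tensor is divisible by arbitrarily high powers of $t$ modulo a piece already shown to vanish; combining with the sheaf-theoretic fact that the section is zero on the complement of $S_0 \times \{0\}$ forces the tensor to be zero.
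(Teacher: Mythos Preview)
Your overall strategy---reduce (C3) for $\mathcal V_\alg$ to (C3) for $\mathcal V_\vN$ via a comparison map into a completed bornological tensor product---is exactly the route the paper takes, and you correctly identify injectivity of the comparison as the crux. However, the argument you sketch for that injectivity has a genuine gap, and the paper resolves it by a different (and simpler) choice of bornology.

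Your Hadamard iteration shows only that an element $\xi$ of $C^\infty(S_0\times T_0)\otimes_{C^\infty(S_0)} W(S_0)$ with vanishing image lies in $\bigcap_n t^n\cdot\bigl(C^\infty(S_0\times T_0)\otimes_{C^\infty(S_0)} W(S_0)\bigr)$. This intersection is \emph{not} zero in general (already for $W(S_0)=C^\infty(S_0)$ it contains every function flat along $t=0$), so something further is needed. At this point you invoke ``the sheaf-theoretic fact that the section is zero on the complement of $S_0\times\{0\}$'', but this conflates two different tasks: that vanishing is part of the (C3) \emph{hypothesis} on the specific $\sigma$, whereas you are trying to prove that $j$ is a monomorphism for an arbitrary element of its domain. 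Even if you retreat to proving only the weaker statement (vanishing of the particular image $i(\sigma)$), the combination ``infinitely $t$-divisible \emph{and} restricting to zero on $t\neq 0$'' still does not obviously force $\xi=0$ in the algebraic tensor product over $C^\infty(S_0)$; you would need, e.g., flatness of the restriction map or a Krull-type intersection statement, neither of which is available here.

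The paper sidesteps this entirely by \emph{not} using the functor $i$ (and hence not the $\vN$-bornology on $W(S_0)$) at all. In the Remark following the Corollary, one reduces---just as you do---to the local claim that an element of $C^\infty(\mathbb R\times S)\otimes_{C^\infty(S)} V$ restricting to zero on $(\mathbb R\setminus 0)\times S$ must vanish. The trick is then to endow $V$ with the \emph{fine} bornology: by Proposition~\ref{prop:fine-otimes} the tensor product $C^\infty_\vN(\mathbb R\times S)\otimes V$ is already complete, so the algebraic tensor agrees with the completed one on the nose. Combining with Proposition~\ref{prop:smooth-functions-on-product} and Proposition~\ref{prop:smooth-functions-in-V}, the element becomes a smooth function $\mathbb R\to V_B$ into a Banach space, and continuity finishes the argument. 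The point is that with the fine bornology the comparison map is an isomorphism of underlying vector spaces, so no separate injectivity argument is required; routing through $i$ forces the $\vN$-bornology and creates the obstacle you ran into.
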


\begin{Remark}
 One might wish for a more elementary proof of the corollary, and we
 would like to note that the bornological tensor product is handy in
 this case as well. Arguing as in the proof of
 Proposition~\ref{prop:V-vN-C3}, one reduces property (C3) for
 $\mathcal V_\alg$ to the claim that if $V$ is a~$C^\infty(S)$-mo\-dule~and
 \begin{equation*}
 \sigma \in C^\infty(\mathbb R \times S) \otimes_{C^\infty(S)} V
 \end{equation*}
 maps to $0$ in $C^\infty((\mathbb R \setminus 0) \times S)
 \otimes_{C^\infty(S)} V$, then $\sigma = 0$. This is not
 immediately clear. Notice, however, that if we give $V$ the fine
 bornology, Proposition~\ref{prop:fine-otimes} allows us to regard
 $\sigma$ as an element of
 \begin{equation*}
 C^\infty_\vN(\mathbb R \times S) \otimeshat_{C^\infty(S)} V
 \cong C^\infty_\vN(\mathbb R) \otimeshat C^\infty_\vN(S)
 \otimeshat_{C^\infty(S)} V,
 \end{equation*}
 and therefore as a smooth function from $\mathbb R$ to some Banach
 space. The claim follows.
\end{Remark}

\appendix

\section{Bornological sheaves}
\label{sec:bornological}

In this appendix, we construct the symmetric monoidal stack $\mathcal
V_\vN$ of sheaves of complete bornological $C^\infty$-modules on the
site $\Man$ of smooth manifolds. Formally, this construction is very
similar to that of its well-known algebraic counterpart $\mathcal
V_\alg$, and our main goal here is to highlight the differences. We
start recalling several basic facts about bornological vector spaces,
providing proofs for the bits that are not easily located in the
literature. For this, our main reference is Meyer~\cite[Chapter~1]{MR2337277}. The basic facts about sheaves of
(complete) bornological vector spaces and modules are quoted from
Houzel~\cite{MR0393552}.

\subsection{Bornological vector spaces}
\label{sec:BVSs}

A (convex) \emph{bornology} on a vector space $V$ over $\mathbb{K} = \R$ or $\C$ is a collection
$\mathfrak S$ of subsets of $V$ deemed to be bounded. These have to
satisfy appropriate axioms which we will not repeat here. For any
collection $\mathfrak S^\prime$ of subsets of $V$, there is a smallest
bornology containing $\mathfrak S^\prime$, the bornology
\emph{generated by} $\mathfrak S^\prime$. A linear map $f\colon V \to
W$ is \emph{bounded} if it sends bounded subsets to bounded subsets.

A \emph{disk} in a vector space $V$ is a convex, balanced subset $B
\subset V$. We denote by $V_B = \mathbb R \cdot B \subset V$ the
subspace spanned by $B$. The closed ball of a seminorm on $V_B$ is an
absorbing disk, and conversely a disk $B \subset V$ determines a
unique seminorm on $V_B$. We say that $B$ is \emph{norming}
respectively \emph{complete} if $V_B$ is a normed respectively a
Banach space. A bornological vector space is \emph{separated} if
every bounded disk is norming, and \emph{complete} if every bounded
subset is contained in a complete bounded disk.

\begin{Example}
 The smallest possible bornology on a vector space $V$ is the one
 generated by~convex hulls of finite subsets. This is called the
 \emph{fine bornology}. It is always complete. We~denote by
 $\Fin(V)$ this bornological vector space. With this bornology, we
 have the relation
 \begin{math}
 V \cong \colim V_\alpha,
 \end{math}
 where $V_\alpha$ runs through all finite-dimensional subspaces of
 $V$, endowed with their fine bornology.
\end{Example}

\begin{Example}
 \label{ex:von-Neumann}
 Let $V$ be a locally convex topological vector space.
 Traditionally, a subset of~$B \subset V$ is called bounded if it is
 absorbed by any neighborhood of the origin. This defines a
 bornology on $V$, called the \emph{von Neumann bornology}. It is
 complete if $V$ is complete as a~topological vector space.
 We denote by $\vN(V)$ this bornological vector space.
\end{Example}

We are interested in bornological vector spaces for their convenient
categorical properties. We denote by
\begin{equation*}
 \CBorn \subset \Born
\end{equation*}
the category of bornological vector spaces and its full subcategory of
complete bornological vector spaces.\footnote{Meyer~\cite{MR2337277}
 denotes $\Born$ by $\mathfrak{Born}^{1/2}$, reserving the notation
 $\mathfrak{Born}$ for the full subcategory of separated borno\-logical
 spaces.}

\begin{Theorem}[{\cite[Proposition~1.126]{MR2337277}}]
 $\Born$ and $\CBorn$ are additive categories and admit all limits
 and colimits.
\end{Theorem}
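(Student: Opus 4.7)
My plan is to verify the four assertions (additivity and (co)completeness for each of $\Born$ and $\CBorn$) by constructing the relevant (co)limits explicitly in $\Born$, and then using a completion functor to transfer the result to $\CBorn$.

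First I would establish the additive structure. The zero vector space with its unique bornology is both initial and terminal. For any two bornological spaces $V, W$, the set $\Hom(V, W)$ of bounded linear maps is a $\mathbb K$-vector space under pointwise operations (a sum of bounded linear maps is bounded since the bornology is closed under finite sums of subsets), and composition is bilinear. The direct sum $V \oplus W$ with the bornology generated by $B \cup C$ for $B \subset V$, $C \subset W$ bounded serves as a biproduct, so $\Born$ is additive; the same direct sum lies in $\CBorn$ whenever $V, W$ do, so $\CBorn$ is additive as well.

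Next I would construct all limits and colimits in $\Born$. For an arbitrary diagram $\{V_i\}$, the product is $\prod_i V_i$ with the bornology whose bounded sets are those contained in $\prod_i B_i$ for bounded $B_i \subset V_i$, and the equalizer of $f, g\colon V \to W$ is $\Ker(f-g) \subset V$ with the induced subspace bornology. Dually, the coproduct is $\bigoplus_i V_i$ with the bornology generated by the images of the bounded subsets of the summands, and the coequalizer of $f, g\colon V \to W$ is the algebraic quotient $W / \mathrm{Im}(f-g)$ with the quotient bornology (a subset is bounded iff it lies in the image of some bounded subset of $W$). Standard verifications show these satisfy the universal properties.

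The main obstacle is handling $\CBorn$. Here I would invoke the completion functor $\widehat{(\cdot)}\colon \Born \to \CBorn$, constructed as the left adjoint to the forgetful inclusion $\iota\colon \CBorn \hookrightarrow \Born$ (for concreteness, $\widehat V = \colim V_B$, where $B$ runs through completants of bounded disks in the separated quotient of $V$; see Meyer~\cite[Section~1.2]{MR2337277}). Given this adjunction, limits in $\CBorn$ are computed as in $\Born$: one checks that the product and equalizer constructions above preserve completeness (for products, a completant disk in each factor yields a completant disk in the product; for equalizers, a closed subspace of a complete bornological space is complete, since a complete bounded disk intersected with the kernel remains complete). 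Colimits in $\CBorn$ are then obtained by applying $\widehat{(\cdot)}$ to the corresponding colimits in $\Born$, which is a general categorical fact about reflective subcategories. This yields all limits and colimits in $\CBorn$ and completes the proof.
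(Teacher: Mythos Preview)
The paper does not supply its own proof of this theorem: it is quoted as \cite[Proposition~1.126]{MR2337277} and left to the reference. What follows in the paper is an expository paragraph describing how limits and colimits are computed in $\Born$ (products, direct sums, kernels, cokernels with the evident bornologies) and noting that the completion functor $V \mapsto V^c$ is left adjoint to the inclusion $\CBorn \hookrightarrow \Born$, so that limits in $\CBorn$ agree with those in $\Born$ while colimits are obtained by completing. Your sketch follows exactly this line and is correct; in particular your use of the reflective-subcategory argument for colimits in $\CBorn$ is the standard one, and your check that equalizers stay complete (via $K \cap V_B$ being closed in the Banach space $V_B$ because $f|_{V_B}$ factors through some $W_{B'}$) is the right mechanism. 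So there is nothing substantive to compare: your approach matches the paper's discussion, which itself defers the actual proof to Meyer.
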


The forgetful functor $\Born \to \Vect$ preserves limits and colimits;
in other words, limits and colimits in $\Born$ are obtained by putting
appropriate bornologies on the corresponding constructions with plain
vector spaces. The bornology on a direct sum $\bigoplus_{i \in I}
V_i$ is generated by images of bounded subsets of each $V_i$ via the
standard inclusions; a subset $B \subset \prod_{i \in I} V_i$ is
bounded if and only if each projection $p_i(B) \subset V_i$ is
bounded; kernels and cokernels are endowed with the subspace and
quotient bornology, respectively.

The inclusion $i\colon \CBorn \to \Born$ has a left adjoint, the
\emph{completion} functor $\Born \to \CBorn$, $V \mapsto V^c$. This
means that there are natural isomorphisms
\begin{equation*}
 \Hom(V, W) \cong \Hom\big(V^c, W\big)
\end{equation*}
for $W$ complete. In particular, limits in $\CBorn$ are calculated as
limits in $\Born$. This is also true for direct sums. However,
completion does not preserve colimits in general: the cokernel of a
map $f\colon V \to W$ in $\CBorn$ is obtained by modding out the
bornological closure of the image, $\Coker f = W / \overline{f(V)}$~\cite[Section~1.3.3]{MR2337277}. We may write $\sep\colim$,
$\sep\Coker$, etc.\ to emphasize that a construction is taken inside
$\CBorn$.

The space $\Hom(V, W)$ of all bounded linear maps has a natural
bornology generated by the uniformly bounded subsets, i.e., those $L
\subset \Hom(V, W)$ such that
\begin{equation*}
 L(B) = \{ f(x) \mid f \in L,\ x \in B \} \subset W
\end{equation*}
is bounded for all bounded $B \subset V$. This is complete if $W$ is.

We equip the algebraic tensor product $V_1 \otimes V_2$ with the
bornology generated by subsets of~the form $B_1 \otimes B_2$, with
$B_1 \subset V_1$, $B_2 \subset V_2$ bounded. We define the
\emph{completed bornological tensor product} by
\begin{equation*}
 V_1 \otimeshat V_2 = (V_1 \otimes V_2)^c.
\end{equation*}

\begin{Theorem}[{\cite[Proposition~1.111]{MR2337277}}]
 With the completed tensor product and the bornology on hom-sets
 defined above, $\CBorn$ becomes a closed symmetric monoidal
 category. This means that $\otimeshat$ is unital, commutative and
 associative in the appropriate sense, and there are natural
 isomorphisms
 \begin{equation*}
 \Hom\big(V_1 \otimeshat V_2, W\big) \cong \Hom(V_1, \Hom(V_2, W))
 \end{equation*}
 for all $V_1, V_2, W \in \CBorn$. Likewise, $\Born$ is closed
 symmetric monoidal with its uncompleted tensor product $\otimes$.
\end{Theorem}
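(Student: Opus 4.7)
The plan is to first establish the symmetric monoidal closed structure on $\Born$ from the universal property of the uncompleted tensor product, then transport it to $\CBorn$ via the completion functor. The starting observation is that a linear map $\phi \colon V_1 \otimes V_2 \to W$ is bounded with respect to the bornology generated by the products $B_1 \otimes B_2$ if and only if the corresponding bilinear map $V_1 \times V_2 \to W$ takes pairs of bounded sets to bounded sets; this is a direct unwinding of the definition of the bornology on $V_1 \otimes V_2$, since such products generate this bornology.

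Using this universal property, I would construct the associator, braiding and unitor isomorphisms (with $\mathbb K$ carrying the fine bornology as the unit) by taking the standard vector-space-level maps and checking that both they and their inverses are bounded; this is routine because each structure map is defined on generating bounded sets in the obvious way. The coherence (pentagon, hexagon, triangle) diagrams then commute because they already commute at the level of underlying vector spaces and the underlying-space functor $\Born \to \Vect$ is faithful. For the closed structure on $\Born$, I would verify directly that for $f \colon V_1 \otimes V_2 \to W$ bounded, the curried map $\hat f \colon V_1 \to \Hom(V_2, W)$ lands in bounded linear maps and is itself bounded (an element-free chase: a bounded $B_1 \subset V_1$ maps to $\{\hat f(v_1) \mid v_1 \in B_1\}$, which is uniformly bounded precisely because $f(B_1 \otimes B_2)$ is bounded for every bounded $B_2$); the inverse uncurrying is similar.

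To transfer everything to $\CBorn$, I would use that the completion functor $c \colon \Born \to \CBorn$ is left adjoint to the fully faithful inclusion $i \colon \CBorn \to \Born$. Since $\otimeshat := c \circ \otimes \circ (i \times i)$, the key technical point is showing that $c$ is strong symmetric monoidal, i.e., that the canonical map $(V \otimes W)^c \to V^c \otimeshat W^c$ is an isomorphism, from which associativity, unitality and commutativity of $\otimeshat$ on $\CBorn$ follow by applying $c$ to the corresponding isomorphisms in $\Born$. Closedness of $\CBorn$ reduces to showing that $\Hom(V, W)$ is complete whenever $W$ is complete (a standard argument: a Cauchy net uniformly bounded on some bounded $B \subset V$ converges pointwise into the complete disk bornology of $W$, and one checks the limit is bounded). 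Given these, the adjunction yields
\begin{equation*}
 \Hom_{\CBorn}\bigl(V_1 \otimeshat V_2, W\bigr)
 \cong \Hom_{\Born}(V_1 \otimes V_2, W)
 \cong \Hom_{\Born}(V_1, \Hom(V_2, W))
 \cong \Hom_{\CBorn}(V_1, \Hom(V_2, W)),
\end{equation*}
which is the desired closed structure.

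The main obstacle will be the strong monoidality of completion, specifically verifying that the natural map $V^c \otimes W^c \to (V \otimes W)^c$ extends to an isomorphism $V^c \otimeshat W^c \cong (V \otimes W)^c$; this requires knowing that completion is idempotent and that the tensor product of complete bounded disks factors through a complete bounded disk in the target, which in turn relies on showing that tensor products of Banach discs are dense in a Banach disc of the algebraic tensor product. Once this monoidality is in hand, all structure isomorphisms and coherence diagrams for $\CBorn$ are obtained from those of $\Born$ by applying $c$, and the proof is complete.
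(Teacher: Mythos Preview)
The paper does not prove this theorem; it is quoted from Meyer~\cite[Proposition~1.111]{MR2337277} and stated without argument. So there is no proof in the paper to compare against.

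Your sketch is broadly sound as an outline of how one would prove the result from scratch. One comment on the framing: you describe the transfer to $\CBorn$ as hinging on completion being \emph{strong} symmetric monoidal, and flag $(V \otimes W)^c \cong V^c \otimeshat W^c$ as the main obstacle. This is slightly more than you need. For $V_1, V_2 \in \CBorn$ you have $V_1 \otimeshat V_2 = (V_1 \otimes V_2)^c$ by definition, and associativity of $\otimeshat$ follows cleanly by Yoneda: both $(V_1 \otimeshat V_2) \otimeshat V_3$ and $V_1 \otimeshat (V_2 \otimeshat V_3)$ represent bounded trilinear maps $V_1 \times V_2 \times V_3 \to W$ for $W$ complete, using only the completion adjunction and the $\Born$-level universal property you already established. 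The chain of isomorphisms you wrote for the closed structure is exactly right and does not require the stronger monoidality statement either. So the proof goes through without the ``tensor products of Banach discs are dense in a Banach disc'' step you anticipated as the hard part.
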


For $V \in \Born$, denote by $\mathfrak S_d(V)$ the set of bounded disks
of $V$, with the partial order relation of being absorbed: $B_1 \leq
B_2$ if $B_1 \subset c B_2$ for some constant $c > 0$. This implies that
we have an injective, continuous linear map $V_{B_1} \to V_{B_2}$ of
seminormed spaces. Denote by $\mathfrak S_c(V)$ the subset of complete
bounded disks. By definition, $\mathfrak S_c(V)$ is cofinal in
$\mathfrak S_d(V)$ if and only if $V$ is complete.

\begin{Proposition}
 \label{prop:dissection}
 Let $V$ be a complete bornological vector space. Then there is a
 canonical isomorphism
 \begin{equation*}
 V \cong \colim_{B \in \mathfrak S_c(V)} V_B
 \end{equation*}
 in $\CBorn$. If $V \in \Born$, replacing $\mathfrak S_c (V)$ with
 $\mathfrak S_d (V)$ gives a similar isomorphism in $\Born$.
\end{Proposition}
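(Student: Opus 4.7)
My plan is to prove the proposition by directly verifying that $V$ satisfies the universal property of the claimed colimit, rather than constructing an inverse by hand. First, I would build the canonical bounded linear comparison map $\phi\colon \colim_{B \in \mathfrak S_c(V)} V_B \to V$ via the universal property of the colimit: each inclusion $\iota_B\colon V_B \hookrightarrow V$ is bounded, because the closed unit ball of the Banach space $V_B$ is exactly $B$, which is a bounded subset of $V$ by definition, and for $B_1 \leq B_2$ the transition maps $V_{B_1} \hookrightarrow V_{B_2}$ are compatible with the inclusions into $V$.

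To show $\phi$ is an isomorphism, I would verify that $V$ itself enjoys the required universal property. Given any target $W \in \CBorn$ together with a compatible family $\{f_B\colon V_B \to W\}_{B \in \mathfrak S_c(V)}$ of bounded linear maps, I would define $f\colon V \to W$ by $f(v) := f_B(v)$ for any $B \in \mathfrak S_c(V)$ with $v \in V_B$. Such a $B$ exists: singletons are bounded in $V$, hence sit inside some bounded disk by the convexity axiom, and then inside some complete bounded disk by completeness of $V$. Well-definedness and linearity of $f$ follow from the compatibility of the $f_B$ together with the filteredness of $\mathfrak S_c(V)$, which lets one place any finite collection of vectors in a common $V_B$. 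Boundedness of $f$: any bounded $C \subset V$ is absorbed by some $B \in \mathfrak S_c(V)$, so $f(C)$ agrees with $f_B(C)$ and is bounded in $W$. Uniqueness is automatic. The $\Born$ case is then analogous but slightly simpler, since $\mathfrak S_d(V)$ directly generates the bornology of $V$ without any cofinality argument.

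The main subtlety I would need to address is the warning from the excerpt that colimits in $\CBorn$ are not computed as in $\Born$ but rather by a further completion, which can involve taking the closure of images. In our situation this causes no difficulty: the diagram $\{V_B\}_{B \in \mathfrak S_c(V)}$ is filtered and has injective transition maps, so its colimit in $\Born$ is simply the union $\bigcup V_B$ endowed with the bornology generated by the images of the $B$'s. This union equals $V$ (every vector lies in some $V_B$, as above), and the generated bornology coincides with the original one on $V$ because $\mathfrak S_c(V)$ is cofinal in $\mathfrak S_d(V)$ when $V$ is complete, and bounded disks generate the bornology of any convex bornological vector space. Since $V$ is complete by hypothesis, the completion step relating colimits in $\Born$ and $\CBorn$ is the identity, and no closure-of-image phenomenon intervenes.
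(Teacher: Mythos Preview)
Your argument is correct. The paper does not actually supply a proof of this proposition; it is stated as a standard structural fact about complete bornological vector spaces, with Meyer's monograph cited as the general reference for the appendix. So there is nothing to compare against, and your verification via the universal property, together with the observation that the filtered diagram $\{V_B\}$ has injective transition maps so that its $\Born$-colimit is already complete, is exactly the kind of direct argument one would expect here.

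One small point of phrasing: when you say ``any bounded $C \subset V$ is absorbed by some $B \in \mathfrak S_c(V)$'', what you actually use (and what completeness gives you) is that $C$ is \emph{contained} in some $B \in \mathfrak S_c(V)$, namely a complete bounded disk containing the disked hull of $C$. This is what makes $f\vert_C = f_B\vert_C$ and hence $f(C)$ bounded. The distinction is harmless here but worth stating precisely.
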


\begin{Remark}
 Complete bornological vector spaces are directly related to the
 category $\mathrm{Ind}(\mathrm{Ban})$ of inductive systems of Banach
 spaces. The assignment of the directed system $\{ V_B \}_{B \in
 \mathfrak S_c (V)}$ to~$V \in \CBorn$ defines the \emph{dissection}
 functor $\mathrm{diss}\colon \Born \to \mathrm{Ind}(\mathrm{Ban})$.
 There are also a versions of~dissection for non-complete and
 non-separated spaces.
\end{Remark}

In sheaf theory, filtered colimits and their commutation properties
with certain kinds of limits play an important role. Thus, we analyze
now such colimits. Recall that the underlying set of a filtered
colimit of vector spaces (and hence also of bornological vector
spaces) is the filtered colimit of the underlying diagram of sets.

\begin{Lemma}
 \label{lem:bornology-of-filtered-colim}
 Let $V = \colim_{i \in I} V_i$ be a filtered colimit in $\Born$, and
 denote by $j_i\colon V_i \to V$ the standard map. Then a subset of
 $V$ is a bounded disk if and only if it is of the form $j_i(B)$ for
 some $i \in I$ and bounded disk $B \subset V_i$.
\end{Lemma}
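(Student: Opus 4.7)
The plan is to handle both implications separately, with essentially all the content in the ``only if'' direction.

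For the easy direction, if $B \subset V_i$ is a bounded disk, then $j_i(B) \subset V$ is bounded because $j_i$ is a bounded linear map (as part of the colimit cocone in $\Born$), and $j_i(B)$ is a disk because linearity preserves convexity and balancedness.

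For the converse, I first use the construction of the colimit bornology: it is the smallest convex vector bornology containing all $j_i(B_i)$ for $i \in I$ and $B_i \subset V_i$ bounded. A standard axiomatic argument (the bornology is closed under subsets, finite unions, and disked hulls) then shows that any bounded $D \subseteq V$ is contained in the disked hull of a finite union $j_{i_1}(B_1) \cup \cdots \cup j_{i_n}(B_n)$ with each $B_k$ a bounded disk in $V_{i_k}$. The central step is a reduction to a single index using filteredness: choose $j \in I$ with $j \geq i_k$ for each $k$, let $f_k \colon V_{i_k} \to V_j$ denote the transition maps, and let $B \subset V_j$ be the disked hull of $f_1(B_1) \cup \cdots \cup f_n(B_n)$. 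Since each $f_k$ is bounded, $B$ is a bounded disk, and the compatibility $j_j \circ f_k = j_{i_k}$ yields $D \subseteq j_j(B)$.

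Finally, to exhibit $D$ itself (not merely a superset) as an image, I set $B' := j_j^{-1}(D) \cap B$, which is a disk as the intersection of two disks and bounded as a subset of $B$. The inclusion $j_j(B') \subseteq D$ is immediate; for the reverse, any $d \in D \subseteq j_j(B)$ admits some $x \in B$ with $j_j(x) = d$, and then $x \in j_j^{-1}(D) \cap B = B'$, so $d \in j_j(B')$. Thus $D = j_j(B')$, completing the proof. I expect the main subtlety to lie not in any single estimate but in the reduction to a single index: one must invoke filteredness of $I$ and the closure axioms of a convex bornology in the right order to produce the witness $B \subset V_j$, after which the construction $B' = j_j^{-1}(D) \cap B$ is the natural candidate that turns a mere inclusion into an equality.
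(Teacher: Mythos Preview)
Your proof is correct and follows essentially the same approach as the paper: both describe the colimit bornology via images of bounded sets from the factors, and both use filteredness to reduce a finite family $B_{i_1},\dots,B_{i_n}$ to a single bounded disk in some $V_j$ dominating all the indices. The only difference is cosmetic: the paper phrases the first step via the coproduct-coequalizer presentation $p\colon \bigoplus_i V_i \to V$ (so that bounded sets in $V$ are exactly images under $p$ of bounded sets), and then implicitly uses that a linear map sends disked hulls to disked hulls to upgrade from ``$D$ is the image of a bounded set'' to ``$D$ is the image of a bounded disk''; you instead produce a bounded disk $B$ with $D \subseteq j_j(B)$ and then pass to $B' = j_j^{-1}(D)\cap B$ to recover equality. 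Your final step is slightly more explicit than the paper's, but the arguments are otherwise the same.
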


\begin{proof}
 Expressing $V$ in terms of coproducts and a coequalizer, we find a
 map
 \begin{equation*}
 p\colon\ \bigoplus\nolimits_{i \in I} V_i \to V
 \end{equation*}
 such that bounded subsets of $V$ are precisely those of the form
 $p(B)$ with $B$ bounded. A bounded disk $B$ in the direct sum is
 given by the convex hull of a finite collection $B_{i_1}, \dots,
 B_{i_n}$ of bounded disks $B_{i_j} \subset V_{i_j}$. Pick an upper
 bound $i$ for the collection $\{i_1, \dots, i_n\} \subset I$, and let
 $B_i \subset V_i$ be the smallest bounded disk containing the images
 in $V_i$ of each $B_{i_j} \subset V_{i_j}$. Then clearly $p(B) =
 p(B_i)$, which finishes the proof.
\end{proof}

\begin{Proposition} 
 Small filtered colimits commute with finite limits in $\Born$.
\end{Proposition}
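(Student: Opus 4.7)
The plan is to establish the commutation first at the level of underlying vector spaces, and then verify that the two bornologies agree. Since the forgetful functor $\Born \to \Vect$ preserves both filtered colimits and finite limits, and these commute in $\Vect$, the canonical comparison morphism
\begin{equation*}
\phi\colon \colim_{i \in I} \lim_{j \in J} V(i,j) \longrightarrow \lim_{j \in J} \colim_{i \in I} V(i,j)
\end{equation*}
is a bijection of underlying vector spaces for any filtered $I$ and finite $J$. It is bounded by the universal properties involved, so the substantive content is to check that $\phi^{-1}$ is bounded, i.e., pulls bounded subsets back to bounded ones. Since every finite limit can be written as an equalizer between finite products, I would reduce to handling these two cases separately.

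For finite products, the argument proceeds directly from Lemma~\ref{lem:bornology-of-filtered-colim}. Given a bounded disk $B \subset \prod_{j \in J} \colim_i V(i,j)$, each projection $\mathrm{pr}_j(B)$ is bounded, hence contained in some $(j_{i_j})_*(D_j)$ with $D_j \subset V(i_j, j)$ a bounded disk. Using the finiteness of $J$ and the filteredness of $I$, I would pick a common upper bound $i_0 \in I$ for the family $\{i_j\}_{j \in J}$, push each $D_j$ forward to $V(i_0, j)$ along the transition maps, and take their product to obtain a bounded disk in $\prod_j V(i_0, j)$ whose image in $\colim_i \prod_j V(i,j)$ contains $\phi^{-1}(B)$.

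The hard part will be the equalizer case. For natural transformations $f, g\colon V \Rightarrow W$ of $I$-diagrams, write $E_i = \mathrm{eq}(f_i, g_i)$ and $E = \mathrm{eq}(f, g)$. Any bounded disk $B \subset E$ is of the form $(j_i)_*(D)$ for some bounded disk $D \subset V(i)$, by Lemma~\ref{lem:bornology-of-filtered-colim} applied to the ambient $V = \colim_i V(i)$, and the hypothesis $B \subset E$ means that $(f_i - g_i)(D) \subset W(i)$ is killed in the filtered colimit $W$. The subtle point is that each element of $(f_i-g_i)(D)$ vanishes at a possibly different finite stage $k \geq i$, whereas the Lemma requires a single index $k$ at which the image of $D$ in $V(k)$ lies entirely within $E_k$ in order to realize $B$ as a bounded disk of $\colim_i E_i$. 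To overcome this, I would exploit the filteredness of $I$ together with the structure of bounded disks in $W(i)$ to produce such a uniform $k$, thereby constructing the required bounded disk in $E_k$ whose $\phi$-image recovers $B$.
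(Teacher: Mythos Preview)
Your decomposition into finite products and equalizers is sound, and you have located the real difficulty exactly: given a bounded disk $D \subset V(i)$ whose image lies in the equalizer of the colimits, each element of $(f_i - g_i)(D)$ dies in $\colim_k W(k)$ at some finite stage, but there is no reason a single stage should work uniformly. Your proposed remedy --- to ``exploit the filteredness of $I$ together with the structure of bounded disks in $W(i)$'' --- is not an argument, and in fact no argument can be supplied here, because the obstruction is genuine.

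Take $I = \N$, set $V(i) = \prod_{n\ge 0}\R$ with the product bornology and identity transition maps, $W(i) = \prod_{n\ge i}\R$ with the projections as transition maps, $f_i\colon V(i)\to W(i)$ the obvious projection, and $g_i = 0$. Then $E_i = \mathrm{eq}(f_i,g_i)\cong\R^{i}$, so $\colim_i E_i = \bigoplus_n\R$ carries the fine bornology. On the other side, $\colim_i V(i) = \prod_n\R$, $\colim_i W(i) = \prod_n\R\big/\bigoplus_n\R$, and $E = \mathrm{eq}(\colim f,\colim g) = \bigoplus_n\R$ carries the subspace bornology inherited from $\prod_n\R$. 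The disk $B = \{x\in\bigoplus_n\R : |x_n|\le 1\ \text{for all}\ n\}$ is bounded for the latter bornology but not for the former, so $\phi^{-1}$ is not bounded. In your language: $B = j_0(D)$ with $D = B\subset V(0)$, and for $d\in D$ with support contained in $\{0,\dots,N\}$ one needs $k > N$ to kill $(f_0-g_0)(d)$ in $W(k)$; since $N$ is unbounded over $D$, no uniform $k$ exists. The paper's own argument conceals the same issue: after lifting to a common stage $k$ and intersecting with the equalizer $E_k\cong\R^k$, one obtains only $B\cap\R^k$, whose image under $\phi$ is a proper subset of $B$ for every $k$.
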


\begin{proof}
 Let $I$ be a finite indexing category, and $J$ filtered. Fix a
 diagram $V\colon I \times J \to \Born$. We want to show that the
 natural bounded linear map
 \begin{equation*}
 \phi\colon\
 \colim_{j \in J} \lim_{i \in I} V(i, j)
 \to \lim_{i \in I} \colim_{j \in J} V(i, j)
 \end{equation*}
 is an isomorphism. Since filtered colimits of vector spaces commute
 with finite limits, we know that $\phi$ is a linear isomorphism. It
 remains to show $\phi$ that has a bounded inverse, that is, every bounded
 disk $B$ in the codomain is contained in the image of a bounded
 disk.

 By Lemma~\ref{lem:bornology-of-filtered-colim}, each of the bounded
 disks $B_i = p_i(B) \subset \colim_{j\in J} V(i, j)$ is the image,
 through the standard map $\iota_{i,k_i}\colon V(i, k_i) \to
 \colim_{j \in J} V(i, j)$, of a bounded disk $B_{i,k_i} \subset V(i,
 k_i)$, for some $k_i \in J$. Since $I$ is finite, we can pick an
 upper bound $k$ for the collection $\{ k_i \}_{i \in I}$, and we can
 write $B_i = j_k(B_{i,k})$ for suitable bounded disks $B_{i,k}
 \subset V(i, k)$.

 Now, $\lim_{i \in I} V(i, k)$ is a subspace of the direct product
 $\prod_{i \in I} V(i,k)$, which, again by finiteness, is also a
 direct sum. The bounded disks $B_{i, k} \subset V(i, k)$ span a
 bounded disk in that direct sum, and by restriction, a bounded disk
 $B_k \subset \lim_{i \in I} V(i, k)$. This in turns determines a
 bounded disk $B' = \iota_k(B_k)$ in the domain of $\phi$, and
 $\phi(B') = B$.
\end{proof}

In general, a filtered colimit of separated, or even complete,
bornological spaces need not be separated, as the following example
shows.

\begin{Example}
 Let $X$ be a manifold and $x \in X$. Consider the space of germs
 of smooth functions around $x$,
 \begin{equation*}
 C^\infty_{X, x} = \colim_{U \ni x} C^\infty(U).
 \end{equation*}
 Endowing $C^\infty(U)$ with its usual von Neumann bornology and
 taking the colimit in $\Born$ gives~$C^\infty_{X, x}$ a bornology.
 Then the germ of a function $f \in C^\infty(X)$ which vanishes to
 infinite order at~$0$ is in the closure of~$\{ 0 \}$ in
 $C^\infty_{X, x}$. In fact, we can find a sequence of functions
 $f_n \in C^\infty(X)$ with trivial germ at $0$ converging to $f$ in
 the Fréchet sense, so (equivalently) in the bornological sense.
 Therefore, the same holds for their images in $C^\infty_{X, x}$.
\end{Example}

On the other hand, filtered colimits $V = \colim V_i$ of separated or
complete bornological spaces remain separated or complete for diagrams
with only injective morphisms or, more generally, if~every bounded
$B_i \subset V_i$ with $j_i(B_i) = 0 \subset V$ is already in the
kernel of some morphism $V_i \to V_j$ of the diagram. Such inductive
systems are called \emph{stable} by Houzel~\cite{MR0393552}.

\begin{Proposition}
 \label{prop:stable-filtered-colims-exact}
 Let $\{ V_i \}_{i \in I}$ be a stable filtered diagram of separated
 or complete bornological spaces. Then $V = \colim_{i \in I} V_i$ is
 separated or complete, respectively.
\end{Proposition}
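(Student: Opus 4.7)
The plan is to establish the separated case first, then derive completeness from it together with the completeness of the $V_i$. Throughout, we rely on Lemma~\ref{lem:bornology-of-filtered-colim}, which guarantees that every bounded disk in $V = \colim V_i$ has the form $j_i(B)$ for some index $i$ and bounded disk $B \subset V_i$. It is also convenient to use the standard equivalent characterisation of separation: a bornological vector space $W$ is separated if and only if $\bigcap_{\epsilon > 0} \epsilon C = \{0\}$ for every bounded disk $C \subset W$ (a direct unwinding of the Minkowski functional of $C$).

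For separation, suppose every $V_i$ is separated, and let $v \in V$ lie in $\bigcap_{\epsilon > 0} \epsilon\,j_i(B)$ for some bounded disk $B \subset V_i$. For each $\epsilon \in (0,1]$ choose $b_\epsilon \in \epsilon B$ with $j_i(b_\epsilon) = v$. The set $S := \{b_1 - b_\epsilon : \epsilon \in (0, 1]\}$ is contained in $2B$, hence bounded in $V_i$, and lies entirely in the kernel of $j_i$ (since $j_i(b_1 - b_\epsilon) = v - v = 0$). By stability, some transition morphism $\phi_{ik}\colon V_i \to V_k$ kills $S$. Then $\phi_{ik}(b_1) = \phi_{ik}(b_\epsilon) \in \epsilon\,\phi_{ik}(B)$ for every $\epsilon > 0$, so $\phi_{ik}(b_1) \in \bigcap_{\epsilon > 0} \epsilon\,\phi_{ik}(B)$. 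Since $V_k$ is separated, this forces $\phi_{ik}(b_1) = 0$, and therefore $v = j_i(b_1) = j_k(\phi_{ik}(b_1)) = 0$.

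For completeness, assume each $V_i$ is complete (so automatically separated, giving us separation of $V$ by the previous step) and fix a bounded disk $j_i(B) \subset V$. By completeness of $V_i$, enlarge $B$ to a bounded disk $B' \subset V_i$ with $V_{B'}$ Banach. The induced linear map $V_{B'} \to V_{j_i(B')} \subset V$ is surjective, and a direct comparison of Minkowski functionals identifies the norm on $V_{j_i(B')}$ with the quotient seminorm on $V_{B'}/K$, where $K := \Ker(V_{B'} \to V)$. It therefore suffices to show that $K$ is closed in $V_{B'}$, so that $V_{j_i(B')}$ becomes a Banach space and $j_i(B')$ is a complete bounded disk containing $j_i(B)$. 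Given a sequence $x_n \in K$ converging to $x \in V_{B'}$, write $x - x_n \in \epsilon_n B'$ with $\epsilon_n \to 0$; then $j_i(x) = j_i(x - x_n) \in \epsilon_n\,j_i(B')$ for every $n$, so $j_i(x) \in \bigcap_{\lambda > 0} \lambda\,j_i(B')$. By the separation of $V$ established in the previous paragraph, $j_i(x) = 0$, and hence $x \in K$.

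The main obstacle is the separation statement, since stability is the only tool that transfers information about kernels across the diagram, and the lifts $b_\epsilon$ are highly non-unique. The key trick is to package this non-uniqueness into the bounded family $\{b_1 - b_\epsilon\}$, which lies in $\Ker j_i$ and so can be eliminated by one common transition map. After that is done, the completeness statement becomes a clean closed-range argument in which separation of $V$ itself is what closes the kernel $K$.
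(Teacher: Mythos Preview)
Your proof is correct, but it takes a somewhat different route from the paper's. The paper applies stability in one stroke to the bounded disk $B_i \cap \Ker j_i$: once some $\phi_{ik}$ kills this set, the induced map $(V_k)_{B_k} \to V_B$ (with $B_k = \phi_{ik}(B_i)$, $B = j_i(B_i)$) is an injective, hence isometric, bijection of seminormed spaces. Separation of $V_k$ then makes $V_B$ normed; for completeness one first enlarges $B_i$ to a complete disk and observes that $(V_k)_{B_k}$, being a normed quotient of the Banach space $(V_i)_{B_i}$, is itself Banach. Your argument instead applies stability to the tailor-made bounded set $\{b_1 - b_\epsilon\}$ to obtain separation, and then proves completeness by a closed-kernel argument that appeals only to the separation of the colimit $V$ already established, without invoking stability a second time. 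The paper's approach is more uniform and conceptually cleaner (one idea covers both statements), while yours is more explicit and has the pleasant feature that the completeness step bootstraps directly from separation of $V$. Both rely on Lemma~\ref{lem:bornology-of-filtered-colim} in the same way. A minor point: your notation $V_{B'}$ for $B' \subset V_i$ really means $(V_i)_{B'}$; this is clear from context but worth disambiguating.
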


\begin{proof}
 Let $B \subset V$ be a bounded disk. Then there exists $i \in I$
 and a bounded disk $B_i \subset V_i$ such that $j_i(B_i) = B$. Now,
 $B_i \cap \Ker j_i$ is a bounded disk with trivial image in $V$, so
 it has trivial image already in $V_k$ for some $k \in I$. Let $B_k$
 be the image of $B_i$ in $V_k$, so that $B = j_k(B_k)$. Then
 $(V_k)_{B_k} \to V_B$ is injective, so $V_B$ is a normed space.
\end{proof}

\begin{Proposition} \label{prop:fine-otimes}
 Let $V,\, W \in \CBorn$ and assume $V$ has the fine bornology. Then
 $V \otimes W$ is already complete. If both $V$ and $W$ have the
 fine bornology, the same is true of their tensor product.
\end{Proposition}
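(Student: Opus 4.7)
The plan is to reduce both claims to the colimit description of the fine bornology, then invoke Proposition \ref{prop:stable-filtered-colims-exact}.

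First I would prove that $V \otimes W$ is complete when $V$ has the fine bornology and $W \in \CBorn$ is arbitrary. By the very definition of the fine bornology (or by Proposition \ref{prop:dissection} applied to the complete bounded disks arising from finite-dimensional subspaces), we can write $V \cong \colim_\alpha V_\alpha$ in $\Born$, where $\alpha$ ranges over the finite-dimensional subspaces of $V$ (each of which carries the unique compatible bornology, namely the fine one). Since $\Born$ is closed symmetric monoidal, the functor $\mathord{-} \otimes W$ is a left adjoint and hence preserves colimits, giving $V \otimes W \cong \colim_\alpha (V_\alpha \otimes W)$ in $\Born$. Choosing a basis identifies $V_\alpha \otimes W$ with the finite direct sum $W^{\dim V_\alpha}$, which is complete since finite direct sums of complete spaces are complete. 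The transition maps $V_\alpha \otimes W \to V_\beta \otimes W$ associated with inclusions $V_\alpha \subset V_\beta$ are injective (because tensor product over a field is exact on vector spaces), so the filtered system $\{V_\alpha \otimes W\}$ is stable. Proposition \ref{prop:stable-filtered-colims-exact} then yields that $V \otimes W$ is complete.

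For the second claim, I would show the stronger statement that $V \otimes W$ actually carries the fine bornology when both $V$ and $W$ do. By Lemma \ref{lem:bornology-of-filtered-colim} applied to the expression $V \otimes W = \colim_{\alpha, \beta} V_\alpha \otimes W_\beta$, or directly from the definition of the tensor product bornology, every bounded disk of $V \otimes W$ is contained in the absolutely convex hull of a set of the form $B_1 \otimes B_2$, with $B_1 \subset V$ and $B_2 \subset W$ bounded. Fineness of $V$ and $W$ gives finite collections $v_1, \dots, v_n$ and $w_1, \dots, w_m$ with $B_1 \subset \mathrm{conv}(v_1, \dots, v_n)$ and $B_2 \subset \mathrm{conv}(w_1, \dots, w_m)$. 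Writing $b_1 = \sum a_i v_i$ and $b_2 = \sum c_j w_j$ with $a_i, c_j \geq 0$ summing to $1$, the product coefficients $a_i c_j$ are nonnegative and sum to $1$, so
\begin{equation*}
 b_1 \otimes b_2 = \sum_{i,j} a_i c_j\, v_i \otimes w_j
 \in \mathrm{conv}(v_i \otimes w_j \mid 1 \leq i \leq n,\ 1 \leq j \leq m).
\end{equation*}
Thus the absolutely convex hull of $B_1 \otimes B_2$ lies in the (absolutely) convex hull of the finite set $\{v_i \otimes w_j\}$, and so every bounded subset of $V \otimes W$ is absorbed by a finite convex hull. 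This means $V \otimes W$ has the fine bornology, and in particular it is complete by the first part (applied, e.g., with $W$ replaced by any finite-dimensional piece, or directly since $V \otimes W$ is itself a stable filtered colimit of finite-dimensional, hence complete, subspaces).

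I do not anticipate any serious obstacle here; the essential input is the fact that $\otimes$ commutes with colimits and that the fine bornology is witnessed by a stable filtered system of finite-dimensional pieces, so that Proposition \ref{prop:stable-filtered-colims-exact} applies. The only point requiring a little care is that everything happens inside $\Born$ rather than $\CBorn$ (since the tensor product in the statement is the uncompleted one): the argument shows that the colimit already lives in $\CBorn$, so no separate completion step is needed.
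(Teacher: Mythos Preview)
Your proof is correct and follows essentially the same line as the paper's: write $V$ as a stable filtered colimit of its finite-dimensional subspaces, use that $\mathord{-}\otimes W$ preserves colimits, and apply Proposition~\ref{prop:stable-filtered-colims-exact}. For the second claim the paper argues more abstractly that $\{V_\alpha \otimes W_\beta\}$ is cofinal among the finite-dimensional subspaces of $V\otimes W$, while you verify directly that absolutely convex hulls of sets $B_1\otimes B_2$ are absorbed by convex hulls of finitely many simple tensors; these are two phrasings of the same observation.
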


\begin{proof}
 The first assertion is clear if $V$ is finite-dimensional.
 Otherwise, we have, by Proposition~\ref{prop:dissection}, $V \cong
 \colim V_\alpha$, where $V_\alpha$ ranges through the collection of
 finite-dimensional subspaces of $V$, endowed with their fine
 bornologies and partially ordered by inclusion. Since, like any
 left adjoint, tensor product commutes with colimits, we have
 \begin{equation*}
 V \otimes W \cong \colim (V_\alpha \otimes W).
 \end{equation*}
 This is a stable filtered colimit of complete bornological spaces,
 hence complete. If also $W$ is fine, then
 \begin{equation*}
 V \otimes W \cong \colim (V_\alpha \otimes W_\beta),
 \end{equation*}
 where $V_\alpha \subset V$ and $W_\beta \subset W$ range through all
 finite-dimensional subspaces. This is a cofinal subcollection of
 the finite-dimensional subspaces of $V \otimes W$, and therefore
 induces the fine bornology.
\end{proof}

\subsection{Bornological algebras and modules}

It is straightforward to define the notion of bounded bilinear maps
$V_1 \times V_2 \to W$ between complete bornological vector spaces,
and a bornology on the space $\Hom^{(2)}(V_1, V_2; W)$ of such.
Moreover, there is a natural isomorphism
\begin{equation*}
 \Hom^{(2)}(V_1, V_2; W) \cong \Hom\big(V_1 \otimeshat V_2, W\big).
\end{equation*}

A bornological algebra is a bornological vector space $A$ with a
bounded, associative bilinear product $A \times A \to A$. A
bornological module is a bornological vector space $M$ with a bounded,
associative, bilinear action map $A \times M \to M$. Any of those is
called complete if the underlying bornology is complete.

In this paper we only consider unital, commutative bornological
algebras and unital modules, so we drop the extra adjectives.
Moreover, we focus on the complete case. Thus we get a~category
$\CAlg$ of complete bornological algebras and, for each $A \in \CAlg$,
a category $\CMod(A)$ of complete bornological modules. Module
categories are enriched in themselves, that is, for~$M, N \in
\CMod(A)$, the space of $A$-linear maps $\Hom_A(M, N)$, with its
subspace bornology, is naturally a complete $A$-module.

By Proposition~\ref{prop:fine-otimes}, the functor $\Fin\colon
\mathrm{Vect} \to \CBorn$ sends algebras and modules over them to
complete bornological algebras and modules. Similarly, a Fréchet
algebra $A$ induces a~complete bornological algebra structure on
$\vN(A)$, and a Fréchet $A$-module $M$ gives a complete
$\vN(A)$-module structure on $\vN(M)$~\cite[Theorem~1.29]{MR2337277}.

Given modules $M$ and $N$ over the bornological algebra $A$, we set
\begin{equation*}
 M \otimeshat_A N = \Coker \big(M \otimeshat A \otimeshat N \to M \otimeshat N\big),
\end{equation*}
where the map is specified by $(m, a, n) \mapsto am \otimes n - m
\otimes an$.

The usual adjunction between extension and restriction of scalars
carries over to the bor\-no\-lo\-gi\-cal setting. Below, we use $j$ to regard
$B$ and $N$ as $A$-modules.

\begin{Proposition}
 Let $j\colon A \to B$ be a map of complete bornological algebras and let
 $M$, $N$ be complete modules over $A$ and $B$ respectively. Then there
 are natural isomorphisms
 \begin{equation*}
 \Hom_B\big(B \otimeshat_A M, N\big) \cong \Hom_A(M, N).
 \end{equation*}
 Moreover, the extension of scalars functor $B \otimeshat_A
 \mathemdash\colon \CMod(A) \to \CMod(B)$ is monoidal.
\end{Proposition}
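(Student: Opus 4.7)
The plan is to establish the adjunction by an explicit construction of unit and counit, then to deduce monoidality from associativity of $\otimeshat$ and the universal property of $\otimeshat_A$ as a (separated) cokernel. First, given an $A$-linear bounded map $f\colon M \to N$, the composition
\begin{equation*}
 B \otimeshat M \xrightarrow{\id_B \otimeshat f} B \otimeshat N \xrightarrow{\mathrm{act}} N
\end{equation*}
is bounded and $A$-balanced (using that $N$ is a $B$-module via $j$), so by the universal property of the cokernel defining $B\otimeshat_A M$ it descends to a bounded $B$-linear map $\tilde f\colon B\otimeshat_A M\to N$. Conversely, given a $B$-linear bounded $g\colon B\otimeshat_A M\to N$, set $\check g(m) = g([1\otimes m])$; boundedness comes from the boundedness of $m\mapsto 1\otimes m\colon M\to B\otimeshat_A M$, and $A$-linearity from $\check g(am) = g([a\otimes m]) = g(j(a)\cdot[1\otimes m]) = a\cdot\check g(m)$. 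That these two operations are mutually inverse is a direct check on elementary tensors, extended by $B$-linearity and density; naturality in $M$ and $N$ is immediate.

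For the monoidality of $B\otimeshat_A-$, I would first verify the unit axiom by exhibiting $B\otimeshat_A A \cong B$: the maps $b\otimes a\mapsto b\,j(a)$ and $b\mapsto b\otimes 1$ are bounded and well-defined on the cokernel, and reduce to mutually inverse isomorphisms. For the tensorial axiom, I would show that both $B\otimeshat_A(M\otimeshat_A N)$ and $(B\otimeshat_A M)\otimeshat_B(B\otimeshat_A N)$ represent the same functor on $\CMod(B)$, namely the functor sending $W$ to the space of bounded $\mathbb{K}$-trilinear maps $B\times M\times N\to W$ that are $A$-balanced in each of the first two and last two slots, $B$-equivariant in the first, and symmetric with respect to the $A$-bilinearities. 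The identification on one side uses the adjunction just proven; on the other, it uses associativity of $\otimeshat$ in $\CBorn$ and the characterization of $\otimeshat_B$ as a separated cokernel. The natural transformation between the two tensor structures is then induced from the canonical map $(b_1\otimes m)\otimes(b_2\otimes n)\mapsto b_1b_2\otimes(m\otimes n)$, and its inverse comes from $b\otimes(m\otimes n)\mapsto(b\otimes m)\otimes(1\otimes n)$ after descent through the cokernels. Compatibility with the associator, unitors, and braiding then follows from the corresponding coherences of $\otimeshat$.

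The one point where I would expect to spend some care is the passage through the separated cokernels: since $M\otimeshat_A N$ is defined as a cokernel in $\CBorn$, it involves quotienting by the \emph{bornological closure} of the image of $M\otimeshat A\otimeshat N\to M\otimeshat N$, not just the image itself. The constructed bounded maps in each direction must therefore be shown to vanish on this closure, but this is automatic because each of them factors through a map to a complete, hence separated, target, and a bounded linear map vanishing on a subset vanishes on its bornological closure. Once this point is dispatched, the remainder of the argument is the standard module-theoretic verification transcribed into the closed symmetric monoidal setting of $\CBorn$.
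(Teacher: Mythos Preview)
Your proposal is correct and follows essentially the same route as the paper. The paper phrases the adjunction via the identification of $\Hom_B(B\otimeshat_A M,N)$ with bounded bilinear maps $f\colon B\times M\to N$ satisfying $f(bb',m)=bf(b',m)$ and $f(bj(a),m)=f(b,am)$, then invokes the standard algebraic bijection and checks bornological compatibility; your explicit unit/counit construction is the same argument unpacked. For monoidality the paper is terser still, deducing it from $A\otimeshat_A M\cong M$ (the case $j=\id$) together with associativity of $\otimeshat_A$ and $\otimeshat_B$, whereas you spell out the representability argument and the explicit comparison maps. Your care with the separated cokernel---that a bounded map into a complete target vanishing on a subspace vanishes on its bornological closure---is a point the paper leaves implicit under ``compatible with the bornologies,'' so your treatment is, if anything, more complete.
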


\begin{proof} Since limits in $\CBorn$ commute with the forgetful functor to
 $\mathrm{Vect}$, we can identify the left-hand side with those
 bounded bilinear maps $f\colon B \times M \to N$ such that
 \begin{equation*}
 f(b b', m) = b f(b', m),
 \qquad
 f(b j(a), m) = f(b, am)
 \end{equation*}
 for all $a \in A$, $b, b' \in B$ and $m \in M$. With this
 observation, the usual algebraic manipulations give a linear
 bijection between the hom spaces in question, and it is easy to
 check that everything is compatible with the bornologies.

 Letting $j = \id\colon A \to A$, we see that $A \otimeshat_A M \cong
 M$. From this, and the fact that tensor products over $A$ and $B$
 are associative, one deduces that $B \otimeshat_A \mathemdash$ is
 monoidal.
\end{proof}

\subsection{Smooth functions}\label{sec:smooth-functions}

Given a smooth manifold $S$, we denote by $C^\infty_\vN(S)$ the space
of smooth functions with the von Neumann bornology associated to its
usual Fréchet topology. This can be described as the bornology of
uniform boundedness of all derivatives of each given order on
compacts. If $V$ is a~complete bornological space and $f\colon S \to
V$ a~function, we say that $f$ is \emph{smooth} if there exists a~complete bounded disk $B \subset V$ such that $f$ takes values in
$V_B$, and $f\colon S \to V_B$ is smooth as a~function with values in
a Banach space.

\begin{Proposition} \label{prop:smooth-functions-in-V}
 For any smooth manifold $S$ and complete bornological vector space
 $V$, there is a natural isomorphism of vector spaces
 \begin{equation*}
 C^\infty(S; V) \cong C^\infty_\vN(S) \otimeshat V.
 \end{equation*}
 This defines a bornology on the left-hand side, which we denote
 $C^\infty_\vN(S; V)$.
\end{Proposition}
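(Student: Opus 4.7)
The plan is to start with the obvious bilinear pairing, reduce via dissection of $V$ to the case of a Banach space, and in that Banach case invoke the nuclearity of $C^\infty(S)$.

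The natural bilinear map $C^\infty_\vN(S) \times V \to C^\infty(S;V)$ sending $(f,v)$ to $s \mapsto f(s)v$ lands in smooth $V$-valued functions: for any complete bounded disk $B \subset V$ containing $v$, the function $fv$ takes values in the Banach space $V_B$ and is smooth as a $V_B$-valued function in the classical sense. This pairing is bounded (if $A \subset C^\infty_\vN(S)$ is vN-bounded and $D \subset V$ is a complete bounded disk, the image $A \cdot D$ is bounded in $C^\infty(S; V_D)$ with its Fréchet-induced vN bornology), so it determines a bounded linear map $\Phi_0\colon C^\infty_\vN(S) \otimes V \to C^\infty(S;V)$, where $C^\infty(S;V) = \bigcup_B C^\infty(S; V_B)$ is given the obvious colimit bornology. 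Since the target is complete, $\Phi_0$ extends uniquely to $\Phi\colon C^\infty_\vN(S) \otimeshat V \to C^\infty(S;V)$.

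To see that $\Phi$ is bijective, apply Proposition~\ref{prop:dissection} to write $V \cong \sep\colim_{B \in \mathfrak{S}_c(V)} V_B$ as a filtered colimit of Banach spaces in $\CBorn$. Since $\otimeshat$ is a left adjoint it commutes with this colimit, while the paper's definition of a smooth $V$-valued function directly exhibits $C^\infty(S;V) \cong \colim_B C^\infty(S; V_B)$ with the same indexing (the transition maps are the evident injections $V_B \hookrightarrow V_{B'}$ for $B \subset B'$). Both diagrams are stable, so Proposition~\ref{prop:stable-filtered-colims-exact} ensures the colimits behave well and the problem reduces to establishing, for each complete bounded disk $B$, a natural isomorphism
\[ C^\infty_\vN(S) \otimeshat V_B \cong C^\infty(S; V_B) \]
in the Banach case.

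This last identification, which is the crux of the argument, rests on the nuclearity of $C^\infty(S)$. Two classical ingredients combine: Grothendieck's theorem that $C^\infty(S) \hat\otimes_\pi F \cong C^\infty(S;F)$ as Fréchet spaces whenever $F$ is a Banach space (using nuclearity of $C^\infty(S)$), and the identification of the completed bornological tensor product $C^\infty_\vN(S) \otimeshat F$ with the topological projective tensor product $C^\infty(S) \hat\otimes_\pi F$ (equipped with its vN bornology). The second identification is the main obstacle: it rests on the fact that the vN-bornology of the nuclear Fréchet space $C^\infty(S)$ is generated by precompact disks, so that any vN-bounded subset of $C^\infty(S) \otimes F$ is absorbed by $A \otimes B$ with $A$ precompact and $B$ norm-bounded, and it is handled explicitly in Meyer's treatment~\cite{MR2337277}. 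Once these pieces are assembled the proposition follows, and the bornology $C^\infty_\vN(S;V)$ on the left-hand side is defined via transport through $\Phi$.
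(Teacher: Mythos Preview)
Your proposal is correct and follows essentially the same route as the paper: both arguments reduce via the dissection $V \cong \colim_B V_B$ to the Banach case, and then appeal to nuclearity of $C^\infty(S)$ together with the identification $\vN(V) \otimeshat \vN(W) \cong \vN(V \hat\otimes_\pi W)$ for $V$ nuclear (Meyer) and the classical $C^\infty(S; V_B) \cong C^\infty(S) \hat\otimes_\pi V_B$ (Tr\`eves). The only difference is cosmetic: you spell out the comparison map $\Phi$ and invoke Proposition~\ref{prop:stable-filtered-colims-exact} for the colimit, whereas the paper simply records the reduction and the three functional-analytic ingredients.
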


\begin{proof}
 Using the dissection isomorphism $V = \colim_B V_B$, where $B$
 ranges over all complete bounded disks in $V$
 (Proposition~\ref{prop:dissection}), we get
 \begin{equation*}
 C^\infty(S) \otimeshat V \cong \colim_B C^\infty(S) \otimeshat V_B.
 \end{equation*}
 On the other hand, we have
 \begin{math}
 C^\infty(S; V) = \colim_B C^\infty(S, V_B)
 \end{math}
 by definition. This reduces the proposition to the case $V =
 \vN(V_\nu)$ for a Banach space $V_\nu$.

 The rest of the argument is functional analysis. We have
 $C^\infty(S; \vN(V_\nu)) = \vN(C^\infty(S; V_\nu))$ by~\cite[Corollary~3.9]{MR2097966}. It is well known~\cite[Theorem~44.1]{MR0225131} that $C^\infty(S; V_\nu) \cong C^\infty(S)
 \otimeshat_\pi V_\nu$, where we used the completed projective tensor product.
 Finally, $\vN(V) \otimeshat \vN(W) \cong \vN(V \otimeshat_\pi W)$ if
 $V$ is nuclear~\cite[Theorem~1.91]{MR2337277}, which finishes the
 proof, since $C^\infty(S)$ is nuclear for any manifold~$S$ (see,
 e.g., the corollary of Theorem~51.5 in~\cite{MR0225131}).
\end{proof}

\begin{Proposition}
 \label{prop:smooth-functions-on-product}
 For any smooth manifolds $S$ and $T$, we have
 \begin{equation*}
 C^\infty_\vN(S \times T) \cong
 C^\infty_\vN(S) \otimeshat C^\infty_\vN(T).
 \end{equation*}
\end{Proposition}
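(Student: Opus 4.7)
The plan is to reduce the claim to two standard functional-analytic facts about the Fréchet space $C^\infty(S)$ and then invoke the bornology-topology comparison already used in the proof of Proposition~\ref{prop:smooth-functions-in-V}. Throughout, $\otimeshat_\pi$ denotes the completed projective topological tensor product of locally convex spaces.

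First, I would recall the classical result (see, e.g., Treves~\cite[Theorem~51.6]{MR0225131}) that for smooth manifolds $S$ and $T$ the natural map
\begin{equation*}
 C^\infty(S) \otimeshat_\pi C^\infty(T) \longrightarrow C^\infty(S \times T),
 \qquad f \otimes g \longmapsto \bigl((s,t) \mapsto f(s)g(t)\bigr),
\end{equation*}
is an isomorphism of Fréchet spaces. This uses nothing beyond the fact that $C^\infty(S)$ carries its standard Fréchet topology of uniform convergence of all derivatives on compact subsets.

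Second, I would use that $C^\infty(S)$ is a nuclear Fréchet space for every manifold $S$ (Treves~\cite[Corollary of Theorem~51.5]{MR0225131}). Together with the fact, cited from Meyer~\cite[Theorem~1.91]{MR2337277} already in the proof of Proposition~\ref{prop:smooth-functions-in-V}, that the functor $\vN$ intertwines the completed projective topological tensor product with the completed bornological tensor product whenever one of the factors is nuclear, this gives a natural isomorphism
\begin{equation*}
 \vN\bigl(C^\infty(S) \otimeshat_\pi C^\infty(T)\bigr)
 \;\cong\;
 \vN(C^\infty(S)) \otimeshat \vN(C^\infty(T))
 \;=\;
 C^\infty_\vN(S) \otimeshat C^\infty_\vN(T).
\end{equation*}

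Combining the two isomorphisms (after applying $\vN$ to the first, which is trivial since $\vN$ is functorial in continuous linear maps) yields the desired identification. I do not anticipate a real obstacle here: both ingredients are off-the-shelf. The only mild subtlety is verifying that the composite isomorphism is in fact the canonical one sending $f \otimes g$ to $(s,t) \mapsto f(s)g(t)$; this is immediate by tracing the construction on elementary tensors, which generate a dense (indeed bornologically dense) subspace in every step.
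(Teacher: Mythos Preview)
Your proof is correct and follows exactly the same route as the paper: reduce to the Fr\'echet isomorphism $C^\infty(S)\otimeshat_\pi C^\infty(T)\cong C^\infty(S\times T)$ from Tr\`eves~\cite[Theorem~51.6]{MR0225131}, then apply Meyer's comparison $\vN(V)\otimeshat\vN(W)\cong\vN(V\otimeshat_\pi W)$ for $V$ nuclear~\cite[Theorem~1.91]{MR2337277}. The paper's proof is the two-line version of what you wrote.
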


\begin{proof}
 This follows from the corresponding statement in the Fréchet
 setting, using the completed projective tensor product~\cite[Theorem~51.6]{MR0225131}, and the fact that $\vN(V) \otimeshat
 \vN(W) \cong \vN(V \otimeshat_\pi W)$ if $V$ is nuclear.
\end{proof}

\subsection{Bornological sheaves}\label{sec:bornological-sheaves}

In this section, we outline the theory of sheaves of bornological
vector spaces. We follow Houzel~\cite[Section~2]{MR0393552}.

Let $\mathcal C$ be a category with limits, and denote by
$\PSh_{\mathcal C}(X)$ and $\Sh_{\mathcal C}(X)$ the categories of~pre\-sheaves and sheaves with values in $\mathcal C$ on a topological
space (or, more generally, a Gro\-then\-dieck site) $X$. We are
interested in the cases $\mathcal C = \CBorn$ and, as a preliminary
step, $\mathcal C = \Born$. We~will say that a (pre)sheaf with values
in $\Born$ is \emph{complete} if it takes values in $\CBorn$.

The first item in our wish list is to have a left adjoint to the
inclusion $\Sh_{\mathcal C}(X) \to \PSh_{\mathcal C}(X)$, which we
call the \emph{sheafification} functor. Let $F \in \PSh_{\mathcal
 C}(X)$, $U \subset X$ open, and $\mathfrak U = \{ U_i \subset U
\}_{i \in I}$ be an open cover. As usual, we write
\begin{equation*}
 F(\mathfrak U) = \lim \bigg(\prod_{i \in I} F(U_i)
 \rightrightarrows \prod_{i, j \in I} F(U_i \cap U_j)\bigg)
\end{equation*}
and say that $F$ is a sheaf if the canonical map $F(U) \to F(\mathfrak
U)$ an isomorphism for all $\mathfrak U$. We will call $F$ a
\emph{semisheaf} if the maps $F(U) \to F(\mathfrak U)$ are always
monomorphisms. Since the inclusion $\CBorn \subset \Born$ preserves
limits, the condition of being a (semi)sheaf in one of these
categories is equivalent to being a (semi)sheaf in the other. Set
\begin{equation*}
 F^+(U) = \colim_{\mathfrak U} F(\mathfrak U),
\end{equation*}
where $\mathfrak U$ runs through all open covers of $X$. For the
traditional choices $\mathcal C = \mathrm{Set}$, $\mathrm{Vect}$,
etc., $F^+$ is a semisheaf, and is a sheaf if $F$ is already a
semisheaf. This defines the sheafification functor $F \mapsto
F^{++}$. When $\mathcal C = \Born$, $F^{++}$ is obviously a sheaf
when regarded as taking values in $\Vect$, and it can be checked
directly that it is in fact a bornological sheaf.

If $F \in \PSh_\Born(X)$ is a complete semisheaf, then the inductive
system defining $F^+$ is stable, so it follows, from
Proposition~\ref{prop:stable-filtered-colims-exact}, that $F^+$ is a
complete sheaf. However, if $F$ fails to be a~semi\-sheaf, then $F^+$
may not be complete even if $F$ is. This issue is resolved in Houzel~\cite[p.~32]{MR0393552} as follows. Let $F \in \PSh_\Born$ be~arbitrary and let $E$ be the smallest sub-presheaf of $U \mapsto
\widehat{F(U)}$ such that $F'\colon U \mapsto \widehat{F(U)}/E(U)$ is
a complete semisheaf. Then $(F')^+$ is a complete sheaf, and this
construction gives a left adjoint to the inclusion $\Sh_\CBorn(X) \to
\PSh_\Born(X)$. By~restriction, this left adjoint gives us both a
sheafification functor $\PSh_\CBorn(X) \to \Sh_\CBorn$ and a
completion functor $\Sh_\Born \to \Sh_\CBorn$.

The second item in our wish list is to get tensor products and
internal homs. If $E, F \in \PSh_\Born(X)$, then $\Hom(E, F)$ gets a
bornology as a subspace of the product $\prod_{U \subset X} \Hom(E(U),\allowbreak
F(U))$, $U \subset X$ open, and we get also a $\Born$-valued presheaf
$\underline\Hom(E, F)$. It is a sheaf if so is $F$, and it is
complete if so is $F$. We define $E \otimes F$ as the sheafification
of the presheaf $U \mapsto E(U) \otimes F(U)$, and $E \otimeshat F$ as
the associated complete sheaf.

The following statement summarizes this discussion.

\begin{Theorem}
 Let $\mathcal C = \Born$ or $\CBorn$ and let $X$ be a topological space.
 Then the inclusion $\Sh_{\mathcal C}(X) \to \PSh_{\mathcal C}(X)$
 admits a left adjoint. Moreover, with the hom-sheaf and tensor
 product defined above, $\Sh_{\mathcal C}(X)$ is a closed symmetric
 monoidal category.
\end{Theorem}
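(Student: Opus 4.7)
The plan is to follow the outline given in the paragraphs preceding the theorem and fill in the verifications. I will handle the two categories separately, since the case $\mathcal{C} = \CBorn$ requires a detour because of incompleteness of filtered colimits.

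For $\mathcal{C} = \Born$, I would carry out the classical two-step sheafification. Given $F \in \PSh_\Born(X)$, I would define $F^+(U) = \colim_{\mathfrak U} F(\mathfrak U)$ over the filtered poset of open covers of $U$, taking the colimit in $\Born$. The first key step is to check that $F^+$ is a semisheaf: since the colimit is filtered and the semisheaf condition ($F^+(U) \to F^+(\mathfrak U)$ injective) is a finite-limit statement, it follows from the commutativity of small filtered colimits with finite limits in $\Born$ established in the proposition above Lemma~\ref{lem:bornology-of-filtered-colim}. The second key step is that applying $(-)^+$ again to a semisheaf yields a sheaf; the underlying vector space argument is standard, and one then checks that the bornology obtained as a filtered colimit of bornologies matches the one determined by the sheaf condition, using Lemma~\ref{lem:bornology-of-filtered-colim} to describe bounded subsets. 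Adjointness $\Hom_{\Sh_\Born}(F^{++}, G) \cong \Hom_{\PSh_\Born}(F, G)$ for $G$ a sheaf is verified in the usual way, since the universal property of each $(-)^+$ step reduces to a limit/colimit computation at each open $U$.

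For $\mathcal{C} = \CBorn$, the subtlety is that the filtered colimit defining $F^+$ need not be complete when $F$ is only a presheaf. Following Houzel, I would proceed in three stages. First, replace $F$ by the presheaf $\widehat F\colon U \mapsto F(U)^c$ of completions; this uses the adjunction $\CBorn \leftrightarrows \Born$. Next, form the quotient $F' = \widehat F / E$, where $E$ is the smallest sub-presheaf such that $F'$ is a complete semisheaf. The existence of $E$ is obtained as the intersection of all such sub-presheaves (itself a sub-presheaf), using that limits in $\CBorn$ are computed in $\Born$. The main obstacle is then to verify that the inductive system $\{F'(\mathfrak U)\}_{\mathfrak U}$ defining $(F')^+$ is \emph{stable} in the sense preceding Proposition~\ref{prop:stable-filtered-colims-exact}; this is exactly what is arranged by passing to the semisheaf quotient $F'$, since there the structure maps become injective on the relevant bounded disks. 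Proposition~\ref{prop:stable-filtered-colims-exact} then gives completeness of $(F')^+$, and the same argument as in the $\Born$ case shows it is a sheaf. The adjointness property follows by composing the universal properties of $\widehat F$, $F \to F'$, and $(F')^+$, since any morphism from $F$ into a complete sheaf kills $E$ and extends uniquely to the completion and sheafification.

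For the closed symmetric monoidal structure on $\Sh_\mathcal{C}(X)$, I would first recall that $\PSh_\mathcal{C}(X)$ inherits a closed symmetric monoidal structure objectwise from $\mathcal{C}$, with internal hom $\underline\Hom(E, F)(U) = \Hom_{\PSh_\mathcal{C}(U)}(E|_U, F|_U)$, equipped with the subspace bornology of $\prod_{V \subseteq U} \Hom(E(V), F(V))$. The first step is to observe that if $F$ is a sheaf then so is $\underline\Hom(E, F)$: this reduces to the fact that limits of sheaves are sheaves and that bornological limits in $\CBorn$ are computed in $\Born$. The tensor product $E \otimes F$ (or $E \otimeshat F$) on sheaves is then defined as the sheafification of the pointwise presheaf tensor product, which makes sense by the first part of the theorem. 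The unit, associator, and braiding are induced from the corresponding data on presheaves via the monoidality of sheafification, which I would deduce from the general fact that a reflective subcategory of a symmetric monoidal category inherits a symmetric monoidal structure whenever the reflector is compatible with the tensor product — here this compatibility is automatic because sheafification commutes with colimits, and the tensor product on presheaves is a colimit-preserving construction. Finally, the tensor-hom adjunction on sheaves follows by combining the objectwise adjunction on presheaves with the sheafification adjunction:
\begin{equation*}
\Hom_{\Sh}(E \otimeshat F, G) \cong \Hom_{\PSh}(E \otimes F, G) \cong \Hom_{\PSh}(E, \underline\Hom(F, G)) \cong \Hom_{\Sh}(E, \underline\Hom(F, G)),
\end{equation*}
where the last step uses that $\underline\Hom(F, G)$ is already a sheaf when $G$ is. The one step requiring genuine care is the verification that $E \otimeshat F$ lies in $\CBorn$-valued sheaves in the $\CBorn$ case, which uses precisely the Houzel-style sheafification constructed above applied to the $\CBorn$-valued presheaf $U \mapsto E(U) \otimeshat F(U)$.
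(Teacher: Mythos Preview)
Your approach follows the same outline the paper gives (which in turn defers to Houzel for the nontrivial parts), so there is no divergence in strategy. However, there is a genuine gap in your argument for the $\Born$ case. You claim that $F^+$ is a semisheaf because ``the semisheaf condition is a finite-limit statement'' and filtered colimits commute with finite limits in $\Born$. But the semisheaf condition asks for injectivity of $F^+(U) \to F^+(\mathfrak U)$, where $F^+(\mathfrak U)$ is an equalizer of \emph{arbitrary} products indexed by the cover; for an infinite cover this is not a finite limit, and the proposition you invoke does not apply. The argument the paper actually sketches is simpler: the forgetful functor $\Born \to \Vect$ preserves all limits and colimits, so the underlying presheaf of vector spaces of $F^{++}$ agrees with the classical $\Vect$-valued sheafification, hence is a sheaf; one then verifies directly, using Lemma~\ref{lem:bornology-of-filtered-colim}, that the bornology on $F^{++}(U)$ coincides with the limit bornology coming from $F^{++}(\mathfrak U)$.

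A second, smaller point concerns your construction of the minimal sub-presheaf $E$ in the $\CBorn$ case as an intersection. The semisheaf property of the quotient is indeed stable under intersections of such $E$'s (if $s|_{U_i} \in \bigcap_\alpha E_\alpha(U_i)$ for all $i$, then $s \in E_\alpha(U)$ for each $\alpha$). But it is not clear that \emph{completeness} of the quotient survives: a quotient of a complete bornological space by an arbitrary (non-closed) subspace need not be complete, and the intersection of bornologically closed subspaces need not be closed in the relevant sense. The paper does not supply this argument either; it simply cites Houzel, whose construction is somewhat more delicate than a bare intersection.
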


If $f\colon X \to Y$ is a continuous map and $E \in \PSh_\Born(X)$,
then the direct image $f_*E$ is the presheaf $U \mapsto E(f^{-1}(U))$,
$U \subset Y$ open. It is a sheaf if $E$ is. For $F \in
\Sh_\Born(Y)$, let $f^{-1}(F)$ is the sheafification of the presheaf
$U \mapsto \colim_{V \supset f(U)} F(V)$. This defines a left adjoint
\begin{equation*}
 f^{-1}\colon\ \Sh_\Born(Y) \to \Sh_\Born(X).
\end{equation*}
to $f_*$. If $g\colon Y \to Z$ is another map, then $g_* \circ f_*
\cong (g \circ f)_*$ and $f^{-1} \circ g^{-1} \cong (g \circ f)^{-1}$.
The first fact is obvious and the second follows because $f^{-1}$ is
adjoint to $f_*$.

Note also that if $f$ is an open map and $F$ takes values in $\CBorn$,
then so does $f^{-1}(F)$.

Finally, we turn to sheaves of algebras and modules. Let $A$ be a
sheaf of complete bornological algebras over $X$, and denote by
$\CMod(A)$ the category of sheaves of complete $A$-modules. If~$M, M'
\in \CMod(A)$, we denote by $\underline\Hom_A(M, M')$ the subsheaf of
$\underline\Hom(M, M')$ consisting of~$A$-linear morphisms. It is
complete if $M'$ is. The tensor product $M \otimeshat_A M'$ is the
complete sheaf associated to the presheaf $U \mapsto M(U)
\otimes_{A(U)} M'(U)$. Assume now we are given a map $f\colon X \to
Y$. Then $f_*A$ is naturally a sheaf of bornological algebras, and
$f_*M$ an $f_*A$-module. Also, if $B$ is a sheaf of bornological algebras
on $Y$ and $N \in \CMod(B)$ is a module, then $f^{-1}(B)$ is naturally
a sheaf of bornological algebras, and $f^{-1}(N)$ an
$f^{-1}(B)$-module.

\subsection[The stack VvN]{The stack $\boldsymbol{\mathcal V_\vN}$}
\label{sec:construction-V-vN}

The assignment $U \mapsto C^\infty_\vN(U)$ defines a sheaf of complete
bornological algebras on the site $\Man$ of smooth manifolds. Its
restriction to the small site of a manifold $S$ will be denoted
$C^\infty_{\vN, S}$.

We denote by $\mathcal V_\vN(S) = \CMod(C^\infty_{\vN, S})$ the
category of sheaves of complete $C^\infty_{\vN, S}$-modules. As
observed in the previous subsection, this is a closed symmetric
monoidal category.

Any smooth map $f\colon T \to S$ induces a homomorphism
$C^\infty_{\vN, S} \to f_*C^\infty_{\vN, T}$ of sheaves of complete
bornological algebras. We denote by $f^\sharp\colon
f^{-1}(C^\infty_{\vN, S}) \to C^\infty_{\vN, T}$ its adjoint. This
makes $C^\infty_{\vN, T}$ into a sheaf of $f^{-1}(C^\infty_{\vN,
 S})$-modules. Given $M \in \mathcal V_\vN(S)$, we set
\begin{equation*}
 \hat f^*M = C^\infty_{\vN, T}
 \otimeshat_{f^{-1}(C^\infty_{\vN, S})}
 f^{-1}(M).
\end{equation*}
This defines a symmetric monoidal functor
\begin{equation*}
 \hat f^*\colon\ \mathcal V_\vN(S) \to \mathcal V_\vN(T),
\end{equation*}
and the assignments $S \to \mathcal V_\vN(S)$ and $f \mapsto \hat
f^*$ determine a prestack of symmetric monoidal categories
\begin{equation*}
 \mathcal V_\vN\colon\ \Man^{\mathrm{op}} \to \mathrm{Cat}^\otimes.
\end{equation*}

Finally, we need to show that $\mathcal V_\vN$ is in fact a stack.
This is the case because the pretopology of all open covers of a
manifold $S$ and the pretopology of open covers subordinated to a
given open cover generate the same Grothendieck site. More
concretely, if $\mathfrak U = \{ U_i \subset S \}$ is an open cover
and $V_i \in \mathcal V_\vN(U_i)$ is a collection of objects with
coherent isomorphisms between their restrictions to overlaps $U_i \cap
U_j$, we can construct a presheaf of $C^\infty_{\vN, S}$-modules $V'$
which agrees with $V_i$ on $U_i$ and assigns $0$ to any open not
contained in some $U_i$. Sheafification then gives the desired
``glued together'' object $V \in \mathcal V_\vN(S)$.

\subsection*{Acknowledgments}

We thank Dmitri Pavlov, Peter Teichner, Konrad Waldorf
and, especially, Stephan Stolz for helpful discussions. We are further
indebted to the Max Planck Institute for Mathematics in~Bonn and the
University of Greifswald, where part of this research was conducted.
The first-named author was partially supported by the ARC Discovery
Project grant FL170100020 under Chief Investigator and Australian
Laureate Fellow Mathai Varghese.

\pdfbookmark[1]{References}{ref}
\LastPageEnding

\end{document}